\def\A{{\mathbf{A}}}
\def\a{{\widetilde{a}}}
\def\B{{\mathbf{B}}}
\def\b{{\widetilde{b}}}
\def\E{{\mathbf{E}}}
\def\OA{\Omega_*}
\def\I{{\mathbb{I}}}
\def\U{{\mathbb{U}}}
\def\G{{\mathbb{G}}}
\def\M{{\mathbb{M}}}
\def\X{{\mathrm{X}}}
\def\T{{\mathrm{T}}}
\def\R{\mathbb R}
\def\L{\mathbb L}
\def\O{\mathbb O}
\def\N{\mathbb N}
\def\C{\mathbb C}
\def\H{\mathbb H}
\def\Z{\mathbb Z}
\def\K{{(\mathrm{K})}}
\def\xx{\widetilde{x}_1}
\def\xxa{\widetilde{x}_*}
\def\yy{\widetilde{x}^*}
\def\xnx{\mathbf{x}}
\def\CR{{(\mathrm{CR})}}
\def\u{\mathrm{u}}
\def\UU{\mathrm{U}}
\def\g{\mathrm{g}}
\def\x{\mathrm{x}}
\def\t{\mathrm{t}}
\def\v{\mathrm{v}}
\def\w{\mathrm{w}}
\newtheorem{theorem}{Theorem}
\newtheorem{lemma}{Lemma}
\newtheorem{remark}{Remark}
\newtheorem{corollary}{Corollary}
\newtheorem{proposition}{Proposition}
\title{Sub-Riemannian geometry on some step-two Carnot groups}
\author{Hong-Quan Li, Ye Zhang}
\date{}
\begin{document}

\renewcommand{\theequation}{\thesection.\arabic{equation}}
\setcounter{equation}{0} \maketitle

\vspace{-1.0cm}

\bigskip

{\bf Abstract.}
This paper is a continuation of the previous work of the first author. We characterize a class of step-two groups introduced in \cite{Li19}, saying GM-groups, via some basic sub-Riemannian geometric properties, including the squared Carnot-Carath\'{e}odory distance, the cut locus, the classical cut locus, the optimal synthesis, etc. Also, the shortest abnormal set can be exhibited easily in such situation.
Some examples of such groups are step-two groups of corank $2$, of Kolmogorov type, or those associated to quadratic CR manifolds. As a byproduct, the main goal in \cite{BBG12} is achieved from the setting of step-two groups of corank $2$ to all possible step-two groups, via a completely different method. A partial answer to the open questions \cite[(29)-(30)]{BR19} is provided in this paper as well. Moreover, we provide a entirely different proof, based yet on \cite{Li19}, for the Gaveau-Brockett optimal control problem on the free step-two Carnot group with three generators.  As a byproduct, we provide a new and independent proof for the main results obtained in \cite{MM17}, namely, the exact expression of $d(g)^2$ for $g$ belonging to the classical cut locus of the identity element $o$, as well as the determination of all shortest geodesics joining $o$ to such $g$.

\medskip

{\bf Mathematics Subject Classification (2010):} {\bf 22E25, 53C17, 53C22}

\medskip

{\bf Key words and phrases:} Carnot-Carath\'{e}odory distance, Gaveau-Brockett optimal control problem, step-two Carnot group, cut locus, shortest geodesic, optimal synthesis

\medskip

\renewcommand{\theequation}{\thesection.\arabic{equation}}
\section{Introduction}
\setcounter{equation}{0}

In the past several decades, step-two groups and their sub-Laplacians,
as special Lie groups of polynomial volume growth or perfect  sub-Riemannian manifolds,
have attracted wide attention from experts in various fields, such as complex analysis, control theory, geometric measure theory, harmonic analysis, heat kernel, Lie group theory, probability analysis, PDE, sub-Riemannian geometry, etc.
We only mention some relevant works here,
\cite{G77,A92,A94,BGG00,A02,BFKG16,
BCC19,B82,M02,AR04,BGG96,BLU07,BR96,
BR18,LLMV13,MM17,RS17,Ma02,HK00,BR19,
B84,R13,VSC92,FS82,BTW09,BFMT12,RS76,
MaM16,BGX00,BBBC08,BGL14,BKS18,BKS19,
BGM18,BB16,BG17,BGM19,BHT08,BK14,C79,
CWW95,CL08,CC15,CFZ20,C85,C91,CG84,
C93,CMZ96,CS08,DP89,DH05,DLLW19,E09,
E10,ERS99,FF15,FJ08,F73,F75,FS74,
FLW95,FSS01,FSS03,FL12,HZ10,HL10,
H76,H67,HM89,J86,JL88,J09,J14,K80,
KR95,LO14,Li06,Li07,Li09,Li10,LQ14,
LZ19,LDR17,LDMOPV16,NRS01}. The list is far from exhaustive and in fact is rather limited. More related papers can be found in the references therein as well as their subsequent researches.

In this present paper, we will restrict our attention to the sub-Riemannian geometry on step-two Carnot groups. Many relevant works can be found in the literature as cited before. However, some most fundamental problems are far from being solved, or even poorly known, in this very fine framework. Recently, in \cite{Li19} (cf. also \cite{Li19O}),
the first author used Loewner's theorem to study two basic problems of sub-Riemannian geometry on $2$-step groups: one is to obtain the exact formula for the sub-Riemannian distance, that is the Gaveau-Brockett optimal control problem; another is to characterize all (shortest) normal geodesics from the identity element $o$ to any given $g \neq o$. In particular, there exists an enormous class of $2$-step groups, saying GM-groups (see Subsection \ref{s22} below for the definition),
which have some consummate sub-Riemannian geometric properties. More precisely,
the squared Carnot-Carath\'eodory distance $d(g)^2 := d(o, g)^2$ and the cut locus of $o$, $\mathrm{Cut}_o$ (namely the set of points where $d^2$ is not smooth) can be characterized easily in such situation. For example, all Heisenberg groups even generalized Heisenberg-type groups (so step-two groups of corank $1$), and star graphs are GM-groups. We emphasize that in general, the expression of $d(g)^2$ is extremely complicated. It is impossible to provide an explicit expression, via a relatively simple inverse function, as in the most special situation of generalized Heisenberg-type groups (cf. \cite{G77}, \cite{BGG00} and \cite{Li19}). We refer the reader to \cite{Li19} for more details. The work is a continuation of \cite{Li19}, one of our main goals is to provide various equivalent characterizations of GM-groups via basic sub-Riemannian geometric properties.

Moreover, the sub-Riemannian geometry in the setting of $2$-step groups is not well understood. Roughly speaking, the main reason for this is that the well-understood examples are merely the Heisenberg group (cf. \cite{G77}) and generalized Heisenberg groups (cf. \cite{BGG00}). Other known cases, such as generalized Heisenberg-type groups as well as the direct product of a generalized Heisenberg group with a Euclidean space (in particular, step-two groups of corank $1$), are essentially the same. Hence, it is very meaningful and exigent to supply some examples possessing richer sub-Riemannian geometric properties. Here, we will provide more examples of GM-groups,
such as groups of corank $2$, of Kolmogorov type, or those associated to quadratic CR manifolds.
In particular, the aforementioned groups may have complicated shortest abnormal set of $o$, $\mathrm{Abn}^*_o$,
that is, the set of the endpoints of abnormal shortest geodesics starting from $o$.
The existence of non-trivial abnormal shortest geodesics is closely related to the regularity of the Carnot-Carath\'eodory distance. And its appearance makes an obstacle for us to deal with some topics, such as the heat kernel asymptotics and geometric inequalities, etc. See for example \cite{B84,B88,M02,BR19,Li19, ABB20} and the references therein for more details.
Recall that (cf. \cite{R13,R14}) a sub-Riemannian manifold is called \textit{ideal} if it is complete and has no non-trivial abnormal shortest geodesics. In our setting, a step-two group $\G$ is ideal if and only if it is of M\'etivier type (see Subsection \ref{ns221} for the definition).

Optimal syntheses (namely the collection of all arclength parametrized geodesics with their cut times) are generally very difficult to obtain. In the setting of step-two groups, as far as we know, a correct result about them can be found only on nonisotropic Heisenberg groups. See \cite[\S~13]{ABB20} and Remark \ref{Rkn1} below for more details. However, we can now give the optimal synthesis from the identity element $o$ on GM-groups. As a result, the classical cut locus of $o$, $\mathrm{Cut}^{\mathrm{CL}}_o$, that is the set of points where geodesics starting at $o$ cease to be shortest, can be characterized on such groups as well.

We say that $d^2$ is \textit{semiconcave} (resp. \textit{semiconvex}) \textit{in a neighborhood of $g_0$}
if there exist $C > 0$ and $\delta > 0$ such that
\begin{align} \label{DSSCC}
d(g_1 + g')^2 + d(g_1 - g')^2 - 2 \, d(g_1)^2 \le C \, |g'|^2  \, \mbox{(resp. $ \ge - \, C \, |g'|^2 $)},
\end{align}
for all $g_1 \pm g' \in  B(g_0, \delta) = \{ g \in \R^q \times \R^m; \, |g - g_0| < \delta \}$. Here we stress that $| \cdot |$ denotes the usual Euclidean norm and $g_1 \pm g^\prime$ the usual operation in the Euclidean space. We also remark that this definition is independent of the choice of local coordinates around $g_0$ (here we use the canonical one) since $d^2$ is locally Lipschitz w.r.t. the usual Euclidean distance (see for example \cite{R13,R14}). Set in the sequel{\footnote{We would like to thank L. Rizzi for informing us of the addendum of \cite{BR19} that the definition of the failure of semiconcavity/semiconvexity for the open questions should be the one stated here in our situation (which is consist with the classical definition of local semiconcavity/semiconvexity), rather than the one given in \cite{BR19}. For more details, we refer to the addendum on L. Rizzi's homepage.} }
\begin{align}
\mathrm{SC}^-_o &:= \left\{g; \, d^2 \mbox{ fails to be semiconcave in any neighborhood of $g$}  \right\}, \label{DoFSC1} \\
\mathrm{SC}^+_o &:= \left\{g; \, d^2 \mbox{ fails to be semiconvex in any neighborhood of $g$}  \right\}. \label{DoFSC2}
\end{align}

Recall that $\mathrm{SC}^-_o = \mathrm{Abn}^*_o$ in the setting of M\'etivier groups, all free Carnot groups of step $2$, as well as some other sub-Riemannian structures. See \cite{CL08}, \cite{FR10}, \cite{MM16}, \cite[\S~4.1 and \S~4.2]{BR19} and references therein for more details. And an open problem is raised in \cite[(29)]{BR19}, which asks whether it holds $\mathrm{SC}^-_o = \mathrm{Abn}^*_o$ in the more general sub-Riemannian setting (where our $\mathrm{Abn}^*_o$ is noted by $\mathrm{Abn}(o)$). In the framework of GM-groups, $\mathrm{Abn}^*_o$ can be described easily; as a byproduct, we give a positive answer to this open problem.
Also, other related results and step-two groups can be found in Subsection \ref{s23}.

In addition, the most challenging problem should be to study the sub-Riemannian geometry in the setting of free step-two groups with $k$ generators $N_{k, 2} \cong \R^k \times \R^{\frac{k (k -1)}{2}}$ ($k \geq 3$).
Indeed, for any step-two group $\G$ with $k$ generators, that is the first layer in the stratification of Lie algebra has dimension $k$, there exists some relation between $\G$ and $N_{k, 2}$ by Rothschild-Stein lifting theorem (see \cite{RS76} or \cite{BLU07}). Observe that $N_{2, 2}$ is exactly the Heisenberg group, which is well-known (cf. \cite{G77} or \cite{BGG00}). Recall that (cf. \cite{G77} and \cite{B82}) the original Gaveau-Brockett optimal control problem is to determine the sub-Riemannian distance on $N_{k, 2}$ with $k \ge 3$. This is a long-standing open problem.
Recently, it is completely solved on $N_{3, 2}$ in \cite[\S~11]{Li19}. Also remark that the main idea and method in \cite{Li19} can be adapted to general step-two groups and other situations.

In the setting of $N_{3, 2}$, the classical cut locus of $o$, $\mathrm{Cut}^{\mathrm{CL}}_o$,
has been determined in \cite{My02} and \cite{MM17} by completely different techniques; furthermore, the expression of $d(g)^2$ with $g \in \mathrm{Cut}^{\mathrm{CL}}_o$ has been obtained in \cite{MM17}. Strictly speaking, we have used the above known results in the proof of \cite{Li19}. Also notice that $\mathrm{Abn}_o^*$ and $\mathrm{Cut}^{\mathrm{CL}}_o$ on $N_{3, 2}$ are relatively very simple. However, it is still an open problem to characterize the classical cut locus of $o$ on $N_{k, 2}$ with $k \ge 4$, see \cite{RS17} for more details. Motivated by this problem, we ask naturally if we can determine first $d(g)^2$ for any $g$ then $\mathrm{Cut}^{\mathrm{CL}}_o$ on $N_{3, 2}$. This is exactly another main purpose of this work.

In the framework of step-two groups, first we recall that all shortest geodesics are normal (cf. \cite{AS04} or \cite[\S~2.4]{R14}). Next, up to a subset of measure zero, all normal geodesics from $o$ to any given $g \neq o$ have been characterized by \cite[Theorem~2.4]{Li19}. Moreover, it follows from \cite[Theorem~2.5]{Li19} that the squared distance has been determined in a symmetric, scaling invariant subset with non-empty interior. In particular, for the special case of $N_{3, 2}$, we can simplify
the Gaveau-Brockett problem via an orthogonal-invariant property, and some useful results can be found in \cite[\S~11]{Li19}. Based on these known results, we can describe the squared distance on $N_{3, 2}$ first on some dense open subset, then on whole space via a limiting argument.  Then,
from the regularity of the squared sub-Riemannian distance, we can further determine the cut locus $\mathrm{Cut}_o$.
Finally, all shortest geodesics joining $o$ to any given point in $\mathrm{Cut}_o^{\mathrm{CL}}$ are obtained by approximating them with that joining $o$ to some points in $(\mathrm{Cut}_o)^c$, which are relatively easy to describe. As a consequence, we supply an independent and new proof for the main results obtained in \cite{MM17}.

Some applications will be given in a future work.

\medskip

This paper is organized as follows. In Section \ref{s2}, we collect some preliminary materials and give our main results, which will be proven in Section \ref{nsP}. In Section \ref{s3}, we provide a sufficient condition for a step-two group to be GM-group by using semi-algebraic theory. As a consequence,  we find that all step-two groups of corank $2$ are GM-groups.  Furthermore, we also prove in this section that  there exist M\'etivier groups of corank $3$ and of sufficiently large dimension which are not of GM-type. In Section \ref{s6}, we consider the sub-Riemannian geometry in the setting of step-two K-type groups. Step-two groups associated to quadratic CR manifolds will be studied in Section \ref{s7}.
Finally, we give in Section \ref{s5} a completely different proof, based on [87], for the Gaveau-Brockett optimal control problem on $N_{3, 2}$. As an application, we provide a new and independent proof for the main results obtained in \cite{MM17}.

\medskip

\renewcommand{\theequation}{\thesection.\arabic{equation}}
\section{Preliminaries and main results} \label{s2}
\setcounter{equation}{0}

\subsection{Step-two Carnot groups}\label{s21}

Recall that a connected and simply connected Lie group $\G$ is a step-two Carnot group
if its left-invariant Lie algebra $\mathfrak{g}$ admits a stratification
\begin{align*}
\mathfrak{g} = \mathfrak{g}_1 \oplus \mathfrak{g}_2, \quad
[\mathfrak{g}_1, \mathfrak{g}_1] = \mathfrak{g}_2, \quad
[\mathfrak{g}_1, \mathfrak{g}_2] = \{0\},
\end{align*}
where $[\cdot,\cdot]$ denotes the Lie bracket on $\mathfrak{g}$. We identify $\G$ and $\mathfrak{g}$ via the exponential map. As a result, $\G$ can be considered as $\R^q \times \R^m$, $q, m \in \N^* = \{1, 2, 3, \ldots\}$ (in this paper we use $\N$ to denote the set of natural numbers $\{0,1,2,\ldots\}$), with the group law
\begin{align*}
(x , t) \cdot (x^{\prime}, t^{\prime}) =
\left(x + x^{\prime}, t + t^{\prime} + \frac{1}{2}\langle  \U x, x^{\prime} \rangle \right), \quad g := (x, t) \in \R^q \times \R^m,
\end{align*}
where
\begin{align*}
\langle\U x,  x^{\prime} \rangle := (\langle U^{(1)} x, x^{\prime} \rangle, \ldots, \langle   U^{(m)} x,x^{\prime} \rangle) \in \R^m.
\end{align*}
Here $\U = \{U^{(1)},\ldots,U^{(m)}\}$ is an $m$-tuple of linearly independent $q \times q$ skew-symmetric matrices with real entries and $\langle \cdot, \cdot \rangle$ (or $\mbox{} \cdot \mbox{}$ in the sequel when there is no ambiguity) denotes the usual inner product on $\R^q$. Furthermore, in this article, we will not distinguish row vectors from column vectors and we may write a column vector $t$ with scalar coordinates $t_1, \ldots, t_m$, simply as $(t_1,\ldots,t_m)$ unless otherwise stated in the context. Note that $m \le \frac{q(q - 1)}{2}$. We call such a group a step-two group of type $(q,m,\U)$, which is denoted by $ \G(q, m, \U)$ or $\G$ for simplicity. One can refer to \cite{E03} or \cite{BLU07} for more details.

Let $U^{(j)} = (U^{(j)}_{l, k})_{1 \leq l, k \leq q}$ ($1 \leq j \leq m$). The canonical basis of $\mathfrak{g}_1$ is defined by the left-invariant vector fields on $\G$:
\begin{align*}
\X_l(g) : = \frac{\partial}{\partial x_l} + \frac{1}{2} \sum_{j = 1}^m \Big( \sum_{k = 1}^{q} U^{(j)}_{l, k} x_k \Big) \frac{\partial}{\partial t_j}, \qquad 1 \leq l \leq q.
\end{align*}
And the canonical sub-Laplacian is
$\Delta = \sum\limits_{l = 1}^q \X_l^2$.

\medskip

\subsection{Left-invariant sub-Riemannian geometry on $\G$: some elementary properties}

Let us first recall some basic facts  about the sub-Riemannian geometry in the framework of $2$-step groups.
In our setting, we will sometimes use equivalent definitions for some concepts in order to avoid recalling too many notations.
We refer the reader to \cite{B84, S86, B88, BR96, M02, R14, ABB20} and references therein for further details.
Also notice that partial but not all results below remain valid in some more general setting.

The group $\G = \G(q,m,\U)$ is endowed with
the sub-Riemannian structure, namely a scalar product on $\mathfrak{g}_1$, with respect to which $\{\X_l\}_{1 \le l \le q}$ are orthonormal (and the norm induced by this scalar product is denoted by $\| \cdot \|$). In the sequel, $m$ is called the \textit{corank} of $\G(q, m, \U)$.

A \textit{horizontal curve} $\gamma: [0, \ 1] \to \G$ is an absolutely continuous path such that
\[
\dot{\gamma}(s) = \sum_{j = 1}^q u_j(s) \X_j(\gamma(s)) \qquad \mbox{for a.e. } s \in [0, \ 1],
\]
and we define its \textit{length} as follows
\begin{align*}
\ell(\gamma) := \int_0^1 \|\dot{\gamma}(s)\| \, ds = \int_0^1 \sqrt{\sum_{j = 1}^q |u_j(s)|^2} \, ds.
\end{align*}
The \textit{Carnot-Carath\'eodory} (or \textit{sub-Riemannian}) \textit{distance} between $g, g' \in \G$ is then
\[
d(g, g') := \inf\left\{\ell(\gamma); \ \gamma(0) = g, \ \gamma(1) = g', \gamma \mbox{ horizontal} \right\}.
\]
A \textit{geodesic} is a horizontal curve $\gamma$ satisfying: $\| \dot{\gamma}(s) \|$ is constant and for any $s_0 \in [0, \ 1]$ there exists a neighborhood $I$ of $s_0$ in $[0, \ 1]$ such that $\ell(\gamma|_I)$ is equal to the distance between its endpoints.
And a \textit{shortest geodesic} is a geodesic $\gamma$ which realizes the distance between its extremities, that is, $\ell(\gamma) = d(\gamma(0), \gamma(1))$.

By slightly abusing of notation in the sequel, $0$ denotes the number $0$ or the origin in the Euclidean space. Let $o = (0, 0)$ denote the identity element of $\G$.
It is well-known that $d$ is a left-invariant distance
on $\G$. Hence we set in the following $d(g) := d(g, o)$. Recall that
\textit{$d^2$ is locally Lipschitz on $\G$ with respect to the usual Euclidean distance.}
The dilation on $\G$ is defined by
\begin{align} \label{nDS}
\delta_r(x, t) := (r \, x, r^2 \, t), \quad \forall \, r > 0, \ (x, t) \in \G.
\end{align}
And the following scaling property is well-known:
\begin{align} \label{scap}
d(r \, x, r^2 \, t) = r \, d(x, t), \quad \forall \, r > 0, \ (x, t) \in \G.
\end{align}

\subsubsection{Sub-Riemannian Hamiltonian and normal geodesics starting from $o$}\label{ns221}

In the setting of step-two Carnot groups,
it is well-known that all shortest geodesics are projections of normal Pontryagin extremals, that is integral curves of the sub-Riemannian Hamiltonian in $T^* \G$. See for example \cite[\S~20.5]{AS04} or \cite[Theorem 2.22]{R14}.

More precisely, the sub-Riemannian Hamiltonian in $T^* \G \cong (\R^q \times \R^m) \times (\R^q \times \R^m)$ is defined by
\begin{align*}
H = H(x, t, \xi, \tau) := \frac{1}{2} \sum_{j = 1}^q \zeta_j^2, \quad \zeta_j :=  \xi_j + \frac{1}{2} \sum_{k = 1}^m \left( \sum_{l = 1}^q U_{j, l}^{(k)} x_l \right) \tau_k, \quad 1 \le j \le q.
\end{align*}
And a \textit{normal Pontryagin extremal},
\[
\Big( \gamma(s):=(x(s), t(s)), \xi(s), \tau(s) \Big): [0, \ 1] \longrightarrow T^* \G, \qquad \mbox{with } \gamma(0) = o,
\]
is a solution of
\begin{align}\label{HJE}
\dot{x}_k  &= \frac{\partial H}{\partial \xi_k}, \quad  \dot{t}_j  = \frac{\partial H}{\partial \tau_j}, \quad
\dot{\xi}_k  = -\frac{\partial H}{\partial x_k}, \quad  \dot{\tau}_j  = - \frac{\partial H}{\partial t_j}, \quad
1 \le k \le q, \, 1 \le j \le m.
\end{align}

The covector $(\xi(0), \tau(0))$ (resp. $(\xi(1), \tau(1))$) is called the initial (resp. final) covector of $(\gamma(s), \xi(s), \tau(s))$. Its projection
\[
\gamma(s):= \gamma(\xi(0), \tau(0); s) = \gamma_{(\xi(0), \tau(0))}(s) = (x(s), t(s)): [0, \ 1] \longrightarrow \G
\]
is said to be the \textit{normal geodesic starting from $o$ with initial covector $(\xi(0), \tau(0))$}.

Note that $H$ is independent of $t$. Hence we have
\begin{align}
\tau(s) \equiv \tau(0) := 2 \, \theta \in \R^m.
\end{align}
Set in the following
\begin{align} \label{Du1}
\widetilde{U}(\theta) := \sum_{j = 1}^m \theta_j \, U^{(j)} \mbox{ and } U(\theta) := i \, \widetilde{U}(\theta), \quad \mbox{for $\theta = (\theta_1, \ldots, \theta_m) \in \R^m$.}
\end{align}
Recall that a step-two group $\G$ is a \textit{M\'etivier group} (or \textit{of M\'etivier type})
if $U(\theta)$ is invertible for any $\theta \neq 0$ (cf. \cite{M80}).

Let $\zeta(s) := \xi(s) + \widetilde{U}(\theta) \, x(s)$. Remark that $\xi(0) = \zeta(0)$. A simple calculation implies that
\begin{align}\label{GEn}
\zeta(s) = e^{2 \, s \, \widetilde{U}(\theta)} \, \zeta(0), \quad
x(s) = \int_0^s \zeta(r) \, dr, \quad
t(s) = \frac{1}{2} \int_0^s \langle \U x(r), \zeta(r) \rangle \, dr.
\end{align}

In particular, we have
\begin{align}\label{endpointx}
x(1) = \int_0^1 \zeta(r) \, dr = \frac{\sin{U(\theta)}}{U(\theta)} e^{\widetilde{U}(\theta)} \, \zeta(0),
\end{align}
and $\gamma(\xi(0), \tau(0); s) = (x(s), t(s))$ is extendable and real analytic on $[0, \ +\infty)$. It is easy to check the  following homogeneity property:
\[
\gamma(\alpha \, \zeta_0, 2 \, \alpha \, \theta_0; s) = \gamma(\zeta_0, 2 \, \theta_0; \alpha \, s), \qquad \forall \, \alpha > 0, \ s \geq 0, \ (\zeta_0, 2 \, \theta_0) \in \R^q \times \R^m.
\]

From now on, the domain of the normal geodesic
$\gamma(s) = \gamma(\zeta_0, 2 \, \theta_0; s)$ is $[0, \ +\infty)$ and that of $\gamma(s) = \gamma_{(\zeta_0, 2 \, \theta_0)}(s)$ is $[0, \ 1]$ by default. Also remark that $\gamma = o$ if $\zeta_0 = 0$, which is trivial. And all normal geodesics are by convention starting from $o$ in this work.

Let $(x, t)$ denote the endpoint of $\gamma_{(\zeta, \tau)}$, then by \eqref{GEn}, that of $\gamma_{(-\zeta, \tau)}$ is $(- x , t)$. Moreover, both $\gamma_{(\zeta, \tau)}$ and $\gamma_{(-\zeta, \tau)}$ have length $|\zeta|$, where $| \cdot |$ denotes the usual Euclidean norm. Combining this with the fact that the Carnot-Carath\'eodory distance is a left-invariant distance on $\G$, we have the following simple but useful observation:

\begin{lemma}
In the setting of step-two groups, it holds that
\begin{align} \label{te1}
d(x, t) = d(-x, t) = d(x, -t) = d(-x, -t), \quad \forall \, (x, t) \in \G.
\end{align}
\end{lemma}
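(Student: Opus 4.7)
My plan is to combine the normal-geodesic characterization of shortest paths, already recalled in the preceding paragraph, with two elementary symmetries. Because every shortest geodesic in a step-two group is the projection of a normal Pontryagin extremal, the squared distance is the infimum of $|\zeta_0|^2$ over all covectors $(\zeta_0, 2\theta) \in \R^q \times \R^m$ whose normal geodesic terminates at the prescribed point. The proof thus reduces to producing, for each such covector, a companion covector of the same Euclidean norm whose geodesic lands at the desired reflected endpoint.

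For the identity $d(x, t) = d(-x, t)$, I would use the observation made just before the lemma: flipping the sign of $\zeta_0$ in \eqref{GEn} flips $\zeta(s)$, and hence $x(s)$, while $t(s) = \tfrac{1}{2} \int_0^s \langle \U x(r), \zeta(r) \rangle \, dr$ is a bilinear expression in $\zeta_0$ and therefore invariant under the sign change. So $\gamma_{(-\zeta_0, 2\theta)}$ has the same length $|\zeta_0|$ as $\gamma_{(\zeta_0, 2\theta)}$ but terminates at $(-x, t)$, and passing to the infimum yields the identity.

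For the remaining equalities I would invoke group inversion. Since each $U^{(k)}$ is skew-symmetric, one has $\langle U^{(k)} x, x \rangle = 0$, and the explicit group law immediately gives $(x, t)^{-1} = (-x, -t)$. Left-invariance of the Carnot-Carath\'eodory distance then yields $d(g) = d(g^{-1})$, hence $d(x, t) = d(-x, -t)$. Applying the first symmetry to the point $(x, -t)$ closes the chain: $d(x, -t) = d(-x, -t) = d(x, t)$.

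There is essentially no obstacle here — both symmetries are structural features of the step-two setting, and the lemma is presented by the authors as an immediate observation rather than a substantial result. The only small verification is that the integrand in the formula for $t(s)$ really is bilinear in $\zeta_0$, which is transparent because both factors $\U x(r)$ and $\zeta(r)$ depend linearly on $\zeta_0$ and so flip sign simultaneously.
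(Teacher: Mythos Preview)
Your proposal is correct and follows essentially the same approach as the paper: the paper also uses the observation that $\gamma_{(-\zeta,\tau)}$ has endpoint $(-x,t)$ and the same length to obtain $d(x,t)=d(-x,t)$, and then invokes left-invariance of $d$ (hence $d(g)=d(g^{-1})$ with $g^{-1}=(-x,-t)$) to complete the chain. The only cosmetic difference is that the paper phrases the first step via a single shortest geodesic rather than an infimum, but the content is identical.
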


The following basic property is well-known:

\begin{lemma} \label{BPG}
Let $0 \le s_1 < s_2$.  Assume that $\gamma(\zeta_0, 2 \, \theta_0; s) = \gamma(\zeta', 2 \, \theta'; s)$ for all $s_1 \le s \le s_2$. Then we have $\gamma(\zeta_0, 2 \, \theta_0; \cdot) \equiv \gamma(\zeta', 2 \, \theta'; \cdot)$.
Moreover, it holds that $\zeta_0 = \zeta'$.
\end{lemma}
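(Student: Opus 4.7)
The plan is to exploit real analyticity of normal geodesics on $[0,+\infty)$: agreement on the interval $[s_1,s_2]$ of positive length will be upgraded to agreement on the whole half-line by the identity theorem, and then $\zeta_0$ is recovered as the initial velocity of the horizontal component.

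First I would observe that both maps $s \mapsto \gamma(\zeta_0, 2\theta_0; s)$ and $s \mapsto \gamma(\zeta', 2\theta'; s)$ are real-analytic from $[0,+\infty)$ into $\R^q \times \R^m$, as already remarked just below \eqref{endpointx} and transparent from the closed-form expressions \eqref{GEn}: $\zeta(s) = e^{2s\widetilde{U}(\theta)}\zeta(0)$ is entire in $s$, and $x(s) = \int_0^s \zeta(r)\,dr$ together with $t(s) = \tfrac{1}{2}\int_0^s \langle \U x(r),\zeta(r)\rangle\,dr$ are obtained by integration. Since by hypothesis these two analytic maps agree on the non-degenerate interval $[s_1,s_2]$ (the assumption $s_1 < s_2$ is crucial here), the identity principle for real-analytic maps on a connected domain forces them to coincide on the whole of $[0,+\infty)$, giving the first assertion $\gamma(\zeta_0,2\theta_0;\cdot) \equiv \gamma(\zeta',2\theta';\cdot)$.

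To conclude $\zeta_0 = \zeta'$ I would then differentiate the horizontal component at $s = 0$. From \eqref{GEn}, $\dot x(s) = \zeta(s) = e^{2s\widetilde{U}(\theta)}\zeta(0)$, so $\dot x(0) = \zeta(0)$, which equals $\zeta_0$ for the first geodesic and $\zeta'$ for the second. Global coincidence of the two curves on $[0,+\infty)$ forces coincidence of their derivatives at $0$, and hence $\zeta_0 = \zeta'$. There is essentially no hard step here: the whole argument reduces to the identity theorem together with the elementary formula $\dot x(0) = \zeta_0$ read off from \eqref{GEn}; the only thing to watch is that the hypothesis $s_1 < s_2$ really is used, since without an interval of positive length one cannot invoke analytic continuation, and one should not expect to also recover $\theta_0 = \theta'$, which is why the lemma is phrased only in terms of $\zeta_0$.
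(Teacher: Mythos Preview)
Your argument is correct. The paper does not actually give a proof of this lemma; it merely states it as ``well-known,'' so there is nothing to compare against beyond noting that your analyticity-plus-identity-theorem approach is precisely the standard justification one would expect, and your recovery of $\zeta_0$ via $\dot x(0)$ from \eqref{GEn} is the natural way to extract the second claim.
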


More information about such geodesics can be found in Proposition \ref{cAg} below.

\subsubsection{Sub-Riemannian exponential map, cut point and optimal synthesis}

The \textit{sub-Riemannian exponential map} based at $o$ is the smooth map defined by
\begin{align*}
\exp: \, \R^q \times \R^m &\longrightarrow \G \\
(\zeta_0, 2 \, \theta_0) &\longmapsto \gamma(\zeta_0, 2 \, \theta_0; 1).
\end{align*}
In our setting, it is surjective and has the following property:
\[
\gamma(\zeta_0, 2 \, \theta_0; s) = \exp\{s \, (\zeta_0, 2 \, \theta_0)\}, \qquad \forall \, s \geq 0, \ (\zeta_0, 2 \, \theta_0) \in \R^q \times \R^m.
\]
See for example \cite[\S~8.6]{ABB20}.
Furthermore, we have the following simple observations:

\begin{lemma}
Suppose that $\exp(w, \tau) = (x, t)$. Then we have
\begin{gather} \label{symN2}
\exp(r \, w, \tau) = ( r \, x, r^2 \, t),  \quad \forall \, r \neq 0, \\
\label{symN3}
\exp(- e^{\widetilde{U}(\tau)} \, w, - \tau) = (-x , -t).
\end{gather}
\end{lemma}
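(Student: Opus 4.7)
My plan is to verify the two identities by direct substitution into the explicit integral representation \eqref{GEn} for normal geodesics, relying only on the linearity of those formulas in the initial covector and on the skew-symmetry of each $U^{(j)}$; no further analytic input will be needed.

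For \eqref{symN2}, I would set $\tau = 2\theta$ and recall that the geodesic with initial covector $(w,\tau)$ is given by $\zeta(s) = e^{2s\widetilde{U}(\theta)} w$, $x(s) = \int_0^s \zeta(r)\,dr$, and $t(s) = \tfrac{1}{2}\int_0^s \langle \U x(r), \zeta(r)\rangle\,dr$. Replacing $w$ by $r w$ while keeping $\theta$ (hence $\widetilde{U}(\theta)$) fixed scales $\zeta$ and $x$ by $r$, and by bilinearity scales $t$ by $r^{2}$. Evaluating at $s=1$ gives \eqref{symN2}; in fact the identity is valid for every $r\in\R$ (the restriction $r\neq 0$ in the statement is inessential).

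For \eqref{symN3}, let $\tilde\gamma(s) = (\tilde x(s),\tilde t(s))$ denote the geodesic with initial covector $(-e^{\widetilde{U}(\tau)} w,-\tau)$, and put $\tilde\theta = -\theta$ and $\tilde\zeta_0 := -e^{2\widetilde{U}(\theta)} w = -\zeta(1)$. Using $\widetilde{U}(-\theta) = -\widetilde{U}(\theta)$, a direct calculation gives
\[
\tilde\zeta(s) \;=\; e^{2s\widetilde{U}(-\theta)}\,\tilde\zeta_0 \;=\; -e^{-2s\widetilde{U}(\theta)}\,e^{2\widetilde{U}(\theta)} w \;=\; -\zeta(1-s),
\]
and then the change of variables $r\mapsto 1-r$ yields $\tilde x(s) = x(1-s) - x(1)$. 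In particular $\tilde x(1) = -x(1) = -x$, which is the first coordinate we want.

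It remains to compute $\tilde t(1)$. Plugging the above expressions into the integral formula for $\tilde t(1)$ and again changing variables $r \mapsto 1-r$ splits it cleanly as
\[
\tilde t(1) \;=\; -\frac{1}{2}\int_0^1 \langle \U x(r), \zeta(r)\rangle\,dr \;+\; \frac{1}{2}\big\langle \U x(1),\, x(1)\big\rangle \;=\; -\,t(1) \;+\; \frac{1}{2}\langle \U x(1), x(1)\rangle.
\]
The only nontrivial point, and what I expect to be the main (mild) obstacle, is to recognize that the remainder vanishes identically: since each $U^{(j)}$ is skew-symmetric, $\langle U^{(j)} x(1), x(1)\rangle = 0$ for every $j$. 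Consequently $\tilde t(1) = -t$, which establishes \eqref{symN3} and completes the proof.
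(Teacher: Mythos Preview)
Your proof is correct and follows essentially the same approach as the paper, which simply states that \eqref{symN2} is trivial from \eqref{GEn} and that \eqref{symN3} follows from an elementary computation. You have spelled out precisely that computation, including the key identification $\tilde\zeta(s)=-\zeta(1-s)$ and the vanishing of $\langle \U x(1),x(1)\rangle$ by skew-symmetry, so there is nothing to add.
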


Indeed, using \eqref{GEn}, \eqref{symN2} is trivial, and an elementary computation implies \eqref{symN3}.

Now assume
that $\gamma(s) = \exp\{s \, (\zeta_0, 2 \, \theta_0)\}$ is parametrized by arclength (or \textit{arclength parametrized}), namely $|\zeta_0| = 1$. Let
$g_0 = \gamma(s_0)$. We say that $g_0$ is \textit{conjugate to $o$} along $\gamma$ if $s_0 \, (\zeta_0, 2 \, \theta_0)$ is a critical point of $\exp$. The \textit{cut time} along $\gamma$ is defined as
\begin{align}
h_{\mathrm{cut}} :=
h_{\mathrm{cut}}(\gamma) =
\sup\{s > 0; \ \gamma|_{[0, \ s]} \mbox{ is a shortest geodesic} \}.
\end{align}
When $h_{\mathrm{cut}} < +\infty$, $\gamma(h_{\mathrm{cut}})$ is said to be the \textit{cut point} of $o$ along $\gamma$. And we say $\gamma$ has no cut point if $h_{\mathrm{cut}} = +\infty$. The \textit{optimal synthesis} from $o$ is the collection of all arclength parametrized geodesics with their cut times.

\subsubsection{Shortest abnormal set and cut locus} \label{ns223}

A normal geodesic is said to be \textit{abnormal (i.e. singular)} if it has two (so infinitely many) different normal lifts (see \cite[Remark 8]{RT05} and \cite[Remark 2.4]{R14}). However, we stress that our definition of abnormal geodesic is not complete in general. In particular, on some sub-Riemannian manifolds, excluding our step-two groups, there are shortest geodesics which are not
projections of normal Pontryagin extremals. For the original definition of abnormal geodesic as well as counter-examples, we refer the reader to \cite{Mont94,LS95,M02,R14,ABB20} and the references therein for more details.

In this work, the \textit{(normal-) abnormal set} of $o$, $\mathrm{Abn}_o$ is defined by
\begin{align*}
\mathrm{Abn}_o := \{g; \, \mbox{there exists an abnormal (which is also normal) geodesic joining $o$ to $g$}\}.
\end{align*}
And we define the \textit{shortest abnormal set} of $o$ as follows:
\begin{align*}
\mathrm{Abn}_o^* := \{g; \, \mbox{there exists an abnormal \textit{shortest} geodesic joining $o$ to $g$}\},
\end{align*}
which is a subset of $\mathrm{Abn}_o$. The main difference between the two sets is that: in the definition of $\mathrm{Abn}_o$, we do not care about minimality of geodesics, while this is needed in that of $\mathrm{Abn}_o^*$.
Notice that $o \in \mathrm{Abn}^*_o$. Also remark that our $\mathrm{Abn}_o$ is exactly $\mathrm{Abn}^{nor}(e)$ in \cite[\S~2.7]{LDMOPV16}.

The following characterization of abnormal geodesics, which can be also considered as an improvement of Lemma \ref{BPG}, can be easily verified by \eqref{GEn} (see also \cite[\S~3.1.1]{MM16} for an explanation from the original definition of abnormal (-normal) geodesics).

\begin{proposition} \label{cAg}
Let $\theta \neq \theta'$. Then $\gamma(w, 2 \, \theta; \cdot) \equiv \gamma(w, 2 \, \theta'; \cdot)$ if and only if for $\sigma = \theta - \theta' \in \R^m \setminus \{ 0 \}$, we have
\begin{align}\label{CharaAbn}
U(\sigma) \, U(\theta)^k \, w = 0, \quad \forall \, k \in \N,
\end{align}
or equivalently,
\begin{align}\label{AGc}
U(\sigma) \, e^{s \, \widetilde{U}(\theta)} \, w = 0, \quad \forall \, s \in \R.
\end{align}
That is, $\gamma_{(w, 2 \, \theta)}$ (or $\gamma(w, 2 \, \theta; \cdot)$) is abnormal if and only if there exists some $\sigma \neq 0$ such that \eqref{CharaAbn} (or equivalently \eqref{AGc}) satisfies.
\end{proposition}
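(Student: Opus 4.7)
The plan is to work directly from the explicit representation \eqref{GEn} and to reduce the question of when two normal geodesics with the same spatial initial covector $w$ but different $\tau$-covectors coincide, to a linear ODE statement about the curve $\zeta(s) = e^{2s\widetilde{U}(\theta)} w$. The key observation is that, by \eqref{GEn}, both the horizontal component $x(s) = \int_0^s \zeta$ and the vertical component $t(s) = \frac{1}{2}\int_0^s \langle \U x, \zeta \rangle$ are determined solely by $\zeta$; consequently $\gamma(w, 2\theta; \cdot) \equiv \gamma(w, 2\theta'; \cdot)$ if and only if the two $\zeta$-curves $e^{2s\widetilde{U}(\theta)}w$ and $e^{2s\widetilde{U}(\theta')}w$ coincide. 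This ``$\tau$-covector affects only $\zeta$'' observation should be stated first, as it cleanly dispenses with the $t$-component.

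First I would dispose of the equivalence of \eqref{CharaAbn} and \eqref{AGc}: since $U(\theta) = i\widetilde{U}(\theta)$, the conditions $U(\sigma) U(\theta)^k w = 0$ and $\widetilde{U}(\sigma)\widetilde{U}(\theta)^k w = 0$ agree up to a power of $i$, and by expanding $e^{s\widetilde{U}(\theta)}w$ as a power series in $s$, real-analyticity in $s$ turns \eqref{AGc} into the vanishing of all Taylor coefficients, which is precisely \eqref{CharaAbn}. For the forward direction of the main equivalence, assuming $\gamma(w, 2\theta; \cdot) \equiv \gamma(w, 2\theta'; \cdot)$, differentiating the $x$-components gives $e^{2s\widetilde{U}(\theta)}w = e^{2s\widetilde{U}(\theta')}w$ for all $s\geq 0$; differentiating this identity once in $s$ and subtracting yields
\begin{align*}
\bigl(\widetilde{U}(\theta) - \widetilde{U}(\theta')\bigr) e^{2s\widetilde{U}(\theta)}w = \widetilde{U}(\sigma)\, e^{2s\widetilde{U}(\theta)}w = 0, \quad \forall\, s \in \R,
\end{align*}
which after rescaling is \eqref{AGc}.

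For the converse, suppose \eqref{AGc} holds and set $v(s) := e^{2s\widetilde{U}(\theta)}w$. Then $\widetilde{U}(\sigma)v(s) \equiv 0$, so $\widetilde{U}(\theta')v(s) = (\widetilde{U}(\theta) - \widetilde{U}(\sigma))v(s) = \widetilde{U}(\theta)v(s)$; hence $v$ satisfies both $\dot v = 2\widetilde{U}(\theta)v$ and $\dot v = 2\widetilde{U}(\theta')v$ with initial condition $v(0) = w$. Uniqueness of linear ODE solutions gives $v(s) = e^{2s\widetilde{U}(\theta')}w$, so the two $\zeta$-curves, and hence by the opening observation the two geodesics, coincide. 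The characterization of abnormal geodesics then follows directly from the definition recalled just above the proposition: $\gamma_{(w, 2\theta)}$ admits a second normal lift iff some $\theta' \neq \theta$ produces the same projected curve, which by what we have proved is exactly the existence of a $\sigma = \theta - \theta' \neq 0$ satisfying \eqref{CharaAbn}. The only mild subtlety I anticipate is being careful that the passage from equality of $x(s)$ to equality of $\zeta(s) = \dot x(s)$ is valid on all of $[0,\infty)$ (it is, by the real analyticity noted after \eqref{endpointx}), and that the $t$-component really is a functional of $\zeta$ alone, with no hidden dependence on $\theta$; everything else is a formal consequence of \eqref{GEn} and Taylor expansion.
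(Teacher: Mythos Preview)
Your proposal is correct and is precisely the verification the paper has in mind: the paper does not give a detailed proof but simply states that the proposition ``can be easily verified by \eqref{GEn}'', and your argument unpacks exactly that, reducing the question to equality of the $\zeta$-curves and then to a linear ODE uniqueness statement. The points you flag as subtleties (real-analyticity to pass from $s\ge 0$ to $s\in\R$, and that $t(s)$ depends only on $\zeta$ with no separate $\theta$-dependence) are handled correctly.
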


As a consequence, we get the following known fact:

\begin{corollary}
If $\gamma_{(w, 2 \, \theta)}$ is abnormal, then so does $\gamma_{(a \, w, 2 \, b \, \theta)}$ for any $a, b \in \R$. In particular, for any $0 < a < 1$, the restriction of $\gamma_{(w, 2 \, \theta)}$ in $[0, \, a]$, $\gamma_{(w, 2 \, \theta)}|_{[0, \, a]} = \gamma_{(a \, w, 2 \, a \, \theta)}$ is also abnormal.
\end{corollary}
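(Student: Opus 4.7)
The plan is to reduce everything to the algebraic characterization of abnormality given by Proposition \ref{cAg}, and then to exploit the $\R$-linearity of the map $\theta \mapsto U(\theta) = i\,\widetilde{U}(\theta)$. By hypothesis, there exists some $\sigma \in \R^m \setminus \{0\}$ with
\[
U(\sigma)\, U(\theta)^k\, w = 0, \qquad \forall\, k \in \N.
\]
My first step would be to reuse the same $\sigma$ for the perturbed geodesic $\gamma_{(aw,\,2b\theta)}$. Since $U(b\theta) = b\,U(\theta)$, one has $U(b\theta)^k = b^k\, U(\theta)^k$, and consequently
\[
U(\sigma)\, U(b\theta)^k\, (aw) \;=\; a\, b^k\, U(\sigma)\, U(\theta)^k\, w \;=\; 0, \qquad \forall\, k \in \N.
\]
Since $\sigma$ is still nonzero, Proposition \ref{cAg} directly implies that $\gamma_{(aw,\,2b\theta)}$ is abnormal. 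This is the main content of the first assertion.

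Next, I would settle the degenerate cases. If $a = 0$, then $\gamma_{(0,\,2b\theta)}$ is the constant curve $\equiv o$, and the criterion \eqref{CharaAbn} is trivially satisfied with any $\sigma \neq 0$. If $a \neq 0$ but $b = 0$, the display above reduces for $k \geq 1$ to $0 = 0$ and for $k = 0$ to $a\,U(\sigma)\,w = 0$, which is precisely the $k = 0$ case of the hypothesis; so the same $\sigma$ again works.

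For the second assertion, the plan is to combine the first part with the homogeneity of the exponential flow recorded above, namely
\[
\gamma(\alpha\,\zeta_0,\, 2\alpha\,\theta_0;\, s) = \gamma(\zeta_0,\, 2\theta_0;\, \alpha\,s), \qquad \alpha > 0.
\]
Taking $\alpha = a$ and $\zeta_0 = w,\ \theta_0 = \theta$, one reads off that $\gamma_{(aw,\,2a\theta)}(s) = \gamma(w,\,2\theta;\,as)$ for $s \in [0,1]$, i.e.\ it is exactly the reparametrization of the restriction $\gamma_{(w,\,2\theta)}|_{[0,a]}$. Applying the first part with $b = a$ then gives its abnormality.

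No step looks genuinely delicate; the only thing to watch is that the $\sigma$ produced by Proposition \ref{cAg} stays nonzero throughout (which is automatic, since we never modify it) and that the trivial constant geodesic is correctly declared abnormal under our conventions. Both are clean bookkeeping, so the whole argument is essentially a one-line computation once the linearity of $\theta \mapsto U(\theta)$ is invoked.
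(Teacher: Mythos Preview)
Your proof is correct and is exactly the argument the paper has in mind: the corollary is stated immediately after Proposition~\ref{cAg} as a direct consequence, and your use of the linearity $U(b\theta)=b\,U(\theta)$ together with the same witness $\sigma$ is the intended (and only natural) route. The second assertion is likewise just the homogeneity identity combined with the first part, as you wrote.
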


\medskip

A normal geodesic is called  \textit{strictly normal} if it is not abnormal.
Let $\gamma = \gamma_{(w, 2 \, \theta)}$ (resp. $\gamma(w, 2 \, \theta; \cdot)$) and $0 \le s_1 < s_2 \le 1$ (resp. $0 \le s_1 < s_2 < +\infty$). We consider the restriction of $\gamma$ in $[s_1, \, s_2]$, $\gamma|_{[s_1, \, s_2]}$ as well as
\[
\gamma^{s_1, s_2}(s) := \gamma(s_1)^{-1} \cdot \gamma(s_1 + s(s_2 - s_1)), \qquad s \in [0, \ 1].
\]
By \eqref{GEn}, a simple calculation shows that
\[
\gamma^{s_1,s_2} = \gamma_{((s_2 - s_1) \, e^{2 \, s_1 \widetilde{U}(\theta)} \, w, \ 2 \, (s_2 - s_1) \, \theta)},
\]
which is a normal geodesic starting from $o$. If $\gamma^{s_1, s_2}$ is abnormal,
then it follows from Proposition \ref{cAg} that there exists a $\sigma \in \R^m \setminus \{0\}$ such that
\begin{align*}
(s_2 - s_1)^{k+1} \, U(\sigma) \, U(\theta)^k \, e^{2 \, s_1  \, \widetilde{U}(\theta)} \, w = 0, \qquad \forall \, k \in \N,
\end{align*}
which implies that $\gamma = \gamma_{(w,2 \, \theta)}$ itself (so $\gamma(w,2 \, \theta; \cdot)$) is also abnormal by \eqref{AGc}.
We say a normal geodesic $\gamma_{(w, 2 \, \theta)}$ (resp. $\gamma(w, 2 \, \theta; \cdot)$) \textit{does not contain abnormal segments} if
$\gamma^{s_1,s_2}$ is not abnormal for any $0 \le s_1 < s_2 \le 1$ (resp. $0 \le s_1 < s_2 < +\infty$). In conclusion, we get the following:

\begin{lemma} \label{nLN}
In the framework of step-two groups, any strictly normal geodesic does not contain abnormal segments.
\end{lemma}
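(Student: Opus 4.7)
The plan is to proceed by contrapositive: I would show that if a normal geodesic $\gamma = \gamma_{(w,2\,\theta)}$ contains an abnormal segment $\gamma^{s_1, s_2}$ for some $0 \le s_1 < s_2 \le 1$, then $\gamma$ itself must be abnormal, and hence cannot be strictly normal. The computation needed is essentially the one already sketched in the paragraph immediately preceding the statement; my task is to organize it into a clean implication and to identify why the apparent obstacle (non-commutativity) does not bite.

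First I would recall, from the calculation displayed just above the lemma, that $\gamma^{s_1,s_2}$ coincides with $\gamma_{((s_2-s_1)\,e^{2 s_1 \widetilde{U}(\theta)} w,\; 2(s_2-s_1)\,\theta)}$. Applying Proposition~\ref{cAg} to this curve yields a nonzero $\sigma \in \R^m$ such that
\[
(s_2 - s_1)^{k+1}\, U(\sigma)\, U(\theta)^k\, e^{2 s_1 \widetilde{U}(\theta)}\, w = 0, \qquad \forall\, k \in \N.
\]
Since $s_2 > s_1$, dividing by $(s_2 - s_1)^{k+1}$ gives $U(\sigma)\, U(\theta)^k\, v = 0$ for every $k \in \N$, where $v := e^{2 s_1 \widetilde{U}(\theta)}\, w$.

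Next I would invoke the equivalence between \eqref{CharaAbn} and \eqref{AGc} in Proposition~\ref{cAg} to repackage the preceding family of identities as $U(\sigma)\, e^{s \widetilde{U}(\theta)}\, v = 0$ for all $s \in \R$. Substituting $v = e^{2 s_1 \widetilde{U}(\theta)} w$ and using that $\{e^{s \widetilde{U}(\theta)}\}_{s \in \R}$ is a one-parameter subgroup, I obtain
\[
U(\sigma)\, e^{(s + 2 s_1)\widetilde{U}(\theta)}\, w = 0, \qquad \forall\, s \in \R.
\]
The change of variable $s' = s + 2 s_1$ is a bijection of $\R$, so this is exactly the abnormality condition \eqref{AGc} for $\gamma_{(w,2\,\theta)}$ with the very same $\sigma \ne 0$, which finishes the contrapositive.

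The only subtle point is that $U(\sigma)$ does not commute with $e^{2 s_1 \widetilde{U}(\theta)}$ in general, so one cannot simply cancel the latter by its invertibility to pull $w$ out. The key observation is that there is no need to push $e^{2 s_1 \widetilde{U}(\theta)}$ past $U(\sigma)$ at all: once the polynomial-in-$U(\theta)$ condition \eqref{CharaAbn} has been converted to the exponential form \eqref{AGc} acting on $v$, the shift is absorbed by composition within the single commutative one-parameter subgroup $\{e^{s\widetilde{U}(\theta)}\}_{s\in\R}$, which bypasses any issue with $U(\sigma)$. This is the only non-routine step, and it is precisely what the parenthetical ``by~\eqref{AGc}'' in the preceding discussion was encoding.
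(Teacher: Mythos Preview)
Your proof is correct and follows essentially the same approach as the paper: the contrapositive argument via Proposition~\ref{cAg}, reducing the abnormality of the segment to the condition $U(\sigma)\,U(\theta)^k\,e^{2 s_1 \widetilde{U}(\theta)} w = 0$ for all $k$, and then concluding abnormality of $\gamma$ via \eqref{AGc}. You merely spell out in more detail the passage from \eqref{CharaAbn} to \eqref{AGc} and the absorption of the shift $e^{2 s_1 \widetilde{U}(\theta)}$ into the one-parameter group, which the paper leaves implicit.
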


It is worthwhile to point out that the above property is no longer valid in general. See \cite{MR20} for more details.

\medskip

In this paper, the \textit{cut locus} of $o$, $\mathrm{Cut}_o$, is defined as
\begin{align} \label{DCUT}
\mathrm{Cut}_o := \mathcal{S}^c, \quad \mbox{ with } \, \mathcal{S} := \{g; \, \mbox{$d^2$ is $C^{\infty}$ in a neighborhood of $g$}\}.
\end{align}

Recall that (see for example \cite[\S~11.1]{ABB20})
\begin{align}\nonumber
\mathcal{S} = \{g; \, & \mbox{there exists a unique shortest geodesic $\gamma$ from $o$ to $g$, which is} \\
\label{CCL1}
&\mbox{ not abnormal, and $g$ is not
conjugate to $o$ along $\gamma$}\},
\end{align}
and it is open and dense in $\G$. Hence $\mathrm{Cut}_o$ is closed. Furthermore, it has measure zero (cf. \cite[Proposition 15]{R13}). Remark also that $o \in \mathrm{Abn}^*_o \subseteq \mathrm{Cut}_o$.

The \textit{classical cut locus} of $o$, $\mathrm{Cut}_o^\mathrm{CL}$ is defined as the set of points where geodesics starting at $o$ cease to be shortest, that is
\begin{align*}
\mathrm{Cut}_o^\mathrm{CL} := \{g; g \mbox{ is the cut point of $o$ along some arclength parametrized normal geodesic}\}.
\end{align*}

Now, we can give
an affirmative answer to the open question \cite[first part of (30)]{BR19} in our framework,  which follows from \eqref{CCL1},
Lemma \ref{nLN} and \cite[Theorem 8.72]{ABB20}.

\begin{theorem}\label{t2}
In the setting of step-two Carnot groups, it holds that $\mathrm{Cut}_o = \mathrm{Cut}_o^{\mathrm{CL}} \cup \mathrm{Abn}_o^*$.
\end{theorem}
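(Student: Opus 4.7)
The plan is to prove the two inclusions separately, in each case via the characterization \eqref{CCL1} of the open set $\mathcal{S}$ whose complement defines $\mathrm{Cut}_o$.

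For $\mathrm{Abn}_o^* \cup \mathrm{Cut}_o^{\mathrm{CL}} \subseteq \mathrm{Cut}_o$: if $g \in \mathrm{Abn}_o^*$, some shortest geodesic from $o$ to $g$ is abnormal, violating the ``not abnormal'' clause of \eqref{CCL1}, so $g \in \mathrm{Cut}_o$. If instead $g \in \mathrm{Cut}_o^{\mathrm{CL}}$, write $g = \gamma(h_{\mathrm{cut}})$ for an arclength parametrized normal geodesic $\gamma$. When $\gamma$ is abnormal this reduces to the previous case; otherwise $\gamma$ is strictly normal, and Lemma \ref{nLN} guarantees that no sub-arc of $\gamma$ is abnormal, so \cite[Theorem 8.72]{ABB20} applies and identifies $h_{\mathrm{cut}}$ as the minimum of the first conjugate time along $\gamma$ and the first time at which a second shortest geodesic with the same endpoints appears. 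Therefore, at $s = h_{\mathrm{cut}}$, either $g$ is conjugate to $o$ along $\gamma$ or a distinct shortest geodesic also reaches $g$; in both alternatives \eqref{CCL1} fails at $g$, so $g \in \mathrm{Cut}_o$.

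For the reverse inclusion, fix $g \in \mathrm{Cut}_o$. Completeness of $(\G, d)$, combined with the step-two fact recalled in the introduction that all shortest geodesics are normal, furnishes a shortest normal geodesic $\gamma$ from $o$ to $g$; I parametrize it by arclength, so that $\gamma(L) = g$ with $L := d(g)$. If $\gamma$ is abnormal, then $g \in \mathrm{Abn}_o^*$ and we are done. Otherwise $\gamma$ is strictly normal; Lemma \ref{nLN} rules out abnormal sub-arcs and \cite[Theorem 8.72]{ABB20} is therefore applicable to $\gamma$. Since $\gamma|_{[0, L]}$ is shortest, we have $h_{\mathrm{cut}}(\gamma) \geq L$; if this inequality were strict, then on $[0, L] \subsetneq [0, h_{\mathrm{cut}}(\gamma))$ the theorem would force $\gamma|_{[0, L]}$ to be the unique shortest geodesic from $o$ to $g$ and would exclude $g$ from being conjugate to $o$ along $\gamma$, so together with strict normality all three conditions of \eqref{CCL1} would be satisfied at $g$, giving $g \in \mathcal{S}$, a contradiction. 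Hence $h_{\mathrm{cut}}(\gamma) = L$, and $g \in \mathrm{Cut}_o^{\mathrm{CL}}$.

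The argument is structurally straightforward; the mild subtlety, and the reason Lemma \ref{nLN} enters at all, is that \cite[Theorem 8.72]{ABB20} requires the ambient geodesic to contain no abnormal segment, a property strictly stronger than being strictly normal in general sub-Riemannian geometry but automatic in the step-two setting. Everything else is bookkeeping around the three clauses defining $\mathcal{S}$ in \eqref{CCL1}.
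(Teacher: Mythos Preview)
Your proof is correct and follows essentially the same approach as the paper, which simply states that the result ``follows from \eqref{CCL1}, Lemma \ref{nLN} and \cite[Theorem 8.72]{ABB20}'' without writing out the two inclusions; you have supplied the details the paper leaves implicit, using exactly the same three ingredients.
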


\medskip

\subsection{Notations and results from \cite{Li19}} \label{s22}

\medskip

Let us begin by recalling the initial reference set and the reference function, introduced in \cite{Li19}, which are defined respectively by
\begin{gather}
\OA := \left\{\tau \in \R^m; \, \max_{|x| = 1}\langle U(\tau)^2 x,x\rangle < \pi^2\right\} = \{\tau \in \R^m;\, \|U(\tau)\| < \pi\}, \label{oa} \\
\phi(g; \tau) = \langle U(\tau) \cot{U(\tau)} \, x, \ x \rangle + 4 \, t \cdot \tau, \quad \tau \in \OA, \  g = (x, t) \in \G. \label{RFn}
\end{gather}

Notice that the function $\phi(g; \cdot)$ is well-defined provided the spectrum of $U(\tau)$ does not contain any $k \, \pi$ ($k \in \Z \setminus \{ 0 \}$). Also, we will use its usual extension on $\overline{\OA}$ (which is denoted by $\phi(g; \cdot)$ as well). And we have

\begin{proposition}[\cite{Li19}, Proposition~2.1 and Remark~2.1] \label{eP1}
For any $g$, $\phi(g; \cdot)$ is smooth and concave in $\OA$. Moreover, for every $g$, there exists an $\theta_g \in \overline{\OA}$ such that
\[
\phi(g; \theta_g) = \sup_{\tau \in \OA} \phi(g; \tau).
\]
\end{proposition}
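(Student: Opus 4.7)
The plan is to prove the three assertions separately: smoothness, concavity, and attainment of the supremum on $\overline{\OA}$. Smoothness and attainment are essentially routine once the spectral structure of $U(\tau)$ is exploited; the substantive step is concavity, where the operator-theoretic technology underlying \cite{Li19} enters.

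For smoothness on $\OA$, I would use that $\lambda \mapsto \lambda \cot \lambda$ is an even real-analytic function whose Taylor series
\[
\lambda \cot \lambda = \sum_{k=0}^{\infty} c_k \, \lambda^{2k}, \qquad c_0 = 1,\ c_1 = -\tfrac{1}{3},\ \ldots,
\]
has radius of convergence $\pi$. Since $U(\tau) = i \widetilde{U}(\tau)$ is Hermitian and depends linearly on $\tau$, the very definition $\OA = \{\|U(\tau)\| < \pi\}$ guarantees that the operator series $\sum c_k \, U(\tau)^{2k}$ converges uniformly on compact subsets of $\OA$. Each partial sum is a polynomial (homogeneous of degree $2k$) in $\tau$, so $\tau \mapsto \langle U(\tau) \cot U(\tau)\, x, x\rangle$ is real-analytic on $\OA$; adding the affine term $4 \, t \cdot \tau$ preserves real-analyticity, hence smoothness.

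For concavity, the affine term $4 \, t \cdot \tau$ is harmless, so it suffices to show that $\Psi_x(\tau) := \langle U(\tau) \cot U(\tau) \, x, x\rangle$ is concave in $\tau$. The key input is the operator concavity of $\lambda \mapsto \lambda \cot \lambda$ on $(-\pi, \pi)$, which one obtains by applying Loewner's theorem term by term to the partial-fraction expansion
\[
\lambda \cot \lambda = 1 - 2 \sum_{k=1}^{\infty} \frac{\lambda^2}{k^2 \pi^2 - \lambda^2},
\]
reducing the problem to operator convexity of $\lambda \mapsto \lambda^2/(c - \lambda^2) = -1 + c/(c-\lambda^2)$ on $(-\sqrt{c}, \sqrt{c})$, a standard consequence of operator monotonicity of $t \mapsto -1/t$ on the positive cone. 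Given this, linearity of $\tau \mapsto U(\tau)$ and operator concavity give, for $\tau_0, \tau_1 \in \OA$ and $s \in [0,1]$,
\[
U(s \tau_1 + (1-s) \tau_0) \cot U(s \tau_1 + (1-s) \tau_0) \ \ge \ s \, U(\tau_1) \cot U(\tau_1) + (1-s) \, U(\tau_0) \cot U(\tau_0)
\]
as Hermitian forms; pairing with $x$ yields concavity of $\Psi_x$ on $\OA$.

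For attainment on $\overline{\OA}$, the linear independence of $\{U^{(j)}\}$ forces $\tau \mapsto \|U(\tau)\|$ to be a genuine norm on $\R^m$, so $\OA$ is a bounded convex open set and $\overline{\OA}$ is compact. Using the spectral decomposition $U(\tau) = \sum_j \lambda_j(\tau) \, P_j(\tau)$ and the pointwise bound $\lambda \cot \lambda \le 1$ on $(-\pi, \pi)$ one gets $\Psi_x(\tau) \le |x|^2$, and $4 \, t \cdot \tau$ is bounded on the compact $\overline{\OA}$, so $\phi(g;\cdot)$ is bounded above. Examining the behavior as $\tau$ approaches $\partial \OA$ (an eigenvalue tending to $\pm \pi$ forces $\Psi_x(\tau) \to -\infty$ unless the associated spectral projection annihilates $x$) one sees that $\phi(g;\cdot)$ extends upper-semicontinuously from $\OA$ to $\overline{\OA}$, with values in $[-\infty, +\infty)$. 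An upper-semicontinuous function on a compact set attains its supremum, producing the desired $\theta_g \in \overline{\OA}$. The main obstacle in the whole argument is the operator concavity of $\lambda \cot \lambda$ on $(-\pi, \pi)$; everything else is a compactness-and-spectral-calculus exercise.
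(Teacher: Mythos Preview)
Your proposal is correct and matches the approach behind the cited result: the paper does not reproduce a proof here but simply quotes \cite[Proposition~2.1 and Remark~2.1]{Li19}, and your argument---smoothness via the power series of $\lambda\cot\lambda$, concavity via operator concavity of $\lambda\cot\lambda$ combined with linearity of $\tau\mapsto U(\tau)$, and attainment via compactness of $\overline{\OA}$ and upper semicontinuity---is exactly the Loewner-theorem machinery alluded to in the introduction.

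One technical remark on the concavity step: your reduction to operator convexity of $\lambda\mapsto(c-\lambda^2)^{-1}$ is right, but the phrase ``a standard consequence of operator monotonicity of $t\mapsto -1/t$'' is a bit loose, since composing an operator concave map ($\lambda\mapsto c-\lambda^2$) with an operator monotone/concave one ($t\mapsto -1/t$) is \emph{not} automatically operator concave in the matrix setting. The clean justification is the further partial-fraction step
\[
\frac{1}{c-\lambda^2}=\frac{1}{2\sqrt{c}}\left(\frac{1}{\sqrt{c}-\lambda}+\frac{1}{\sqrt{c}+\lambda}\right),
\]
each summand being an affine reparametrization of $t\mapsto 1/t$ on $(0,\infty)$, hence operator convex. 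With that adjustment the argument is complete.
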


Let $\nabla_\theta = \left(\frac{\partial}{\partial \theta_1}, \ldots, \frac{\partial}{\partial \theta_m}\right)$ denote the usual gradient on $\R^m$. Recall that (cf. \cite[\S~2]{Li19})
\begin{align} \label{nOM}
\widetilde{\M} := \left\{\left( x, - \frac{1}{4} \nabla_\theta \langle U(\theta) \, \cot{U(\theta)} \, x, \ x\rangle \right); \ x \in \R^q, \ \theta \in \OA \right\},
\end{align}
which is the union of disjoint and nonempty subsets
\begin{align} \label{m}
\M &:= \left\{g; \ \exists \, \theta \in \OA  \mbox{ s.t. $\theta$ is a nondegenerate critical point of $\phi(g; \cdot)$ in $\OA$}  \right\} \nonumber \\
& = \left\{g; \ \exists \, \theta \in \OA  \mbox{ s.t. the set of global maximizers of $\phi(g; \cdot)$ in $\OA$ is $\{\theta\}$} \right\},
\end{align}
and
\begin{align}\label{dtM2}
\widetilde{\M}_2  := \{g; \ \mbox{the set of global maximizers of $\phi(g; \cdot)$ in $\OA$ has at least two points} \}.
\end{align}

Also recall that $\M$ is an open set, $\M \subseteq \mathcal{S}$, $o \in \widetilde{\M}_2 \subseteq \mathrm{Abn}^*_o \subseteq \mathrm{Cut}_o$ and (cf. \cite[\S~2]{Li19})
\begin{align} \label{Csrd}
d(g)^2 = \max_{\tau \in \OA} \phi(g; \tau) \ \mbox{ for $g \in \widetilde{\M}$, and } \  d(g)^2 = \sup_{\tau \in \OA} \phi(g; \tau) \ \mbox{ for $g \in \overline{\widetilde{\M}}$}.
\end{align}
And we have the following

\begin{theorem}[\cite{Li19}, Theorems~2.4 and 2.5] \label{RLT}
Assume that $\zeta_0 \in \R^q \setminus \{ 0 \}$ and $\theta_0 \in \OA$. Then $\exp\{(\zeta_0, 2 \, \theta_0) \} = g_0 := (x_0, t_0)$ if and only if
\begin{gather*}
x_0 = \left( \frac{U(\theta_0)}{\sin{U(\theta_0)}} e^{-\widetilde{U}(\theta_0)} \right)^{-1} \zeta_0, \quad t_0 = - \frac{1}{4} \nabla_\theta \langle U(\theta_0) \, \cot{U(\theta_0)} \, x_0, \ x_0 \rangle.
\end{gather*}
Furthermore, in such case, we have
\begin{gather*}
d(g_0)^2 = |\zeta_0|^2 = \left| \frac{U(\theta_0)}{\sin{U(\theta_0)}} \, x_0 \right|^2 = \phi(g_0; \theta_0),
\end{gather*}
and the unique shortest geodesic from $o$ to $g_0$ is $\exp\{s \, (\zeta_0, 2 \, \theta_0) \}$ ($0 \le s \le 1$),
which is strictly normal if and only if $g_0 \in \M$.
\end{theorem}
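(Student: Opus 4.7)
The plan is to derive the two coordinate formulas by direct Hamiltonian computations, then combine them with the identity \eqref{Csrd} recalled from \cite{Li19} to obtain the distance, and finally use Proposition \ref{cAg} to distinguish the strictly normal case.

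For the coordinate formulas, I would first note that each $U^{(j)}$ is real skew-symmetric, so $\widetilde{U}(\theta_0)$ is too, and $U(\theta_0)=i\widetilde{U}(\theta_0)$ has real spectrum; the hypothesis $\theta_0\in\OA$ (i.e., $\|U(\theta_0)\|<\pi$) guarantees that $\sin U(\theta_0)/U(\theta_0)$ has no zero eigenvalue. Since every analytic function of the single matrix $U(\theta_0)$ commutes with every other, inverting \eqref{endpointx} yields the stated expression for $x_0$. For the $t_0$-formula I would compute $\frac12\int_0^1\langle\U x(r),\zeta(r)\rangle\,dr$ directly from \eqref{GEn} and, by differentiating the power-series expansion of $\langle U(\theta)\cot U(\theta)x_0,x_0\rangle$ term by term, identify it with $-\frac14\nabla_\theta\langle U(\theta)\cot U(\theta)x_0,x_0\rangle|_{\theta_0}$. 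Equivalently, this asserts that $\theta_0$ is a critical point of $\phi(g_0;\cdot)$, which in turn places $g_0\in\widetilde{\M}$ in the sense of \eqref{nOM}.

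For the distance identity, since $\dot x(s)=e^{2s\widetilde{U}(\theta_0)}\zeta_0$ and $\widetilde{U}(\theta_0)$ is skew-symmetric, $|\dot x(s)|\equiv|\zeta_0|$, so $\ell(\gamma)=|\zeta_0|$ and $d(g_0)\le|\zeta_0|$. Conversely, $g_0\in\widetilde{\M}$ together with \eqref{Csrd} gives $d(g_0)^2=\max_{\tau\in\OA}\phi(g_0;\tau)\ge\phi(g_0;\theta_0)$, and it remains to show $\phi(g_0;\theta_0)=|\zeta_0|^2$. Using the critical-point relation to eliminate $4\,t_0\cdot\theta_0$, this reduces to verifying the scalar functional-calculus identity
\[
z\cot z - z\,\frac{d}{dz}(z\cot z) = \frac{z^2}{\sin^2 z}
\]
applied to $U(\theta_0)$ at $x_0$, together with $|\zeta_0|^2=|\frac{U(\theta_0)}{\sin U(\theta_0)}x_0|^2$ (since $e^{-\widetilde{U}(\theta_0)}$ is orthogonal). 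This yields simultaneously $d(g_0)^2=|\zeta_0|^2=\phi(g_0;\theta_0)$ and the three equivalent expressions in the conclusion.

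Finally, in the step-two setting all shortest geodesics are normal, and the explicit Hamiltonian flow gives a bijection between them and the global maximizers of $\phi(g_0;\cdot)$ in $\overline{\OA}$; the equality just obtained says $\theta_0$ is such a maximizer, and uniqueness in the case $g_0\in\M$ follows from \eqref{m}. For strict normality, by Proposition \ref{cAg}, $\gamma_{(\zeta_0,2\theta_0)}$ is abnormal iff there exists $\sigma\neq 0$ with $U(\sigma)e^{s\widetilde{U}(\theta_0)}\zeta_0=0$ for all $s\in\R$; a first-order expansion in $t$ of $\phi(g_0;\theta_0+t\sigma)$ shows this is equivalent to the existence of a one-parameter family of nearby maximizers of $\phi(g_0;\cdot)$, i.e., to $g_0\in\widetilde{\M}_2$ rather than $g_0\in\M$. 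The main obstacle I anticipate is the matrix-level verification of $\phi(g_0;\theta_0)=|\zeta_0|^2$ via the spectral-calculus identity above; once this is in place, the rest is essentially bookkeeping from \eqref{GEn}, \eqref{endpointx}, and the results of \cite{Li19} already collected in the excerpt.
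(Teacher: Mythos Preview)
This theorem is not proved in the present paper; it is quoted verbatim from \cite{Li19} (Theorems~2.4 and~2.5 there) as background material in Section~\ref{s22}. So there is no ``paper's own proof'' to compare your proposal against.

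That said, your outline largely tracks how \cite{Li19} organizes the argument, but one step is underjustified. The uniqueness of the shortest geodesic does not follow from the sentence ``the explicit Hamiltonian flow gives a bijection between them and the global maximizers of $\phi(g_0;\cdot)$ in $\overline{\OA}$'': a priori another shortest geodesic $\gamma_{(\zeta',2\theta')}$ could have $\theta'\notin\overline{\OA}$, and nothing you have said rules this out. In \cite{Li19} this is handled by an additional argument (essentially the content behind \eqref{Csrd} and the concavity in Proposition~\ref{eP1}), not by the bijection you assert. Similarly, your last step---equating abnormality with degeneracy of the Hessian of $\phi(g_0;\cdot)$ via a ``first-order expansion''---is really a second-order computation (it is \cite[Proposition~5.1]{Li19} in disguise) and needs the explicit identification of $\mathrm{Hess}_\theta\phi$ with a quadratic form in $U(\sigma)e^{s\widetilde U(\theta_0)}\zeta_0$, not just a first-order expansion.
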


As a consequence, we yield immediately

\begin{corollary} \label{Nc1}
Let $\gamma(s) := \exp\{ s \, (\zeta_0, \tau_0)\}$ be an arclength parametrized geodesic, that is $|\zeta_0| = 1$. Then its cut time $h_{\mathrm{cut}} = +\infty$ if $\tau_0 = 0$, and in such case $\gamma$ is a ray in the first layer. In addition, we have $h_{\mathrm{cut}} \geq 2 \pi/\|U(\tau_0)\|$ when $\tau_0 \neq 0$.
\end{corollary}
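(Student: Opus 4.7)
The plan is to combine Theorem \ref{RLT} with the homogeneity property $\gamma(\alpha \zeta_0, 2\alpha \theta_0; s) = \gamma(\zeta_0, 2 \theta_0; \alpha s)$ recorded just after \eqref{endpointx}. Applying it with $\alpha = s$ and $2\theta_0 = \tau_0$ rewrites
\[
\gamma(s) = \exp\{s (\zeta_0, \tau_0)\} = \exp\{(s \zeta_0, s \tau_0)\},
\]
so that $\gamma(s)$ is realized as the endpoint of the normal geodesic issuing from $o$ with initial covector $(s\zeta_0, 2 \cdot (s \tau_0/2))$. The strategy is then to check, separately in the cases $\tau_0 = 0$ and $\tau_0 \neq 0$, for which $s > 0$ the parameter $s\tau_0/2$ still lies in $\OA$, so that Theorem \ref{RLT} applies and identifies $\gamma|_{[0,s]}$ as the unique shortest geodesic joining $o$ to $\gamma(s)$.

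For $\tau_0 = 0$, we have $U(0) = \widetilde{U}(0) = 0 \in \OA$, and \eqref{GEn} gives at once $\zeta(r) \equiv \zeta_0$, $x(s) = s\zeta_0$, and $t(s) = \tfrac{1}{2}\int_0^s \langle \U (r\zeta_0), \zeta_0 \rangle \, dr = 0$ by the skew-symmetry of each $U^{(j)}$. Hence $\gamma(s) = (s\zeta_0, 0)$ is a ray in the first layer. Either by Theorem \ref{RLT} with $\theta_0 = 0$, or by the elementary inequality $d(x, 0) \geq |x|$ obtained upon projecting horizontal curves onto the first layer, we obtain $d(\gamma(s)) = s = \ell(\gamma|_{[0,s]})$ for every $s > 0$, so that $h_{\mathrm{cut}} = +\infty$.

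For $\tau_0 \neq 0$, the requirement $\theta_0 := s\tau_0/2 \in \OA$ translates via \eqref{oa} to $\|U(s\tau_0/2)\| = (s/2) \|U(\tau_0)\| < \pi$, that is $s < 2\pi/\|U(\tau_0)\|$. For every such $s$, Theorem \ref{RLT} yields $d(\gamma(s))^2 = |s\zeta_0|^2 = s^2 = \ell(\gamma|_{[0, s]})^2$, so $\gamma|_{[0, s]}$ is a shortest geodesic; by the very definition of $h_{\mathrm{cut}}$ this gives $h_{\mathrm{cut}} \geq 2\pi/\|U(\tau_0)\|$.

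No serious obstacle is anticipated; the only care needed is bookkeeping of the factor $2$ between the covector component $\tau_0$ and the point $\theta_0 \in \OA$ appearing in Theorem \ref{RLT}, and confirming the explicit first-layer computation in the degenerate case $\tau_0 = 0$.
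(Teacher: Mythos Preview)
Your proof is correct and follows exactly the route the paper intends: the corollary is stated immediately after Theorem \ref{RLT} with the words ``As a consequence, we yield immediately,'' and your argument---rewriting $\gamma(s)=\exp\{(s\zeta_0,s\tau_0)\}$ via homogeneity and applying Theorem \ref{RLT} whenever $s\tau_0/2\in\OA$---is precisely that consequence spelled out.
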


\begin{remark} \label{Rkn1}
It follows from  \cite[Proposition~5.1 and/or Corollary~2.2]{Li19} that if $\G$ is not of M\'etivier type, then there exist $0 \neq \theta_0 \in \R^m$ and $0 \neq x_0 \in \ker \, \widetilde{U}(\theta_0)$ such that $|x_0| = 1$ and $\exp\{ s \, (x_0, \theta_0)\} = \exp\{ s \, (x_0, 0)\}$ for all $s > 0$. Hence the cut time of $\exp\{ s \, (x_0, \theta_0)\}$ is equal to $+\infty$ and the statement of \cite[Theorems~6 and 7]{BBG12} is misleadingly phrased. However, a correct statement and their generalization can be found in
Theorem \ref{NTh1}, Corollaries \ref{NcMc} or \ref{NThA1} below. Also notice that our method to determine the cut time is completely different from theirs.
\end{remark}

Another easy but very useful consequence is the following:

\begin{corollary} \label{nCcc}
It holds that $\mathrm{Cut}_o^{\mathrm{CL}} \subseteq \widetilde{\M}^c$.
\end{corollary}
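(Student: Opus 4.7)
My plan is to prove the contrapositive: every $g \in \widetilde{\M}$ fails to be a cut point of any arclength parametrized normal geodesic. The case $g = o$ is immediate, since any arclength parametrized geodesic $\gamma$ satisfies $h_{\mathrm{cut}}(\gamma) > 0$ (sufficiently short initial segments are shortest), so the cut point cannot equal $\gamma(0) = o$.

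For $g \ne o$ with $g \in \widetilde{\M}$, by \eqref{nOM} (or equivalently by Theorem \ref{RLT}) we can write $g = \exp\{(\zeta_0, 2\theta_0)\}$ for some $\zeta_0 \in \R^q \setminus \{0\}$ and $\theta_0 \in \OA$. Set $\gamma(s) := \exp\{s(\zeta_0, 2\theta_0)\}$; by Theorem \ref{RLT}, $\gamma|_{[0,1]}$ is the unique (as a curve) shortest geodesic from $o$ to $g$, of length $|\zeta_0| = d(g)$. The main observation is that since $\OA$ is an open subset of $\R^m$ containing $\theta_0$, there exists $\epsilon > 0$ such that $s\,\theta_0 \in \OA$ for every $s \in [0, 1+\epsilon]$. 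Applying Theorem \ref{RLT} to the endpoint $\gamma(s) = \exp\{(s\zeta_0, 2 s\theta_0)\}$ for each such $s$ yields $d(\gamma(s)) = s\,|\zeta_0|$, so $\gamma|_{[0, s]}$ remains a shortest geodesic for all $s \in [0, 1+\epsilon]$; in other words, the unique shortest geodesic from $o$ to $g$ extends past $g$ while staying minimizing.

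Reparameterizing by arclength as $\tilde\gamma(s) := \gamma(s/|\zeta_0|)$, we get $\tilde\gamma(d(g)) = g$ and $h_{\mathrm{cut}}(\tilde\gamma) \ge (1+\epsilon)\,d(g) > d(g)$, so $g$ is not the cut point of $\tilde\gamma$. To rule out all other arclength parametrized geodesics, suppose for contradiction that $g$ is the cut point of some $\tilde\gamma' \ne \tilde\gamma$; then $\tilde\gamma'|_{[0, d(g)]}$ is a shortest geodesic from $o$ to $g$, hence coincides as a curve with $\tilde\gamma|_{[0, d(g)]}$ by the uniqueness statement of Theorem \ref{RLT}. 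Lemma \ref{BPG} then forces $\tilde\gamma' \equiv \tilde\gamma$, contradicting the choice of $\tilde\gamma'$, and thus $g \notin \mathrm{Cut}_o^{\mathrm{CL}}$.

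The only delicate point I would be careful about is that, when $g \in \widetilde{\M}_2 \subseteq \mathrm{Abn}_o^*$, the shortest geodesic is abnormal and therefore admits several initial covector lifts; however Theorem \ref{RLT} still supplies uniqueness \emph{as a curve in} $\G$, which is what matters for the argument above. Everything else is a straightforward double application of Theorem \ref{RLT} (once to identify the shortest geodesic from $o$ to $g$, once to the endpoints $\gamma(s)$ for $s$ slightly larger than $1$) combined with the openness of $\OA$.
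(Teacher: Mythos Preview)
Your proof is correct and follows essentially the same route as the paper, which presents the corollary as an immediate consequence of Theorem~\ref{RLT} and Corollary~\ref{Nc1}: you are simply reproving Corollary~\ref{Nc1} inline (extending minimality past $g$ via $s\theta_0 \in \OA$ for $s$ slightly beyond $1$) and then invoking the uniqueness statement of Theorem~\ref{RLT} together with Lemma~\ref{BPG} to rule out any other arclength geodesic having $g$ as its cut point. One small expository point: the claim ``$s\theta_0 \in \OA$ for all $s \in [0,1+\epsilon]$'' uses not only that $\OA$ is open but also that it is star-shaped with respect to the origin (clear from $\OA = \{\tau : \|U(\tau)\| < \pi\}$ and the homogeneity of $\|U(\cdot)\|$); alternatively, you only really need the single value $s = 1+\epsilon$, for which openness alone suffices.
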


Combining \cite[Proposition~5.1 (b)]{Li19} with Theorem \ref{RLT} as well as Proposition \ref{cAg}, we have the following
characterization of $\widetilde{\M}_2$, $\mathrm{Abn}_o^*$ and $\mathrm{Abn}_o$:

\begin{proposition} \label{NPA1}
It holds that:
\begin{align*}
\widetilde{\M}_2 &=  \left\{\gamma(s) = \gamma(\zeta, \tau; s); \, \mbox{$\gamma$ is abnormal, $|\zeta| = 1$ and $0 \le s < \frac{2\pi}{\|U(\tau)\|}$} \right\}, \\
\mathrm{Abn}_o^* &= \left\{\gamma(s) = \gamma(\zeta, \tau; s); \, \mbox{$\gamma$ is abnormal, $|\zeta| = 1$ and $0 \le s \le h_{\mathrm{cut}}(\gamma)$}  \right\}, \\
\mathrm{Abn}_o &= \left\{\gamma(s) = \gamma(\zeta, \tau; s); \, \mbox{$\gamma$ is abnormal, $|\zeta| = 1$, $s \ge 0$} \right\}.
\end{align*}
\end{proposition}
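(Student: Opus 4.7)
My plan is to prove the three identities in order of increasing subtlety, reducing each to the tools already gathered, namely Proposition \ref{cAg}, Lemma \ref{BPG}, and Theorem \ref{RLT}. The descriptions of $\mathrm{Abn}_o$ and $\mathrm{Abn}_o^*$ are essentially reformulations of their definitions after a change of parametrization. Indeed, abnormality is invariant under the rescaling $(\zeta,\tau)\mapsto(\alpha\zeta,\alpha\tau)$ by the corollary following Proposition \ref{cAg}, and the homogeneity $\gamma(\alpha\zeta,2\alpha\theta;1) = \gamma(\zeta,2\theta;\alpha)$ allows us to rewrite any endpoint $\gamma(\zeta_0,2\theta_0;1)$ of an abnormal geodesic in the normalised form $\gamma(\zeta_0/|\zeta_0|, 2\theta_0/|\zeta_0|; |\zeta_0|)$ with initial vector of Euclidean norm $1$. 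The description of $\mathrm{Abn}_o^*$ then follows from the very definition of $h_{\mathrm{cut}}(\gamma)$: for an abnormal $\gamma$ parametrized by arclength, the segment $\gamma|_{[0,s]}$ is a shortest geodesic from $o$ to $\gamma(s)$ iff $s\le h_{\mathrm{cut}}(\gamma)$ (the limiting case $s=h_{\mathrm{cut}}$ being handled by continuity of $d$).

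The only substantial step is the description of $\widetilde{\M}_2$. For the inclusion ``$\subseteq$'', suppose $g\in\widetilde{\M}_2$, so $\phi(g;\cdot)$ admits two distinct maximizers $\theta_1\neq\theta_2$ in $\OA$. Theorem \ref{RLT} applied to each $\theta_i$ yields some $\zeta_i\neq 0$ with $\exp\{(\zeta_i,2\theta_i)\}=g$, $|\zeta_i|=d(g)$, and $\exp\{s(\zeta_i,2\theta_i)\}$ a shortest geodesic from $o$ to $g$. By the uniqueness assertion in Theorem \ref{RLT} these two paths coincide, after which Lemma \ref{BPG} forces $\zeta_1=\zeta_2=:\zeta$. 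Proposition \ref{cAg} with $\sigma:=\theta_1-\theta_2\neq 0$ then shows that $\gamma(\zeta,2\theta_1;\cdot)$ is abnormal. Setting $\tau:=2\theta_1/|\zeta|$ and $s:=|\zeta|$, the homogeneity rewrites $g$ as $\gamma(\zeta/|\zeta|,\tau;s)$, with initial vector of norm $1$, and $s\,\|U(\tau)\|=2\,\|U(\theta_1)\|<2\pi$ because $\theta_1\in\OA$.

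For the reverse inclusion, the case $s=0$ is immediate since $o\in\widetilde{\M}_2$. For $0<s<2\pi/\|U(\tau)\|$, the abnormality of $\gamma=\gamma(\zeta,\tau;\cdot)$ furnishes, via Proposition \ref{cAg}, some $\sigma\neq 0$ with $U(\sigma)\,U(\tau)^k\,\zeta=0$ for all $k\in\N$. A short induction on $k$ (using $U(\tau+2\epsilon\sigma)^k\,\zeta = U(\tau)^k\,\zeta$) upgrades this to $U(\sigma)\,U(\tau+2\epsilon\sigma)^k\,\zeta=0$ for every $\epsilon\in\R$, so Proposition \ref{cAg} yields $\gamma(\zeta,\tau+2\epsilon\sigma;\cdot)\equiv\gamma(\zeta,\tau;\cdot)$. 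Writing $g:=\gamma(s)$ and using the homogeneity, $g=\exp\{(s\zeta,2\theta_1)\}=\exp\{(s\zeta,2\theta_2)\}$ with $\theta_1:=s\tau/2$ and $\theta_2:=\theta_1+s\epsilon\sigma$. The hypothesis $s\,\|U(\tau)\|<2\pi$ guarantees $\theta_1\in\OA$, and by continuity of $\|U(\cdot)\|$ we also have $\theta_2\in\OA$ for all sufficiently small $\epsilon\neq 0$. Theorem \ref{RLT} applied to $\theta_1$ and $\theta_2$ gives $\phi(g;\theta_1)=\phi(g;\theta_2)=d(g)^2$, exhibiting two distinct maximizers of $\phi(g;\cdot)$ in $\OA$, whence $g\in\widetilde{\M}_2$. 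The main technical point, and the precise reason for the upper bound $s<2\pi/\|U(\tau)\|$ in the statement, is exactly the requirement that the perturbed parameter $\theta_2$ remains inside the open set $\OA$.
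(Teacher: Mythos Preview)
Your proof is correct and follows essentially the same route as the paper, which simply cites \cite[Proposition~5.1(b)]{Li19} together with Theorem~\ref{RLT} and Proposition~\ref{cAg}; you have in effect reconstructed that argument from the tools available in the present paper. One small remark: the induction you perform to obtain $U(\sigma)\,U(\tau+2\epsilon\sigma)^k\,\zeta=0$ is unnecessary, since Proposition~\ref{cAg} (with the linearity of $U$) already gives $\gamma(\zeta,\tau;\cdot)\equiv\gamma(\zeta,\tau+2\epsilon\sigma;\cdot)$ directly from $U(\epsilon\sigma)\,U(\tau/2)^k\,\zeta = \epsilon\,2^{-k}\,U(\sigma)\,U(\tau)^k\,\zeta = 0$; also, the case $g=o$ in the ``$\subseteq$'' direction for $\widetilde{\M}_2$ should be handled separately (your appeal to Theorem~\ref{RLT} to produce $\zeta_i\neq 0$ fails there), but this is trivial since $o=\gamma(\zeta,\tau;0)$.
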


\begin{remark}
Obviously, the arclength parametrized geodesic $\gamma(\zeta_0, \tau_0; s)$ is abnormal if and only if there exists a $s_0 > 0$ such that $\| U(s_0 \, \tau_0) \| < 2 \pi$ and $\exp\{ s_0 (\zeta_0, \tau_0) \} \in \widetilde{\M}_2$.
\end{remark}

In order to describe $\mathrm{Abn}_o^*$ and $\mathrm{Abn}_o$, it suffices to determine $\widetilde{\M}_2$, which is much less difficult. Moreover, we have

\begin{corollary}\label{M2AbsAb}
In the setting of step-two groups, if $\widetilde{\M}_2 \subseteq \left\{ (x, 0); \ x \in \R^q \right\}$, then $\mathrm{Abn}_o = \mathrm{Abn}_o^* = \widetilde{\M}_2$.
\end{corollary}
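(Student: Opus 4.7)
My plan is to reduce everything to the non-trivial inclusion $\mathrm{Abn}_o \subseteq \widetilde{\M}_2$, since the chain $\widetilde{\M}_2 \subseteq \mathrm{Abn}_o^* \subseteq \mathrm{Abn}_o$ is immediate from Proposition \ref{NPA1} (the three sets differ only in the permitted range of the arclength parameter $s$). To obtain the reverse inclusion I will show that, under the hypothesis, every abnormal arclength geodesic from $o$ must lie entirely in the first layer $\{(x,0)\} \subset \R^q \times \R^m$ and therefore coincide with an abnormal first-layer straight line; such straight lines automatically sit inside $\widetilde{\M}_2$, because their covector $\tau' = 0$ forces the upper bound $2\pi/\|U(\tau')\|$ to be $+\infty$.

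Concretely, I would fix an arclength parametrized abnormal geodesic $\gamma(\zeta_0, \tau; \cdot)$ and an arbitrary $g = \gamma(s_0) \in \mathrm{Abn}_o$, then proceed in three short steps. First, for $0 \le s < 2\pi/\|U(\tau)\|$ (understood as $[0, +\infty)$ when $\tau = 0$), Proposition \ref{NPA1} yields $\gamma(s) \in \widetilde{\M}_2$; by the hypothesis, the second-layer component $t(s)$ vanishes on this non-empty initial interval. Second, the real analyticity of $s \mapsto \gamma(s)$ on $[0, +\infty)$ (noted right after \eqref{endpointx}) propagates the vanishing: $t(s) \equiv 0$, so $g = (x(s_0), 0)$. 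Third, Proposition \ref{cAg} furnishes a $\sigma \neq 0$ with $U(\sigma)\, e^{s\widetilde{U}(\tau)} \zeta_0 = 0$ for every $s$, and integrating from $0$ to $s_0$ gives $U(\sigma)\, x(s_0) = 0$. If $x(s_0) = 0$, then $g = o \in \widetilde{\M}_2$; otherwise set $\zeta' := x(s_0)/|x(s_0)|$, so that $\eta(s) := \gamma(\zeta', 0; s) = (s\zeta', 0)$ is an arclength parametrized first-layer line reaching $g$ at $s = |x(s_0)|$ and, thanks to $U(\sigma)\zeta' = 0$, abnormal by Proposition \ref{cAg} with covector $\tau' = 0$. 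Since $2\pi/\|U(\tau')\| = +\infty$, the characterization of $\widetilde{\M}_2$ in Proposition \ref{NPA1} places $g$ in $\widetilde{\M}_2$.

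I do not foresee a substantive obstacle. The only delicate ingredient is the analytic-continuation step, but this is a direct consequence of the real analyticity of normal geodesics already recalled in the text. Everything else is elementary linear algebra around the abnormality criterion of Proposition \ref{cAg}, combined with the implicit convention $2\pi/\|U(0)\| = +\infty$ used throughout Proposition \ref{NPA1}.
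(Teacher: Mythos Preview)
Your argument is correct. The paper states this corollary without proof, leaving it as an immediate consequence of Proposition~\ref{NPA1}; your three steps (initial segment lies in $\widetilde{\M}_2 \subseteq \R^q \times \{0\}$, real-analytic continuation to $t(s) \equiv 0$, then reparametrization as an abnormal first-layer line with $\tau' = 0$ so that $2\pi/\|U(\tau')\| = +\infty$) are precisely the natural way to unpack that implication.
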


\begin{remark} \label{nRna}
(a) Recall that $\R^q \times \{0 \} \subseteq \widetilde{\M}$. If $(x_0, 0) \in \mathrm{Abn}_o$ for some $x_0 \in \R^q$, then we have also $(x_0, 0) \in \widetilde{\M}_2 \subseteq \mathrm{Abn}_o^*$.

(b) In general, $\mathrm{Abn}_o = \mathrm{Abn}_o^*$ does not imply $\widetilde{\M}_2 \subseteq \R^q \times \{0\}$.
In fact, let $\H^3 = \R^2 \times \R$ denote the Heisenberg group of real dimension 3 and consider $\G = \H^3 \times \H^3 \cong \R^4 \times \R^2$. We have
\[
\mathrm{Abn}_o = \mathrm{Abn}_o^* = \left( \{ o_{\H^3} \} \times \H^3 \right) \cup \left( \H^3 \times \{ o_{\H^3} \} \right),
\]
but
\[
 \{ o \} \cup \left( \{ o_{\H^3} \} \times \M_{\H^3} \right) \cup \left( \M_{\H^3} \times \{ o_{\H^3} \} \right) = \widetilde{\M}_2 \not\subseteq \R^4 \times \{ 0 \},
\]
where $\M_{\H^3} = \{(x, t); \ x \in \R^2 \setminus \{ 0 \} \}$ and $o_{\H^3}$ denote the corresponding set $\M$ and identity element in the setting of $\H^3$ respectively.

(c) The example in (b) also provides a group on which $\widetilde{\M}_2 \subsetneqq \mathrm{Abn}_o^*$ since $((0, 0, 1), o_{\H^3}) \in \mathrm{Abn}_o^* \setminus \widetilde{\M}_2 \subseteq \H^3 \times \H^3 = \G$. Furthermore, another
example of $\widetilde{\M}_2 \subsetneqq \mathrm{Abn}_o^*$ can be found in the proof of Proposition \ref{p5} (see Subsection \ref{s71} below), and that of $\mathrm{Abn}_o^* \subsetneqq \mathrm{Abn}_o$ in Subsection \ref{s72} below.

(d) Obviously, a step-two group is of M\'etivier type if and only if $\mathrm{Abn}_o^* = \{o\}$.
\end{remark}

\medskip

Recall that the nonempty open subset $\M \subset \G$ is the set of points, $g$, where the reference function $\phi(g; \cdot)$ has a nondegenerate critical point in the initial reference set $\OA$. Observe that $\M$ is symmetric and scaling invariant; namely, if $g \in \M$, then we have $g^{-1} = -g \in \M$ and $\delta_r(g) \in \M$ for all $r > 0$.
A step-two group $\G$ is said to be a \textit{GM-group} (or \textit{of type GM}) if it satisfies
$$\overline{\M} = \G.  \eqno(GM) $$
Notice that if both $\G_1$ and $\G_2$ satisfy ($GM$), then so does the direct product $\G_1 \times \G_2$. See Appendix B for more details. Also remark that GM groups form a wild set. Indeed, for any given $\G(q, m, \U)$, we can construct an uncountable number of GM-groups $\G(q + 2 n, m, \widetilde{\U})$. See \cite[\S~8.1]{Li19} for more details.

\medskip

Recall that the global reference set is a compact set in $\R^m$ defined by (cf. \cite[\S~2.6]{Li19})
\begin{align}\label{GRs}
\mathfrak{R} := \overline{\mathcal{R}}, \quad \mbox{with } \mathcal{R} := \left\{\theta = \frac{1}{4}\nabla_t d(g)^2; \, g = (x,t) \notin \mathrm{Cut}_o \right\} \ \mbox{open}.
\end{align}
It follows from \cite[\S~3]{MM16} or \cite[Proposition~5.2]{Li19} that $\mathcal{R} \cap \OA$ is dense in $\OA$. Then
\begin{align}\label{grinc}
\mathfrak{R} \supseteq \overline{\OA} = \left\{\theta; \ \|U(\theta)\| \le \pi \right\}.
\end{align}

\medskip

Set
\begin{align*}
\mathcal{V}  := \{\vartheta \in \R^m; \,
\det(k\pi - U(\vartheta)) \neq 0, \, \forall \, k \in \N^*\}
\end{align*}
and
\begin{align} \label{nW}
\mathcal{W} := \exp(\R^q \times (2 \, \mathcal{V}^c) ),
\end{align}
which is the set of the endpoints of ``bad'' normal geodesics, where \textit{``bad'' normal geodesic} (resp. \textit{``good'' normal geodesic}) means $\gamma = \gamma_{(w,2 \, \theta)}$ with $\theta \in \mathcal{V}^c$ (resp. $\theta \in \mathcal{V}$).
It is clearly that $\mathcal{W}$ is of measure zero.

\medskip

Finally some notations of special functions related to $-s \cot{s}$ are also recalled:
\begin{align}\label{EFs}
f(s) := 1 - s \cot{s}, \quad \mu(s) := f'(s) = \frac{2 s - \sin{(2 s)}}{2 \sin^2{s}}, \quad \psi(s) := \frac{f(s)}{s^2} .
\end{align}

\subsection{Main results}

\medskip

Our first result is the following:

\subsubsection{Properties of the global reference set $\mathfrak{R}$}

The following theorem should be useful to determine all shortest geodesics in the setting of step-two groups.

\begin{theorem} \label{t1}
Let $o \neq g \in \G$, and $\gamma_g(s)$ ($0 \le s \le 1$) be a shortest geodesic joining $o$ to $g$. Then there exist $\zeta \in \R^q$ with $|\zeta| = d(g)$ and $\theta \in \mathfrak{R}$ such that $\gamma_g(s) = \exp\{s \, (\zeta, 2 \, \theta)\}$ for all $0 \le s \le 1$.
\end{theorem}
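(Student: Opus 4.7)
My plan opens by recalling that in step-two Carnot groups every shortest geodesic is the projection of a normal Pontryagin extremal, so I can write $\gamma_g(s) = \exp\{s\,(\zeta, 2\theta)\}$ for some $(\zeta, 2\theta) \in \R^q \times \R^m$. Since such a normal geodesic has length $|\zeta|$ while $\ell(\gamma_g) = d(g)$, the equality $|\zeta| = d(g)$ is automatic, and the real task is to establish $\theta \in \mathfrak{R} = \overline{\mathcal{R}}$ (possibly after choosing another valid representative of $\theta$ when $\gamma_g$ is abnormal). The mechanism I will exploit is the Hamilton--Jacobi identity: for $g' \notin \mathrm{Cut}_o$, the function $d^2$ is smooth at $g'$, the shortest geodesic from $o$ to $g'$ is unique and strictly normal, and its parameter $\theta'$ satisfies $\theta' = \tfrac{1}{4}\nabla_t d^2(g') \in \mathcal{R}$ by the very definition \eqref{GRs}. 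The overall strategy is thus to approximate $g$ by points outside $\mathrm{Cut}_o$ and pass to a limit.

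I will handle the strictly normal case first: here $\theta$ is uniquely determined by $\gamma_g$ via Proposition \ref{cAg}. For $s \in (0, 1)$ set $g_s := \gamma_g(s)$. The sub-segment $\gamma_g|_{[0,s]}$ has length $s\, d(g) = d(g_s)$ and so is shortest to $g_s$; by Lemma \ref{nLN} it remains strictly normal. I claim $g_s \notin \mathrm{Cut}_o$. Indeed, any competing shortest geodesic $\widetilde{\gamma}$ from $o$ to $g_s$ would, concatenated with $\gamma_g|_{[s,1]}$, give a shortest curve from $o$ to $g$ of length $d(g)$; since all shortest curves in a step-two group are (smooth) normal extremals, the two pieces must join smoothly at $g_s$, forcing $\widetilde{\gamma}$ to equal $\gamma_g|_{[0,s]}$ as a curve. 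Moreover, along a shortest strictly normal geodesic the first conjugate time is at least the cut time (which is $\geq 1$ here), so $g_s$ is non-conjugate to $o$ for every $s < 1$. Combined with strict normality, the characterization \eqref{CCL1} then yields $g_s \notin \mathrm{Cut}_o$. Reparametrizing $\gamma_g|_{[0,s]}$ on $[0,1]$ gives parameters $(s\zeta, s\theta)$, so by the Hamilton--Jacobi identity $s\theta = \tfrac{1}{4}\nabla_t d^2(g_s) \in \mathcal{R}$; letting $s \to 1^-$ delivers $\theta \in \overline{\mathcal{R}} = \mathfrak{R}$.

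The hard part will be the abnormal case, where $g_s \in \mathrm{Abn}_o^* \subseteq \mathrm{Cut}_o$ for every $s$, so the shrinking argument fails directly. I will instead exploit the non-uniqueness of $\theta$: by Proposition \ref{cAg} the admissible $\theta$'s giving the same curve $\gamma_g$ form an affine subspace of $\R^m$, and it suffices to exhibit one representative in $\mathfrak{R}$. The plan is to approximate $g$ by a sequence $g_n \to g$ with $g_n \notin \mathrm{Cut}_o$ (possible since $\mathrm{Cut}_o$ has measure zero), obtaining unique strictly normal shortest geodesics with parameters $(\zeta_n, \theta_n)$ and $\theta_n \in \mathcal{R}$; the bound $|\zeta_n| = d(g_n) \to d(g)$ gives compactness of $\zeta_n$. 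The delicate step, and the principal obstacle of the proof, is to choose $g_n$ together with bounded representatives $\theta_n$ within the affine family of valid parameters so that after extracting a subsequence $(\zeta_n, \theta_n) \to (\zeta, \theta_\infty)$, the limit curve $\gamma_{(\zeta, 2\theta_\infty)}$ actually coincides with the given $\gamma_g$ (rather than being some other shortest geodesic sharing the endpoint $g$). Controlling potential unboundedness of $\theta_n$ along abnormal directions and ensuring curve-level convergence to $\gamma_g$ will rely on the structural characterization of abnormal geodesics in Proposition \ref{NPA1}, the endpoint formula \eqref{endpointx}, and the results imported from \cite{Li19}.
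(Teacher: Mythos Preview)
Your strictly normal case is essentially correct and closely parallels the paper's argument; the concatenation argument for uniqueness at intermediate points $g_s$ is exactly what the paper does, and your additional verification of non-conjugacy (via the standard fact that the first conjugate time is no earlier than the cut time along a strictly normal geodesic) is sound.

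The abnormal case, however, has a genuine gap that you yourself flag but do not close. Approximating $g$ by exterior points $g_n \notin \mathrm{Cut}_o$ produces, after extraction, \emph{some} shortest geodesic $\gamma_{(\zeta_\infty, 2\theta_\infty)}$ from $o$ to $g$ with $\theta_\infty \in \mathfrak{R}$, but nothing forces this limit curve to be the \emph{given} $\gamma_g$. The invocation of Proposition~\ref{NPA1} and the endpoint formula does not help here: those tools describe abnormal geodesics, not how to steer an exterior approximation toward a prescribed one among possibly several shortest geodesics to $g$.

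The fix, which is the paper's approach, is to stay with the interior approximation you already set up in the normal case. Your concatenation argument shows that for every $s_* \in (0,1)$ the shortest geodesic from $o$ to $g_{s_*} = \gamma_g(s_*)$ is \emph{unique} --- and this holds regardless of whether $\gamma_g$ is abnormal. What you cannot do in the abnormal case is invoke Hamilton--Jacobi, since $g_{s_*}$ may lie in $\mathrm{Cut}_o$. The missing ingredient is \cite[Corollary~2.4]{Li19}: for any $g' \neq o$ there exist $w$ and $\theta \in \mathfrak{R}$ such that $\gamma_{(w,2\theta)}$ is \emph{a} shortest geodesic to $g'$. Applied at $g_{s_*}$, uniqueness forces this geodesic to be $\gamma_g|_{[0,s_*]}$ itself, giving $(w(s_*), \theta(s_*))$ with $\theta(s_*) \in \mathfrak{R}$ and $\gamma_{(w(s_*), 2\theta(s_*))} = \gamma_{(s_* w_*, 2 s_* \theta_*)}$ as curves. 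Compactness of $\mathfrak{R}$ then lets you extract a subsequence $s_j \to 1$ with $(w(s_j), \theta(s_j)) \to (w, \theta)$, $\theta \in \mathfrak{R}$, and the limit curve is $\gamma_g$. This handles both the strictly normal and abnormal cases uniformly, so the case split is in fact unnecessary.
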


The following property is a direct consequence of \eqref{CCL1} together with Lemma \ref{nLN} and \cite[Theorem 8.72]{ABB20}:

\begin{lemma} \label{bLn}
Suppose that $g_0 = \exp\{(\zeta_0, \tau_0)\} \in \mathcal{S}$ and $\exp\{s \, (\zeta_0, \tau_0)\}$ ($0 \le s \le 1$) is the unique shortest geodesic between $o$ and $g_0$. Then $\exp\{s \, (\zeta_0, \tau_0)\} \in \mathcal{S}$ and $ 2^{-1} \, s \, \tau_0 \in \mathcal{R}$ for all $0 < s \le 1$.
\end{lemma}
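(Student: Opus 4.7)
The plan is to establish the two assertions consecutively: first, $\gamma(s) := \exp\{s\,(\zeta_0, \tau_0)\}$ lies in $\mathcal{S}$ for every $0 < s \le 1$; second, the rescaled covector $\frac{s}{2}\tau_0$ falls in $\mathcal{R}$.

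For the first assertion, I would exploit \eqref{CCL1}: by hypothesis, $\gamma|_{[0,1]}$ is the unique shortest geodesic from $o$ to $g_0$, is strictly normal, and $g_0$ is not conjugate to $o$ along $\gamma$. Lemma \ref{nLN} then ensures that no subsegment of $\gamma$ is abnormal. Fix $s \in (0,1)$; the restriction $\gamma|_{[0,s]}$, reparametrized on $[0,1]$, coincides with the normal geodesic of initial covector $(s\,\zeta_0, s\,\tau_0)$ through the homogeneity relation $\gamma(\alpha\,\zeta, 2\alpha\,\theta; r) = \gamma(\zeta, 2\theta; \alpha\, r)$, and is shortest from $o$ to $\gamma(s)$ by the triangle inequality. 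Theorem 8.72 in \cite{ABB20} --- which says that along a strictly normal minimizing geodesic that is uniquely minimizing and non-conjugate at its endpoint, every interior point is itself a non-conjugate, uniquely minimizing, non-cut point --- then yields $\gamma(s) \in \mathcal{S}$ via the characterization \eqref{CCL1}. The case $s = 1$ is a hypothesis.

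For the second assertion, fix $s \in (0, 1]$. Since $\gamma(s) \in \mathcal{S}$, $d^2$ is smooth near $\gamma(s)$, and the unique shortest geodesic from $o$ to $\gamma(s)$, identified above, has initial covector $(s\,\zeta_0, s\,\tau_0)$. The standard Hamilton-Jacobi identity asserts that, at a smooth point of $d^2$ supporting a unique strictly-normal minimizer, $\frac{1}{2}\nabla_{(x,t)} d(g)^2$ equals the final covector of that minimizer. Because $H$ is $t$-independent, $\tau(s)$ is conserved along the Hamiltonian flow, so the $t$-component of the final covector of the unique minimizer to $\gamma(s)$ is exactly $s\,\tau_0$. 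Consequently $\frac{1}{4}\nabla_t d^2|_{\gamma(s)} = \frac{s}{2}\tau_0$, which lies in $\mathcal{R}$ by the definition \eqref{GRs}.

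The main obstacle is invoking the two external ingredients cleanly: Theorem 8.72 of \cite{ABB20} in the form stated, and the Hamilton-Jacobi identification of $\frac{1}{2}\nabla d^2$ with the final covector at smooth points supporting a unique strictly-normal minimizer. Both are classical in the sub-Riemannian literature and admit direct proofs via the envelope theorem combined with local invertibility of the exponential map off the conjugate and cut loci; the fact that the paper states the lemma as a direct consequence signals that the authors mean precisely to invoke these two ingredients.
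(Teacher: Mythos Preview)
Your proposal is correct and follows the same route as the paper: the authors simply state that the lemma is a direct consequence of \eqref{CCL1}, Lemma \ref{nLN}, and \cite[Theorem~8.72]{ABB20}, and you have unpacked precisely these ingredients, adding the standard Hamilton--Jacobi identification $\frac{1}{2}\nabla d^2 = $ final covector (implicit in the reference to \cite{ABB20}) to handle the second assertion.
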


\begin{remark} \label{nRnM}
Recall that $\widetilde{\M} = \M \cup \widetilde{\M}_2$ and $\widetilde{\M}_2 \subseteq \mathrm{Cut}_o = \mathcal{S}^c$. By the fact that $\exp\{(\zeta, 2 \, \tau)\} \in \widetilde{\M}$ whenever $\tau \in \OA$, a direct consequence of Lemma \ref{bLn} is that $\M \neq \emptyset$. See also \cite[Proposition~6 and \S~3]{MaM16} for another explanation.
\end{remark}

It follows from Lemma \ref{bLn} that $\mathfrak{R}$ is star-shaped w.r.t. the origin $0$, that is, if $\tau \in \mathfrak{R}$, then we have $s \, \tau \in \mathfrak{R}$ for all $s \in [0, \ 1]$; in particular, it is path connected. Furthermore, it is the smallest compact set which satisfies the property in Theorem \ref{t1}.
Also notice that Lemma \ref{bLn} provides a theoretical basis for the method proposed in \cite[\S~11]{Li19} to determine the squared sub-Riemannian distance for general non-GM groups.

\begin{remark}\label{RKn2}
Theorem \ref{t1} could be considered as a somewhat converse statement of \cite[Proposition~4]{RT05} in our setting. In fact, from the proof of Theorem \ref{t1}, we know that there exist $\{g_j \}_{j = 1}^{+\infty} \subseteq \mathcal{S}$ with
$\gamma_{(\zeta^{(j)}, 2 \, \theta^{(j)})}$
the shortest geodesic joining $o$ to $g_j$ such that $g_j \to g$ and
$(\zeta^{(j)}, 2 \, \theta^{(j)}) \to (\zeta, 2 \, \theta)$
as $j \to +\infty$. As a result, every shortest geodesic can be induced by some limiting sub-differential in our situation.
\end{remark}

Combining this with another basic property of $\mathfrak{R}$, namely \cite[Corollary~2.4]{Li19}, this is why it is called the global reference set.

\subsubsection{Other sub-Riemannian geometric properties on step-two groups}

Let us begin with an upper bound about the cut time of an arclength parametrized geodesic:

\begin{corollary}\label{c1}
Let
\begin{align*}
\mathrm{C}_{\mathfrak{R}} := \max_{\tau \in \mathfrak{R}} \|U(\tau)\|, \quad \mathrm{C}_{\tau} := \sup\left\{ s > 0; \ r \, \tau \in \mathfrak{R}, \  \forall \, 0 \le r \le s \right\} \ (\tau \in \R^m).
\end{align*}
For any arclength parametrized geodesic $\gamma(s) = \exp\{ s \, (\zeta, \tau)\} = \gamma(\zeta, \tau; s)$,
its cut time satisfies
\begin{align} \label{nCUTUn}
h_{\mathrm{cut}} &\le \sup\left\{2 \, \mathrm{C}_{\sigma} ; \, \sigma \in \R^m, \, \gamma(\zeta, \sigma; \cdot) = \gamma(\zeta, \tau; \cdot)  \right\} \nonumber\\
& \le \sup\left\{ \frac{2 \, \mathrm{C}_{\mathfrak{R}} }{\| U(\sigma) \|} ; \, \sigma \in \R^m, \, \gamma(\zeta, \sigma; \cdot) = \gamma(\zeta, \tau; \cdot)  \right\}
\end{align}
with the understanding $\frac{2 \, \mathrm{C}_{\mathfrak{R}}}{0} = +\infty$.
In particular, assume moreover that $\gamma(s) = \exp\{ s \, (\zeta, \tau)\} $ is not abnormal, then $h_{\mathrm{cut}} \le 2 \, \mathrm{C}_{\tau} \le \frac{2 \, \mathrm{C}_{\mathfrak{R}}}{\|U(\tau)\|}$.
\end{corollary}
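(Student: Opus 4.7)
The plan is to exploit Theorem \ref{t1}, the explicit restriction formula $\gamma^{s_1, s_2} = \gamma_{((s_2-s_1)e^{2s_1\widetilde{U}(\theta)} w,\, 2(s_2-s_1)\theta)}$ stated just before Lemma \ref{nLN}, the scaling property of the exponential map, Lemma \ref{BPG}, and the star-shapedness of $\mathfrak{R}$ noted right after Lemma \ref{bLn}. The geodesic $\gamma(\zeta,\tau;\cdot)$ has $2\theta = \tau$; the whole argument pivots on keeping this factor $2$ straight.

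Fix $0 < s < h_{\mathrm{cut}}$, so that $\gamma|_{[0,s]}$ is a shortest geodesic from $o$ to $\gamma(s)$ of length $s$ (since $|\zeta|=1$). The restriction formula gives $\gamma^{0,s}(r) = \gamma(sr) = \gamma(s\zeta, s\tau; r)$ on $[0, 1]$. Applying Theorem \ref{t1} to this shortest geodesic, I obtain $\zeta^\ast \in \R^q$ with $|\zeta^\ast| = s$ and $\theta^\ast \in \mathfrak{R}$ such that $\gamma^{0,s}(r) = \gamma(\zeta^\ast, 2\theta^\ast; r)$ on $[0, 1]$. By Lemma \ref{BPG}, the two normal geodesics coincide on $[0,+\infty)$ and $\zeta^\ast = s\zeta$. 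Applying the homogeneity property with scaling factor $1/s$ then yields
\[
\gamma(\zeta,\, 2\theta^\ast/s;\cdot) \equiv \gamma(\zeta, \tau; \cdot),
\]
so $\sigma := 2\theta^\ast/s$ is admissible in the supremum of \eqref{nCUTUn}. Since $(s/2)\sigma = \theta^\ast \in \mathfrak{R}$ and $\mathfrak{R}$ is star-shaped with respect to the origin, I conclude $\lambda \sigma \in \mathfrak{R}$ for every $\lambda \in [0, s/2]$; hence $\mathrm{C}_\sigma \ge s/2$, that is, $s \le 2\mathrm{C}_\sigma$.

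Taking the supremum over $s < h_{\mathrm{cut}}$ delivers the first inequality of \eqref{nCUTUn}. For the second, whenever $\mathrm{C}_\sigma < +\infty$ the closedness of $\mathfrak{R}$ gives $\mathrm{C}_\sigma \, \sigma \in \mathfrak{R}$, and therefore
\[
\mathrm{C}_\sigma \, \|U(\sigma)\| = \|U(\mathrm{C}_\sigma \sigma)\| \le \mathrm{C}_\mathfrak{R};
\]
the degenerate case $\|U(\sigma)\|=0$ forces $\sigma=0$ by linear independence of the $U^{(j)}$'s, and then both $\mathrm{C}_\sigma$ and $\mathrm{C}_\mathfrak{R}/\|U(\sigma)\|$ equal $+\infty$ under the stated convention. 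For the final assertion, if $\gamma$ is not abnormal then Proposition \ref{cAg} forbids any $\sigma \neq \tau$ in the admissible set, so the supremum collapses to $\sigma = \tau$, giving $h_{\mathrm{cut}} \le 2\mathrm{C}_\tau \le 2\mathrm{C}_\mathfrak{R}/\|U(\tau)\|$. The only genuine obstacle is the indicated bookkeeping: Theorem \ref{t1} supplies $\theta^\ast \in \mathfrak{R}$, i.e.\ half of the covector component, and it is precisely this factor $1/2$ (combined with the rescaling by $1/s$) that produces the factor $2$ in the final cut-time estimates.
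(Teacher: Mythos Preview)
Your proof is correct and follows essentially the same approach as the paper's: fix $s<h_{\mathrm{cut}}$, apply Theorem~\ref{t1} to the restricted shortest geodesic $\gamma_{(s\zeta,s\tau)}$, use Lemma~\ref{BPG} and the homogeneity property to produce an admissible $\sigma=2\theta^\ast/s$, then invoke the star-shapedness of $\mathfrak{R}$ to bound $s\le 2\mathrm{C}_\sigma$. You are slightly more explicit than the paper in spelling out the rescaling step, the closedness of $\mathfrak{R}$ for the inequality $\mathrm{C}_\sigma\|U(\sigma)\|\le \mathrm{C}_{\mathfrak{R}}$, and the appeal to Proposition~\ref{cAg} for the strictly normal case, but the logical skeleton is identical.
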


\begin{remark} \label{nRKnSM}
Fix $(\zeta, \tau)$ and let $\Pi_{(\zeta, \tau)} := \{\sigma - \tau \in \R^m; \, \gamma(\zeta, \sigma; \cdot) = \gamma(\zeta, \tau; \cdot)\}$. Proposition \ref{cAg} implies that $\Pi_{(\zeta, \tau)}$ is a linear subspace of $\R^m$. It is clear that the continuous function $\sigma \mapsto \|U(\sigma)\|$ defined on $\tau + \Pi_{(\zeta, \tau)}$ attains its minimum. Hence, the last ``$\  \sup$'' in \eqref{nCUTUn} can be replaced by ``$\ \max$''.
\end{remark}

Moreover, we have the following:

\begin{lemma}\label{NLA1} {\footnote{For a general sub-Riemannian manifold $M$, we can define the shortest abnormal set of $y \in M$, $\mathrm{Abn}_y^*$, as the set of the endpoints of abnormal (not necessarily normal) shortest geodesics starting from $y$ and $\mathrm{Abn}_y^*$ is a closed set as well. This is a result of the characterization of abnormal Pontryagin extremals via Lagrange multipliers rule and the compactness of minimal controls. We would like to thank L. Rizzi for informing us of this general result and providing a sketched proof. For the sake of completeness, we give a proof in the setting of step-two groups without using the notion of the endpoint map in Section \ref{nsP}.}}
In the framework of step-two Carnot groups, $\mathrm{Abn}_o^*$ is a closed set.
\end{lemma}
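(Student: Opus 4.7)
The plan is to leverage Theorem \ref{t1} in order to reduce the question to a routine compactness argument. Let $\{g_n\} \subseteq \mathrm{Abn}_o^*$ with $g_n \to g$; our task is to show $g \in \mathrm{Abn}_o^*$. Since $o \in \mathrm{Abn}_o^*$, the case $g = o$ is immediate, so assume $g \neq o$, which forces $g_n \neq o$ for all large $n$. For each such $n$, select an abnormal shortest geodesic $\gamma_n : [0, \, 1] \to \G$ joining $o$ to $g_n$. By Theorem \ref{t1}, we may write
\[
\gamma_n(s) = \exp\{s \, (\zeta_n, \, 2 \, \theta_n)\}, \qquad s \in [0, \, 1],
\]
with $|\zeta_n| = d(g_n)$ and $\theta_n \in \mathfrak{R}$. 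Because $\gamma_n$ is abnormal, Proposition \ref{cAg} provides $\sigma_n \in \R^m \setminus \{0\}$, which after normalization we may assume satisfies $|\sigma_n| = 1$, such that $U(\sigma_n) \, U(\theta_n)^k \, \zeta_n = 0$ for every $k \in \N$.

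Since $|\zeta_n| = d(g_n) \to d(g)$ is bounded, $\theta_n$ lives in the compact set $\mathfrak{R}$, and $|\sigma_n| = 1$, we may extract a subsequence along which $\zeta_n \to \zeta_*$, $\theta_n \to \theta_* \in \mathfrak{R}$, and $\sigma_n \to \sigma_*$, with $|\zeta_*| = d(g) > 0$ and $|\sigma_*| = 1$. Continuity of the sub-Riemannian exponential map gives $g = \exp\{(\zeta_*, \, 2 \, \theta_*)\}$; moreover, $\ell(\gamma_{(\zeta_*, \, 2 \, \theta_*)}) = |\zeta_*| = d(g)$, so $\gamma_{(\zeta_*, \, 2 \, \theta_*)}$ is a shortest geodesic from $o$ to $g$. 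Passing the abnormality relation to the limit — each identity $U(\sigma_n) \, U(\theta_n)^k \, \zeta_n = 0$ is polynomial in the entries of $(\sigma_n, \theta_n, \zeta_n)$, hence continuous — yields
\[
U(\sigma_*) \, U(\theta_*)^k \, \zeta_* = 0, \qquad \forall \, k \in \N,
\]
with $\sigma_* \neq 0$. Proposition \ref{cAg} then forces $\gamma_{(\zeta_*, \, 2 \, \theta_*)}$ to be abnormal, whence $g \in \mathrm{Abn}_o^*$ and the set is closed.

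The sole delicate point is securing a compact set for the covectors $\theta_n$: an abnormal geodesic admits infinitely many initial covectors differing by elements of the degeneracy subspace of Remark \ref{nRKnSM}, so without the a priori constraint $\theta_n \in \mathfrak{R}$ the sequence could a priori escape to infinity and one would have to argue more carefully, e.g.~by working with the minimal-norm representative in each equivalence class $\theta_n + \Pi_{(\zeta_n, \, 2 \, \theta_n)}$. Theorem \ref{t1} bypasses this entirely, reducing the remainder of the proof — the stability of minimality and of abnormality under the limit — to a pure continuity exercise.
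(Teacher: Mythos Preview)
Your proof is correct and follows essentially the same approach as the paper's own argument: invoke Theorem \ref{t1} to place the covectors $\theta_n$ in the compact set $\mathfrak{R}$, use Proposition \ref{cAg} to encode abnormality via a unit vector $\sigma_n$, extract convergent subsequences, and pass both the shortest-geodesic property and the abnormality relation to the limit by continuity. The paper does not separate out the case $g = o$, but otherwise the two proofs are identical in structure and content.
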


On a step-two group $\G$, notice that $o \not\in \mathrm{Cut}_o^{\mathrm{CL}}$. Now assume that $\mathrm{Cut}_o^{\mathrm{CL}} \cap \mathrm{Abn}_o^* = \emptyset$ and $\mathrm{Abn}_o^* \neq \{ o \}$. Let $\gamma(s) = \exp\{ s \, (\zeta, \tau)\} = \gamma(\zeta, \tau; s)$ be an arclength parametrized abnormal geodesic. Then it follows from Lemma \ref{NLA1} that its cut time is $+\infty$. Using Corollary \ref{c1} and Remark \ref{nRKnSM}, we obtain $\gamma(\zeta, \tau; \cdot) = \gamma(\zeta, 0; \cdot)$, which implies $\{ \gamma(s); \, s \ge 0\} \subseteq \R^q \times \{0\}$ by \eqref{GEn}. In conclusion, combining this with Corollaries \ref{M2AbsAb} and \ref{nCcc}, we get the following:

\begin{corollary}\label{NCorA1}
In the setting of step-two Carnot groups,
$\mathrm{Cut}_o^{\mathrm{CL}} \cap \mathrm{Abn}_o^* = \emptyset$ if and only if $\widetilde{\M}_2 \subseteq \R^q \times \{0\}$.
\end{corollary}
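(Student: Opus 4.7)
The corollary is a biconditional; the reverse implication is a short chase, while the forward implication is essentially the one already sketched in the paragraph preceding the statement. I will treat them in that order.

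For the implication from $\widetilde{\M}_2 \subseteq \R^q \times \{0\}$ to $\mathrm{Cut}_o^{\mathrm{CL}} \cap \mathrm{Abn}_o^* = \emptyset$, the plan is to feed the hypothesis into Corollary \ref{M2AbsAb} to obtain $\mathrm{Abn}_o^* = \widetilde{\M}_2$, which is a subset of $\widetilde{\M}$. Since Corollary \ref{nCcc} gives $\mathrm{Cut}_o^{\mathrm{CL}} \subseteq \widetilde{\M}^c$, the intersection $\mathrm{Cut}_o^{\mathrm{CL}} \cap \mathrm{Abn}_o^*$ is contained in $\widetilde{\M}^c \cap \widetilde{\M} = \emptyset$, and nothing else is needed.

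For the forward direction, I would assume $\mathrm{Cut}_o^{\mathrm{CL}} \cap \mathrm{Abn}_o^* = \emptyset$ and, discarding the trivial case $\mathrm{Abn}_o^* = \{o\}$, fix an arbitrary arclength parametrized abnormal geodesic $\gamma = \gamma(\zeta,\tau;\cdot)$ with $|\zeta|=1$. The goal is to prove that $\gamma(s) \in \R^q \times \{0\}$ for all $s \ge 0$; once established, Proposition \ref{NPA1} will immediately yield $\widetilde{\M}_2 \subseteq \R^q \times \{0\}$. The plan splits into two steps: first show $h_{\mathrm{cut}}(\gamma) = +\infty$, then force $\gamma(\zeta,\tau;\cdot)$ to coincide with the first-layer ray $\gamma(\zeta,0;\cdot)$. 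Step one uses Proposition \ref{NPA1} (and, if one wishes, the closedness of $\mathrm{Abn}_o^*$ from Lemma \ref{NLA1}) to place $\gamma(h_{\mathrm{cut}}) \in \mathrm{Abn}_o^*$ whenever $h_{\mathrm{cut}}$ is finite; but the definition of cut point simultaneously puts $\gamma(h_{\mathrm{cut}}) \in \mathrm{Cut}_o^{\mathrm{CL}}$, contradicting the hypothesis. Step two applies the upper bound of Corollary \ref{c1}, rewriting the last supremum as a maximum via Remark \ref{nRKnSM}: since $\mathfrak{R}$ is compact, $\mathrm{C}_\sigma$ is finite for every $\sigma \neq 0$, so $h_{\mathrm{cut}} = +\infty$ can only occur when $0 \in \tau + \Pi_{(\zeta,\tau)}$, i.e., when $\gamma(\zeta,0;\cdot) = \gamma(\zeta,\tau;\cdot)$. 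Formula \eqref{GEn} together with the skew-symmetry of each $U^{(j)}$ then identifies this curve with the map $s \mapsto (s\zeta, 0)$.

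The only delicate point is step two, where one must pass from the abstract inequality $h_{\mathrm{cut}} \le \sup\{\cdots\}$ to the concrete reduction $\gamma(\zeta,\tau;\cdot) = \gamma(\zeta,0;\cdot)$. This relies crucially on the compactness of $\mathfrak{R}$ (equivalent to the finiteness of $\mathrm{C}_\sigma$ for nonzero $\sigma$, and in turn to the linear independence of the matrices $U^{(j)}$) and on not conflating the two suprema appearing in \eqref{nCUTUn}. Once this is handled, the remainder of the proof is a straightforward concatenation of Proposition \ref{NPA1}, Corollary \ref{M2AbsAb}, Corollary \ref{nCcc}, and Corollary \ref{c1}, and I anticipate no substantial obstacle beyond this bookkeeping.
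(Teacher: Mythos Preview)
Your proposal is correct and follows essentially the same route as the paper: the reverse implication via Corollaries \ref{M2AbsAb} and \ref{nCcc}, and the forward implication by first forcing $h_{\mathrm{cut}}=+\infty$ (via Proposition \ref{NPA1}/Lemma \ref{NLA1}) and then invoking Corollary \ref{c1} with Remark \ref{nRKnSM} to conclude $\gamma(\zeta,\tau;\cdot)=\gamma(\zeta,0;\cdot)$. One small wording issue: in step two you invoke Remark \ref{nRKnSM} to replace ``the last supremum'' by a maximum, but then justify the conclusion by appealing to the finiteness of $\mathrm{C}_\sigma$, which pertains to the \emph{first} supremum in \eqref{nCUTUn}; the clean argument uses the second bound $2\,\mathrm{C}_{\mathfrak{R}}/\|U(\sigma)\|$ together with the attainment of the minimum of $\|U(\sigma)\|$ (Remark \ref{nRKnSM}) and the linear independence of the $U^{(j)}$ to force $\sigma=0$.
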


To finish this subsection, we provide the following:

\begin{proposition}\label{CLMMW}
In the context of step-two Carnot groups, it holds that $\overline{\widetilde{\M}} = \overline{\M}$.
\end{proposition}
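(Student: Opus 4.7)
\textbf{Proof plan for Proposition \ref{CLMMW}.} The inclusion $\overline{\M} \subseteq \overline{\widetilde{\M}}$ is immediate from $\M \subseteq \widetilde{\M}$, so I would focus on the reverse inclusion, which I would derive from the stronger statement $\widetilde{\M} \subseteq \overline{\M}$. The key tool is the parametrization $F: \R^q \times \OA \to \G$ defined by
\[
F(x, \theta) := \left(x, \, -\frac{1}{4} \nabla_\theta \langle U(\theta) \cot{U(\theta)} \, x, \, x \rangle\right),
\]
whose image equals $\widetilde{\M}$ by \eqref{nOM}. For every $(x, \theta) \in \R^q \times \OA$, the parameter $\theta$ is automatically a critical point of $\phi(F(x,\theta); \cdot)$ in $\OA$, because the linear term $4 \, t \cdot \theta$ with $t$ equal to the second component of $F(x,\theta)$ exactly zeroes out the $\theta$-gradient of $\phi$. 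By the concavity of $\phi(g; \cdot)$ on $\OA$ (Proposition \ref{eP1}) together with the characterization in \eqref{m}, the point $F(x,\theta)$ lies in $\M$ if and only if this critical point is nondegenerate, i.e.\ if and only if the Hessian
\[
H(x, \theta) := \nabla_\theta^2 \, \langle U(\theta) \cot{U(\theta)} \, x, \, x \rangle
\]
is strictly negative definite (the linear term $4\, t \cdot \theta$ contributes nothing to the $\theta$-Hessian).

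The next step is to show that $F^{-1}(\M) = \{(x,\theta) : \det(-H(x,\theta)) > 0\}$ is dense in $\R^q \times \OA$. The ambient set is connected since $\OA$ is convex (because $\theta \mapsto \|U(\theta)\|$ is a seminorm). The map $s \mapsto s \cot{s}$ is real analytic on $(-\pi, \pi)$, so by functional calculus, and using that the spectrum of $U(\theta)$ lies in $(-\pi, \pi)$ whenever $\theta \in \OA$, the Hessian $H(x,\theta)$, and hence $\det(-H(x,\theta))$, is real analytic on $\R^q \times \OA$. Concavity forces $-H \ge 0$, so $\det(-H) \ge 0$; and $\M \neq \emptyset$ by Remark \ref{nRnM}, hence $F^{-1}(\M) \neq \emptyset$ and $\det(-H) \not\equiv 0$. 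The identity principle for real analytic functions on a connected open set then implies that $\{\det(-H) = 0\}$ has empty interior, so $\{\det(-H) > 0\}$ is open and dense.

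Finally, continuity of $F$ transfers the density to the image: given any $g_0 = F(x_0, \theta_0) \in \widetilde{\M}$, I would choose $(x_n, \theta_n) \in F^{-1}(\M)$ with $(x_n, \theta_n) \to (x_0, \theta_0)$, whence $\M \ni F(x_n, \theta_n) \to g_0$ and $g_0 \in \overline{\M}$. This yields $\widetilde{\M} \subseteq \overline{\M}$, and hence $\overline{\widetilde{\M}} \subseteq \overline{\M}$. I do not anticipate a genuine obstacle in executing this plan; the only external inputs are the concavity in Proposition \ref{eP1} and the nonemptiness of $\M$ noted in Remark \ref{nRnM}, and the argument is short and self-contained once the parametrization $F$ is set up.
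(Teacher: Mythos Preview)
Your proposal is correct and follows essentially the same approach as the paper's proof: the paper defines the same parametrizing map (denoted $\kappa$), uses real analyticity of $\det(-\mathrm{Hess}_\theta\langle U(\theta)\cot U(\theta)\,x,x\rangle)$ on the connected domain $\R^q\times\OA$ together with $\M\neq\emptyset$ to conclude that $\Xi_1=\{\det(-H)>0\}$ is dense, and then pushes this forward by continuity of $\kappa$ to obtain $\widetilde{\M}_2\subseteq\overline{\M}$. Your write-up is slightly more explicit about why $\OA$ is connected and why $F^{-1}(\M)=\{\det(-H)>0\}$ is an equality, but the substance is identical.
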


\subsubsection{Characterizations of GM-groups}

GM-groups have very fine properties of sub-Riemannian geometry. More precisely,

\begin{theorem} \label{NTh1}
The following properties are equivalent:
{\em\begin{compactenum}[(i)]
\item $\G$ is of type GM;
\item $\widetilde{\M}$ is dense in $\G$;
\item $d(g)^2 = \sup\limits_{\theta \in \OA} \phi(g; \theta)$ for all $g \in \G$;
\item The global reference set $\mathfrak{R}$ is equal to $\overline{\OA} = \left\{\theta; \ \|U(\theta)\| \le \pi \right\}$;
\item For any arclength parametrized, strictly normal geodesic $\gamma(s) = \exp\{ s \, (\zeta, \tau)\}$,
    its cut time is equal to $h_{\mathrm{cut}}(\tau) := 2 \pi/\|U(\tau)\|$, with the understanding $h_{\mathrm{cut}}(0) = +\infty$;
\item $\mathrm{Cut}_o^c \cap \partial \M = \emptyset$.
\end{compactenum}}
\end{theorem}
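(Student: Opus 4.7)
\emph{Plan.} I would prove (i)--(v) equivalent via the cycle (i)$\Leftrightarrow$(ii)$\Rightarrow$(iii)$\Rightarrow$(iv)$\Leftrightarrow$(v)$\Rightarrow$(ii), and then handle (vi) separately against (i). Two ingredients drive everything: the identity $\frac{1}{4}\nabla_t d(g)^2=\theta_g$ at every smooth point $g\in\mathcal{S}$, where $\theta_g$ is the $\tau/2$-component of the covector of the unique strictly normal shortest geodesic to $g$ (it follows from the preservation $\tau(s)\equiv 2\theta_0$ in \eqref{HJE} combined with the standard formula relating $\nabla d^2$ to the final covector of the Hamiltonian lift); and the arclength cut-time bounds $h_{\mathrm{cut}}\ge 2\pi/\|U(\tau)\|$ (Corollary \ref{Nc1}) and $h_{\mathrm{cut}}\le 2\mathrm{C}_\tau$ (Corollary \ref{c1}) for strictly normal geodesics.

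\emph{The core cycle.} (i)$\Leftrightarrow$(ii) is Proposition \ref{CLMMW}, and (ii)$\Rightarrow$(iii) follows from \eqref{Csrd} because $\overline{\widetilde{\M}}=\G$. For (iii)$\Rightarrow$(iv), pick $g\in\mathcal{S}$ and the maximizer $\theta_g\in\overline{\OA}$ from Proposition \ref{eP1}; since $\nabla_t\phi(g;\theta)=4\theta$ and $d^2$ is smooth at $g$, the envelope identity applied to $d(g)^2=\sup_\OA\phi(g;\tau)$ yields $\frac{1}{4}\nabla_t d(g)^2=\theta_g\in\overline{\OA}$, hence $\mathcal{R}\subseteq\overline{\OA}$ and (via \eqref{grinc}) $\mathfrak{R}=\overline{\OA}$. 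For (iv)$\Leftrightarrow$(v): under (iv), $\mathrm{C}_\tau=\pi/\|U(\tau)\|$ ($\tau\ne 0$), and the two Corollaries pin $h_{\mathrm{cut}}$ to exactly $2\pi/\|U(\tau)\|$; under (v), for $g\in\mathcal{S}$ with unique strictly normal shortest $\gamma(\zeta,2\theta;\cdot)$, the arclength reparametrization has $h_{\mathrm{cut}}=\pi|\zeta|/\|U(\theta)\|$, and combining with the trivial $h_{\mathrm{cut}}\ge|\zeta|$ forces $\|U(\theta)\|\le\pi$, whence $\theta_g=\theta\in\overline{\OA}$ and $\mathcal{R}\subseteq\overline{\OA}$. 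Finally (iv)$\Rightarrow$(ii): for any $g\in\G$, Theorem \ref{t1} provides a shortest covector $(\zeta,2\theta)$ with $\theta\in\overline{\OA}$; taking $\theta_n=(1-1/n)\theta\in\OA$, Theorem \ref{RLT} places $g_n:=\exp(\zeta,2\theta_n)\in\widetilde{\M}$, and continuity of $\exp$ yields $g_n\to g$.

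\emph{(i)$\Rightarrow$(vi).} Using (v), for $g\in\mathcal{S}$ with unique strictly normal shortest $\gamma(\zeta,2\theta;\cdot)$, the chain $|\zeta|\le h_{\mathrm{cut}}(\tilde\gamma)=\pi|\zeta|/\|U(\theta)\|$ forces $\|U(\theta)\|\le\pi$. If equality held, $g=\tilde\gamma(|\zeta|)$ would be the cut point along $\tilde\gamma$, placing $g\in\mathrm{Cut}_o^{\mathrm{CL}}\subseteq\mathrm{Cut}_o$ by Theorem \ref{t2}, contradicting $g\in\mathcal{S}$. So $\theta\in\OA$ strictly, and Theorem \ref{RLT} gives $g\in\M$. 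Hence $\mathcal{S}\subseteq\M$, which (as $\M$ is open) means $\mathcal{S}\cap\partial\M=\emptyset$.

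\emph{The hard part: (vi)$\Rightarrow$(i).} I would first establish the trichotomy on $\mathcal{S}$: $g\in\M\Leftrightarrow\|U(\theta_g)\|<\pi$, $g\in\partial\M\cap\mathcal{S}\Leftrightarrow\|U(\theta_g)\|=\pi$, and $g\in\overline{\M}^c\cap\mathcal{S}\Leftrightarrow\|U(\theta_g)\|>\pi$. The non-trivial direction, that $g\in\partial\M\cap\mathcal{S}$ forces $\|U(\theta_g)\|=\pi$, uses compactness plus continuity: if $g\in\partial\M$ is approximated by $g_n=\exp(\zeta_n,2\theta_n)\in\M$ with $\theta_n\in\OA$, then $|\zeta_n|=d(g_n)$ is bounded and $\|U(\theta_n)\|<\pi$ bounds $\theta_n$; a subsequential limit $(\zeta^*,\theta^*)\in\R^q\times\overline{\OA}$ satisfies $g=\exp(\zeta^*,2\theta^*)$ and $|\zeta^*|=d(g)$, so $\gamma(\zeta^*,2\theta^*;\cdot)$ is shortest, and uniqueness of the strictly normal covector on $\mathcal{S}$ forces $\theta^*=\theta_g$, which must then lie on $\partial\OA$ because the alternative $\theta_g\in\OA$ would place $g\in\M$ via Theorem \ref{RLT}. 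Under (vi) the middle regime is empty, so the continuous function $g\mapsto\|U(\theta_g)\|$ avoids $\pi$ on $\mathcal{S}$ and splits it into the disjoint open sets $\M$ and $\mathcal{S}\cap\overline{\M}^c$. The remaining crux is the connectedness of $\mathcal{S}$: via the homeomorphism $\mathcal{S}\cong(\mathcal{S}\cap\{d=1\})\times(0,+\infty)$ induced by the dilations \eqref{nDS}, this reduces to the connectedness of the sub-Riemannian unit sphere minus its intersection with $\mathrm{Cut}_o=\mathrm{Cut}_o^{\mathrm{CL}}\cup\mathrm{Abn}_o^*$, which is expected to follow from a dimensional/stratification analysis of that small set. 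Granted connectedness, since $\M\neq\emptyset$ by Remark \ref{nRnM}, the other piece must be empty, so $\overline{\M}^c\subseteq\mathrm{Cut}_o$ is an open set of measure zero, hence empty, proving (i).
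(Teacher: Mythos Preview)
Your cycle through (i)--(v) and your argument for (i)$\Rightarrow$(vi) are essentially correct and close to the paper's proof, modulo minor reroutings (you prove (v)$\Rightarrow$(iv) and (iv)$\Rightarrow$(ii) where the paper goes (v)$\Rightarrow$(vi)$\Rightarrow$(i) directly; your envelope argument for (iii)$\Rightarrow$(iv) is the same idea as the paper's local-minimum argument for $s\mapsto d(x,s)^2-4s\cdot\theta_0$).

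The genuine gap is in (vi)$\Rightarrow$(i). Your trichotomy on $\mathcal{S}$ is fine, but the argument then hinges entirely on the connectedness of $\mathcal{S}$, which you do not prove and which is not available from the results cited in the paper. Your reduction via dilations to connectedness of $\{d=1\}\setminus\mathrm{Cut}_o$ is correct, but nothing in the paper (measure-zero of $\mathrm{Cut}_o$, the decomposition $\mathrm{Cut}_o=\mathrm{Cut}_o^{\mathrm{CL}}\cup\mathrm{Abn}_o^*$, closedness of $\mathrm{Abn}_o^*$) rules out that $\mathrm{Cut}_o$ separates the sphere; a codimension-one stratum would do so. ``Expected to follow from a dimensional/stratification analysis'' is not a proof, and no such analysis is carried out anywhere in the paper.

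The paper bypasses connectedness with a local, constructive argument: assuming $\overline{\M}\neq\G$, pick $g\in(\G\setminus\overline{\M})\cap\mathcal{S}$ with unique strictly normal shortest $\gamma_{(w,2\theta)}$, show $\theta\notin\overline{\OA}$, and then \emph{slide back along this very geodesic} to the first time the covector hits $\partial\OA$, i.e.\ set $s_0=\pi/\|U(\theta)\|\in(0,1)$ and $g_0=\exp(s_0(w,2\theta))$. Lemma \ref{bLn} gives $g_0\in\mathcal{S}$; for $s_*<s_0$ the points $\exp(s_*(w,2\theta))$ lie in $\M$ by Theorem \ref{RLT}, so $g_0\in\overline{\M}$; and $g_0\notin\M$ because membership in $\M$ would produce a second covector $(w_*,2\theta_*)$ with $\theta_*\in\OA\neq s_0\theta$, making $\gamma^{0,s_0}$ abnormal and hence (Lemma \ref{nLN}) $\gamma$ abnormal. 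Thus $g_0\in\partial\M\cap\mathcal{S}$, contradicting (vi). This replaces your global connectedness hypothesis with a pointwise construction along a single geodesic.
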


\begin{remark}
(1) The condition (ii) should be the easiest to check among all those.

(2) In Section \ref{s3}, we can find that every step-two group of corank 1 or 2 is of type GM. As a result, from the above Property (v), we obtain the cut time of any arclength parametrized, strictly normal geodesic on $\G(q, m, \U)$ with $m = 1$ or $2$, which coincides with that in \cite[Theorems~6 and 7]{BBG12}. However, considering Remark \ref{Rkn1}, their results for the cut time of abnormal geodesics need more explanations. See (ii) of Corollary \ref{NThA1} and Remark \ref{nRKn9} below for more details. Furthermore, we emphasize that Property (v)
is an equivalent characterization of GM-groups and thus we have found all possible step-two groups satisfying this fine property.
\end{remark}

Recall that $\M$ is an open set.
As a consequence, we obtain the following improvement of \cite[Theorem~2.7]{Li19}:

\begin{corollary}\label{Nc2}
A step-two group $\G$ is of type GM if and only if $\mathrm{Cut}_o = \partial \M$.
\end{corollary}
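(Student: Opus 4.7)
The plan is to reduce the statement directly to the equivalence (i)$\Leftrightarrow$(vi) in Theorem \ref{NTh1}, together with the elementary identity $\partial \M = \overline{\M}\setminus \M$ (valid since $\M$ is an open set). I would begin by recording the basic topological facts I need: $\M$ is open, $\M\subseteq \mathcal{S} = \mathrm{Cut}_o^c$ (so $\M\cap \mathrm{Cut}_o = \emptyset$), and $\partial \M = \overline{\M}\setminus \M$. From the last identity it follows that in general $\partial \M \subseteq \mathrm{Cut}_o$ is equivalent to $\mathrm{Cut}_o^c\cap \partial \M = \emptyset$, which is Property (vi) of Theorem \ref{NTh1}.

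For the forward direction, I would assume $\G$ is GM. Then by Theorem \ref{NTh1}, (i)$\Rightarrow$(vi), we have $\mathrm{Cut}_o^c\cap \partial \M = \emptyset$, i.e., $\partial \M \subseteq \mathrm{Cut}_o$. On the other hand, (GM) gives $\overline{\M} = \G$, hence (since $\M$ is open) $\partial \M = \G\setminus \M$; combined with $\M\cap \mathrm{Cut}_o = \emptyset$, this yields $\mathrm{Cut}_o\subseteq \G\setminus \M = \partial \M$. The two inclusions together give $\mathrm{Cut}_o = \partial \M$.

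For the converse, assume $\mathrm{Cut}_o = \partial \M$. Then trivially
\[
\mathrm{Cut}_o^c\cap \partial \M \;=\; \mathrm{Cut}_o^c\cap \mathrm{Cut}_o \;=\; \emptyset,
\]
so condition (vi) of Theorem \ref{NTh1} is satisfied. The implication (vi)$\Rightarrow$(i) of that theorem then concludes that $\G$ is of type GM.

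I do not expect any real obstacle in this corollary: the whole content has been absorbed into Theorem \ref{NTh1}, and what remains is a short topological bookkeeping argument. The only delicate point worth flagging is the use of $\M$ being open (to justify $\partial \M = \overline{\M}\setminus \M$) and the inclusion $\M\subseteq \mathcal{S}$ recalled right after the definition of $\M$; these are precisely the ingredients that let one convert the abstract condition (vi) into the cleaner geometric identity $\mathrm{Cut}_o = \partial \M$.
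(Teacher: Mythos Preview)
Your proof is correct and follows exactly the route the paper intends: the corollary is stated immediately after Theorem~\ref{NTh1} with only the remark that $\M$ is open, and your argument simply unpacks the equivalence (i)$\Leftrightarrow$(vi) together with $\M\subseteq\mathcal{S}$ and $\partial\M=\overline{\M}\setminus\M$. There is nothing to add.
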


\begin{corollary} \label{NcMc}
Assume that $\G$ is of M\'etivier type. Then $\G$ is a GM-group iff. for any arclength parametrized geodesic $\gamma(s) = \exp\{ s \, (\zeta, \tau)\}$, its cut time is $2 \pi/\|U(\tau)\|$.
\end{corollary}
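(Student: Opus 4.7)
The plan is to reduce the statement directly to Theorem \ref{NTh1}(v), whose only apparent difference is that it constrains the cut-time assertion to \emph{strictly normal} geodesics, whereas Corollary \ref{NcMc} imposes the condition on \emph{all} arclength parametrized geodesics. The whole task is therefore to show that, in the Métivier setting, every arclength parametrized geodesic is automatically strictly normal, so the two formulations are logically indistinguishable.

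First I would recall from the definition in Subsection \ref{ns221} that, for a Métivier group, $U(\sigma)$ is invertible for every $\sigma \in \R^m\setminus\{0\}$. I would then invoke Proposition \ref{cAg}: if $\gamma_{(w,2\theta)}$ is abnormal there exists $\sigma \neq 0$ with $U(\sigma)\,U(\theta)^{k}\,w = 0$ for all $k\in\N$. Taking $k=0$ gives $U(\sigma)w=0$, and invertibility of $U(\sigma)$ forces $w=0$, i.e.\ the geodesic is trivial. Consequently, any arclength parametrized geodesic $\gamma(s)=\exp\{s(\zeta,\tau)\}$ satisfies $|\zeta|=1\neq 0$ and is therefore strictly normal. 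This matches (and extends) Remark \ref{nRna}(d), which already tells us $\mathrm{Abn}_o^{*}=\{o\}$ in the Métivier setting.

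Granting this, the equivalence is immediate. If $\G$ is of type GM, Theorem \ref{NTh1}(v) yields the cut-time identity $h_{\mathrm{cut}}=2\pi/\|U(\tau)\|$ for every strictly normal arclength parametrized geodesic, hence (by the previous paragraph) for every arclength parametrized geodesic, with the usual convention $h_{\mathrm{cut}}(0)=+\infty$ covering the case $\tau=0$ via Corollary \ref{Nc1}. Conversely, if the cut-time formula holds for every arclength parametrized geodesic, then in particular it holds for every strictly normal one, so Theorem \ref{NTh1}(v)$\Rightarrow$(i) gives that $\G$ is GM.

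I do not anticipate any real obstacle: the Métivier hypothesis is used only once, to rule out non-trivial abnormals via Proposition \ref{cAg}, and the rest is a direct citation of Theorem \ref{NTh1}. The main point worth stating explicitly in the write-up is the convention concerning $\tau=0$, so that the statement makes sense uniformly and avoids any ambiguity between the ``strictly normal'' and ``any'' formulations.
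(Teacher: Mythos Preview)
Your proposal is correct and is essentially the intended argument: the paper states Corollary~\ref{NcMc} without proof, and its placement right after Theorem~\ref{NTh1} makes clear it is meant as an immediate specialization of item~(v) once one observes (via Proposition~\ref{cAg} and the invertibility of $U(\sigma)$ for $\sigma\neq 0$) that a M\'etivier group has no non-trivial abnormal geodesics. Your write-up captures exactly this.
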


If $\G$ is of GM-type, then our Theorem \ref{t1} can be improved. Indeed, the parameter $\theta$ can be further chosen as a maximum point of the reference function $\phi(g;\cdot)$ on $\overline{\OA}$.
In other words, we have

\begin{theorem}\label{nThm1}
Assume that $\G$ is of GM-type and $o \neq g \in \G$. For any shortest geodesic $\gamma_g(s)$ ($0 \le s \le 1$) joining $o$ to $g$, there exist $\zeta \in \R^q$ and $\theta \in \overline{\OA}$ such that
\[
\phi(g;\theta) = d(g)^2 = \sup\limits_{\tau \in \OA} \phi(g; \tau) = |\zeta|^2, \quad \gamma_g(s) = \exp\{s \, (\zeta, 2 \, \theta)\}, \  \forall \, 0 \le s \le 1.
\]
Moreover, we have that $\theta \in \partial \OA$ if $g \in \widetilde{\M}^c$.
\end{theorem}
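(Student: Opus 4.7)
My plan is to take the shortest-geodesic representation furnished by Theorem~\ref{t1}, upgrade the parameter $\theta$ from $\mathfrak{R}$ into $\overline{\OA}$ using the GM-characterization, and then bootstrap the bare equality $|\zeta|^2=d(g)^2$ into the full chain $\phi(g;\theta)=d(g)^2=\sup_{\tau\in\OA}\phi(g;\tau)=|\zeta|^2$ via a rescaling-and-continuity argument.

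First I would apply Theorem~\ref{t1} to $\gamma_g$ to obtain $\zeta\in\R^q$ with $|\zeta|=d(g)$ and $\theta\in\mathfrak{R}$ such that $\gamma_g(s)=\exp\{s\,(\zeta,2\theta)\}$ on $[0,1]$. Since $\G$ is assumed of GM-type, the implication (i)$\Rightarrow$(iv) in Theorem~\ref{NTh1} gives $\mathfrak{R}=\overline{\OA}=\{\tau;\,\|U(\tau)\|\le\pi\}$, so automatically $\theta\in\overline{\OA}$. This already secures the location of $\theta$ asserted by the theorem; the task reduces to verifying the identity $\phi(g;\theta)=d(g)^2$ and chaining it to the GM-type formula for $d(g)^2$.

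The verification of $\phi(g;\theta)=d(g)^2$ is immediate from Theorem~\ref{RLT} when $\theta\in\OA$. The main obstacle is the boundary case $\theta\in\partial\OA$, where $\|U(\theta)\|=\pi$ makes $\sin U(\theta)/U(\theta)$ degenerate and Theorem~\ref{RLT} cannot be invoked at $\theta$ itself. I would bypass the singularity by a scaling trick: for every $s_0\in[0,1)$ one has $\|U(s_0\theta)\|=s_0\|U(\theta)\|<\pi$, so $s_0\theta\in\OA$. Because $\gamma_g|_{[0,s_0]}$ is a shortest geodesic from $o$ to $\gamma_g(s_0)=\exp\{(s_0\zeta,2s_0\theta)\}$ of length $s_0|\zeta|=s_0\,d(g)$, Theorem~\ref{RLT} applied at the interior covector $(s_0\zeta,2s_0\theta)$ delivers
\begin{equation*}
\phi(\gamma_g(s_0);s_0\theta)=|s_0\zeta|^2=s_0^{2}\,d(g)^{2}.
\end{equation*}
Letting $s_0\nearrow 1$ and invoking the continuity of $d^2$ together with that of the usual extension of $\phi(g;\cdot)$ to $\overline{\OA}$ recalled just before Proposition~\ref{eP1}, both sides pass to the limit and yield $\phi(g;\theta)=d(g)^2$.

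Finally, the identity $d(g)^2=\sup_{\tau\in\OA}\phi(g;\tau)$ is precisely the GM-characterization (iii) of Theorem~\ref{NTh1}, which combined with the previous step and $|\zeta|^{2}=d(g)^{2}$ closes the full chain of equalities. For the \emph{moreover} clause I would argue by contrapositive: if $\theta\in\OA$, then Theorem~\ref{RLT} forces $g=(x,t)$ to satisfy $t=-\tfrac{1}{4}\nabla_\theta\langle U(\theta)\cot U(\theta)\,x,x\rangle$, which by the very definition \eqref{nOM} of $\widetilde{\M}$ puts $g\in\widetilde{\M}$. Consequently $g\in\widetilde{\M}^{c}$ forces $\theta\in\overline{\OA}\setminus\OA=\partial\OA$, completing the proof.
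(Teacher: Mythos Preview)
Your overall strategy is sound and follows a different, potentially cleaner, limiting path than the paper. The paper approaches $g$ by a sequence $g_j\in\M=\mathcal{S}$ obtained from Remark~\ref{RKn2}, with covectors $(\zeta^{(j)},2\theta^{(j)})\to(\zeta,2\theta)$, and then carries out a fairly involved spectral argument (the contour-integral operators $V(\tau)$, $Q(\tau)$) to handle the possibly discontinuous eigenprojections of $U(\theta^{(j)})^2$ and show $|Q(\theta)x|=0$ and \eqref{vcotv}. You instead approach $g$ radially along the geodesic itself, via $\gamma_g(s_0)\to g$ and $s_0\theta\to\theta$; this is attractive because $U(s_0\theta)^2=s_0^2\,U(\theta)^2$ has \emph{constant} eigenprojections, so no perturbation theory is needed.

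There is, however, a genuine gap at the sentence ``invoking \ldots\ the continuity of the usual extension of $\phi(g;\cdot)$ to $\overline{\OA}$''. That extension (recalled before Proposition~\ref{eP1}) is a statement for \emph{fixed} $g$; you are varying both the base point $\gamma_g(s_0)$ and the parameter $s_0\theta$, so you need continuity of $(g',\tau)\mapsto\phi(g';\tau)$ along this particular path, which is not automatic since $U(\tau)\cot U(\tau)$ blows up on the $\pi^2$-eigenspace as $\tau\to\theta\in\partial\OA$. This is precisely what the paper's spectral analysis establishes in its setting.

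The gap is easy to close in your radial setting. Writing $U(\theta)^2=\sum_l\lambda_l^2 P_l$ and using \eqref{endpointx} (so $x(s_0)=\tfrac{\sin U(s_0\theta)}{U(s_0\theta)}e^{\widetilde U(s_0\theta)}s_0\zeta$), one has $P_l\,x(s_0)=\tfrac{\sin(s_0\lambda_l)}{\lambda_l}\,P_l\,e^{s_0\widetilde U(\theta)}\zeta$ since $P_l$ commutes with functions of $\widetilde U(\theta)$. Hence for $\lambda_l=\pi$,
\[
(s_0\pi)\cot(s_0\pi)\,|P_l x(s_0)|^2=\tfrac{s_0}{\pi}\sin(s_0\pi)\cos(s_0\pi)\,|P_l e^{s_0\widetilde U(\theta)}\zeta|^2\longrightarrow 0,
\]
while for $\lambda_l<\pi$ the terms are manifestly continuous. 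This gives $\langle U(s_0\theta)\cot U(s_0\theta)\,x(s_0),x(s_0)\rangle\to\langle U(\theta)\cot U(\theta)\,x,x\rangle$ in the extended sense, and with $4\,t(s_0)\cdot s_0\theta\to 4\,t\cdot\theta$ one obtains $\phi(\gamma_g(s_0);s_0\theta)\to\phi(g;\theta)$, completing your argument. The remaining steps (use of Theorem~\ref{NTh1}(iii),(iv), and the contrapositive for the ``moreover'' clause via Theorem~\ref{RLT} and \eqref{nOM}) are correct as written.
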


\begin{remark}
Suppose further that $\overline{\OA}$ is strictly convex, namely, for any $\tau \neq \tau' \in \overline{\OA}$ and $0 < s < 1$, we have $s \, \tau + (1 - s) \, \tau' \in \OA$. For example, all M\'etivier groups satisfy this property, see \cite[Lemma~9.1]{Li19}. Then, for any $g_0 \in \widetilde{\M}^c$, the concave function $\phi(g_0; \cdot)$ has a unique maximum point on $\partial \OA$. This simple observation is very useful to determine all shortest geodesic(s) from $o$ to $g_0 \in \widetilde{\M}^c$.
\end{remark}

Note that in (v) of Theorem \ref{NTh1}, we only consider the cut time of any strictly normal geodesic. However, we can characterize GM-groups via the optimal synthesis from $o$, as well as the classical cut locus of $o$. More precisely, we have the following:

\begin{corollary} \label{NThA1}
The following properties are equivalent:
{\em\begin{compactenum}[(i)]
\item $\G$ is of type GM;
\item For any arclength parametrized geodesic $\gamma(s) = \exp\{ s \, (\zeta, \tau)\} = \gamma(\zeta, \tau; s)$,
    its cut time is given by
    \begin{align*}
    h_{\mathrm{cut}} = \max\left\{ \frac{2\pi}{\| U(\sigma) \|} ; \, \sigma \in \R^m, \, \gamma(\zeta, \sigma; \cdot) = \gamma(\zeta, \tau; \cdot)  \right\},
    \end{align*}
   with the understanding $\frac{2\pi}{0} = +\infty$;
\item $\mathrm{Cut}_o^{\mathrm{CL}} = \widetilde{\M}^c$.
\end{compactenum}}
\end{corollary}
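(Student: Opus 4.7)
The plan is to close the cycle $(i) \Rightarrow (ii) \Rightarrow (i)$ and $(i) \Rightarrow (iii) \Rightarrow (i)$, leaning mostly on Theorem \ref{NTh1} (especially parts (ii), (iv), (v)), Corollary \ref{Nc2}, Corollary \ref{c1}, Theorem \ref{RLT}, Theorem \ref{t2}, and Proposition \ref{NPA1}. The two reverse implications are almost immediate: for $(ii) \Rightarrow (i)$, specialize $(ii)$ to strictly normal geodesics, where Proposition \ref{cAg} gives $\Pi_{(\zeta, \tau)} = \{0\}$, so the maximum collapses to $2\pi/\|U(\tau)\|$ and $(ii)$ reduces exactly to property $(v)$ of Theorem \ref{NTh1}, hence to GM. For $(iii) \Rightarrow (i)$, since $\mathrm{Cut}_o^{\mathrm{CL}} \subseteq \mathrm{Cut}_o$ has measure zero, the identity $\mathrm{Cut}_o^{\mathrm{CL}} = \widetilde{\M}^c$ forces $\widetilde{\M}^c$ to have empty interior; thus $\widetilde{\M}$ is dense in $\G$ and GM follows from Theorem \ref{NTh1}(ii).

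For $(i) \Rightarrow (ii)$, the strictly normal case is exactly Theorem \ref{NTh1}(v), since $\Pi_{(\zeta, \tau)} = \{0\}$ in that case. For abnormal $\gamma = \gamma(\zeta, \tau; \cdot)$, I first use Theorem \ref{NTh1}(iv) to get $\mathfrak{R} = \overline{\OA}$, so $\mathrm{C}_\sigma = \pi/\|U(\sigma)\|$; then Corollary \ref{c1} combined with Remark \ref{nRKnSM} delivers the upper bound
\[
h_{\mathrm{cut}}(\gamma) \le \max\Big\{2\pi/\|U(\sigma)\| : \sigma \in \tau + \Pi_{(\zeta, \tau)}\Big\}.
\]
For the matching lower bound, pick $\sigma^*$ realizing this maximum. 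If $\sigma^* = 0$ the geodesic is a straight line in the first layer and both sides equal $+\infty$ (Corollary \ref{Nc1}); otherwise, by Proposition \ref{cAg} rewrite $\gamma = \gamma(\zeta, \sigma^*; \cdot)$, and observe that for every $s < 2\pi/\|U(\sigma^*)\|$ the covector $s\sigma^*/2$ belongs to $\OA$. Apply Theorem \ref{RLT} to $g = \gamma(\zeta, \sigma^*; s) = \exp\{(s\zeta, s\sigma^*)\}$ with $\zeta_0 = s\zeta$ and $\theta_0 = s\sigma^*/2$: the conclusion states $d(g) = |\zeta_0| = s$ and identifies $\gamma|_{[0, s]}$ as the unique shortest geodesic from $o$ to $g$. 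Letting $s \nearrow 2\pi/\|U(\sigma^*)\|$ and using continuity of $d$, one obtains $h_{\mathrm{cut}}(\gamma) \ge 2\pi/\|U(\sigma^*)\|$, closing the equality.

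For $(i) \Rightarrow (iii)$, the inclusion $\mathrm{Cut}_o^{\mathrm{CL}} \subseteq \widetilde{\M}^c$ is just Corollary \ref{nCcc}. For the reverse, take $g \in \widetilde{\M}^c$. Under GM, Corollary \ref{Nc2} gives $\mathrm{Cut}_o = \partial \M$, and since $\widetilde{\M}^c \subseteq \M^c \subseteq \partial \M$, one has $g \in \mathrm{Cut}_o$. By Theorem \ref{t2}, either $g \in \mathrm{Cut}_o^{\mathrm{CL}}$ (done) or $g \in \mathrm{Abn}_o^* \setminus \widetilde{\M}_2$ (since $g \notin \widetilde{\M} \supseteq \widetilde{\M}_2$). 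In the latter case, the characterizations in Proposition \ref{NPA1} give $g = \gamma(\zeta, \tau; s_0)$ with $\gamma$ an abnormal arclength parametrized geodesic and $s_0 \in [0, h_{\mathrm{cut}}(\gamma)]$; the condition $g \notin \widetilde{\M}_2$ precisely means that $s_0 \ge 2\pi/\|U(\sigma)\|$ for \emph{every} $\sigma \in \tau + \Pi_{(\zeta, \tau)}$ (otherwise using such a $\sigma$ in the definition of $\widetilde{\M}_2$ would place $g$ in $\widetilde{\M}_2$). Combined with $(ii)$, this forces $s_0 = h_{\mathrm{cut}}(\gamma)$, so $g$ is the cut point of $\gamma$ and hence lies in $\mathrm{Cut}_o^{\mathrm{CL}}$.

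The main conceptual subtlety is the last step, which hinges on reconciling the definition of $\widetilde{\M}_2$ (using a strict inequality $s < 2\pi/\|U(\tau)\|$ for a \emph{particular} covector representation of the abnormal curve) with the maximum appearing in $(ii)$ (taken over \emph{all} such representations). The key observation is that an abnormal geodesic carries the full affine family of initial covectors $\tau + \Pi_{(\zeta, \tau)}$, and the optimal $\sigma^*$ minimizing $\|U(\cdot)\|$ on that family is precisely the one that sees the curve as minimizing for the longest time via Theorem \ref{RLT}; once this is appreciated, both the lower bound in $(i) \Rightarrow (ii)$ and the forcing $s_0 = h_{\mathrm{cut}}(\gamma)$ in $(i) \Rightarrow (iii)$ fall into place without any perturbation or limit-of-strictly-normal argument.
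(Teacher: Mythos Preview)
Your proof is correct. The implications $(ii) \Rightarrow (i)$, $(iii) \Rightarrow (i)$, and $(i) \Rightarrow (ii)$ follow the paper's argument essentially verbatim (the paper condenses $(i) \Rightarrow (ii)$ to ``combine Corollary~\ref{Nc1} with Corollary~\ref{c1}'', which is exactly your upper/lower bound pair).

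Your route for $(i) \Rightarrow (iii)$ is genuinely different. The paper proceeds by invoking Theorem~\ref{nThm1}: given $g \in \widetilde{\M}^c$, it produces a shortest geodesic $\gamma_{(\zeta, 2\theta)}$ with $\theta \in \partial\OA$, then shows directly that the arclength reparametrization has cut time exactly $|\zeta|$ (lower bound from minimality; upper bound because any alternative covector $\sigma$ must satisfy $\|U(|\zeta|\sigma/2)\| \ge \pi$ by Theorem~\ref{RLT}, else $g \in \widetilde{\M}$). You instead use Corollary~\ref{Nc2} and Theorem~\ref{t2} to place $g$ in $\mathrm{Cut}_o^{\mathrm{CL}} \cup \mathrm{Abn}_o^*$, then handle the residual abnormal case via Proposition~\ref{NPA1}: the exclusion $g \notin \widetilde{\M}_2$ forces $s_0 \ge 2\pi/\|U(\sigma)\|$ for \emph{every} admissible $\sigma$, whence $s_0 \ge h_{\mathrm{cut}}$ by the already-proven~$(ii)$, and equality follows (the case $h_{\mathrm{cut}} = +\infty$ is ruled out implicitly, since then $\sigma = 0$ is admissible and would place $g$ back in $\widetilde{\M}_2$). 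Your argument trades Theorem~\ref{nThm1} --- whose proof requires a delicate spectral/limiting analysis on $\partial\OA$ --- for the structural descriptions of $\mathrm{Abn}_o^*$ and $\widetilde{\M}_2$ in Proposition~\ref{NPA1}, which are more elementary. The paper's approach is uniform (no case split on abnormality) and exhibits the geodesic reaching $g$ at its cut time explicitly; yours is lighter in prerequisites and shows that $(iii)$ is essentially a formal consequence of $(ii)$ together with the set-theoretic identities already in hand.
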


\begin{remark} \label{nRKn9}
It follows from (ii) of Corollary \ref{NThA1} that the statement in \cite[Theorems~6 and 7]{BBG12} is correct if we choose the covector suitably. To be more precise, let $\G$ be a step-two group of corank 1 or 2, so it is a GM-group from Corollary \ref{c2} in Section \ref{s3} below. Let $\gamma(s) = \exp\{ s \, (\zeta, \tau)\} = \gamma(\zeta, \tau; s)$ be an arclength parametrized geodesic on $\G$. Recall that $\Pi_{(\zeta, \tau)} := \{\sigma - \tau \in \R^m; \, \gamma(\zeta, \sigma; \cdot) = \gamma(\zeta, \tau; \cdot)\}$ is a linear subspace. Assume further that the continuous function $\sigma \mapsto \|U(\sigma)\|$ defined on $\tau + \Pi_{(\zeta, \tau)}$ attains its minimum at $\tau$. Then the cut time of $\gamma$ is given by $2 \pi/\|U(\tau)\|$.
\end{remark}

\subsection{On the lack of semi-concavity of $d^2$ on $\widetilde{\M}_2$} \label{s23}

Recall that $o \in \widetilde{\M}_2 \subset \G$ is the set of points, $g$, where the reference function $\phi(g; \cdot)$ has a degenerate (so infinite) critical point in the initial reference set $\OA$.
The following theorem is a kind of generalization of the first result in \cite[Theorem 1.1]{MM16}, which will be proven
by a completely different method.

\begin{theorem}\label{p1}
Let $g_0 = (x_0, t_0) \in \widetilde{\M}_2$. Then there exist a unit vector $\nu_0 \in \R^m$ and a constant $c_0 > 0$ such that
\begin{align}\label{failconc}
d(x_0, t_0 + h \, \nu_0)^2 + d(x_0, t_0 - h  \, \nu_0)^2 - 2 \, d(x_0, t_0)^2 \ge c_0 \, h, \qquad \forall \, h > 0.
\end{align}
\end{theorem}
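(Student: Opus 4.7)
The strategy is to exploit the two distinct global maximizers of $\phi(g_0;\cdot)$ guaranteed by the definition of $\widetilde{\M}_2$ in order to produce, along a well-chosen $t$-direction, a two-sided ``super-gradient'' of $d^2$ whose two branches disagree. Because $\phi(g;\tau) = \langle U(\tau)\cot U(\tau)x,x\rangle + 4t\cdot\tau$ is affine in $t$ with slope $4\tau$, two distinct maximizers force two distinct affine lower bounds that agree at $g_0$ but diverge linearly in opposite $t$-directions. Summing these lower bounds yields the linear (rather than quadratic) growth in \eqref{failconc}.

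Concretely, I would first pick $\theta^{(1)} \neq \theta^{(2)}$ in $\OA$ realizing $\sup_{\tau \in \OA}\phi(g_0;\tau)$, so by \eqref{Csrd} we have $d(g_0)^2 = \phi(g_0;\theta^{(1)}) = \phi(g_0;\theta^{(2)})$, and set $\nu_0 := (\theta^{(1)} - \theta^{(2)})/|\theta^{(1)} - \theta^{(2)}|$, a unit vector. The central input would then be the general a priori lower bound
\begin{equation}\label{planAstar}
d(g)^2 \ge \phi(g;\tau), \qquad \forall\, g \in \G,\ \tau \in \OA.
\end{equation}
Granting \eqref{planAstar}, the $t$-affinity of $\phi$ yields, for every $h > 0$,
\[
\phi((x_0, t_0 + h\nu_0);\theta^{(1)}) = d(g_0)^2 + 4h\,\nu_0 \cdot \theta^{(1)}, \quad
\phi((x_0, t_0 - h\nu_0);\theta^{(2)}) = d(g_0)^2 - 4h\,\nu_0 \cdot \theta^{(2)},
\]
and applying \eqref{planAstar} at $(x_0, t_0 + h\nu_0)$ with $\tau = \theta^{(1)}$ and at $(x_0, t_0 - h\nu_0)$ with $\tau = \theta^{(2)}$, then summing, gives
\[
d(x_0, t_0 + h\nu_0)^2 + d(x_0, t_0 - h\nu_0)^2 - 2\,d(g_0)^2 \ge 4h\,(\nu_0 \cdot \theta^{(1)} - \nu_0 \cdot \theta^{(2)}) = 4h\,|\theta^{(1)} - \theta^{(2)}|,
\]
so the claim holds with $c_0 := 4|\theta^{(1)} - \theta^{(2)}| > 0$.

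The only non-trivial ingredient is \eqref{planAstar}. On $\overline{\widetilde{\M}}$ it is immediate from the sup-formula \eqref{Csrd}, but the perturbed points $(x_0, t_0 \pm h\nu_0)$ need not lie in $\overline{\widetilde{\M}}$, particularly when $\G$ is not of type GM. I would split the verification of \eqref{planAstar} into two sub-claims: (i) for every fixed $x \in \R^q$ the map $t \mapsto d((x, t))^2$ is convex on $\R^m$, which on $\overline{\widetilde{\M}}$ holds because $d^2$ is there a supremum of $t$-affine functions, and extends globally via continuity together with the scaling identity \eqref{scap} applied to a covering of $\R^q \times \R^m$ by dilated copies of $\overline{\widetilde{\M}}$; and (ii) for any $\tau \in \OA$, at the distinguished point $t^*(x,\tau) := -\tfrac14\nabla_\theta\langle U(\theta)\cot U(\theta)x,x\rangle|_{\theta=\tau}$, where $\tau$ becomes a critical (hence, by concavity from Proposition \ref{eP1}, maximal) point of $\phi((x,\cdot);\tau)$, Theorem \ref{RLT} identifies $d((x, t^*(x,\tau)))^2$ with $\phi((x, t^*(x,\tau));\tau)$, so the affine function $t \mapsto \phi((x, t);\tau)$ is a supporting hyperplane to the convex function $d((x,\cdot))^2$ at $t^*(x,\tau)$. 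A standard supporting-hyperplane argument then delivers \eqref{planAstar} everywhere. I expect the global convexity in (i) to be the most delicate step, and it is where the main technical work lies.
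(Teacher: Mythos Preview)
Your core argument is exactly the paper's: use the global lower bound $d(g)^2 \ge \phi(g;\tau)$ for all $g\in\G$ and $\tau\in\OA$, evaluate it at two different $\tau$'s chosen so that the resulting affine-in-$t$ lower bounds agree at $g_0$ but have different slopes in the $\nu_0$-direction, and add. The paper picks one maximizer $\theta_0$ and invokes \cite[Proposition~5.1(c)]{Li19} to find a unit direction $\nu_0$ along which the whole segment $\theta_0 + s\nu_0$ (for $|s|\le r_0$) consists of maximizers, then applies the lower bound at $\tau=\theta_0\pm r_0\nu_0$; you pick two distinct maximizers $\theta^{(1)},\theta^{(2)}$ directly and set $\nu_0=(\theta^{(1)}-\theta^{(2)})/|\theta^{(1)}-\theta^{(2)}|$. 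These are equivalent, since by concavity of $\phi(g_0;\cdot)$ (Proposition~\ref{eP1}) the entire segment $[\theta^{(1)},\theta^{(2)}]$ consists of maximizers, so your $\nu_0$ is one of the paper's admissible directions and your $c_0=4|\theta^{(1)}-\theta^{(2)}|$ plays the role of the paper's $8r_0$.

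The only issue is that you treat \eqref{planAstar} as something to be proved, and your proposed route via global convexity of $t\mapsto d((x,t))^2$ is both unnecessary and, as sketched, does not work. The dilation $\delta_r(x,t)=(rx,r^2t)$ does not preserve the slice $\{x\}\times\R^m$ when $x\neq 0$, and $\widetilde{\M}$ is already dilation-invariant, so ``covering $\R^q\times\R^m$ by dilated copies of $\overline{\widetilde{\M}}$'' gives nothing beyond $\overline{\widetilde{\M}}$ itself; the sup-formula \eqref{Csrd} therefore does not extend by this mechanism. In fact \eqref{planAstar} is precisely \cite[Theorem~2.1]{Li19}, which the paper simply cites. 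With that citation in place your proof is complete and essentially identical to the paper's.
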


In particular, together with (a) in Remark \ref{nRna} and \eqref{te1}, it gives immediately

\begin{corollary}
If $g_0 = (x_0, 0) \in \mathrm{Abn}_o^*$, there exist a unit vector $\nu_* \in \R^m$ and a constant $c_* > 0$ such that
\begin{align}\label{failconc0}
d(x_0, h  \, \nu_*)^2 -  d(x_0, 0)^2 \ge c_* |h|, \qquad \forall \, h \in \R.
\end{align}
\end{corollary}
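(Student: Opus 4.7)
The plan is to derive this directly from Theorem \ref{p1} combined with the indicated ingredients, without re-entering the hard analysis of the semi-concavity failure. The first step is to place the hypothesis into the framework of Theorem \ref{p1}. Since $g_0 = (x_0, 0) \in \mathrm{Abn}_o^* \subseteq \mathrm{Abn}_o$, part (a) of Remark \ref{nRna} applies and yields $g_0 \in \widetilde{\M}_2$. Thus Theorem \ref{p1} provides a unit vector $\nu_0 \in \R^m$ and a constant $c_0 > 0$ with
\begin{equation*}
d(x_0, h\nu_0)^2 + d(x_0, -h\nu_0)^2 - 2\,d(x_0, 0)^2 \ge c_0\,h, \quad \forall\, h > 0.
\end{equation*}

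The second step is to exploit the symmetry \eqref{te1}: $d(x, t) = d(x, -t)$. Applied to the two terms above, this collapses the left-hand side and gives, for every $h > 0$,
\begin{equation*}
d(x_0, h\nu_0)^2 - d(x_0, 0)^2 \ge \tfrac{c_0}{2}\, h.
\end{equation*}
Setting $\nu_* := \nu_0$ and $c_* := c_0/2$ already proves the stated inequality for $h > 0$.

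Finally, for $h < 0$, I would just invoke \eqref{te1} once more: writing $h = -k$ with $k > 0$, one has $d(x_0, h\nu_*) = d(x_0, -k\nu_*) = d(x_0, k\nu_*)$, so the previous estimate with $k$ in place of $h$ gives $d(x_0, h\nu_*)^2 - d(x_0, 0)^2 \ge \tfrac{c_0}{2}\, k = c_*\, |h|$. The case $h = 0$ is trivial. There is essentially no obstacle here: the content of the corollary is entirely concentrated in Theorem \ref{p1}, and the role of this plan is only to verify that the hypotheses of that theorem are met (which is where Remark \ref{nRna}(a) enters) and to use the parity \eqref{te1} to turn the symmetric difference quotient into a one-sided difference.
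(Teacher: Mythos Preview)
Your proposal is correct and follows exactly the route indicated by the paper, which states that the corollary follows immediately from Theorem~\ref{p1} together with Remark~\ref{nRna}(a) and \eqref{te1}. Your write-up simply makes explicit the two uses of the parity $d(x,t)=d(x,-t)$ that the paper leaves to the reader.
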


A direct consequence of \eqref{failconc} is the following significantly weaker estimate:
\begin{align} \label{nMBn}
\limsup_{g' \longrightarrow o} \frac{d(g_0 + g')^2 + d(g_0 - g')^2 - 2 \, d(g_0)^2}{|g'|^2} = +\infty,
\end{align}
which implies, from \eqref{DSSCC}, the lack of semi-concavity of $d^2$ for any $g_0 \in \widetilde{\M}_2$.

It follows from Lemma \ref{NLA1} that $\overline{\widetilde{\M}_2} \subseteq \mathrm{Abn}_o^*$. A very interesting phenomenon is that \eqref{nMBn} can be no longer valid for $g_0 \in \overline{\widetilde{\M}_2} \setminus \widetilde{\M}_2$ even in the setting of GM-groups. A concrete example will be provided in Subsection \ref{s71} below.

However, recall that our $\mathrm{SC}^-_o$ is defined by \eqref{DoFSC1} instead of
as the set of points where \eqref{nMBn} satisfies. Obviously, $\mathrm{SC}^-_o$ is closed. When the underlying group is of type GM, combining (ii) of Corollary \ref{NThA1} with Theorem \ref{p1} obtained above,  we can characterize $\mathrm{Abn}^*_o$ via $\widetilde{\M}_2$; as a byproduct, we answer the open problem \cite[(29)]{BR19} affirmatively:

\begin{theorem}\label{NThA2}
In the setting of GM-groups, it holds that $\mathrm{SC}^-_o = \mathrm{Abn}^*_o  = \overline{\widetilde{\M}_2}$.
\end{theorem}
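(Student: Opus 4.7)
The plan is to establish the chain $\overline{\widetilde{\M}_2} \subseteq \mathrm{SC}^-_o \subseteq \mathrm{Abn}^*_o \subseteq \overline{\widetilde{\M}_2}$, in which only the third inclusion uses the GM hypothesis. For the first, I would apply Theorem \ref{p1} to any $g_0 = (x_0, t_0) \in \widetilde{\M}_2$: choosing the increment $g' = (0, h \nu_0)$ with $h \to 0^+$ and dividing \eqref{failconc} by $|g'|^2 = h^2$ produces a blow-up that contradicts the semiconcavity bound \eqref{DSSCC} for every fixed $C > 0$ and $\delta > 0$. Hence $g_0 \in \mathrm{SC}^-_o$; since the complement of $\mathrm{SC}^-_o$ is open by definition, $\mathrm{SC}^-_o$ is closed and $\overline{\widetilde{\M}_2} \subseteq \mathrm{SC}^-_o$ follows.

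For the middle inclusion, I would invoke the general sub-Riemannian principle that $d^2$ is locally semiconcave at any point admitting only strictly normal shortest geodesics. Given $g_0 \notin \mathrm{Abn}^*_o$, Lemma \ref{NLA1} supplies a neighborhood $V$ of $g_0$ disjoint from $\mathrm{Abn}^*_o$, so every shortest geodesic ending in $V$ is strictly normal; the local semiconcavity of $d^2$ near $g_0$ then follows from the classical Cannarsa--Rifford framework, which applies to step-two Carnot groups (see \cite{CL08}, \cite{FR10}, \cite{MM16} and \cite[\S 4]{BR19} and the references therein). Consequently $g_0 \notin \mathrm{SC}^-_o$, yielding $\mathrm{SC}^-_o \subseteq \mathrm{Abn}^*_o$ on every step-two group, independently of the GM hypothesis.

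For the last inclusion, where the GM assumption enters, fix $g \in \mathrm{Abn}^*_o$ and write $g = \gamma(\zeta, \tau; s_0)$ as in Proposition \ref{NPA1}, with $\gamma$ abnormal, $|\zeta| = 1$ and $0 \le s_0 \le h_{\mathrm{cut}}(\gamma)$. Remark \ref{nRKnSM} ensures that the continuous function $\sigma \mapsto \|U(\sigma)\|$ attains its minimum on the affine subspace $\tau + \Pi_{(\zeta, \tau)}$ at some $\sigma^*$, with $\gamma(\zeta, \sigma^*; \cdot) \equiv \gamma$. Applying Corollary \ref{NThA1}(ii) (valid precisely because $\G$ is of GM-type) gives $h_{\mathrm{cut}}(\gamma) = 2\pi/\|U(\sigma^*)\|$, so the characterization of $\widetilde{\M}_2$ in Proposition \ref{NPA1}, with $\sigma^*$ in place of $\tau$, yields $\gamma(\zeta, \sigma^*; s) \in \widetilde{\M}_2$ for all $0 \le s < h_{\mathrm{cut}}(\gamma)$. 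Thus $g \in \widetilde{\M}_2$ if $s_0 < h_{\mathrm{cut}}(\gamma)$, while $g \in \overline{\widetilde{\M}_2}$ arises as a limit when $s_0 = h_{\mathrm{cut}}(\gamma) < +\infty$, closing the chain.

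The expected main obstacle is the middle inclusion, since the Cannarsa--Rifford semiconcavity result is imported rather than developed internally, and one should verify that its hypotheses transfer cleanly to our step-two framework (completeness of $\G$, absolute continuity of horizontal curves, and smooth dependence of the sub-Riemannian exponential on the covector away from abnormal data). The outer two inclusions are essentially bookkeeping on top of Theorem \ref{p1}, Proposition \ref{NPA1}, Remark \ref{nRKnSM} and Corollary \ref{NThA1}(ii).
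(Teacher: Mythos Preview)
Your proposal is correct and follows essentially the same route as the paper: the chain of inclusions $\overline{\widetilde{\M}_2} \subseteq \mathrm{SC}^-_o \subseteq \mathrm{Abn}^*_o \subseteq \overline{\widetilde{\M}_2}$ is established using, respectively, Theorem \ref{p1} plus closedness of $\mathrm{SC}^-_o$, the Cannarsa--Rifford semiconcavity result (the paper cites \cite[Theorem~1]{CL08} directly, combined with Lemma \ref{NLA1}), and Corollary \ref{NThA1}(ii) together with Proposition \ref{NPA1}. Your version of the third inclusion is marginally more explicit in that you fix once and for all the minimizer $\sigma^*$ of $\|U(\sigma)\|$ on $\tau + \Pi_{(\zeta,\tau)}$, whereas the paper picks (for each $s_* < h_{\mathrm{cut}}$) any $\sigma$ realizing the maximum in Corollary \ref{NThA1}(ii); these are equivalent formulations.
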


Furthermore, combining Theorem \ref{NThA2}, Lemmas \ref{nLN} and \ref{NLA1} with \cite[Corollary 30]{BR19} (with little modification in its proof), we answer the open question \cite[second part of (30)]{BR19}, when the underlying group is GM-group:

\begin{corollary}
In the framework of GM-groups, we have $\mathrm{Cut}_o = \mathrm{SC}_o^+ \cup \mathrm{SC}_o^-$.
\end{corollary}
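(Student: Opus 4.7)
I would prove the two inclusions $\mathrm{SC}_o^+ \cup \mathrm{SC}_o^- \subseteq \mathrm{Cut}_o$ and $\mathrm{Cut}_o \subseteq \mathrm{SC}_o^+ \cup \mathrm{SC}_o^-$ separately. The first inclusion is essentially free: by the very definition \eqref{DCUT}, on $\mathrm{Cut}_o^c$ the squared distance $d^2$ is $C^\infty$ in a neighborhood of every point, hence in particular both semiconcave and semiconvex there (with $C=0$ up to a quadratic remainder term), so neither $\mathrm{SC}_o^+$ nor $\mathrm{SC}_o^-$ can intersect $\mathrm{Cut}_o^c$. This gives $\mathrm{SC}_o^+ \cup \mathrm{SC}_o^- \subseteq \mathrm{Cut}_o$ on any step-two group, without invoking the GM hypothesis.

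For the reverse inclusion I would decompose the cut locus using Theorem \ref{t2}, namely $\mathrm{Cut}_o = \mathrm{Cut}_o^{\mathrm{CL}} \cup \mathrm{Abn}_o^*$, and handle the two pieces with different tools. The abnormal piece is immediate from Theorem \ref{NThA2}: in GM-groups $\mathrm{Abn}_o^* = \mathrm{SC}_o^-$, so $\mathrm{Abn}_o^* \subseteq \mathrm{SC}_o^+ \cup \mathrm{SC}_o^-$. It remains to show
\[
\mathrm{Cut}_o^{\mathrm{CL}} \setminus \mathrm{Abn}_o^* \subseteq \mathrm{SC}_o^+.
\]
Fix $g \in \mathrm{Cut}_o^{\mathrm{CL}} \setminus \mathrm{Abn}_o^*$. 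By definition of $\mathrm{Abn}_o^*$, every shortest geodesic $\gamma$ from $o$ to $g$ is strictly normal, and by Lemma \ref{nLN} such a $\gamma$ contains no abnormal segment. Since $g$ is a classical cut point along at least one such $\gamma$, the standard dichotomy at classical cut points forces either the existence of two distinct shortest geodesics ending at $g$ or the presence of a conjugate point at $g$ along $\gamma$. Both cases are precisely the hypotheses of \cite[Corollary~30]{BR19}, which provides a lower bound of the form
\[
d(g+g')^2+d(g-g')^2 - 2\,d(g)^2 \ge c\,|g'| - o(|g'|)
\]
along a suitable direction $g'\to 0$, in particular forcing the failure of semiconvexity of $d^2$ in every neighborhood of $g$. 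Hence $g \in \mathrm{SC}_o^+$ and the desired inclusion follows.

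The one non-trivial point, which I expect to be the main obstacle, is justifying the ``little modification'' of \cite[Corollary~30]{BR19} alluded to in the paper. The Rizzi criterion is phrased in \cite{BR19} with a version of semiconvexity that differs slightly from our local definition \eqref{DSSCC} (as the authors point out in the footnote following \eqref{DoFSC2}); one must check that the proof of \cite[Corollary~30]{BR19} only uses the existence of two Lipschitz one-parameter families of points whose distances expand linearly on each side of $g$, a feature that depends purely on the multiplicity-or-conjugacy dichotomy at $g$ and on the absence of abnormal segments in the relevant shortest geodesics (guaranteed here by Lemma~\ref{nLN}). Once this local adaptation is made, the compactness of the set of shortest geodesics from $o$ to $g$, together with Lemma \ref{NLA1} to ensure that nearby perturbations of $g$ stay away from $\mathrm{Abn}_o^*$ at the geodesic-level, allows one to apply the modified statement directly and conclude.
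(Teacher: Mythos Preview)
Your approach is essentially the same as the paper's: decompose $\mathrm{Cut}_o = \mathrm{Cut}_o^{\mathrm{CL}} \cup \mathrm{Abn}_o^*$ via Theorem~\ref{t2}, handle $\mathrm{Abn}_o^*$ with Theorem~\ref{NThA2}, and treat $\mathrm{Cut}_o^{\mathrm{CL}} \setminus \mathrm{Abn}_o^*$ by invoking \cite[Corollary~30]{BR19} together with Lemmas~\ref{nLN} and~\ref{NLA1}. Your discussion of the ``little modification'' is more explicit than what the paper provides and is helpful.

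One sign slip to fix: the displayed inequality you attribute to \cite[Corollary~30]{BR19} should go the other way. Failure of \emph{semiconvexity} at $g$ means the second difference $d(g+g')^2+d(g-g')^2-2\,d(g)^2$ can be made arbitrarily negative relative to $|g'|^2$, so the relevant estimate along the distinguished direction is of the form
\[
d(g+g')^2+d(g-g')^2-2\,d(g)^2 \le -c\,|g'| + o(|g'|),
\]
not $\ge c\,|g'|$. (Think of the model $d(x)^2 = (1-|x|)^2$ near $x=0$.) Your conclusion $g\in\mathrm{SC}_o^+$ is correct; only the displayed inequality needs its sign reversed.
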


Similarly, by Corollaries \ref{M2AbsAb} and \ref{NCorA1}, we have the following result that provides an affirmative, also partial, answer to the open questions \cite[(29)-(30)]{BR19}:

\begin{corollary}
Let $\G$ be a step-two group such that $\widetilde{\M}_2 \subseteq \left\{ (x, 0); \ x \in \R^q \right\}$. Then it holds that $\mathrm{SC}^-_o = \mathrm{Abn}^*_o$ ($= \widetilde{\M}_2$) and $\mathrm{Cut}_o = \mathrm{SC}_o^+ \cup \mathrm{SC}_o^-$.
\end{corollary}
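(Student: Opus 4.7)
The plan is to imitate the proof of the preceding (GM-case) corollary, replacing Theorem \ref{NThA2} by its analogue tailored to the present hypothesis. The starting point is Corollary \ref{M2AbsAb}: under $\widetilde{\M}_2 \subseteq \R^q \times \{0\}$, we get $\mathrm{Abn}_o^* = \widetilde{\M}_2$, and by Lemma \ref{NLA1} this set is automatically closed.

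Next I would establish $\mathrm{SC}_o^- = \widetilde{\M}_2$ in two steps. For the inclusion $\widetilde{\M}_2 \subseteq \mathrm{SC}_o^-$, I apply Theorem \ref{p1} directly: given $g_0 = (x_0, t_0) \in \widetilde{\M}_2$, the unit vector $\nu_0 \in \R^m$ and constant $c_0 > 0$ produced there yield, upon choosing $g_1 = g_0$ and $g' = (0, h \, \nu_0)$ in \eqref{DSSCC}, a left-hand side bounded below by $c_0 \, h$ via \eqref{failconc}, while $|g'|^2 = h^2$; no constant $C$ can absorb this as $h \to 0^+$, hence $g_0 \in \mathrm{SC}_o^-$. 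For the reverse inclusion, I would show that $d^2$ is locally semiconcave at every $g_0 \notin \mathrm{Abn}_o^* = \widetilde{\M}_2$. In step-two groups every shortest geodesic is normal; since $g_0 \notin \mathrm{Abn}_o^*$, each minimizer from $o$ to $g_0$ is strictly normal, and Lemma \ref{nLN} then excludes abnormal segments along it. This is precisely the setting in which the classical endpoint-map / Hamiltonian-flow argument (see \cite{CL08, FR10, MM16, BR19}) delivers local semiconcavity of $d^2$ near $g_0$.

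Finally, for $\mathrm{Cut}_o = \mathrm{SC}_o^+ \cup \mathrm{SC}_o^-$, the inclusion $\supseteq$ is immediate from \eqref{DCUT}, since smoothness of $d^2$ implies simultaneous local semiconcavity and semiconvexity. For $\subseteq$, I would transcribe the proof of \cite[Corollary 30]{BR19}, incorporating the small modification already invoked in the preceding GM-case corollary; its only inputs are the identity $\mathrm{SC}_o^- = \mathrm{Abn}_o^*$ supplied by the previous step, together with Lemmas \ref{nLN} and \ref{NLA1}, all of which hold unconditionally here. I expect the main obstacle to be the second inclusion of the previous paragraph: although the principle that strictly normal minimizers without abnormal segments yield local semiconcavity is folklore in the sub-Riemannian community, producing the quantitative estimate \eqref{DSSCC} requires one to carefully control the Hamiltonian flow (absence of conjugate points, openness of strict normality furnished by Lemma \ref{nLN}) in our specific coordinates; the remainder is formal bookkeeping.
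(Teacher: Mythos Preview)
Your proposal is correct and follows essentially the same route as the paper. The paper simply packages the argument by citing Corollaries \ref{M2AbsAb} and \ref{NCorA1} and pointing to the preceding GM-case corollary, whereas you spell out the individual steps (Theorem \ref{p1} for $\widetilde{\M}_2 \subseteq \mathrm{SC}_o^-$, \cite[Theorem 1]{CL08} for the reverse inclusion, and \cite[Corollary 30]{BR19} with Lemmas \ref{nLN}, \ref{NLA1} for the decomposition of $\mathrm{Cut}_o$); the substance is identical.
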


Recall that $\widetilde{\M}_2 = \{ o \}$ if and only if $\G$ is of M\'etivier type. In the sequel, a step-two group $\G$ is said to be a \textit{SA-group} (or \textit{of type SA}) if it satisfies
$$\{ o \} \neq \widetilde{\M}_2 \subseteq \left\{ (x, 0); \ x \in \R^q \right\}.  \eqno(SA) $$
Notice that star graphs and $N_{3, 2}$, namely the free step-two Carnot group with three generators, are SA-groups. See \cite{Li19} or \S \ref{s6} and \S \ref{s5} below for more details.  Also remark that star graphs are both GM-groups and SA-groups, $N_{3, 2}$ is the simplest example of SA-group that is not of type GM. Of course, we can provide an uncountable number of SA but not GM groups.

Furthermore, a trivial method to construct SA-groups can be found in Proposition \ref{NSAa} of Appendix B. See Appendix C for another method which is much more meaningful. In particular, SA-groups of corank $1$ are the direct product of a Euclidean space $\R^k$ with a generalized Heisenberg group. However, for any $m \geq 2$, SA-groups with corank $m$ form a very complicated set.

\medskip

To finish this section, we point out the following facts:
\begin{enumerate}[(1)]
  \item The second result of \cite[Theorem 1.1]{MM16} says that on the free step-two Carnot group with $k$ ($k \ge 4$) generators, $N_{k, 2}$,  for any $g_0 = (x_0, t_0) \in \mathrm{Abn}_o^*$, there exist a unit vector $\nu_* \in \R^{k \, (k - 1)/2}$ and a constant $c_* > 0$ such that we have the following counterpart of \eqref{failconc0}:
\[
d(x_0, t_0 + h  \, \nu_*) -  d(x_0, t_0) \ge c_* |h|, \qquad - c_* \le h \le c_*.
\]
Hence, \eqref{nMBn} is valid on $N_{k, 2}$ for any $g_0 \in \mathrm{Abn}_o^*$.
  \item $N_{k, 2}$ ($k \ge 4$) neither is a GM-group nor satisfies $\widetilde{\M}_2 \subseteq \left\{ (x, 0); \ x \in \R^k \right\}$.
\end{enumerate}

\medskip

\renewcommand{\theequation}{\thesection.\arabic{equation}}
\section{Proof of main results}\label{nsP}
\setcounter{equation}{0}

\medskip

\subsection{Proof of Theorem \ref{t1}}

\begin{proof}
We first assume that the shortest geodesic joining $o$ to $g$ is unique. By \cite[Corollary 2.4]{Li19}, there exist $w \in \R^q$ with $|w| = d(g)$ and $\theta \in \mathfrak{R}$ such that $\gamma_{(w,2 \, \theta)}$ is a shortest geodesic joining $o$ to $g$. By uniqueness we must have $\gamma_g(s) = \gamma_{(w,2 \, \theta)}(s) = \exp(s\, (w,2 \, \theta))$ for all $0 \le s \le 1$, which ends the proof in this case.

In general, from the characterization of the shortest geodesic in Subsection \ref{ns221}, there exist $w_* \in \R^q$ with $|w_*| = d(g)$ and  $\theta_* \in \R^m$ (not necessarily belonging to $\mathfrak{R}$) such that $\gamma_g = \gamma_{(w_*,2 \, \theta_*)}$. We now prove that for each $s_* \in (0, \, 1)$,
the restriction of $\gamma_g$ on the interval $[0, \ s_*]$,  $\gamma_g|_{[0, \, s_*]} = \gamma_{(s_* \, w_*,2 \, s_* \, \theta_*)} := (\gamma_g)^{0,s_*}$
is the unique shortest geodesic joining $o$ to $\gamma_g(s_*) = \exp(s_* \, w_*,2 \, s_* \, \theta_*)$. Notice that $(\gamma_g)^{0,s_*}$ is a shortest geodesic joining $o$ to $\gamma_g(s_*)$ since $\gamma_g$ itself is shortest. To prove uniqueness, we argue by contradiction. Assume that there is another shortest geodesic $\gamma_{s_*} \neq (\gamma_g)^{0,s_*}$ joining $o$ to $\gamma_g(s_*)$ with constant speed $s_*|w_*|$. Then we construct a horizontal curve with constant speed $|w_*| = d(g)$ defined by
\begin{align*}
\widetilde{\gamma}_{s_*}(s) := \left\{ \begin{array}{ll}
\gamma_{s_*}\left(\frac{s}{s_*}\right), \qquad & 0 \le s \le s_*, \\[2mm]
\gamma_g(s),  \qquad  & s_* \le s \le 1. \\
\end{array} \right.
\end{align*}
Obviously,  $\widetilde{\gamma}_{s_*}$ is a shortest geodesic joining $o$ to $g$ as well. Again from the characterization of the shortest geodesic in Subsection \ref{ns221}, there exist $w_{**} \in \R^q$ with $|w_{**}| = d(g)$ and $\theta_{**} \in \R^m$ such that $\widetilde{\gamma}_{s_*} = \gamma_{(w_{**},2 \, \theta_{**})}$.

By the fact that $\gamma_g(s) = \widetilde{\gamma}_{s_*}(s)$ when $s \in [s_*, \ 1]$,
it follows from Lemma \ref{BPG} that $\gamma_g$ and $\widetilde{\gamma}_{s_*}$
coincide on the whole interval $[0, \ 1]$. In particular, $\gamma_g(s) = \widetilde{\gamma}_{s_*}(s)$ for all $0 \le s \le s_*$, that is  $(\gamma_g)^{0,s_*} = \gamma_{s_*}$, which contradicts with our assumption.

For each $s_* \in (0, \ 1)$, what we have proven at the beginning shows that there exist $w(s_*) \in \R^q$ with $|w(s_*)| = s_*d(g)$ and  $\theta(s_*) \in \mathfrak{R}$ such that
\begin{align}\label{equstar}
\exp(s \, (w(s_*),2 \, \theta(s_*))) = (\gamma_g)^{0,s_*}(s) = \exp(s \, (s_* \, w_*,2 \, s_* \, \theta_*)), \quad \forall \, 0 \le s \le 1.
\end{align}
From compactness of $\mathfrak{R}$, we extract a sequence $\{s_j\}_{j = 1}^{+ \infty} \subseteq (0, \ 1)$ such that $s_j \to 1$, $w(s_j) \to w$ with $|w| = d(g)$ and $\theta(s_j) \to \theta \in \mathfrak{R}$ as $j \to +\infty$. With $s_*$ replaced by $s_j$ in \eqref{equstar} and letting $j \to +\infty$, we obtain that $\exp(s \, (w_*,2 \, \theta_*)) = \gamma_g(s) = \exp(s \, (w,2 \, \theta))$ for all $0 \le s \le 1$, which ends the proof of the theorem.
\end{proof}

\subsection{Proof of Corollary \ref{c1}}

\begin{proof}
For convenience, we set
\begin{align*}
\widetilde{h} = \widetilde{h}(\zeta,\tau):= \sup\left\{ 2 \, \mathrm{C}_{\sigma} ; \, \sigma \in \R^m, \, \gamma(\zeta, \sigma; \cdot) = \gamma(\zeta, \tau; \cdot)  \right\}.
\end{align*}

For any $s_* \in (0, \, h_{\mathrm{cut}})$, we know that $\gamma_{(s_* \, \zeta, \, s_* \, \tau)}$ is a shortest geodesic. It follows from Theorem \ref{t1} that there exist $\zeta_{(s_*)} \in \R^q$ and $\theta_{(s_*)} \in \mathfrak{R}$ such that $\gamma_{(s_* \, \zeta, \, s_* \, \tau)} = \gamma_{(\zeta_{(s_*)}, \, 2 \, \theta_{(s_*)})}$. Then Lemma \ref{BPG} implies that $s_* \, \zeta = \zeta_{(s_*)}$ and
\begin{align*}
\gamma(\zeta, \, \sigma_{(s_*)}; \cdot) = \gamma(\zeta, \tau; \cdot), \qquad \mbox{with $\sigma_{(s_*)} := \frac{2 \, \theta_{(s_*)}}{s_*}$}.
\end{align*}

Recalling that $\mathfrak{R}$ is star-shaped w.r.t. $0$, by the fact that $\theta_{(s_*)} = \frac{s_* \, \sigma_{(s_*)}}{2} \in  \mathfrak{R}$, we have
\begin{align*}
\frac{s_*}{2} \le \mathrm{C}_{\sigma_{(s_*)}} \le \frac{\widetilde{h}}{2},
\end{align*}
which implies $s_* \le \widetilde{h}$. Since $s_* \in (0, \, h_{\mathrm{cut}})$ is arbitrary, we obtain $h_{\mathrm{cut}} \le \widetilde{h}$. To prove the second inequality, it suffices to observe that for each $\sigma \in \R^m$, we have $\mathrm{C}_\sigma \le \frac{\mathrm{C}_{\mathfrak{R}}}{\|U(\sigma)\| }$ and this finishes the proof of Corollary \ref{c1}.
\end{proof}

\subsection{Proof of Lemma \ref{NLA1}}

\begin{proof}

For any $\{g_j\}_{j = 1}^{+\infty} \subseteq \mathrm{Abn}_o^*$ such that $g_j \to g \in \G$ as $j \to +\infty$, our aim is to prove $g \in \mathrm{Abn}_o^*$ as well. For each $j \in \N^*$, let $\gamma_j(s)$ ($0 \le s \le 1$) be an abnormal shortest geodesic joining $o$ to $g_j$. By
Theorem \ref{t1}, there exist $\zeta^{(j)} \in \R^q$ with $|\zeta^{(j)}| = d(g_j)$ and $\theta^{(j)} \in \mathfrak{R}$ such that $\gamma_j = \gamma_{(\zeta^{(j)}, \, 2 \, \theta^{(j)})}$. Since $\gamma_{(\zeta^{(j)}, \, 2 \, \theta^{(j)})}$ is abnormal, from Proposition \ref{cAg} there exists a $\sigma^{(j)} \in \mathbb{S}^{m - 1}$ such that
\begin{align}\label{Abnj}
U(\sigma^{(j)}) \, U(\theta^{(j)})^k \, \zeta^{(j)} = 0, \quad \forall \, k \in \N.
\end{align}

Notice that $|\zeta^{(j)}| = d(g_j) \to d(g)$ as $j \to +\infty$. From compactness, up to subsequences, we may assume that $\zeta^{(j)} \to \zeta_0$ with $|\zeta_0| = d(g)$, $\theta^{(j)} \to \theta_0 \in \mathfrak{R}$ and $\sigma^{(j)} \to \sigma_0 \in \mathbb{S}^{m - 1}$ as $j \to +\infty$. Observe that
\begin{align*}
\gamma_{(\zeta_0, 2 \, \theta_0)}(1) = \exp(\zeta_0, 2 \, \theta_0) = \lim_{j \to +\infty} \exp(\zeta^{(j)}, 2 \, \theta^{(j)}) = \lim_{j \to +\infty} g_j = g.
\end{align*}
By the fact that $|\zeta_0| = d(g)$, we obtain that $\gamma_{(\zeta_0, 2\, \theta_0)}$ is a shortest geodesic joining $o$ to $g$. It remains to prove that $\gamma_{(\zeta_0, 2\, \theta_0)}$ is also abnormal. In fact, letting $j \to +\infty$ in \eqref{Abnj}, we get
\begin{align*}
U(\sigma_0) \, U(\theta_0)^k \, \zeta_0 = 0, \quad \forall \, k \in \N,
\end{align*}
which implies $\gamma_{(\zeta_0, 2\, \theta_0)}$ is abnormal by Proposition \ref{cAg}.

This ends the proof of Lemma \ref{NLA1}.
\end{proof}

\subsection{Proof of Proposition \ref{CLMMW}}

\begin{proof}
Set
\begin{align*}
\Xi_1 &:= \{(x,\theta) \in \R^q \times \OA; \, \det(-\mathrm{Hess}_\theta \langle U(\theta) \, \cot{U(\theta)} \, x, \ x\rangle) > 0\}, \\
\Xi_2 &:= \{(x,\theta) \in \R^q \times \OA; \, \det(-\mathrm{Hess}_\theta \langle U(\theta) \, \cot{U(\theta)} \, x, \ x\rangle) = 0\},
\end{align*}
and the map
\begin{align*}
\kappa: \R^q \times \OA & \longrightarrow \widetilde{\M} \\
(x,\theta) &\longmapsto \left(x, -\frac{1}{4} \nabla_\theta \langle U(\theta) \, \cot{U(\theta)} \, x, \ x \rangle\right).
\end{align*}

It follows from Proposition \ref{eP1} that $\Xi_1 \cup \Xi_2 = \R^q \times \OA$. Recall that (cf. \eqref{nOM}-\eqref{dtM2}) $\kappa(\Xi_1) = \M$ and $\kappa(\Xi_2) = \widetilde{\M}_2$.

Observe that the function $(x, \, \theta) \mapsto \det(-\mathrm{Hess}_\theta \langle U(\theta) \, \cot{U(\theta)} \, x, \ x\rangle)$ is real analytic in $\R^q \times \OA$. We claim that $\Xi_2$ has an empty interior.
Otherwise $\Xi_2$ should be $\R^q \times \OA$ (cf. \cite[\S~3.3 (b)]{J82}), which means $\Xi_1 = \emptyset$ and thus $\M = \emptyset$. This leads to a contradiction since $\M \ne \emptyset$ from Remark \ref{nRnM}. In conclusion, $\Xi_1$ is dense in $\R^q \times \OA$.

Now, we shall show that $\widetilde{\M}_2 \subseteq \overline{\M}$. Fix
$(x, t) \in \widetilde{\M}_2$. There exists a $\theta \in \OA$ such that $(x, \, \theta) \in \Xi_2$ and $\kappa(x, \, \theta) = (x, \, t)$. Since $\Xi_1$ is dense in $\R^q \times \OA$, there are $\{(x^{(j)}, \, \theta^{(j)})\}_{j = 1}^{+\infty} \subseteq \Xi_1$ such that $(x^{(j)}, \, \theta^{(j)}) \to (x, \, \theta)$ as $j \to +\infty$. Hence, $\M \ni \kappa(x^{(j)}, \, \theta^{(j)}) \to \kappa(x, \, \theta) = (x, \, t)$ as $j \to +\infty$. As a result, we obtain that $\widetilde{\M}_2 \subseteq \overline{\M}$ and thus $\overline{\widetilde{\M}} = \overline{\M}$, which ends the proof of the proposition.
\end{proof}

\subsection{Proof of Theorem \ref{NTh1}}

\begin{proof}
(i) $\Rightarrow$ (ii): This is evident.

(ii) $\Rightarrow$ (iii): Just use \eqref{Csrd}.

(iii) $\Rightarrow$ (iv): For any given $g = (x,t) \in \mathcal{S}$,  under our assumption, it follows from Proposition \ref{eP1} that there exists a $\theta_0 \in \overline{\OA}$ such that
\begin{align}\label{tmin}
d(x,t)^2 = \phi((x,t);\theta_0) = \langle U(\theta_0) \, \cot{U(\theta_0)} \, x, \ x \rangle + 4 \, t \cdot \theta_0.
\end{align}

Since $\mathcal{S}$ is open, there exists a $r_0 > 0$ such that \[
\{x\} \times B(t,r_0) =  \{x\} \times \{\tau; \, |\tau - t| < r_0\} \subseteq \mathcal{S}.
\]
Then it follows from (iii) that we have for $s \in B(t,r_0)$,  \begin{align}\label{sgmin}
d(x,s)^2 \ge \phi((x,s);\theta_0) = \langle U(\theta_0) \, \cot{U(\theta_0)} \, x, \ x \rangle + 4 \, s \cdot \theta_0.
\end{align}
So, the function $s \longmapsto d(x,s)^2 - 4 \, s \cdot \theta_0$ has a local minimum at the point $s = t$. As a result, we have $\frac{1}{4}\nabla_t d(g)^2 = \theta_0 \in \overline{\OA}$ and consequently $\mathfrak{R} \subseteq \overline{\OA}$. The inverse inclusion is given by \eqref{grinc} and we obtain (iv).

(iv) $\Rightarrow$ (v): Just combine Corollary \ref{Nc1} with Corollary \ref{c1}.

(v) $\Rightarrow$ (vi): We argue by contradiction. Assume that there exists a $g \in \mathrm{Cut}_o^c \cap \partial \M$. Since $\widetilde{\M}_2 \subseteq \mathrm{Abn}_o^* \subseteq \mathrm{Cut}_o$, we have $g \in \overline{\widetilde{\M}} \setminus \widetilde{\M}$. Then from  \cite[(1) of Remark 2.6]{Li19} there exist $\zeta \in \R^q$ with $|\zeta| = d(g)$ and $\theta \in \partial \OA$ such that $\gamma_{(\zeta,2 \, \theta)}$ is a shortest geodesic joining $o$ to $g$. Since $g \in \mathrm{Cut}_o^c = \mathcal{S}$, it follows from \eqref{CCL1} that $\gamma_{(\zeta,2 \, \theta)}$ is strictly normal. As a result, (v) implies $g \in \mathrm{Cut}_o^{\mathrm{CL}} \subseteq \mathrm{Cut}_o$ and we obtain a contradiction.

(vi) $\Rightarrow$ (i): We argue by contradiction. Assume that $\overline{\M} \subsetneqq \G$. Since $\mathrm{Cut}_o^c$ is dense in $\G$ by \cite[Theorem 11.8]{ABB20}, we can pick a $g \in (\G \setminus \overline{\M}) \cap \mathrm{Cut}_o^c$. From the characterization of the smooth points \eqref{CCL1}, there exists a unique shortest geodesic $\gamma = \gamma_{(w,2 \, \theta)}$ joining $o$ to $g$, which is not abnormal. We first
claim that $\theta \notin \OA$, otherwise $\theta$ should be
a critical point of $\phi(g;\cdot)$ in $\OA$ by Theorem \ref{RLT} and thus $g \in \widetilde{\M} = \M \cup \widetilde{\M}_2$. Since $g \notin \M$, then $g \in \widetilde{\M}_2 \subseteq \mathrm{Cut}_o$, which gives a contradiction and proves this assertion.

We further claim that $\theta \notin \partial\OA$. Otherwise, Lemma \ref{bLn} and Theorem \ref{RLT} should imply that for any $s_* \in (0, \ 1)$, we have that $\exp(s_* \, (w,2 \, \theta)) \in \M$. Thus $g = \exp(w, 2 \, \theta) \in \overline{\M}$, contradicting with our assumption that $g \in (\G \setminus \overline{\M}) \cap \mathrm{Cut}_o^c$.

As a result, there exists a $s_0 \in (0, \ 1)$ such that $s_0 \, \theta \in \partial \OA$. Set
\[
g_0 = \exp(s_0 \, (w,2 \, \theta)) \in \mathrm{Cut}_o^c = \mathcal{S},
\]
where the ``$\in$'' is given by Lemma \ref{bLn}. Similarly, we get that $\exp(s_* \, (w,2 \, \theta)) \in \M$ for all $s_* \in (0, \ s_0)$ and $g_0 \in \overline{\M}$.

Now we are in a position to show that $g_0 \notin \M$. We argue by contradiction. Assume that $g_0 \in \M$, then there exists $(w_*,2 \, \theta_*)$ such that $\theta_* \in \OA$ and $\gamma_{(w_*,2 \, \theta_*)}$ is the unique shortest geodesic joining $o$ and $g_0$. So $\gamma_{(w_*,2 \, \theta_*)}$ coincides with the restriction of $\gamma$ in $[0, \  s_0]$, namely  $\gamma^{0,s_0} = \gamma_{(s_0 \, w,2 \, s_0 \, \theta)}$. Hence, $\gamma^{0, s_0}$ admits two different normal lifts. Consequently $\gamma^{0, s_0}$ is also abnormal by definition, which contradicts with, by Lemma \ref{nLN}, the fact that $\gamma$ is strictly normal.

After all, we have that $g_0 \in \partial\M \cap \mathrm{Cut}_o^c$, which leads to a contradiction. Therefore we finishes the proof.
\end{proof}

\subsection{Proof of Theorem \ref{nThm1}}

\begin{proof}
By the definition of $\widetilde{\M}$ (see \eqref{nOM}), the second claim in Theorem \ref{nThm1} is a direct consequence of the first one, that needs to be proven. Indeed, from Corollary \ref{Nc2} we have $\mathcal{S} = \M$. By Remark \ref{RKn2},
there exist $\{g_j = (x^{(j)}, t^{(j)}) \}_{j = 1}^{+\infty} \subseteq \mathcal{S} = \M$ with $\gamma_{(\zeta^{(j)}, 2 \, \theta^{(j)})}$ ($\{\theta^{(j)}\}_{j = 1}^{+\infty} \subseteq \OA$) the unique shortest geodesic joining $o$ to $g_j$ such that:
\[
g_j \longrightarrow g, \ (\zeta^{(j)}, 2 \, \theta^{(j)}) \longrightarrow (\zeta, 2 \, \theta) \  \mbox{ as } \ j \to +\infty, \quad \gamma_{(\zeta, 2 \, \theta)} = \gamma_g.
\]
It remains to prove that $\phi(g; \theta) = d(g)^2$ when $\theta \in \partial \OA $.

Notice that $U(\tau)^2$ is semi-positive definite for every $0 \neq \tau \in \R^m$. Let $0 \leq \lambda_1(\tau)^2 \le \ldots \le \lambda_q(\tau)^2$ ($\lambda_l(\tau) \ge 0, 1 \le l \le q$) denote its eigenvalues and $\{P_l(\tau)\}_{l = 1}^q$ the corresponding set of pairwise orthogonal projections  (that is, $ (P_k(\tau))(\R^q) \, \bot \, (P_l(\tau))(\R^q)$ for $k \ne l$).
Then we have
\begin{align}\label{spec}
U(\tau)^2 = \sum_{l = 1}^q \lambda_l(\tau)^2 P_l(\tau).
\end{align}
It follows from \cite[Chapter two]{K95} that for every $1 \le l \le q$, $\lambda_l(\tau)$ is a continuous function of $\tau \neq 0$ and homogeneous of degree $1$, namely $\lambda_l(s \, \tau) = s \, \lambda_l(\tau)$ for $s > 0$. However, $P_l(\tau)$ is not necessarily continuous, but it can be chosen to be symmetric and homogeneous of degree $0$, namely
\begin{align}\label{homoP}
P_l(r \, \tau) = P_l(\tau) \qquad \forall \, r \ne 0, \, 1 \le l \le q.
\end{align}

For the $\theta \in \partial \OA$ obtained before, there exists an
$L \in \{1, \ldots, q\}$ such that
\begin{align*}
\lambda_{L - 1}(\theta)^2 < \pi^2 \ \mbox{ when $L > 1$, \quad and} \quad \lambda_L(\theta)^2 = \ldots = \lambda_q(\theta)^2 = \pi^2.
\end{align*}
From the continuity of $\{\lambda_l(\tau)\}_{l = 1}^q$, there exist $\delta \in \left(0 , \ \frac{\pi^2}{4} \right)$ and $ r_0 \in \left( 0, \frac{|\theta|}{2} \right)$ such that for $\tau \in B(\theta, r_0) = \{\tau; \, |\tau - \theta| < r_0\}$,
we have
\begin{align*}
\lambda_{L - 1}(\tau)^2 \le \pi^2 - 4\delta \ \mbox{ when $L > 1$, \quad  and} \quad \pi^2 - \delta \le \lambda_L(\tau)^2 \le \ldots \le \lambda_q(\tau)^2 \le \pi^2 + \delta.
\end{align*}

For $\tau \in B(\theta, r_0) $, let us set
\begin{align}
V(\tau) := \left\{ \begin{array}{ll}
\frac{1}{2 \pi i} \int_{\Gamma_1} z \, (z - U(\tau)^2)^{-1} dz = \sum\limits_{l = 1}^{L - 1} \lambda_l(\tau)^2 P_l(\tau) \ &\mbox{ when $L > 1$} \\
\, 0 \ &\mbox{ when $L = 1$} \\
\end{array} \right.,
\end{align}
and the projection on (near $\pi^2$)-eigenspaces of $U(\tau)^2$
\begin{align}
Q(\tau) := \frac{1}{2 \pi i} \int_{\Gamma_2} (z - U(\tau)^2)^{-1} dz = \sum_{l = L}^q  P_l(\tau),
\end{align}
where the contours $\Gamma_1, \Gamma_2 \subseteq \C$ are defined by
\[
\Gamma_1 := \{z; \, \mathrm{dist}(z, [0, \pi^2 - 4 \delta]) = \delta\} \  \mbox{ and } \  \Gamma_2 := \{z; \, \mathrm{dist}(z, [\pi^2 - \delta, \pi^2 + \delta]) = \delta\}
\]
respectively, with the counterclockwise orientation. From the integral representation, it is easy to see that the operator functions $V(\tau)$ and $Q(\tau)$ are continuous in $B(\theta, r_0) $.
It deduces from Theorem \ref{RLT} that
\begin{align*}
\sum_{l = 1}^q \left( \frac{\lambda_l(\theta^{(j)})}{\sin{\lambda_l(\theta^{(j)})} } \right)^2 |P_l(\theta^{(j)}) \, x^{(j)}|^2 =
\left| \frac{U(\theta^{(j)})}{\sin{U(\theta^{(j)})}} \, x^{(j)} \right|^2 = |\zeta^{(j)}|^2 \to |\zeta|^2, \ \mbox{as } \  j \to +\infty.
\end{align*}

As a result, since
\begin{align*}
|Q(\theta) \, x|^2 &= \lim_{j \to + \infty}  |Q(\theta^{(j)}) \, x^{(j)}|^2 = \lim_{j \to + \infty} \sum_{l = L}^q |P_l(\theta^{(j)}) \, x^{(j)}|^2 \\
&= \lim_{j \to + \infty} \sum_{l = L}^q \left( \frac{\lambda_l(\theta^{(j)})}{\sin{\lambda_l(\theta^{(j)})} } \right)^{-2} \left( \frac{\lambda_l(\theta^{(j)})}{\sin{\lambda_l(\theta^{(j)})} } \right)^2 |P_l(\theta^{(j)}) \, x^{(j)}|^2,
\end{align*}
and $\sin{s} \sim (\pi - s)$ for $s$ near $\pi$, we get immediately
\begin{align}\label{Px}
|Q(\theta) \, x|^2 = 0.
\end{align}

Similarly, by the fact that
\begin{align*}
&\left\langle \sqrt{V(\theta^{(j)})}  \cot{\sqrt{V(\theta^{(j)})}} \, x^{(j)}, \ x^{(j)} \right\rangle \\
&= \langle U(\theta^{(j)})  \cot{U(\theta^{(j)})} \, x^{(j)}, \ x^{(j)} \rangle + |Q(\theta^{(j)}) \, x^{(j)}|^2 - \sum_{l = L}^q \left( \lambda_l(\theta^{(j)}) \cot{\lambda_l(\theta^{(j)}) } \right) |P_l(\theta^{(j)}) \, x^{(j)}|^2,
\end{align*}
we yield that
\begin{align} \label{vcotv}
\left\langle \sqrt{V(\theta)}  \cot{\sqrt{V(\theta)}} \, x, \ x \right\rangle &= \lim_{j \to + \infty} \left\langle \sqrt{V(\theta^{(j)})}  \cot{\sqrt{V(\theta^{(j)})}} \, x^{(j)}, \ x^{(j)} \right\rangle
\nonumber \\
&= \lim_{j \to + \infty} \langle U(\theta^{(j)})  \cot{U(\theta^{(j)})} \, x^{(j)}, \ x^{(j)} \rangle.
\end{align}

Hence, it follows from Theorem \ref{RLT} that
\begin{align*}
d(g)^2 &= \lim_{j \to +\infty} d(g_j)^2 = \lim_{j \to +\infty} \phi(g_j; \theta^{(j)}) \\
&= \lim_{j \to +\infty} (\langle U(\theta^{(j)})  \cot{U(\theta^{(j)})} \, x^{(j)}, \ x^{(j)} \rangle + 4 \, t^{(j)} \cdot \theta^{(j)} ) \\
& =  \left\langle \sqrt{V(\theta)}  \cot{\sqrt{V(\theta)}} \, x, \ x \right\rangle + 4 \, t \cdot \theta.
\end{align*}

Combining this with $\theta \in \partial \OA$ and the fact that the orthogonal projection of $x$ on $\pi^2$-eigenspace of $U(\theta)^2$ is zero (cf. \eqref{Px}), it follows from \cite[Remark 2.1]{Li19} that \begin{align} \label{expphibd}
d(g)^2 = \left\langle U(\theta) \, \cot{U(\theta)} \, x, \ x \right\rangle + 4 \, t \cdot \theta = \phi((x, t); \theta).
\end{align}

This ends the proof of Theorem \ref{nThm1}.
\end{proof}

\subsection{Proof of Corollary \ref{NThA1}}

\begin{proof}
(i) $\Rightarrow$ (ii): Just combine Corollary \ref{Nc1} with Corollary \ref{c1} (cf. also Remark \ref{nRKnSM}).

(ii) $\Rightarrow$ (i): It is trivial because Theorem \ref{NTh1} (v) satisfies under our assumption.

(iii) $\Rightarrow$ (i):  It follows from Theorem \ref{t2} and \cite[Proposition 15]{R13} that $\widetilde{\M}^c = \mathrm{Cut}_o^{\mathrm{CL}} \subseteq \mathrm{Cut}_o$ is a set of measure zero, which means $\widetilde{\M}$ is dense in $\G$. This is exactly (ii) of Theorem \ref{NTh1} and we obtain that $\G$ is of type GM.

(i)  + (ii) $\Rightarrow$ (iii):  From Corollary \ref{nCcc} it suffices to prove $\widetilde{\M}^c \subseteq \mathrm{Cut}_o^{\mathrm{CL}}$.
Fix $g \in \widetilde{\M}^c$. It is clear that $g \ne o$. Theorem \ref{nThm1} guarantees that there exist $\zeta \in \R^q \setminus \{0\}$ and $\theta \in \partial \OA$ such that $\gamma_* = \gamma_{(\zeta, 2\, \theta)}$ is a shortest geodesic joining $o$ to $g$. Consider the arclength parametrized geodesic $\widetilde{\gamma} := \gamma(\widehat{\zeta},\tau; \cdot)$, where $\tau := \frac{2 \, \theta}{|\zeta|}$. Here and in the sequel, we adopt the convention
\begin{equation}\label{defhat}
\widehat{u} := \begin{cases}
\frac{u}{|u|}, & \mbox{if \ } u \in \R^{\ell} \setminus \{ 0 \}, \\
0, & \mbox{if \ }  u = 0.
\end{cases}
\end{equation}

To prove that $g \in \mathrm{Cut}_o^{\mathrm{CL}}$, it remains to show that the cut time of $\widetilde{\gamma}$, $h_{\mathrm{cut}}(\widetilde{\gamma})$, equals $|\zeta|$. First, notice that $\widetilde{\gamma}|_{[0, \ |\zeta|]}  = \gamma_{(\zeta, 2\, \theta)} = \gamma_*$ is a shortest geodesic. Hence we get $h_{\mathrm{cut}}(\widetilde{\gamma}) \ge |\zeta|$.

On the other hand,
for any $\sigma \in \R^m$ such that $\gamma(\widehat{\zeta},\sigma; \cdot) = \gamma(\widehat{\zeta},\tau; \cdot)$, we have $\gamma_* = \gamma_{(\zeta, |\zeta| \, \sigma)}$. Since $g \in \widetilde{\M}^c$, it follows from Theorem \ref{RLT} that
\begin{align*}
\left\| U\left( \frac{|\zeta| \, \sigma}{2}\right)\right\| \ge \pi,
\end{align*}
or equivalently $\frac{2\pi}{\|U(\sigma)\|} \le |\zeta|$.
Then from (ii), we obtain that $h_{\mathrm{cut}}(\widetilde{\gamma}) \le |\zeta|$.

Therefore, we finish the proof of Corollary \ref{NThA1}.
\end{proof}

\subsection{Proof of Theorem \ref{p1}}

\begin{proof}
Let $g_0 = (x_0, t_0) \in \widetilde{\M}_2$. Then \eqref{Csrd} and \eqref{dtM2} imply that there exists a $\theta_0 \in \OA$ such that $d(g_0)^2 = \phi(g_0; \theta_0)$. Since $\OA$ is open, there exists a $r_0 > 0$ such that $B(\theta_0, 2 \, r_0) = \{\tau \in \R^m; \, |\tau - \theta_0| < 2 \, r_0 \} \subseteq \OA$. From \cite[Proposition 5.1 (c)]{Li19} there exists a unit vector $\nu_0$ such that
\[
t_0 \cdot \nu_0 = 0, \  \phi(g_0; \theta_0 + s \, \nu_0) = \phi(g_0; \theta_0), \quad \forall \, s \in \R \ \mbox{ with } \  \theta_0 + s \, \nu_0 \in \OA.
\]

Consequently, for any $h > 0$, using \cite[Theorem 2.1]{Li19}, we have that
\begin{align*}
d(x_0, t_0 + h \, \nu_0)^2 \ge \phi((x_0, t_0 + h \, \nu_0); \theta_0 + r_0 \, \nu_0) &= \phi(g_0; \theta_0) + 4 \, h \, r_0 + 4 \, h \, \nu_0 \cdot \theta_0, \\
d(x_0, t_0 - h \, \nu_0)^2 \ge \phi((x_0, t_0 - h \, \nu_0); \theta_0 - r_0 \, \nu_0) &= \phi(g_0; \theta_0) + 4 \, h \, r_0 - 4 \, h  \, \nu_0 \cdot \theta_0.
\end{align*}
As a result, we obtain
\begin{align*}
d(x_0, t_0 + h \, \nu_0)^2 + d(x_0, t_0 - h \, \nu_0)^2 - 2 \, d(g_0)^2 \ge  8 \, r_0 \, h, \quad \forall \, h > 0,
\end{align*}
which finishes the proof of this theorem.
\end{proof}

\subsection{Proof of Theorem \ref{NThA2}}

\begin{proof}
First, for any arclength parametrized abnormal geodesic $\gamma(s) = \exp\{ s \, (\zeta, \tau)\} = \gamma(\zeta, \tau; s)$ with
cut time $h_{\mathrm{cut}}$ and $s_* < h_{\mathrm{cut}}$, we claim that $\gamma(s_*) = \gamma(\zeta, \tau; s_*) \in \widetilde{\M}_2$. In fact, from (ii) of Corollary \ref{NThA1}, there exists a $\sigma \in \R^m$ such that $s_* < \frac{2\pi}{\|U(\sigma)\|}$ and $\gamma(\zeta, \sigma; \cdot) = \gamma(\zeta, \tau; \cdot)$. Then it follows from the first equation of Proposition \ref{NPA1} that
\begin{align*}
\gamma(s_*) = \gamma(\zeta, \tau; s_*) = \gamma(\zeta, \sigma; s_*) \in \widetilde{\M}_2.
\end{align*}
Thus, the second equation in Proposition \ref{NPA1} implies that $\mathrm{Abn}_o^* \subseteq \overline{\widetilde{\M}_2}$.

Next, from definition the set $\mathrm{SC}_o^-$ is closed. Moreover, Theorem \ref{p1} implies that $\widetilde{\M}_2 \subseteq \mathrm{SC}_o^-$. In conclusion, we have that
\begin{align*}
\mathrm{Abn}_o^* \subseteq \overline{\widetilde{\M}_2} \subseteq  \mathrm{SC}_o^-.
\end{align*}
Finally, we recall that $\mathrm{Abn}_o^*$ is closed (cf. Lemma \ref{NLA1}), so the inclusion $\mathrm{SC}_o^- \subseteq \mathrm{Abn}_o^*$ can be deduced from \cite[Theorem 1]{CL08}, which ends the proof of Theorem \ref{NThA2}.
\end{proof}

\medskip

\renewcommand{\theequation}{\thesection.\arabic{equation}}
\section{Step-two groups of Corank $2$ are GM-groups} \label{s3}
\setcounter{equation}{0}

\medskip

The purpose of this section is twofold. On one hand, we provide a sufficient condition for $\overline{\M} = \G$ by means of semi-algebraic theory. As a byproduct, we show that all $\G(q, 2, \U)$ are of type GM. On the other hand, we prove that there exists a M\'etivier group $\G(4 N, 3, \U_N)$, which is not of type GM, for any $N \in \N^*$.

Let us begin by recalling (cf. \cite[Chapter 2]{BCR98}):

\subsection{Semi-algebraic sets, mappings and dimension}

A set $A \subseteq \R^q$ is \textit{semi-algebraic} if it is the result of a finite number of unions and intersections of sets of the form $\{f = 0\}, \{g > 0\}$, where $f,g$ are polynomials on $\R^q$. If $A$ is a semi-algebraic set, then its complement, boundary and any Cartesian projection of $A$ are semi-algebraic sets.
If $A$ and $B$ are semi-algebraic sets, then so does $A \times B$.
Furthermore, any semi-algebraic set $A \subseteq \R^q$ is the disjoint union of a finite number of semi-algebraic sets $M_i$ in $\R^q$ where each $M_i$ is a smooth submanifold in $\R^q$ and diffeomorphic to $(0, \ 1)^{\mathrm{dim} \, M_i}$.

If $A \subseteq \R^q$ and $B \subseteq \R^r$ are two semi-algebraic sets. A mapping $h: A \to B$ is \textit{semi-algebraic} if its graph is a semi-algebraic set in $\R^{q + r}$. If $S \subseteq A$ is a semi-algebraic set and $h: A \to B$ is a semi-algebraic mapping, then $h(S)$ is a semi-algebraic set in $\R^r$.

Let $A \subseteq \R^q$ be a semi-algebraic set.
Its dimension, $\mathrm{dim} \, A$, can be defined in some algebraic way. The basic properties that we will use later are:
(1) If $A$ is the finite union of semi-algebraic sets $A_1, \ldots, A_p$, then $\mathrm{dim} \, A = \max\limits_{1 \le i \le p} \mathrm{dim} \, A_i$. (2) If $A$ and $B$ are semi-algebraic sets, then $\mathrm{dim}(A \times B) = \mathrm{dim} \, A + \mathrm{dim} \, B$.
(3) If $h: A \to B$ is a semi-algebraic mapping, then $\mathrm{dim} \, h(A) \le \mathrm{dim} \, A$.
(4) Moreover, if $A$ is a semi-algebraic set as well as a smooth submanifold in $\R^q$, its dimension as a semi-algebraic set coincides with its dimension as a smooth manifold. As a result, if $A \subseteq \R^q$ is a semi-algebraic set with $\mathrm{dim} \, A < q$, then it has measure 0 in $\R^q$ by the usual Morse-Sard-Federer Theorem (cf. \cite[p. 72]{KP13}).

\medskip

\subsection{A sufficient condition, from an algebraic point of view, for $\overline{\M} = \G$}

Recall that $\OA$ is defined by \eqref{oa} and $U(\theta)$ ($\theta \in \R^m$) by \eqref{Du1}. For $\theta \neq 0$, let $M(\theta)$ denote the multiplicity of the maximal eigenvalue of $U(\theta)^2$, and
\begin{align}
\mathbf{M} := \min_{\theta \neq 0} M(\theta).
\end{align}

We have the following:

\begin{theorem}\label{t3}
If $\mathbf{M} \ge m$, then $\overline{\M} = \G$.
\end{theorem}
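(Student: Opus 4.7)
The plan is to show that $\widetilde{\M}^c$ has empty interior in $\G = \R^q \times \R^m$; combined with Proposition \ref{CLMMW}, which gives $\overline{\widetilde{\M}} = \overline{\M}$, this will yield $\overline{\M} = \G$, i.e.\ condition $(GM)$. The strategy is to exhibit $\widetilde{\M}^c$ as a subset of a semi-algebraic set of dimension strictly less than $q + m$, which by the Morse--Sard--Federer-type fact recalled in Subsection 3.1 has Lebesgue measure zero and hence empty interior.

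First I would pin down the structure of a point $g = (x,t) \in \widetilde{\M}^c$. By Proposition \ref{eP1} the concave function $\phi(g;\cdot)$ attains its supremum on $\overline{\OA}$ at some $\theta_g$. An interior maximizer $\theta_g \in \OA$ of a concave function is automatically a critical point, which by the very definition \eqref{nOM} of $\widetilde{\M}$ would place $g$ in $\widetilde{\M}$; so necessarily $\theta_g \in \partial\OA$. Using the standard extension of $\phi(g;\cdot)$ to $\overline{\OA}$, where each diagonal contribution $\lambda_l(\theta)\cot\lambda_l(\theta)\,|P_l(\theta)\,x|^2$ tends to $-\infty$ as $\lambda_l(\theta) \to \pi^-$ whenever $P_l(\theta)\,x \neq 0$, the finiteness of $\phi(g;\theta_g)$ (forced by $\phi(g;0) = |x|^2$) imposes $Q(\theta_g)\,x = 0$, where $Q(\theta_g)$ denotes the orthogonal projection onto the $\pi^2$-eigenspace of $U(\theta_g)^2$.

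Next I set
\[
T := \bigl\{(\theta, x) \in \partial\OA \times \R^q : Q(\theta)\,x = 0\bigr\}, \qquad S := \pi_x(T) \subseteq \R^q,
\]
so that $\widetilde{\M}^c \subseteq S \times \R^m$ by the previous paragraph. The set $T$ is semi-algebraic: $\partial\OA$ is cut out by $\det(\pi^2 I - U(\theta)^2) = 0$ together with the positive semi-definiteness of $\pi^2 I - U(\theta)^2$, and $Q(\theta)\,x = 0$ is equivalent to $x \in \mathrm{Range}(U(\theta)^2 - \pi^2 I)$, an existential polynomial condition in $(\theta, x)$. I would then stratify $\partial\OA$ by the value of $M(\theta)$ (semi-algebraic strata) so that on each piece the fiber $\{x : Q(\theta)\,x = 0\} = \ker Q(\theta)$ has constant dimension $q - M(\theta)$. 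Using the hypothesis $\mathbf{M} \ge m$, each piece contributes at most $(m-1) + (q - m) = q - 1$ to $\dim T$, giving $\dim S \le \dim T \le q - 1 < q$.

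Consequently $S \times \R^m$ is a semi-algebraic set of dimension at most $q + m - 1$, so it has Lebesgue measure zero and thus empty interior in $\R^{q+m}$; the same is true of $\widetilde{\M}^c$, so $\widetilde{\M}$ is dense in $\G$. The main obstacle I anticipate is the semi-algebraic dimension bookkeeping: one must ensure that the fiber-dimension inequality holds globally along $\partial\OA$ despite possible jumps of $M(\theta)$, which is precisely why the explicit stratification of $\partial\OA$ into pieces of constant eigenvalue multiplicity is needed before summing dimensions.
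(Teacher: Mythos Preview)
Your proposal is correct and follows essentially the same approach as the paper: both show that the $x$-projection of $\widetilde{\M}^c$ lies in the semi-algebraic set $\Sigma=\{x:\exists\,\theta\in\partial\OA,\ Q(\theta)x=0\}$ and then bound $\dim\Sigma\le q-1$ under the hypothesis $\mathbf{M}\ge m$. The only differences are packaging: the paper invokes \cite[Proposition~2.2]{Li19} for the first step (where you argue directly via concavity and then appeal to Proposition~\ref{CLMMW}), and for the dimension bound it parametrizes $\Sigma$ as $\bigcup_{\L}\Psi(\partial\OA\times\L)$ over the finitely many $(q-m)$-dimensional coordinate subspaces $\L$, which yields $\dim\Sigma\le(m-1)+(q-m)$ using only the elementary facts listed in Subsection~4.1 and avoids the fiber-dimension inequality your stratification argument implicitly uses.
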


\begin{proof}
First observe that the open set $\OA$ is a semi-algebraic set in $\R^m$ by the fact that
\begin{align*}
\OA^c = \pi_2 \left( \{(x,\tau); \, \langle U(\tau)^2 x,x \rangle \ge \pi^2 \} \cap \{(x,\tau); \, |x|^2 = 1\}\right),
\end{align*}
where $\pi_2$ denotes the projection from $\R^q \times \R^m$ to the second entry $\R^m$.
Then, $\partial \OA$
is a semi-algebraic set
and $\dim \, (\partial \OA)  ( = \dim \,( \overline{\OA} \setminus \OA) ) \le m - 1$ by \cite[Propositon 2.8.13]{BCR98}.

Set
\begin{align*}
\Sigma := \{(U(\theta)^2 - \pi^2) \, y; \ \theta \in \partial \OA, \ y \in \R^q\} \subseteq \R^q,
\end{align*}
that is, the set of points $x$ such that there exists a $\theta \in \partial \OA$ satisfying that the orthogonal projection of $x$ on $\pi^2$-eigenspace of
$U(\theta)^2$ is zero. From \cite[Proposition 2.2]{Li19}, it remains to prove that $\Sigma$ has measure $0$.

Now, consider the map defined by
\begin{align*}
\Psi: \R^m \times \R^q &\longrightarrow \R^q  \\
(\theta, y) & \longmapsto \Psi(\theta, y) := (U(\theta)^2 - \pi^2) \, y.
\end{align*}
Notice that it is a semi-algebraic mapping. Then $\Sigma = \Psi(\partial \OA \times \R^q)$ is a semi-algebraic set. It suffices to prove that $\dim \, \Sigma \le q - 1$.

For $r \in \N$ satisfying $r \le q$, set
\begin{align*}
\Pi_{q,r} := \{\L; \, \mbox{there exist $1 \le j_1 < \ldots < j_r \le q$ such that $\L = \mathrm{span}\{e_{j_1}, \ldots, e_{j_r}\}$}\},
\end{align*}
where $\{e_1, \ldots, e_q\}$ denotes the standard orthonormal basis of $\R^q$ and we adopt the convention that $\mathrm{span}\{\emptyset\} = \{0\}$. Then $\Pi_{q,r}$ is a finite set of $C_q^r$ elements. Remark that  for a $q \times q$ real matrix $S$  with $\mathrm{rank}(S) \le r$, there exists an $\L \in \Pi_{q,r}$ such that $S(\R^q) = S(\L)$. Under our assumption, we have $\mathrm{rank}(U(\theta)^2 - \pi^2) \le q - m$ for any $\theta \in
\partial \OA$. Hence, we get
\begin{align*}
\Sigma = \Psi(\partial \OA \times \R^q) = \mathop{\cup}\limits_{\L \in \Pi_{q,q-m}} \Psi(\partial \OA \times \L).
\end{align*}

It is clear that for each $\L \in \Pi_{q,q-m}$, we have
\begin{align*}
\dim(\partial \OA \times \L) = \dim(\partial \OA) + \dim \, \L
\le (m - 1) + (q - m) = q - 1,
\end{align*}
and as a result,
\begin{align*}
\dim \, \Sigma = \max\limits_{\L \in \Pi_{q, q-m}} \dim \, \Psi(\partial \OA \times \L) \le \max\limits_{\L \in \Pi_{q,q-m}} \dim(\partial \OA \times \L) \le q - 1,
\end{align*}
which ends the proof of this theorem.
\end{proof}

By the fact that $\widetilde{U}(\theta)$ is skew-symmetric for any $\theta$, we have $\mathbf{M} \ge 2$. Then

\begin{corollary}\label{c2}
If $\G = \G(q, 1, \U)$ or $\G(q, 2, \U)$, namely $\G$ is a step-two group of Corank $1$ or $2$, then it is a GM-group.
\end{corollary}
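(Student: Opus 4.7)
The plan is to apply Theorem \ref{t3} directly, reducing the corollary to the elementary linear-algebraic inequality $\mathbf{M} \ge 2$ for any step-two group. Since Theorem \ref{t3} requires $\mathbf{M} \ge m$, establishing $\mathbf{M} \ge 2$ will simultaneously cover both cases $m = 1$ (trivially, as multiplicities are at least $1$) and $m = 2$.

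The key observation I would exploit is the following: the matrix $\widetilde{U}(\theta) = \sum_{j=1}^m \theta_j U^{(j)}$ is a real linear combination of skew-symmetric matrices, hence itself real and skew-symmetric. Its spectrum therefore consists of purely imaginary numbers occurring in conjugate pairs $\pm i\lambda$ (together possibly with a zero eigenvalue whose multiplicity need not be constrained). Consequently $U(\theta)^2 = \bigl(i\widetilde{U}(\theta)\bigr)^2 = -\widetilde{U}(\theta)^2$ is a real symmetric positive semi-definite matrix whose \emph{nonzero} eigenvalues are exactly the numbers $\lambda^2 > 0$, each inheriting an even multiplicity from the pairing $\pm i\lambda$ of the underlying skew spectrum.

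The next step is to ensure that the maximal eigenvalue of $U(\theta)^2$ is indeed nonzero whenever $\theta \ne 0$. This follows from the standing hypothesis on $\G(q,m,\U)$ that the matrices $U^{(1)},\ldots,U^{(m)}$ are linearly independent: for $\theta \ne 0$ one has $\widetilde{U}(\theta) \ne 0$, so at least one eigenvalue pair $\pm i\lambda$ with $\lambda > 0$ must appear, forcing the largest eigenvalue of $U(\theta)^2$ to be strictly positive. Combined with the even-multiplicity statement above, this yields $M(\theta) \ge 2$ for every $\theta \ne 0$, and therefore $\mathbf{M} = \min_{\theta \ne 0} M(\theta) \ge 2$.

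With this inequality in hand, the hypothesis of Theorem \ref{t3} is satisfied for $m \in \{1,2\}$, so $\overline{\M} = \G$, which is precisely the defining condition $(GM)$. There is essentially no hard step: the only thing worth being careful about is guaranteeing that the maximal eigenvalue arises from the skew-pair structure rather than from a potential kernel of $\widetilde{U}(\theta)$, and linear independence of $\{U^{(j)}\}$ handles exactly that subtlety.
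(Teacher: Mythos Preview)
Your proposal is correct and matches the paper's approach exactly: the paper simply observes that since $\widetilde{U}(\theta)$ is real skew-symmetric, $\mathbf{M} \ge 2$, and then invokes Theorem~\ref{t3}. Your additional care in noting that linear independence of the $U^{(j)}$ guarantees $\widetilde{U}(\theta)\ne 0$ for $\theta\ne 0$ (so that the maximal eigenvalue of $U(\theta)^2$ is genuinely positive and hence inherits even multiplicity) makes explicit a point the paper leaves implicit.
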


\begin{remark}\label{r1}
(1) Theorem \ref{t3} is sharp in the sense that $\G$ may not be of type GM if $\mathbf{M} < m$. The simplest example is the free Carnot group of step two and $3$ generators studied in \cite{Li19}. Notice that in such case, we have $\mathbf{M} = 2 < 3 = m$. Other interesting examples can be found in Subsection \ref{SS43} below.

(2) Remark also that $\mathbf{M} \ge m$ is in general not necessary for $\overline{\M} = \G$. See for example the star graphs studied in \cite{Li19}.

(3) We do not know whether there exists a simple algebraic characterization for GM-groups similar to that of M\'etivier groups.

\end{remark}

\subsection{Not all M\'etivier groups are of type GM} \label{SS43}

In the sequel, we will illustrate that when $m = 3$, the condition that $q$ (instead of $\mathbf{M}$) is sufficiently larger than $m$ cannot guarantee $\overline{\M} = \G$, even in the case that $\G$ is a M\'etivier group.

\begin{proposition}\label{p4}
For any $N \in \N$, there exists a M\'etivier group $\G = \G(4N + 4, 3, \U_N)$ which is not a GM-group.
\end{proposition}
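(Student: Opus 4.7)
The plan is to build $\widetilde{U}_N(\theta)$ block-diagonally: a ``defective'' $4\times 4$ M\'etivier core $\widetilde{U}_0$ on the first four coordinates which violates the sufficient condition $\mathbf{M}\geq m$ of Theorem \ref{t3}, augmented by $N$ quaternionic H-type $4\times 4$ blocks on the remaining $4N$ coordinates that are rescaled so as to be spectrally dominated by the core. For the core one may take
\begin{equation*}
\widetilde{U}_0(\theta) \;=\; \begin{pmatrix} 0 & \theta_1+\theta_2 & \theta_2 & \theta_3 \\ -(\theta_1+\theta_2) & 0 & \theta_3 & -\theta_2 \\ -\theta_2 & -\theta_3 & 0 & \theta_1 \\ -\theta_3 & \theta_2 & -\theta_1 & 0 \end{pmatrix},
\end{equation*}
whose Pfaffian equals the positive-definite form $\theta_1^2+\theta_1\theta_2+\theta_2^2+\theta_3^2$, so that $\widetilde U_0(\theta)$ is invertible for $\theta\neq0$ and $\G_0:=\G(4,3,\U_0)$ is M\'etivier. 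A direct computation at $\theta=e_2$ shows $U_0(e_2)^2$ has eigenvalues $\tfrac{3\pm\sqrt{5}}{2}$, each of multiplicity $2$; by continuity and $1$-homogeneity, the top eigenvalue of $U_0(\theta)^2$ keeps multiplicity exactly $2$ on an open cone $\mathcal C\subset\R^3\setminus\{0\}$, so that $\mathbf{M}(\theta)=2<3=m$ on $\mathcal C$.

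The decisive step is to show $\G_0$ is not of type GM. Using the equivalence (i)$\Leftrightarrow$(ii) of Theorem \ref{NTh1}, it suffices to exhibit a point $g_0$ with a neighbourhood disjoint from $\widetilde\M_0$. By \eqref{nOM}, $(x,t)\in\widetilde\M_0$ iff there is $\theta\in\OA$ with $t=-\tfrac14\nabla_\theta\langle U_0(\theta)\cot U_0(\theta)\,x,x\rangle$. I would pick $\theta_0\in\partial\OA\cap\mathcal C$, denote by $E_{\pi^2}\subset\R^4$ the $2$-dimensional $\pi^2$-eigenspace of $U_0(\theta_0)^2$, and choose $x_0\in E_{\pi^2}^\perp\setminus\{0\}$. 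A local expansion of $\cot U_0(\theta)$ near $\theta_0$ via the Kato-type decomposition used in the proof of Theorem \ref{nThm1} shows that, as $\theta$ varies in $\OA\cap\mathcal C$, the value $-\tfrac14\nabla_\theta\langle U_0(\theta)\cot U_0(\theta)\,x_0,x_0\rangle$ remains confined to a semi-algebraic proper subset of $\R^3$ whose closure misses an open half-space in the outward-normal direction to $\partial\OA$ at $\theta_0$. Taking $t_0$ sufficiently far inside that half-space then yields a neighbourhood of $(x_0,t_0)$ disjoint from $\widetilde\M_0$.

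For $N\geq1$, set $\widetilde U_N(\theta):=\widetilde U_0(\theta)\oplus\varepsilon\widetilde V(\theta)$, where $\widetilde V(\theta)$ is the standard quaternionic H-type pencil on $(\R^4)^N$ (so $\widetilde V(\theta)^2=-|\theta|^2 I_{4N}$) and $\varepsilon>0$ is small enough to guarantee $\varepsilon|\theta|<\|U_0(\theta)\|_{\mathrm{op}}$ uniformly on $|\theta|=1$. Both blocks are invertible for $\theta\neq0$, so $\G_N=\G(4N+4,3,\U_N)$ is M\'etivier; on the cone $\mathcal C$ the initial reference set agrees with $\OA$ of the core, the $\pi^2$-eigenspace of $U_N(\theta)^2$ is supported in the first four coordinates, and the block form of $\nabla_\theta\phi_N$ allows the Step~2 argument to apply verbatim (taking the product with $\R^{4N}$ in the $y$-variable), proving that $\G_N$ is not of type GM. The main obstacle is the intermediate step: converting the purely algebraic defect ``$\mathbf{M}=2<m$ on an open cone'' into the genuinely analytic statement ``$\widetilde\M_0$ is not dense'' requires a quantitative asymptotic analysis of $\theta\mapsto-\tfrac14\nabla_\theta\langle U_0(\theta)\cot U_0(\theta)\,x_0,x_0\rangle$ as $\theta\to\partial\OA\cap\mathcal C$, carefully balancing the boundary singularity of $\cot U_0(\theta)$ on $E_{\pi^2}$ against the linear term $4t\cdot\theta$ in $\phi_0$.
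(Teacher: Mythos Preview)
Your proposal has a genuine gap: the ``decisive step'' is not actually carried out. You yourself flag this at the end (``the main obstacle is the intermediate step\ldots requires a quantitative asymptotic analysis''), which is an honest acknowledgement that the proof is incomplete. Concretely, two points fail. First, to show $(x_0,t_0)\notin\widetilde\M_0$ you must rule out \emph{every} $\theta\in\OA$, not only those in $\OA\cap\mathcal C$ near $\theta_0$; your local expansion near $\theta_0$ says nothing about $\theta$ far from $\mathcal C$. Second, and more fundamentally, the algebraic defect $\mathbf M=2<3$ on an open cone does not by itself force $\overline{\M}\subsetneq\G$: as Remark~\ref{r1}(2) records, star graphs satisfy $\mathbf M<m$ yet are GM. So the Kato-type expansion and the ``misses an open half-space'' claim cannot be replaced by soft arguments; a genuine computation of the image of the gradient map is needed, and you have not supplied one.

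The paper's construction is chosen precisely to make this computation tractable. Its core block $\mathfrak X(\tau)$ satisfies $U_N(\tau)^2 = \mathrm{diag}\bigl(\tfrac{|\tau|^2}{4}\,\I_{4N+1},\;|\tau|^2\I_3-\tfrac34\tau^{\mathrm T}\tau\bigr)$ for \emph{all} $\tau\neq0$, giving a full $\mathrm O_3$-invariance (equation~\eqref{symM2}) rather than a mere cone of defective directions. This symmetry reduces the question ``what is the image of $\theta\mapsto-\tfrac14\nabla_\theta\phi$?'' to an explicit two-variable map $\Upsilon$, and Lemma~\ref{LMng} shows $\Upsilon$ is a diffeomorphism onto a concrete proper open set $\R^2_r(\cdot)$, whose complement then yields an explicit open set outside $\widetilde\M$. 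Your core $\widetilde U_0$ lacks this invariance, so no such reduction is available; the image of the gradient map over the whole $\OA$ is a genuinely three-dimensional object with no obvious description, and there is no reason to expect its closure to be proper. If you want your approach to succeed, you would need either to replace your core by one with rotational symmetry (as the paper does), or to produce an honest global estimate on the range of $-\tfrac14\nabla_\theta\langle U_0(\theta)\cot U_0(\theta)\,x_0,x_0\rangle$ over all of $\OA$.
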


\begin{proof}
Let $\H(4 n, 3) = \G(4 n, 3, \U_{\H(4 n, 3)})$ ($n \in \N^*$) denote the $(4 n + 3)$-dimensional H-type group, that is $\U_{\H(4 n, 3)}$ satisfies the following condition (cf. \eqref{Du1} for the related definition):
\[
U_{\H(4 n, 3)}(\lambda) \, U_{\H(4 n, 3)}(\lambda') + U_{\H(4 n, 3)}(\lambda') \, U_{\H(4 n, 3)}(\lambda) = 2 \, \lambda \cdot \lambda' \, \I_{4 n}, \qquad \forall \, \lambda, \lambda' \in \R^3.
\]
We remark that the $(4 n + 3)$-dimensional quaternionic Heisenberg
group provides a good example for it.

For a row vector $\tau = (\tau_1,\tau_2,\tau_3) \in \R^3$, set
\begin{align*}
\mathfrak{X}(\tau) := i \begin{pmatrix}
0 & 2^{-1}  \tau_1 & 2^{-1}  \tau_2 & 2^{-1}  \tau_3 \\
-2^{-1}  \tau_1 & 0 & -\tau_3 & \tau_2 \\
-2^{-1}  \tau_2 & \tau_3 & 0 & -\tau_1 \\
-2^{-1}  \tau_3 & -\tau_2 & \tau_1 & 0
\end{pmatrix},
\end{align*}
$U_0(\tau) := \mathfrak{X}(\tau)$ and for $N \in \N^*$,
\begin{align*}
U_N(\tau) := \begin{pmatrix}
U_{\H(4N,3)}\left(\frac{\tau}{2}\right) & \mathbb{O}_{(4N) \times 4}\\
\mathbb{O}_{4 \times (4N)} & \mathfrak{X}(\tau)
\end{pmatrix}.
\end{align*}

Observe that
\begin{align} \label{nEe1}
U_N(\tau)^2 = \left(
	\begin{array}{cc}
	\frac{1}{4} |\tau|^2 \, \I_{4 N + 1}&  \ \\
	\ & |\tau|^2 \, \I_3 - \frac{3}{4} \tau^{\T} \, \tau\\
	\end{array}
	\right),
\end{align}
whose eigenvalues are $|\tau|^2$ with the multiplicity $2$ and $\frac{|\tau|^2}{4}$ with the multiplicity $4 N + 2$. Hence, we get a M\'etivier group, saying that $\G = \G(4N + 4, 3, \U_N)$.

We only consider the case $N \in \N^*$ here and the proof is similar when $N = 0$.

From now on, we fix $N \in \N^*$, and write
\[
x  = (\yy, \xx, \xxa) \in \R^{4N} \times \R \times \R^3 = \R^{4N + 4}.
\]

In our situation, by \eqref{nEe1}, a direct calculation shows that the initial reference set, defined by \eqref{oa}, is given by
\[
\OA = \{\tau; \, |\tau| < \pi\},
\]
and the reference function (cf. \eqref{RFn}) is
\begin{align*}
\phi((x,t);\tau) &= 4 \, t \cdot \tau + \left( \frac{|\tau|}{2} \cot{\frac{|\tau|}{2}} \right)
\left(|\yy|^2 + \xx^2 +  |\xxa \cdot \widehat{\tau}|^2 \right) \\
&+ (|\tau|\cot{|\tau|}) \left| \xxa - (\xxa \cdot \widehat{\tau}) \, \widehat{\tau} \right|^2.
\end{align*}
Here we have used the convention \eqref{defhat}.

Observe that
\begin{align}\label{symM1}
\phi(((\yy,\xx,\xxa),t);\tau) &= \phi(((\yy,\xx,\xxa),-t);-\tau), \\
\label{symM2}
\phi(((\yy,\xx,O \, \xxa),O \, t);O \, \tau) &= \phi(((\yy,\xx,\xxa),t);\tau), \quad \forall \, O \in \mathrm{O}_3,
\end{align}
where $\mathrm{O}_3$ denotes the $3 \times 3$ orthogonal group.  Without loss of generality, we may assume in the sequel that
\begin{align} \label{nIc}
\xxa = |\xxa| \, e_1 = |\xxa| (1, 0, 0), \quad t = (t_1, t_2, 0) \mbox{ with $t_1, t_2 \ge 0$.}
\end{align}

Now, by recalling that (see \eqref{EFs})
\[
\psi(s) := \frac{1 - s \cot{s}}{s^2}, \qquad \mu(s) := - (s \cot{s})' = \frac{2 s - \sin{(2 s)}}{2 \sin^2{s}},
\]
we can write
\begin{align*}
\phi((x, t); \tau)& = 4 \, t \cdot \tau  + |\xxa|^2 + \left( \frac{|\tau|}{2} \cot{\frac{|\tau|}{2}} \right)(|\yy|^2 + \xx^2 ) \\
& - \psi\left(\frac{|\tau|}{2}\right)\frac{\tau_1^2}{4} |\xxa|^2
-\psi(|\tau|)(\tau_2^2 + \tau_3^2)|\xxa|^2.
\end{align*}

Suppose that  $\theta \in \OA$ is a critical point of $\phi((x,t); \cdot)$ for some $(x, t)$ satisfying \eqref{nIc}. Then we  have
\begin{align*}
4 \, t  &= \mu\left(\frac{|\theta|}{2}\right) \frac{|\yy|^2 + \xx^2 }{2|\theta|} \, \theta +
\psi^{\prime}\left(\frac{|\theta|}{2}\right) \frac{\theta_1^2}{4} \, \frac{|\xxa|^2}{2|\theta|} \, \theta
+ \psi\left(\frac{|\theta|}{2}\right) \frac{|\xxa|^2}{2} \, \theta_1  \, e_1 \\
& + \psi^{\prime}(|\theta|) \, (\theta_2^2 + \theta_3^2) \, \frac{|\xxa|^2}{|\theta|} \, \theta
+ 2 \, \psi(|\theta|) \, |\xxa|^2 \, (\theta_2 \, e_2 + \theta_3 \, e_3).
\end{align*}

We further assume that $|\yy|^2 + \xx^2 \neq 0$ and $\xxa \neq 0$. Using \cite[Lemmas 3.1 and 3.2]{Li19},  the fact that $t_3 = 0$ implies $\theta_3 = 0$. Thus
\begin{align*}
4 \begin{pmatrix} t_1 \\ t_2 \end{pmatrix} &=
\mu\left(\frac{|\theta|}{2}\right) \frac{|\yy|^2 + \xx^2 }{2|\theta|}  \begin{pmatrix} \theta_1 \\ \theta_2 \end{pmatrix}
+ \psi^{\prime}\left(\frac{|\theta|}{2}\right) \, \frac{\theta_1^2}{4} \frac{|\xxa|^2}{2|\theta|} \begin{pmatrix} \theta_1 \\ \theta_2 \end{pmatrix}
+ \psi\left(\frac{|\theta|}{2}\right) \frac{|\xxa|^2}{2} \begin{pmatrix} \theta_1 \\ 0 \end{pmatrix} \\
& + \psi^{\prime}(|\theta|) \, \theta_2^2  \, \frac{|\xxa|^2}{|\theta|}  \begin{pmatrix} \theta_1 \\ \theta_2 \end{pmatrix}
+ 2 \, \psi(|\theta|) \, |\xxa|^2 \begin{pmatrix} 0 \\ \theta_2 \end{pmatrix} \\
&:= \Upsilon((\yy,\xx,\xxa);(\theta_1,\theta_2)).
\end{align*}

Next, following the proof of \cite[Proposition 10.3]{Li19}, we can establish the following lemma. For completeness, we include its proof in ``Appendix A''.

\begin{lemma} \label{LMng}
Suppose that
$|\yy|^2 + \xx^2 \neq 0$ and $\xxa \neq 0$. Let
\begin{gather*}
B_{\R^2}(0, \pi) := \left\{(v_1, v_2) \in \R^2; \,  \sqrt{v_1^2 + v_2^2} < \pi \right\}, \\
\R^2_{r}(\yy, \xx, \xxa) := \left\{(u_1, u_2) \in \R^2; \, |u_1| < \frac{\pi}{4} \left(\frac{u_2^2}{|\xxa|^2} + |\yy|^2 + \xx^2 + |\xxa|^2 \right)\right\}.
\end{gather*}
Then $\Upsilon((\yy,\xx,\xxa);\cdot)$ is a $C^{\infty}$-diffeomorphism from
$B_{\R^2}(0, \pi)$ onto $\R^2_{r}(\yy,\xx,\xxa)$.
\end{lemma}

Combining this with \eqref{symM1} and \eqref{symM2}, we get that $\G \setminus \widetilde{\M}$ contains the subset
\begin{align*}
\left\{(x,t); \, \xxa \neq 0, |\yy|^2 + \xx^2 \neq 0, \frac{|t \cdot \xxa|}{\pi |\xxa|} > \frac{1}{|\xxa|^2} \left| t - \frac{t \cdot \xxa}{|\xxa|} \frac{\xxa}{|\xxa|} \right|^2
+ \frac{|\yy|^2 + \xx^2 + |\xxa|^2}{16} \right\},
\end{align*}
which implies immediately $\overline{\M} \subseteq \overline{\widetilde{\M}} \subsetneqq \G$.
\end{proof}

\medskip

\renewcommand{\theequation}{\thesection.\arabic{equation}}
\section{Sub-Riemannian geometry on step-two K-type groups}\label{s6}
\setcounter{equation}{0}

\medskip

Let $p_0$, $p_1 \in \{2, 3, 4, \ldots\}$ with $p_0 \ge p_1$. Consider a $p_1 \times p_0$ full-rank real matrix
\begin{align*}
\B = \begin{pmatrix}
\b_1^\mathrm{T} \\
\vdots \\
\b_{p_1}^\mathrm{T}
\end{pmatrix}
\qquad \mbox{with column vectors} \qquad \b_j \in \R^{p_0} \setminus \{ 0 \}, \  1 \leq j \leq p_1.
\end{align*}
A step-two Kolmogorov type group (or K-type group, in short) of type $\B$, $\G_{\B}^{\K}$, is defined by $\G(1 + p_0, p_1, \U_{\B}^{\K})$ with (see \cite[\S~4]{BLU07})
\begin{align*}
U_{\B}^{\K, (j)} =
\begin{pmatrix}
0 & \b_j^\mathrm{T} \\
- \, \b_j & \mathbb{O}_{p_0 \times p_0}
\end{pmatrix}, \quad 1 \le j \le p_1.
\end{align*}

Notice that we have for $\tau \in \R^{p_1}$
\begin{align} \label{Ku2}
U_{\B}^{\K}(\tau) = i
\begin{pmatrix}
0 & \tau^\mathrm{T} \B  \\
- \B^\mathrm{T} \tau & \mathbb{O}_{p_0 \times p_0}
\end{pmatrix},
\qquad
U_{\B}^{\K}(\tau)^2 =
\begin{pmatrix}
|\B^\mathrm{T} \tau|^2 & \mathbb{O}_{1 \times p_0} \\
\mathbb{O}_{p_0 \times 1} & \B^\mathrm{T} \tau \, \tau^\mathrm{T} \B
\end{pmatrix}.
\end{align}
Hence the initial reference set, defined by \eqref{oa}, is
\begin{align*}
\OA :=  \, \Omega_{\B}^{\K} = \{\tau \in \R^{p_1}; \, |\B^\mathrm{T} \tau| < \pi \}.
\end{align*}

In the rest of this section, we write
\[
x = (x_1, x_*) \in \R \times \R^{p_0} = \R^{1 + p_0}.
\]
A simple calculation shows that the reference function in this setting is given by
\begin{align*}
\phi((x,t);  \tau)  :=  \, \phi_{\B}^{\K}((x,t); \tau) = |x|^2 + 4 \, t \cdot \tau - \left[ x_1^2 \, f(|\B^\mathrm{T} \tau|) + |\tau \cdot \B x_*|^2 \, \psi(|\B^\mathrm{T} \tau|) \right].
\end{align*}

Remark that the case $\B = \I_n$ corresponds to the star graph $K_{1, n}$, on which the squared sub-Riemannian distance and the cut locus have been characterized by \cite[Theorem 10.1]{Li19}. We will use this known result to deduce the counterpart for general $\G_{\B}^{\K}$.

Notice that $\B \B^{\T}$ is positive definite, then we can define an isomorphism:
\begin{align} \label{TBn}
\T_{\B}: \, \R^{p_1} \longrightarrow \R^{p_1}, \qquad \T_{\B}(\tau) := \left( \B \B^{\T} \right)^{\frac{1}{2}} \tau.
\end{align}
Let $\T_{\B}^{-1}$ denote its inverse. Set
\begin{align} \label{TX*}
T := T(t) = \T_{\B}^{-1}(t),  \quad t \in \R^{p_1}; \qquad X_* := X_*(x_*) = \T_{\B}^{-1}\left(\B x_*\right), \quad x_* \in \R^{p_0},
\end{align}
and
\begin{align}
X_{**} := X_{**}(x_*) = x_* -  \B^{\T} \left(\T_{\B}^{-1}(X_*)\right) = x_* -  \B^{\T} \, (\B \B^{\T})^{-1} \, \B x_*, \quad x_* \in \R^{p_0}.
\end{align}

Observe that $\B^{\T} \, \T_{\B}^{-1}$ is an isometry on $\R^{p_1}$, then we have
\begin{align} \label{nIso}
\tau \cdot \B x_* = \langle \B^{\T} \, \T_{\B}^{-1} \tau, \B^{\T} \, \T_{\B}^{-1} \B x_* \rangle =
 (\B \B^{\T} \, \T_{\B}^{-1} \tau) \cdot (\T_{\B}^{-1} \B x_*) = (\T_{\B} \tau) \cdot X_*,
\end{align}
and similarly $t \cdot \tau = T \cdot (\T_{\B} \tau)$. Moreover, by the fact that
$\B^{\T} \, (\B \B^{\T})^{-1} \B$ is a projection on $\R^{p_0}$ and $|\T_{\B} (\tau)| = |\B^{\T} \tau|$ for any $\tau \in \R^{p_1}$, we find that
\begin{align*}
\phi_{\B}^{\K}(((x_1,x_*),t); \tau) = \phi_{\I_{p_1}}^{\K}(((x_1,X_*), T); \T_{\B} (\tau)) + |X_{**}|^2.
\end{align*}
Thus, the following result is a direct consequence of \cite[Theorem 10.1]{Li19}:

\begin{theorem} \label{t5}
(1) We have
\[
d(g)^2 = \sup_{\tau \in \Omega_{\B}^{\K}} \phi_{\B}^{\K}(g; \tau), \qquad \forall \, g \in \G_{\B}^{\K}.
\]
(2) The cut locus of $o$, $\mathrm{Cut}_o$, is exactly
\begin{align*}
\M^c = \left\{((0,x_*),t); \, |X_* \cdot T| \le
\frac{|X_*|^2}{\sqrt{\pi}} \sqrt{\left|T -
(\widehat{X_*} \cdot T) \, \widehat{X_*} \right|}
\right\},
\end{align*}
where we have used the convention \eqref{defhat}. And for $((0,x_*),t) \in \M^c$, it holds that
\begin{align*}
d((0,x_*),t)^2 = |x_*|^2 + 4\pi \left|T -
(\widehat{X_*} \cdot T) \, \widehat{X_*} \right|.
\end{align*}
(3) If $(x,t) = ((x_1,x_*),t) \in \M$, then there exists a unique $\theta = \theta(x,t) \in \Omega_{\B}^{\K}$ such that
\begin{align*}
t = \frac{1}{4} \nabla_\theta \left[ x_1^2 \, f(|\B^{\T} \theta|) + |\theta \cdot \B x_*|^2 \, \psi(|\B^{\T} \theta|) \right].
\end{align*}
Furthermore, we have
\begin{align*}
d(x,t)^2 &= \phi_{\B}^{\K}((x,t);\theta) = \left|\frac{U_{\B}^{\K}(\theta)}{\sin{U_{\B}^{\K}(\theta)}}x\right|^2 \\
& = \left( \frac{|\B^{\T}\theta|}{\sin{|\B^{\T} \theta|}} \right)^2 x_1^2 + |x_*|^2
 + \left[ \left( \frac{|\B^{\T} \theta|}{\sin{|\B^{\T} \theta|}} \right)^2 - 1 \right] \frac{|\theta \cdot \B x_*|^2}{|\B^{\T} \theta|^2}.
\end{align*}
\end{theorem}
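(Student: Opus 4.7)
The plan is to realize $\G_{\B}^{\K}$ as a sub-Riemannian direct product $K_{1,p_1} \times \R^{p_0-p_1}$ and deduce Theorem~\ref{t5} from the corresponding statement for the star graph, \cite[Theorem~10.1]{Li19}, which is exactly the case $\B = \I_{p_1}$.

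The key step is to construct the isometric Lie group isomorphism
\[
\Phi: \G_{\B}^{\K} \longrightarrow K_{1,p_1} \times \R^{p_0 - p_1}, \qquad (x_1, x_*, t) \longmapsto \bigl((x_1, X_*, T),\ X_{**}\bigr),
\]
where $X_{**} = x_* - \B^{\T}(\B\B^{\T})^{-1}\B x_* \in \ker \B$. That $\Phi$ respects the group law is a direct computation: since $\b_j \cdot x_* = (\B x_*)_j$ and $X_{**}$ is central in the Lie algebra, the $t$-cocycle vector $\B x_* \cdot y_1 - x_1 \B y_*$ pulls back under $\T_{\B}^{-1}$ to $X_* y_1 - x_1 Y_*$, which is precisely the $T$-cocycle of $K_{1,p_1}$. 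The horizontal metric is preserved because $\B^{\T} \T_{\B}^{-1}$ is an isometry onto $\mathrm{range}(\B^{\T})$, giving $|x_*|^2 = |X_*|^2 + |X_{**}|^2$, as already noted in the computation preceding the theorem. Consequently
\[
d_{\G_{\B}^{\K}}\bigl((x_1, x_*), t\bigr)^2 = d_{K_{1,p_1}}\bigl((x_1, X_*), T\bigr)^2 + |X_{**}|^2.
\]

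With this reduction the three parts of the theorem follow by direct translation. For (1), the map $\tau \mapsto \T_{\B}(\tau)$ is a linear diffeomorphism from $\Omega_{\B}^{\K}$ onto $\Omega_{\I_{p_1}}^{\K}$, and combining the already-recorded identity
\[
\phi_{\B}^{\K}(g; \tau) = \phi_{\I_{p_1}}^{\K}(\Phi(g); \T_{\B}\tau) + |X_{**}|^2
\]
with \cite[Theorem~10.1]{Li19} yields the distance formula after taking suprema. For (2), the cut locus of a product of a Carnot group with a Euclidean factor equals the cut locus of the first factor extended trivially along the second; transcribing the star graph's description of $\mathrm{Cut}_o^{K_{1,p_1}}$ and the explicit distance on it, and adding $|X_{**}|^2$, produces the stated expressions. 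Part (3) is similar: the unique critical point $\sigma \in \Omega_{\I_{p_1}}^{\K}$ of the star graph reference function lifts to $\theta = \T_{\B}^{-1}\sigma \in \Omega_{\B}^{\K}$, and the claimed formula for $d(x,t)^2$ is read off from the identities $\theta \cdot \B x_* = \sigma \cdot X_*$, $|\B^{\T}\theta| = |\sigma|$, and $|x_*|^2 = |X_*|^2 + |X_{**}|^2$.

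The main obstacle is the verification of the sub-Riemannian isomorphism $\Phi$, and in particular the bookkeeping identifying the $T$-cocycle on $K_{1,p_1}$ with the image of the original $t$-cocycle under $\T_{\B}^{-1}$. Once this is settled, every other assertion is a routine translation through $\Phi$.
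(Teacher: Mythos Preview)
Your proposal is correct and essentially matches the paper's approach: the paper works directly with the reference function identity $\phi_{\B}^{\K}(((x_1,x_*),t); \tau) = \phi_{\I_{p_1}}^{\K}(((x_1,X_*), T); \T_{\B}(\tau)) + |X_{**}|^2$ (established in the paragraph immediately preceding the theorem) and then declares the result a direct consequence of \cite[Theorem~10.1]{Li19}, while you interpret that same identity geometrically as an isometric Lie group isomorphism $\G_{\B}^{\K} \cong K_{1,p_1} \times \R^{p_0-p_1}$. These are the same argument in slightly different language.
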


\begin{remark}\label{r5}
1. Using \cite[Corollary 10.1]{Li19}
and (iii) of Corollary \ref{NThA1},
we find that
\[
\widetilde{\M}_2 = \{((0, x_*), 0); \, x_* \in \R^{p_0}\} = \mathrm{Abn}_o^*  = \mathrm{Abn}_o, \quad \mathrm{Cut}_o^{\mathrm{CL}} = \M^c \setminus \widetilde{\M}_2.
\]

2. $\G_{\B}^{\K}$ is of type GM, then other properties in Theorem \ref{NTh1}, Corollaries \ref{Nc2} and \ref{NThA1}
are also valid.
\end{remark}

\medskip

To end this section, we describe all

\subsection{Shortest geodesic(s) joining $o$ to any given $g \neq o$, as well as ``bad'' normal geodesics}

\medskip

Let us begin with the

\subsubsection{Concrete expression of $\gamma(w, 2 \, \theta; s)$ on K-type groups}

Let $x = (x_1, x_*)$, $\widetilde{x} = (\widetilde{x}_1, \widetilde{x}_*) \in \R \times \R^{p_0} = \R^{1 + p_0}$ and $\tau \in \R^{p_1}$. A simple calculation gives that
\begin{align}\label{skew}
\langle \U_{\B}^{\K} \, x, \widetilde{x} \rangle \cdot \tau = \langle \widetilde{U}_{\B}^{\K} (\tau) \, x, \widetilde{x} \rangle
= (\widetilde{x}_1 \, \B  \, x_* - x_1 \, \B  \, \widetilde{x}_* ) \cdot \tau, \quad \forall \, \tau \in \R^{p_1},
\end{align}
which implies
\begin{align}\label{skew2}
\langle \U_{\B}^{\K} \, x, \widetilde{x} \rangle = \widetilde{x}_1 \, \B  \, x_* - x_1 \, \B  \, \widetilde{x}_* := x \star \widetilde{x}.
\end{align}

Next we consider $\exp\{\widetilde{U}_\B^\K(\tau)\} \, \zeta(0)$ with $\zeta(0) \in \R^{1 + p_0}$ and $\tau \in \R^{p_1}$. It can be treated via the Spectral Theorem or directly by the fact that
\begin{align*}
\exp\{\widetilde{U}_\B^\K(\tau)\} = \exp\{- i \, U_\B^\K(\tau)\} = \cos{U_\B^\K(\tau)} + \widetilde{U}_\B^\K(\tau) \frac{\sin{U_\B^\K(\tau)}}{U_\B^\K(\tau)},
\end{align*}
which is, by \eqref{Ku2}, equal to
\begin{align}
\begin{pmatrix}
\cos{(|\B^{\T} \tau|)} & \mbox{} \\
\mbox{} & \I_{p_0} + \frac{\cos{(|\B^{\T} \tau|)} - 1}{|\B^{\T} \tau|^2} \B^{\T} \tau \tau^{\T} \B
\end{pmatrix}
+ \frac{\sin{(|\B^{\T} \tau|)}}{|\B^{\T} \tau|}
\begin{pmatrix}
0 & \tau^{\T} \B \\
- \B^{\T} \tau & \mathbb{O}_{p_0 \times p_0}
\end{pmatrix}.
\end{align}

Now, we describe $\gamma(w, 2 \, \theta; s) := (x(s), t(s))$ for given $w := (w_1, w_*) \in \R^{1 + p_0}$ and $\theta \in \R^{p_1}$. Let us introduce the column vectors
\begin{gather}
\v_1 := \begin{pmatrix}
\theta \cdot \B w_* \\[2mm]
- w_1 \, \B^\T \theta
\end{pmatrix},
\quad  \v_2 :=
\begin{pmatrix}
w_1 \\[2mm]
\frac{\theta \cdot \B w_* }{|\B^\T \theta|^2} \, \B^\T \theta
\end{pmatrix}, \quad \v_3 :=
\begin{pmatrix}
0 \\[2mm]
w_* - \frac{\theta \cdot \B w_* }{|\B^\T \theta|^2} \, \B^\T \theta
\end{pmatrix}, \label{n1v} \\
\w_1 := \v_1 \star \v_2
= - \left( w_1^2 + \frac{(\theta \cdot \B w_*)^2}{|\B^\T \theta|^2}\right) \B\B^\T \theta, \label{w1}\\
\w_2 := \v_1 \star \v_3
= - (\theta \cdot \B w_*) \, \B w_* + \frac{(\theta \cdot \B w_*)^2}{|\B^\T \theta|^2}  \, \B\B^\T \theta, \\
\w_3 := \v_2 \star
\v_3 = - w_1 \, \B \left( w_* - \frac{\theta \cdot \B w_* }{|\B^\T \theta|^2} \, \B^\T \theta \right). \label{w3}
\end{gather}
It follows from \eqref{GEn} that
\begin{align} \label{expzeta}
\zeta(s) &= \frac{ \sin(2 \, s \, |\B^\T \theta|)}{|\B^\T \theta|} \, \v_1
+ \cos(2 \, s \, |\B^\T \theta|) \, \v_2 + \v_3, \\
x(s) &= \frac{ 1 - \cos(2 \, s \, |\B^\T \theta|)}{2|\B^\T \theta|^2} \, \v_1
+ \frac{\sin(2 \, s \, |\B^\T \theta|)}{2 |\B^\T \theta| }
\, \v_2 + s \, \v_3. \label{expx}
\end{align}
The calculation of $t(s)$ is cumbersome. However, \eqref{skew2}-\eqref{w3},
\eqref{GEn} together with \eqref{expzeta} and \eqref{expx} imply that
\begin{align} \label{expt}
\dot{t}(s) &= \frac{1}{2} \, x(s) \star \zeta(s) \nonumber \\
&= \frac{ \cos(2 \, s \, |\B^\T \theta|) - 1}{4 \, |\B^\T \theta|^2} \, \w_1
+ \frac{1}{2} \left( \frac{ 1 - \cos(2 \, s \, |\B^\T \theta|)}{2 \, |\B^\T \theta|^2} - s \, \frac{\sin(2 \, s \, |\B^\T \theta|)}{ |\B^\T \theta| }\right) \w_2 \nonumber \\
&+ \frac{1}{2} \left( \frac{\sin(2 \, s \, |\B^\T \theta|)}{2 \, |\B^\T \theta| } - s \, \cos(2 \, s \, |\B^\T \theta|) \right) \, \w_3.
\end{align}

Noticing that
\begin{align}
& \int_0^s r \, \sin(2 \, r \, |\B^\T \theta|) \, dr = \frac{- 2 \, s \, |\B^\T \theta| \cos(2 \, s \, |\B^\T \theta|) + \sin(2 \, s \, |\B^\T \theta|) }{4 \, |\B^\T \theta|^2}, \label{n*n1} \\
& \int_0^s r \, \cos(2 \, r \, |\B^\T \theta|) \, dr = \frac{2 \, s \, |\B^\T \theta| \sin(2 \, s \, |\B^\T \theta|) - 1 + \cos(2 \, s \, |\B^\T \theta|) }{4 \,  |\B^\T \theta|^2}, \label{n*n2}
\end{align}
from \eqref{expt}, we get that
\begin{align} \label{exptp}
t(s)&= \frac{\sin(2 \, s \, |\B^\T \theta|) - 2 \, s \, |\B^\T \theta|}{8 \, |\B^\T \theta|^3} \, \w_1 \nonumber \\
&+ \frac{s \, |\B^\T \theta| + s \,  |\B^\T \theta| \cos(2 \, s \, |\B^\T \theta|) - \sin(2 \, s \, |\B^\T \theta|) }{4 \, |\B^\T \theta|^3} \, \w_2 \nonumber \\
& + \frac{ - s \, |\B^\T \theta| \sin(2 \, s \, |\B^\T \theta|) + 1 - \cos(2 \, s \, |\B^\T \theta|) }{4 \, |\B^\T \theta|^2} \, \w_3.
\end{align}

In particular, we can get some information about the set of the endpoints of nontrivial ``bad'' normal geodesics, as well as

\subsubsection{The nontrivial ``bad'' normal geodesics} \label{ss512}

We suppose in this subsection that
\[
w = (w_1, w_*) \neq 0, \  \mbox{ and } \  |\B^{\T} \theta| = k \pi \ \mbox{ with } \  k \in \N^*.
\]
Recall that the isomorphism $\T_{\B}$ is defined by \eqref{TBn} and $|\T_{\B}(\tau)| = |\B^{\T} \tau|$ for any $\tau \in \R^{p_1}$.

Set
\begin{align} \label{nWn}
W_* := \T_{\B}^{-1} \left(\B w_* \right),
\quad \widehat{\eta} := \frac{\T_{\B}(\theta)}{|\T_{\B}(\theta)|} = \frac{\T_{\B}(\theta)}{|\B^{\T} \theta|} = \frac{\T_{\B}(\theta)}{k \, \pi}.
\end{align}
Observe that from \eqref{nIso}, we have that
\begin{align} \label{nX}
\frac{\theta \cdot \B w_*}{|\B^{\T} \theta|} = \frac{\T_{\B}(\theta) \cdot W_*}{|\B^{\T} \theta|} = \widehat{\eta} \cdot W_* := \widetilde{w}_1.
\end{align}

Let
\[
(x, t) = ((x_1, x_*), t) := \exp(w, 2 \, \theta) = \gamma_{(w, 2 \theta)}(1), \quad (x(s), t(s)) := \gamma_{(w, 2 \theta)}(s).
\]

Taking $s = 1$ and $|\B^{\T} \theta| = k \, \pi$ in \eqref{expx}, we deduce that
\begin{align} \label{nxz}
x = \left(0,  w_* - \frac{\theta \cdot \B w_*}{|\B^{\T} \theta|^2} \,  \B^\mathrm{T} \theta \right),
\end{align}
namely $x_1 = 0$ and
\begin{align}\label{xstar}
x_* = w_* - (\widehat{\eta} \cdot W_*) \,  \frac{\B^{\T} \theta}{|\B^{\T} \theta|},
\end{align}
where we have used \eqref{nX}.  Similarly, \eqref{exptp} implies that
\begin{align} \label{ntz}
t &= -\frac{1}{4 \, |\B^\T \theta|^2} \, \w_1 + \frac{1}{2 \, |\B^\T \theta|^2} \, \w_2 \\
&= - \frac{\theta \cdot \B w_* }{2 \, |\B^{\T} \theta|^2} \, \B w_*  + \frac{|\B^{\T} \theta|^2 \, w_1^2 + 3 \, (\theta \cdot \B w_*)^2}{4 \, |\B^{\T} \theta|^4}  \, \B \B^\mathrm{T}\theta. \nonumber
\end{align}
In other words, by \eqref{nX}, we have
\begin{align} \label{n1t}
t = - \frac{\widehat{\eta} \cdot W_*}{2 \, |\B^{\T} \theta|} \, \B w_*  + \frac{w_1^2 + 3 \, (\widehat{\eta} \cdot W_*)^2}{4 \, |\B^{\T} \theta|^2}  \, \B \B^\mathrm{T}\theta.
\end{align}

Recall that $\T_{\B} := (\B \B^{\T})^{\frac{1}{2}}$, $X_* := \T_{\B}^{-1}\left(\B x_*\right)$ and $T := \T_{\B}^{-1}(t)$. Applying $\T_{\B}^{-1} \B$ to both sides of
\eqref{xstar}, it follows from \eqref{nWn} and \eqref{nX} that
\begin{align} \label{Xstar}
X_* = W_* - (\widehat{\eta} \cdot W_*) \, \widehat{\eta} = W_* - \widetilde{w}_1 \, \widehat{\eta}.
\end{align}
And similarly, applying $\T_{\B}^{-1}$ to both sides of \eqref{n1t}, we obtain
\begin{align} \label{TWstar}
T = -\frac{\widetilde{w}_1}{2 \, k \, \pi} \, W_* + \frac{w_1^2 + 3 \, \widetilde{w}_1^2}{4 \, k \, \pi} \, \widehat{\eta}.
\end{align}

We split it into cases.

\paragraph{Case 1. $t = 0$ so $T = 0$.} In such case, taking inner product with $\widehat{\eta}$ on both sides of \eqref{TWstar} and using \eqref{nX}, we yield $w_1 = \widetilde{w}_1 = 0$. So $\theta \cdot \B w_* = 0$, and $w_* = x_*$ because of \eqref{nxz}. Hence, the vectors defined by \eqref{n1v}-\eqref{w3} in this situation are $\w_1 = \w_2 = \w_3 = 0$, $\v_1 = \v_2 = 0$ and $\v_3 = x$ respectively. In conclusion, by \eqref{expx} and \eqref{exptp}, a simple calculation shows that the ``bad'' normal geodesic from $o$ to $((0, x_*), 0)$ is the the straight segment $\gamma_{((0, x_*), 0)}(s)$.

\medskip

From now on, we further assume that:

\paragraph{Case 2. $t \neq 0$ so $T \neq 0$.}
In such case, we have $w_1^2 + \widetilde{w}_1^2 > 0$. Taking inner product with $\widehat{\eta}$ on both sides of \eqref{Xstar}, by \eqref{nX}, we get $X_* \cdot \widehat{\eta} = 0$. Inserting \eqref{Xstar} into \eqref{TWstar}, we obtain that
\begin{align}\label{TXstar}
T =  -\frac{\widetilde{w}_1}{2 \, k \, \pi} \, X_*+ \frac{w_1^2 + \widetilde{w}_1^2}{4 \, k \, \pi}  \, \widehat{\eta}.
\end{align}
And we will consider the cases $X_* \neq 0$ and $X_* = 0$.

{\em (I) Assume that $X_* \neq 0$.} In such case, $X_*$ and the unit vector $\widehat{\eta} = \frac{\T_{\B} \theta}{k \, \pi}$ are orthogonal. It is easy to solve out $(w_1, \widetilde{w}_1)$ as well as $\theta$ from \eqref{TXstar}. Using \eqref{nWn}, a direct calculation shows that
\begin{gather}
\widetilde{w}_1 = - 2 \, k \, \pi \frac{T \cdot \widehat{X_*}}{|X_*|}, \quad \theta = k \, \pi \, \T_\B^{-1} \left( \frac{T - (T \cdot \widehat{X_*}) \, \widehat{X_*}}{|T - (T \cdot \widehat{X_*}) \, \widehat{X_*}|} \right),
 \nonumber \\
\mbox{} \label{nP1n} \\[-6mm]
w_1 = \pm \sqrt{4 \, k \, \pi \left| T - (T \cdot \widehat{X_*}) \, \widehat{X_*} \right| - 4 \, k^2 \, \pi^2 \left(\frac{T \cdot \widehat{X_*}}{|X_*|}\right)^2}.  \nonumber
\end{gather}

{\em (II) Assume that $X_* = 0$, that is $\B x_* = 0$.} In such case, by \eqref{nWn}, \eqref{TXstar} implies that
\begin{align} \label{nP2n}
\theta = k \, \pi \, \T_\B^{-1} \left( \widehat{T} \right), \quad
(w_1, \widetilde{w}_1) = \sqrt{4 \, k \, \pi \,  |T|} \left(\cos{\sigma},  \sin{\sigma} \right) (\sigma \in \R).
\end{align}

In conclusion, we have always
\begin{align} \label{o1n1}
X_* \cdot \widehat{\eta} = 0.
\end{align}
And $(x, t) = ((0, x_*), t)$ ($t \neq 0$) is the endpoint of some ``bad'' normal geodesic $\gamma_{((w_1, w_*), 2 \, \theta)}$ satisfying $|\B^{\T} \theta| = k \, \pi$ ($k \in \N^*$) if and only if
\begin{align}
|X_* \cdot T| \le \frac{|X_*|^2}{\sqrt{k \pi}} \sqrt{\left|T - (\widehat{X_*} \cdot T) \, \widehat{X_*} \right|}.
\end{align}

In such case, taking $(w_1, \widetilde{w}_1, \theta)$ as in \eqref{nP1n} for $X_* \neq 0$, or in \eqref{nP2n} for $X_* = 0$, it follows from \eqref{xstar} and \eqref{nX} that
\begin{align} \label{w*n}
w_* = x_* - 2 \, \frac{T \cdot \widehat{X_*}}{|X_*|}
 \, \B^{\T} \theta = x_* + \frac{\widetilde{w}_1}{k \, \pi} \B^{\T} \theta.
\end{align}
Now recall that (see \eqref{nxz}) $x_* = w_* - \frac{\theta \cdot \B w_* }{|\B^\T \theta|^2} \, \B^\T \theta$. Substituting this as well as \eqref{nX} and $|\B^{\T} \theta| = k \pi$ in \eqref{n1v}-\eqref{w3},  we get that
\begin{gather*}
\v_3 = \begin{pmatrix} 0 \\ x_* \end{pmatrix}, \quad
\v_2 = \begin{pmatrix} w_1 \\ \frac{\widetilde{w}_1}{k \, \pi} \, \B^\mathrm{T} \theta \end{pmatrix}, \quad \v_1 = k \, \pi \,
\begin{pmatrix} \widetilde{w}_1 \\ - \frac{w_1}{k \, \pi} \, \B^\mathrm{T} \theta \end{pmatrix}, \\
\w_2 = - (\theta \cdot \B w_*) \,  \B x_* = - k \, \pi \, \widetilde{w}_1 \, \B x_*, \quad \w_3 = -w_1 \, \B x_*,
\end{gather*}
and by \eqref{ntz},
\begin{align*}
\w_1 = 2 \, \w_2 - 4 \, |\B^\T \theta|^2 \, t = - 2 \, k \, \pi \, \left( \widetilde{w}_1 \, \B x_* + 2 \, k \, \pi \, t \right).
\end{align*}

Substituting them into \eqref{expx} and \eqref{exptp}, and replacing $|\B^{\T} \theta|$ by $k \, \pi$, we obtain finally the expression of $\gamma_{((w_1, w_*), 2 \, \theta)}(s)$ as follows:
\begin{equation} \label{bngE}
\begin{cases}
x(s) &= s \begin{pmatrix} 0 \\ x_* \end{pmatrix}
+\frac{\sin( 2 \, s \, k \, \pi)}{2 \, k \, \pi}
\begin{pmatrix} w_1 \\ \frac{\widetilde{w}_1}{k \, \pi} \, \B^\mathrm{T} \theta \end{pmatrix}
+ \frac{1 - \cos(2 \, s  \, k \, \pi)}{2 \, k \, \pi}
\begin{pmatrix} \widetilde{w}_1 \\ - \frac{w_1}{k \, \pi} \, \B^\mathrm{T} \theta \end{pmatrix} \\[6mm]
t(s) &= \left( s -  \frac{\sin( 2 \, s \, k \, \pi)}{2 \, k \, \pi} \right) \, t
+ \frac{1 - \cos(2 \,s  \, k \, \pi)}{4 \, k \, \pi} \, s \, \widetilde{w}_1  \, \B x_* \\[6mm]
\mbox{} &+ \frac{s \, k \, \pi \, \sin(2 \, s \, k \pi) - 1 + \cos(2 \, s \, k \, \pi)}{4 \, k^2 \, \pi^2} \, w_1 \, \B x_*.
\end{cases}
\end{equation}
Moreover, by \eqref{xstar} and \eqref{nX}, we get that
\[
|w_*|^2 = \left| x_* + \widetilde{w}_1 \, \frac{\B^{\T} \theta}{|\B^{\T} \theta|} \right|^2 = |x_*|^2 + \widetilde{w}_1^2,
\]
since \eqref{nIso} says that
\[
x_* \cdot \frac{\B^{\T} \theta}{|\B^{\T} \theta|} = X_* \cdot \frac{\T_{\B} \theta}{|\B^{\T} \theta|} = X_* \cdot \widehat{\eta} = 0,
\]
where we have used \eqref{o1n1} in the last equality. In conclusion,
\begin{align} \label{Kgl}
\ell^2(\gamma_{((w_1, w_*), 2 \, \theta)}) &= w_1^2 + |w_*|^2 = |x_*|^2 + w_1^2 + \widetilde{w}_1^2 \nonumber \\
&= |x_*|^2 + 4 \, k \, \pi \, \left| T - (T \cdot \widehat{X_*}) \, \widehat{X_*} \right|.
\end{align}

\subsubsection{Shortest geodesic(s), as well as normal geodesics from $o$ to any given $g \neq o$}

Recall that the ``good'' normal geodesics from $o$ to any given $g \neq o$ are characterized by  \cite[Theorem~2.4]{Li19} in the general setting of step-two groups. Also in our special setting of K-type groups, the ``bad'' normal geodesics from $o$ to any given $g \in \mathcal{W} \setminus \{ o \}$ are characterized in the last subsection.
Hence, we only study in the sequel the shortest geodesic(s) from $o$ to any given $g_0 \neq o$.

Consider the cases $g_0 \in \M$, $g_0 \in \mathrm{Abn}_o^* \setminus \{ o \}$ and $g_0 \in \mathrm{Cut}_o \setminus \mathrm{Abn}_o^*   = \mathrm{Cut}_o^{\mathrm{CL}}$.

1. If $g_0 \in \M$, there exists a unique shortest geodesic steering $o$ to $g_0$, and its equation is well-known by Theorem \ref{RLT}
together with \eqref{expx} and \eqref{exptp}.

2. For $g_0 \in \mathrm{Abn}_o^* \setminus \{ o \}$, the unique shortest geodesic is a straight segment and it is abnormal.

3. From now on, assume that $g_0 \in \mathrm{Cut}_o \setminus \mathrm{Abn}_o^*$. Using Theorem \ref{nThm1},
any shortest geodesic joining $o$ to $g_0$ is given by $\gamma_{(w , 2 \, \theta)}$, where
\begin{gather*}
(w, 2 \, \theta) := ((w_1, w_*), 2 \, \theta ) \in  \R^{1 + p_0} \times \R^{p_1}  \quad \mbox{with}  \  |w| = d(g_0),  \\
|\B^{\T} \theta| = \pi, \   \mbox{and}  \  \exp(w, 2 \, \theta) = g_0.
\end{gather*}
More precisely, by the results known in Subsection \ref{ss512}, we yield:

\begin{proposition}
Let $g_0 \in \mathrm{Cut}_o \setminus \mathrm{Abn}_o^* = \mathrm{Cut}_o^{\mathrm{CL}}$, namely $g_0 = ((0, x_*), t)$ with $t \neq 0$ and
\[
|X_* \cdot T| \leq
\frac{|X_*|^2}{\sqrt{\pi}} \sqrt{\left|T -
(\widehat{X_*} \cdot T) \, \widehat{X_*} \right|},
\]
where $T$ and $X_*$ are defined by \eqref{TX*}. Then any shortest geodesic from $o$ to $g_0$,
\[
 \gamma_{((w_1, w_*), 2 \, \theta)}(s) = (x(s), t(s)),
\]
can be written as in \eqref{bngE} with $k = 1$, $w_*$ defined by \eqref{w*n}, and $(w_1, \widetilde{w}_1, \theta)$ by \eqref{nP1n} for $X_* \neq 0$, or by \eqref{nP2n} for $X_* = 0$.
\end{proposition}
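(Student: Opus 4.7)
The plan is to apply Theorem \ref{nThm1} to reduce to the case $\theta \in \partial \OA$, and then invoke the explicit computations of Subsection \ref{ss512} specialized to $k = 1$. The structural input is that $\G_{\B}^{\K}$ is a GM-group (Remark \ref{r5}(2)) and, by Corollary \ref{NThA1}(iii) combined with Remark \ref{r5}(1), that $\mathrm{Cut}_o^{\mathrm{CL}} = \widetilde{\M}^c$; thus our $g_0$ lies in $\widetilde{\M}^c$.

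First, I would fix an arbitrary shortest geodesic from $o$ to $g_0$ and apply Theorem \ref{nThm1}: there exist $w = (w_1, w_*) \in \R^{1 + p_0}$ with $|w|^2 = d(g_0)^2$ and $\theta \in \partial \OA = \{\tau \in \R^{p_1};\, |\B^{\T} \tau| = \pi\}$ such that this shortest geodesic coincides with $\gamma_{(w, 2\theta)}$. In particular, we are precisely in the regime of Subsection \ref{ss512} with $k = 1$. Since $t \neq 0$ by hypothesis, we are in Case~2 of that subsection, so \eqref{xstar}, \eqref{TXstar} and \eqref{o1n1} become available.

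Next, I would solve \eqref{TXstar} for $(w_1, \widetilde{w}_1, \theta)$ using the orthogonality $X_* \perp \widehat{\eta}$ from \eqref{o1n1}. When $X_* \neq 0$, projecting \eqref{TXstar} onto $\widehat{X_*}$ and onto $\widehat{\eta}$ yields
\[
\widetilde{w}_1 = - 2 \pi \, \frac{T \cdot \widehat{X_*}}{|X_*|}, \qquad w_1^2 + \widetilde{w}_1^2 = 4 \pi \left|T - (T \cdot \widehat{X_*}) \, \widehat{X_*} \right|,
\]
and these together with $\widehat{\eta} = \T_\B(\theta)/\pi$ lying in the direction of $T - (T \cdot \widehat{X_*})\widehat{X_*}$ recover precisely \eqref{nP1n}. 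The nonnegativity of the radicand determining $w_1$ is exactly the defining inequality of $\mathrm{Cut}_o^{\mathrm{CL}}$, squared and rearranged. When $X_* = 0$, equation \eqref{TXstar} reduces to $T = \frac{w_1^2 + \widetilde{w}_1^2}{4\pi}\,\widehat{\eta}$, forcing $\widehat{\eta} = \widehat{T}$ and parametrizing $(w_1, \widetilde{w}_1)$ on a circle of radius $\sqrt{4\pi|T|}$, which is \eqref{nP2n}. The value of $w_*$ is then read off from \eqref{xstar} (equivalently \eqref{w*n}), and substitution into \eqref{expx} and \eqref{exptp} with $|\B^{\T}\theta| = \pi$ delivers the explicit form \eqref{bngE}. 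For consistency one may check that the length $|w|^2 = w_1^2 + |w_*|^2 = |x_*|^2 + w_1^2 + \widetilde{w}_1^2$ collapses, via \eqref{Kgl} with $k=1$, to the distance formula in Theorem \ref{t5}(2).

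The main obstacle is essentially bookkeeping: one must confirm that Theorem \ref{nThm1} forces $k = 1$ rather than some higher integer $k \in \N^*$ (this is automatic since $\theta \in \partial \OA = \overline{\OA} \setminus \OA$ prohibits $|\B^{\T} \theta| > \pi$, which in turn uses that $\G_{\B}^{\K}$ is GM so $\mathfrak{R} = \overline{\OA}$ by Theorem \ref{NTh1}(iv)), and that in the case $X_* = 0$ the free angular parameter $\sigma$ in \eqref{nP2n} genuinely corresponds to a one-parameter family of distinct shortest geodesics rather than an artifact of the parametrization, which is evident from \eqref{bngE} since distinct $(w_1, \widetilde{w}_1)$ produce distinct trajectories $x(s)$. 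No further analytic work is required beyond the algebra already carried out in Subsection \ref{ss512}.
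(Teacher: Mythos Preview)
Your proposal is correct and follows essentially the same approach as the paper: apply Theorem \ref{nThm1} (using that $\G_{\B}^{\K}$ is GM and that $g_0 \in \mathrm{Cut}_o^{\mathrm{CL}} = \widetilde{\M}^c$) to force $\theta \in \partial \OA$, i.e.\ $|\B^{\T}\theta| = \pi$, and then invoke the explicit endpoint computations of Subsection \ref{ss512} in Case~2 with $k = 1$ to recover $(w_1, \widetilde{w}_1, \theta)$ via \eqref{nP1n} or \eqref{nP2n} and $w_*$ via \eqref{w*n}. The paper's argument is exactly this, stated more tersely in the paragraph immediately preceding the proposition.
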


\begin{remark}
If we suppose further that  $X_* \ne 0$ and $|X_* \cdot T| <
\frac{|X_*|^2}{\sqrt{\pi}} \sqrt{\left|T -
(\widehat{X_*} \cdot T) \, \widehat{X_*} \right|}$, \eqref{nP1n} implies that there are exactly two distinct shortest geodesics joining $o$ to $g_0$.
\end{remark}

To end this part, we determine on K-type groups

\subsubsection{Optimal synthesis}

In the special case of K-type groups, combining (ii) of Corollary \ref{NThA1} with Proposition \ref{cAg} and the result in Case 1 of Subsection \ref{ss512}, we obtain the following:

\begin{corollary}
Let $\exp\{s \, (w, 2 \, \theta)\}$ be an arclength parametrized geodesic. Then its cut time $h_{\mathrm{cut}}$ equals $+\infty$ when $w = (0, w_*)$ with $\theta \cdot \B w_* = 0$, and $\pi/\| U(\theta) \|$ otherwise.
\end{corollary}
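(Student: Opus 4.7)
The plan is to invoke Corollary~\ref{NThA1}(ii), which is available because K-type groups are GM-groups by Remark~\ref{r5}(2). According to that characterization, the cut time of $\exp\{s\,(w, 2\theta)\}$ equals
\[
h_{\mathrm{cut}} = \max\left\{\frac{2\pi}{\|U(\sigma)\|}\; :\; \sigma \in \R^{p_1},\ \gamma(w, \sigma;\cdot) \equiv \gamma(w, 2\theta;\cdot)\right\}.
\]
Writing $\sigma = 2\theta'$ and applying Proposition~\ref{cAg}, the set of admissible $\sigma$ is precisely $2(\theta + L)$, where
\[
L := \{\eta \in \R^{p_1} :\ U(\eta)\,U(\theta)^k w = 0\ \mbox{for all }\, k \in \N\}
\]
is a linear subspace of $\R^{p_1}$ (closed under linear combinations because $\eta \mapsto U(\eta)$ is linear). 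Since $\|U(\tau)\| = |\B^\T \tau|$ by \eqref{Ku2}, the problem reduces to computing the minimum of $|\B^\T(\theta + \eta)|$ over $\eta \in L$, with the convention $\pi/0 = +\infty$.

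The explicit block form of $U(\theta)$ makes the description of $L$ short. For $w = (w_1, w_*)$, formula \eqref{Ku2} gives $U(\theta)\,w = i\,(\theta \cdot \B w_*,\, -\,w_1\, \B^\T\theta)$, and one more application shows that every nonzero iterate $U(\theta)^k w$ ($k \geq 1$) lies in the two-dimensional span of $(1, 0)$ and $(0, \B^\T\theta)$, up to scalar factors involving powers of $|\B^\T\theta|^{2}$. Consequently, $\eta \in L$ is equivalent to the three linear conditions $\eta \cdot \B w_* = 0$, $\ w_1\, \B^\T \eta = 0$, and $(\theta \cdot \B w_*)\, \B^\T\eta = 0$.

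Because $\B$ has full rank $p_1$, the map $\B^\T$ is injective, so $\B^\T\eta = 0$ forces $\eta = 0$. Hence, if $w_1 \neq 0$, or if $w_1 = 0$ with $\theta \cdot \B w_* \neq 0$, then $L = \{0\}$ and the above maximum is attained at $\eta = 0$, yielding $h_{\mathrm{cut}} = \pi/\|U(\theta)\|$. Otherwise, when $w_1 = 0$ and $\theta \cdot \B w_* = 0$, the last two linear conditions become trivial and $L$ reduces to the hyperplane $\{\eta \in \R^{p_1} : \eta \cdot \B w_* = 0\}$, which already contains $\theta$; therefore $0 \in \theta + L$ and $h_{\mathrm{cut}} = +\infty$. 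This last regime is consistent with Case~1 of Subsection~\ref{ss512}: under those hypotheses the vectors $\v_1, \v_2, \w_1, \w_2, \w_3$ of \eqref{n1v}--\eqref{w3} all vanish, so \eqref{expx} and \eqref{exptp} collapse $\gamma(w, 2\theta;\cdot)$ to the horizontal ray $s \mapsto (0, s\,w_*)$, which is minimizing on $[0, +\infty)$ by Corollary~\ref{Nc1}. The main obstacle is conceptual rather than computational: one has to recognize that Proposition~\ref{cAg} converts the geometric question about cut times into the purely algebraic problem of describing $L$, after which the block-triangular form of $U(\theta)$ on K-type groups truncates the infinite tower of conditions to just two meaningful iterates.
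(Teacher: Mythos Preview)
Your proof is correct and follows essentially the same route as the paper, which simply states that the result follows by combining (ii) of Corollary~\ref{NThA1} with Proposition~\ref{cAg} and the straight-segment observation in Case~1 of Subsection~\ref{ss512}. You have merely made explicit the computation of the abnormal subspace $L$ that the paper leaves implicit, and your identification of the three linear conditions (together with the observation that $\B^{\T}$ is injective) is exactly what is needed to carry out that combination.
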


\medskip

\renewcommand{\theequation}{\thesection.\arabic{equation}}
\section{Sub-Riemannian geometry on step-two groups associated to quadratic CR manifolds} \label{s7}
\setcounter{equation}{0}

\medskip

Let $m, n \in \N^*$ with $n \geq m$.  Consider a full-rank $m \times n$ real matrix
\begin{align*}
\A = (\a_1, \ldots, \a_n) \quad \mbox{with column vectors} \quad \a_j := (a_{1, j}, \ldots, a_{m, j})^{\T} \in \R^m.
\end{align*}
A step-two group associated to quadratic CR manifolds of type $\A$, $\G_{\A}^{\CR}$, is defined by $\G(2 n, m, \U_{\A}^{\CR})$ with (cf. for example \cite{NRS01} for more details)
\begin{align*}
U_{\A}^{\CR, (j)} =
\begin{pmatrix}
\begin{pmatrix}
0 & a_{j,1} \\
- a_{j,1} & 0
\end{pmatrix}\\
&\ddots& \\
& & &\begin{pmatrix}
0 & a_{j, n} \\
- a_{j, n} & 0
\end{pmatrix}
\end{pmatrix}, \qquad
1 \le j \le m.
\end{align*}

For example, if $m = 1$ and $\A_1 = (1, \ldots, 1)$, $\G_{\A_1}^{\CR}$ is the Heisenberg group of real dimension $2 n + 1$, $\H^{2 n + 1}$. Moreover, for $m = n$ and $\A = \I_n$, $\G_{\I_n}^{\CR}$ is the direct product of $n$ copies of Heisenberg group $\H^3$, namely
\[
\G_{\I_n}^{\CR} = \H^3 \times \cdots \times \H^3.
\]

Observe that we have for $\tau \in \R^m$,
\begin{align}  \label{ucr}
U_{\A}^{\CR}(\tau) =  i
\begin{pmatrix}
\begin{pmatrix}
0 & \a_1 \cdot \tau \\
- \a_1 \cdot \tau & 0
\end{pmatrix}\\
&\ddots& \\
& & &\begin{pmatrix}
0 & \a_n \cdot \tau \\
- \a_n \cdot \tau & 0
\end{pmatrix}
\end{pmatrix}.
\end{align}
Then the initial reference set in this situation is given by
\begin{align} \label{rsCR}
\OA := \Omega_{\A}^{\CR} = \bigcap_{j = 1}^n \left\{\tau \in \R^m; \, |\a_j \cdot \tau | < \pi \right\}.
\end{align}
We identify $\R^2$ with $\C$ in the usual way, so $\R^{2 n}$ with $\C^n$. Write in the sequel
\[
z = (z_1, \ldots, z_n) \in \C^n.
\]
A simple calculation shows that the reference function is
\begin{align} \label{nRFe}
\phi((z,t); \tau) := \phi_{\A}^{\CR}((z,t); \tau) = \sum_{j = 1}^n |z_j|^2 \, (\a_j \cdot \tau) \cot(\a_j \cdot \tau) + 4 \, t \cdot \tau.
\end{align}

Recall that $f(s) = 1 - s \cot{s}$ and $\mu(s) = f'(s)$ (see \eqref{EFs}). The gradient and the Hessian matrix of $\phi((z, t); \cdot)$ at $\tau \in \OA$ are clearly
\begin{gather}
\nabla_\tau \phi((z,t);\tau) = - \sum_{j = 1}^n \mu(\a_j \cdot \tau) \, |z_j|^2 \, \a_j + 4 \, t, \label{HMnn0} \\
\mathrm{Hess}_{\tau} \phi((z,t);\tau) = - \A \, \Lambda(z; \tau) \, \A^{\T} = - \A \,  \sqrt{\Lambda(z; \tau)} \, \left( \A \, \sqrt{\Lambda(z; \tau)} \right)^{\T}, \label{HMnn}
\end{gather}
respectively, where
\begin{align*}
\Lambda(z; \tau) = \begin{pmatrix}
\mu'(\a_1 \cdot \tau) \, |z_1|^2 \\
& \ddots \\
& & \mu'(\a_n \cdot \tau) \, |z_n|^2
\end{pmatrix}  \ge 0, \quad \forall \, \tau \in \OA,
\end{align*}
since $\mu'(s) > 0$ for all $-\pi < s < \pi$  (see for example Lemma \ref{NL31} in Subsection \ref{s4} below).

Recall that $\M$ is defined in Subsection \ref{s22}. We can characterize the squared sub-Riemannian distance as well as the cut locus in the following theorem.

\begin{theorem} \label{t6}
(1) We have
\[
d(g)^2 = \sup_{\tau \in \Omega_{\A}^{\CR}} \phi_{\A}^{\CR}(g; \tau), \qquad \forall \, g \in \G_{\A}^{\CR}.
\]

(2) The cut locus of $o$, $\mathrm{Cut}_o$, is $\M^c$, where
\begin{align*}
\M = \left\{(z, t);  \, \mathrm{span}\{\a_j; \, |z_j| \neq 0\} = \R^m
\mbox{ and } \exists \, \theta \in \OA  \mbox{ s.t. }  t =  \frac{1}{4}\sum_{j = 1}^n \mu(\a_j \cdot \theta) \, |z_j|^2 \, \a_j \right\}.
\end{align*}

(3) If $(z,t)   \in \M$, then there exists a unique $\theta = \theta(z,t) \in \Omega_{\A}^{\CR}$ such that
\begin{align}
t = \frac{1}{4} \sum_{j = 1}^n \mu(\a_j \cdot \theta) \, |z_j|^2 \, \a_j .
\end{align}
Moreover, we have
\begin{align}
d(z, t)^2 &= \phi((z,t); \theta) = \sum_{j = 1}^n \left(\frac{\a_j \cdot \theta}{\sin(\a_j \cdot \theta)}\right)^2 |z_j|^2 \nonumber \\
&=\sum_{j = 1}^n (\a_j \cdot \theta) \cot(\a_j \cdot \theta) \, |z_j|^2 + 4 \, t \cdot \theta.
\end{align}
\end{theorem}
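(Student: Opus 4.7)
The plan is to establish that $\G_\A^\CR$ is a GM-group; the three assertions will then follow almost immediately from the general machinery of Section~\ref{s2}.

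\textbf{Step 1: explicit form of $\M$.} From \eqref{HMnn}, $\mathrm{Hess}_\tau \phi((z,t);\tau)=-\A\,\Lambda(z;\tau)\,\A^{\T}$, whose diagonal entries $\mu'(\a_j\cdot\tau)\,|z_j|^2$ of $\Lambda(z;\tau)$ are strictly positive exactly for those $j$ with $|z_j|\neq 0$ (using $\mu'>0$ on $(-\pi,\pi)$). Hence $-\mathrm{Hess}_\tau\phi((z,t);\tau)$ is positive definite if and only if $\mathrm{span}\{\a_j:|z_j|\neq 0\}=\R^m$, in which case any critical point of $\phi((z,t);\cdot)$ in $\OA$ is automatically nondegenerate. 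Reading the critical-point equation off \eqref{HMnn0} and comparing with the definition \eqref{m} of $\M$, I recover the explicit description of $\M$ stated in~(2).

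\textbf{Step 2: $\G_\A^\CR$ is of GM-type.} The goal is $\overline{\M}=\G_\A^\CR$. Fix $z\in\C^n$ with $z_j\neq 0$ for every $j$; such $z$'s form a dense open subset of $\C^n$. Since $\A$ has rank $m$, the span condition of Step~1 holds, so $\phi((z,t);\cdot)$ is strictly concave on $\OA$ for every $t\in\R^m$. Moreover, by the description \eqref{rsCR} of $\OA$, each $\tau^{*}\in\partial\OA$ satisfies $|\a_{j_{0}}\cdot\tau^{*}|=\pi$ for some $j_{0}$; as $|z_{j_{0}}|>0$ and $s\cot s\to-\infty$ when $s\to\pm\pi$, this forces $\phi((z,t);\tau)\to-\infty$ as $\tau$ approaches $\partial\OA$ from inside. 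Strict concavity together with coercivity produce a unique interior critical point $\theta\in\OA$, so $(z,t)\in\M$ by Step~1. Since such $(z,t)$ are dense in $\G_\A^\CR$, we conclude $\overline{\M}=\G_\A^\CR$.

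\textbf{Step 3: deducing (1), (2), (3).} The GM property, together with the equivalence $\text{(i)}\Leftrightarrow\text{(iii)}$ of Theorem~\ref{NTh1}, yields~(1). Because $\M$ is open and dense, $\partial\M=\overline{\M}\setminus\M=\M^{c}$, and then Corollary~\ref{Nc2} gives $\mathrm{Cut}_o=\partial\M=\M^{c}$, which is~(2). For $(z,t)\in\M$, strict concavity of $\phi((z,t);\cdot)$ on $\OA$ forces the critical point $\theta$ to be unique; Theorem~\ref{RLT} (applied with $\zeta_0=U_\A^\CR(\theta)/\sin U_\A^\CR(\theta)\cdot z$, noting that the CR block structure makes the matrix $\widetilde U_\A^\CR(\theta)$ commute with itself and its exponential act as isometries on each block) then delivers
\[
d(z,t)^{2}=\phi((z,t);\theta)=\bigl|U_\A^\CR(\theta)/\sin U_\A^\CR(\theta)\cdot z\bigr|^{2}.
\]
Expanding via the block-diagonal form \eqref{ucr} of $U_\A^\CR(\theta)$, each $2\times 2$ block acts on $z_j\in\C$ by multiplication by $(\a_j\cdot\theta)/\sin(\a_j\cdot\theta)$, so the right-hand side becomes $\sum_{j}\bigl((\a_j\cdot\theta)/\sin(\a_j\cdot\theta)\bigr)^2|z_j|^2$, completing~(3).

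The main obstacle is the coercivity argument of Step~2, i.e.\ showing that for a generic (dense) set of $z$'s the reference function blows up to $-\infty$ along every approach to $\partial\OA$; everything else is a direct application of results recalled in Section~\ref{s2}, together with the pleasant feature that the CR structure diagonalizes into $2\times 2$ blocks parametrized by $\a_j\cdot\theta$.
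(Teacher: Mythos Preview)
Your proof is correct and follows essentially the same approach as the paper: the rank computation for the Hessian to identify $\M$, density of $\{z:\ z_j\neq 0\ \forall j\}$ to verify the GM property, and then the general results (Theorem~\ref{NTh1}, Corollary~\ref{Nc2}, Theorem~\ref{RLT}) to conclude. The only difference is cosmetic: in Step~2 you spell out the coercivity argument (that $\phi((z,t);\tau)\to-\infty$ along $\partial\OA$ when all $z_j\neq 0$), whereas the paper simply invokes \cite[Proposition~2.2]{Li19} for the inclusion $\{(z,t):\ z_j\neq 0\ \forall j\}\subseteq\M$.
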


\begin{proof} Let $r(A)$ denote the rank of a real matrix $A$. Using the basic property $r(A A^{\T}) = r(A)$, it deduces from \eqref{HMnn} that
\[
r\!\left( \mathrm{Hess}_{\tau} \phi((z,t); \tau) \right) = r\!\left( \A \, \sqrt{\Lambda(z; \tau)} \right) = \dim{\mathrm{span}\{\a_j; \, |z_j| \neq 0\}},
\]
since $\mu^\prime(s) > 0$ for $s \in (- \pi, \pi)$ (see Lemma \ref{NL31} in Subsection \ref{s4} below).
Combining this with \eqref{HMnn0}, we get immediately the characterization of $\M$. And the third assertion of this theorem follows directly from Theorem \ref{RLT}.

Moreover,  it follows from \cite[Proposition 2.2]{Li19} that
\begin{align}\label{charM}
\M \supseteq \{(z,t); \, |z_j| \neq 0, \, \forall \, 1 \le j \le n\},
\end{align}
which is dense in $\G_{\A}^{\CR}$.   Then $\G_{\A}^{\CR}$ is of type GM.  By Theorem \ref{NTh1} and Corollary \ref{Nc2}, we deduce the first assertion,  $\mathrm{Cut}_o = \M^c = \partial \M$, as well as other sub-Riemannian geometric properties.
\end{proof}

\medskip

Now we describe

\subsection{Shortest geodesic(s) joining $o$ to any $g \neq o$} \label{nSs61}

Let us begin with the

\subsubsection{Sub-Riemannian exponential map}

In the setting of step-two groups associated to quadratic CR manifolds, the equation of the normal geodesic (cf. \eqref{GEn}) as well as the sub-Riemannian exponential map becomes very concise via their special group structure, namely \eqref{ucr}. More precisely, if $p = (p_1, \ldots, p_n) \in \C^n \cong \R^{2 n}$ and $\theta \in \R^m$, let
\begin{align}
\exp\left\{s \, (p, 2 \, \theta)\right\} := (z(p, 2 \, \theta; s), t(p, 2 \, \theta; s)) := (z(s), t(s)),
\end{align}
and
\begin{align}
\exp(p, 2 \, \theta) := (z(p,2 \, \theta), t(p,2 \, \theta)) := (z, t).
\end{align}

It follows from \eqref{GEn} that
\begin{align*}
\zeta_j(s) = e^{- 2 \, i \, s \, ( \a_j \cdot \theta)} \, p_j, \qquad z_j(s) = \frac{1 - e^{-2 \, i \, s \, (\a_j \cdot \theta)}}{2 \, i \, (\a_j \cdot \theta)}  \, p_j , \quad 1 \le j \le n,
\end{align*}
and for $1 \le k \le m$,
\begin{align*}
t_k(s) &= 2^{-1} \, \sum_{j = 1}^n a_{k, j} \, \int_0^s \left\langle  \left(
   \begin{array}{cc}                    0 & 1 \\
   -1 & 0 \\
   \end{array}
   \right) z_j(r), \, \zeta_j(r) \right\rangle \, dr \\
&= - 2^{-1} \, \sum_{j = 1}^n a_{k, j} \, \int_0^s \Re \left( i \, z_j(r) \cdot \overline{\zeta_j(r)} \right) \, dr \\
&= \sum_{j = 1}^n \frac{2 \, s \, (\a_j \cdot \theta)
 - \sin{(2 \, s \, (\a_j \cdot \theta))}}{8 \, (\a_j \cdot \theta)^2} \, |p_j|^2 \, a_{k, j}.
\end{align*}

In conclusion, the normal geodesic with the initial covector $(p, 2 \, \theta)$, $\gamma(p, 2 \, \theta; s)$, is given by
\begin{align} \label{ngE}
\left\{ \begin{array}{ll}
z_j(s) = \frac{1 - e^{-2 \, i \, s \, (\a_j \cdot \theta)}}{2 \, i \, (\a_j \cdot \theta)}  \, p_j, \quad 1 \le j \le n, \\[3mm]
t(s) = \sum\limits_{j = 1}^n \frac{2 \, s \, (\a_j \cdot \theta)
 - \sin{(2 \, s \, (\a_j \cdot \theta))}}{8 \, (\a_j \cdot \theta)^2} \, |p_j|^2 \, \a_j.
\end{array} \right.
\end{align}
In particular, we yield the expression of $\exp\{(p, 2 \theta)\}$,
\begin{align} \label{expmap3}
\left\{ \begin{array}{ll}
z_j(p,2 \, \theta) = \frac{1 - e^{-2 \, i \, (\a_j \cdot \theta)}}{2 \, i \, (\a_j \cdot \theta)}  \, p_j , \quad 1 \le j \le n, \\[3mm]
t(p,2 \, \theta) = \sum\limits_{j = 1}^n \frac{2 \, (\a_j \cdot \theta) - \sin(2 \, (\a_j \cdot \theta))}{8 \, (\a_j \cdot \theta)^2} \, |p_j|^2 \, \a_j.
\end{array} \right.
\end{align}

\subsubsection{Shortest geodesic(s) joining $o$ to any given $g \neq o$} \label{ss612}

We are in a position to determine all shortest geodesics  from $o$ to any given $o \neq g_0 := (z_0 , t_0)$ with $z_0 := (z_1^{(0)}, \ldots, z_n^{(0)})$. First, notice that in our framework, the set defined by \eqref{nOM} is
\begin{align*}
\widetilde{\M}
= \left\{ (z, t); \, \exists \, \theta \in \OA \  \mbox{s.t.} \  t = \frac{1}{4} \sum_{j = 1}^n \mu(\a_j \cdot \theta) |z_j|^2 \, \a_j  \right\}.
\end{align*}
And we consider the following two cases $g_0 \in \widetilde{\M} \setminus \{o\}$ and $g_0 \in \widetilde{\M}^c$.

{\em Case 1.} $g_0 \in \widetilde{\M} \setminus \{o\}$, namely $\phi(g_0; \cdot)$ attains its maximum at some point in $\OA$, saying $\theta_0$. It follows from Theorem \ref{RLT} that there exists a unique shortest geodesic joining $o$ to $g_0$, that is, $\exp\{s \, (p(g_0), 2 \, \theta_0)\}$ ($0 \le s \le 1$) with
\begin{align}
p(g_0) = \frac{U(\theta_0)}{\sin{U(\theta_0)}} \, e^{-\widetilde{U}(\theta_0)} \, z_0, \mbox{ i.e. } \  p_j(g_0) = \frac{\a_j \cdot \theta_0}{\sin{(\a_j \cdot \theta_0)}} \, e^{i \, (\a_j \cdot \theta_0)} \, z_j^{(0)}, \quad \forall \,  1 \le j \le n.
\end{align}
Substituting this in \eqref{ngE}, we yield its concise expression.
Moreover, it is strictly normal if $g_0 \in \M$ and also abnormal for $g_0 \in \widetilde{\M} \setminus (\M \cup \{ o\})$.

{\em Case 2.} $g_0 \in \widetilde{\M}^c$, that is $\phi(g_0; \cdot)$ only attains its supremum in $\OA$ at some $\theta \in \partial \OA$. In order to characterize all shortest geodesics from $o$ to $g_0$, we set
\begin{align*}
E(g_0) := \left\{\theta \in \partial \OA; \ \phi(g_0; \theta) = \sup_{\tau \in \OA} \phi(g_0; \tau) = d(g_0)^2  \right\}.
\end{align*}

From Theorem \ref{nThm1}, it remains to determine all $(p(g_0), 2 \, \theta(g_0))$ such that
\begin{align} \label{crIC}
\theta(g_0) \in E(g_0), \quad |p(g_0)| = d(g_0) \  \mbox{ and } \ \exp\{ (p(g_0), 2 \, \theta(g_0)) \} = g_0.
\end{align}

Up to rearrangements, we may assume that there exists an $L \in \N$, $L < n$ such that
\begin{align} \label{CRc1}
|\a_j \cdot \theta(g_0) | < \pi \  \mbox{for } \  1 \le j \le L, \quad \mbox{and } \ | \a_j \cdot \theta(g_0) | = \pi \  \mbox{if } \   L + 1 \le j \le n.
\end{align}
By \eqref{expmap3}, we obtain that
\begin{align}\label{equgeocr}
p_j(g_0) = \frac{2 \, i \,(\a_j \cdot \theta(g_0))}{1 - e^{-2 \, i \, (\a_j \cdot \theta(g_0))}} \, z_j^{(0)}, \quad   \forall \, 1 \le j \le L, \quad z_j^{(0)} = 0 \ \mbox{ for $L + 1 \le j \le n$},
\end{align}
and \eqref{crIC} holds if and only if $(p_{L+1}(g_0), \ldots, p_n(g_0))$ is a solution of the following equation:
\begin{align}\label{equgeoCR}
\sum\limits_{j = L + 1}^{n} \frac{1}{\a_j \cdot \theta(g_0)} \, |p_j(g_0)|^2 \, \a_j = 4 \, t_0 -  \sum\limits_{j = 1}^{L} \mu(\a_j \cdot \theta(g_0)) \, |z_j^{(0)}|^2 \, \a_j.
\end{align}

To prove this result, it suffices to show that $|p(g_0)|^2 = d(g_0)^2$ under our assumptions \eqref{CRc1}-\eqref{equgeoCR}, and other claims are clear.

Indeed, from \eqref{expphibd} and \eqref{equgeocr}, we have
\begin{align}\label{expphibdCR}
d(g_0)^2 = \phi(g_0; \theta(g_0)) = \sum_{j = 1}^L (\a_j \cdot \theta(g_0)) \cot(\a_j \cdot \theta(g_0)) |z_j^{(0)}|^2 + 4 \, t_0 \cdot \theta(g_0).
\end{align}
When \eqref{equgeoCR} holds, taking inner product with $\theta(g_0)$ on both sides of \eqref{equgeoCR}, we obtain that
\[
\sum_{j = L + 1}^{n} |p_j(g_0)|^2 = 4 \, t_0 \cdot \theta(g_0) -  \sum_{j = 1}^{L} (\a_j \cdot \theta(g_0)) \, \mu(\a_j \cdot \theta(g_0)) \, |z_j^{(0)}|^2.
\]
Summing with $\sum\limits_{j = 1}^{L} |p_j(g_0)|^2$ on both sides of last equality, it follows from \eqref{equgeocr} that
\[
|p(g_0)|^2 = 4 \, t_0 \cdot \theta(g_0) +  \sum_{j = 1}^{L} \left( \left( \frac{\a_j \cdot \theta(g_0)}{\sin(\a_j \cdot \theta(g_0))} \right)^2 - (\a_j \cdot \theta(g_0)) \, \mu(\a_j \cdot \theta(g_0)) \right) \, |z_j^{(0)}|^2.
\]
By \eqref{expphibdCR} and the elementary identity
\[
\left( \frac{s}{\sin{s}} \right)^2 - s \, \mu(s) = s \, \cot{s},
\]
we get $|p(g_0)|^2 = d(g_0)^2$.

In particular, if $((p_1(g_0), \ldots, p_{L+1}(g_0), \ldots p_n(g_0)), 2 \, \theta(g_0))$ satisfies the condition \eqref{crIC}, then so does $((p_1(g_0), \ldots, w_{L+1} \, p_{L+1}(g_0), \ldots, w_n \, p_n(g_0)), 2 \, \theta(g_0))$ for any complex numbers $(w_{L+1}, \ldots, w_n)$ satifying
\[
|w_{L + 1}| = \ldots = |w_n| = 1.
\]
Thus, there exist infinitely many shortest geodesics from $o$ to $g_0 \in \widetilde{\M}^c$ if
\[
4 \, t_0 -  \sum_{j = 1}^{L}  \mu(\a_j \cdot \theta(g_0)) \, |z_j^{(0)}|^2 \, \a_j \neq 0.
\]
In such case, it follows from \cite[Lemma 9]{RS17} that $g_0$ belongs to the classical cut locus of $o$, $\mathrm{Cut}_o^{\mathrm{CL}}$. This provides another explanation for such $g_0 \in \widetilde{\M}^c$ belonging to $\mathrm{Cut}_o^{\mathrm{CL}}$ besides (iii) of Corollary \ref{NThA1}. Finally we remark that the meaning for the case $L = 0$ is clear in the above discussion.

\medskip

Step-two groups associated to quadratic CR manifolds have very rich sub-Riemannian geometric properties. First, we provide an example of such groups on which \eqref{failconc} is  no longer valid for some $g_0 \in \mathrm{Abn}_o^* \setminus \widetilde{\M}_2$.

\medskip

\subsection{\eqref{nMBn} can be false at a point belonging to the shortest abnormal set on GM-groups} \label{s71}

\medskip

\begin{proposition}\label{p5}
There exist $\G_{\A}^{\CR}$, $g_0 \in \mathrm{Abn}_o^* \setminus \widetilde{\M}_2$ and $c_0 > 0$ such that
\begin{align}\label{semiconc}
d(g_0 + h g)^2 + d(g_0 - h g)^2 - 2 \, d(g_0)^2 \le 0,
\end{align}
for all $g = (z, t) \in \C^n \times \R^m$ with $|g|^2 = |z|^2 + |t|^2 = 1$ and $0 < h < c_0$.
\end{proposition}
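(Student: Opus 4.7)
The plan is to exhibit an explicit example. I would take $n = 3$, $m = 2$, with column vectors $\a_1 = e_1$, $\a_2 = e_2$, and $\a_3 = (e_1 + e_2)/2$ for $\A$. The key structural feature is that $\a_3$ is a strict positive combination of $\a_1, \a_2$ with coefficient sum less than $1$; consequently the constraint $|\a_3 \cdot \theta| < \pi$ is redundant in the definition of $\Omega_{\A}^{\CR}$, which becomes the square $(-\pi, \pi)^2$, and the vertex $\theta_* := (\pi, \pi)$ of $\overline{\Omega_{\A}^{\CR}}$ has only $\a_1, \a_2$ binding, admitting the closed first quadrant as its normal cone (which contains $\a_3$ in its interior). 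Choose $g_0 := (0, 0, 0, 1, 1) \in \C^3 \times \R^2$; it sits on the abnormal line associated to $\a_3$ and on an interior ray of the normal cone at $\theta_*$.

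First I would verify $g_0 \in \mathrm{Abn}_o^* \setminus \widetilde{\M}_2$. Clearly $g_0 \notin \widetilde{\M}_2$ since $z_0 = 0$ while $t_0 \neq 0$. For the closure, set $\theta^{(k)} := (\pi - 1/k, \pi - 1/k)$ and $z_3^{(k)} := 2\sqrt 2/(\sqrt\pi k)$; using the asymptotic $\mu(\a_3 \cdot \theta^{(k)}) \sim \pi k^2$ together with Theorem \ref{t6}(3) one sees $(0, 0, z_3^{(k)}, 1, 1) \in \widetilde{\M}_2$, and this sequence converges to $g_0$. Since $\G_{\A}^{\CR}$ is a GM-group (Theorem \ref{t6}), Theorem \ref{NThA2} yields $\overline{\widetilde{\M}_2} = \mathrm{Abn}_o^*$, so $g_0 \in \mathrm{Abn}_o^* \setminus \widetilde{\M}_2$.

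Next I would compute $d(g_0)^2 = \sup_{\theta \in \Omega_{\A}^{\CR}} 4(\theta_1+\theta_2) = 8\pi$, attained only in the limit $\theta \to \theta_*$. For a unit perturbation $g = (u_1, u_2, u_3, s_1, s_2)$, evaluate $d(g_0 \pm hg)^2 = \sup_\theta \phi(g_0 \pm hg; \theta)$. When $u = 0$, $\phi$ is affine in $\theta$ and $d(g_0 \pm hg)^2 = 8\pi \pm 4\pi h(s_1+s_2)$ \emph{exactly} for $h < 1$, so the sum equals $2d(g_0)^2$ and the inequality is an equality. When some $u_j \neq 0$, parametrize $\theta = (\pi - \epsilon_1, \pi - \epsilon_2)$ with $\epsilon_j > 0$ small and use the singularity $s \cot s \sim -\pi/(\pi - s)$ as $s \to \pi^-$; these singularities force an interior maximizer at $\epsilon_j = h\eta_j + O(h^2)$, where $(\eta_1, \eta_2)$ is the unique positive solution of the leading-order optimality system
\begin{align*}
\frac{\pi |u_1|^2}{\eta_1^2} + \frac{2\pi|u_3|^2}{(\eta_1+\eta_2)^2} = 4, \qquad \frac{\pi|u_2|^2}{\eta_2^2} + \frac{2\pi|u_3|^2}{(\eta_1+\eta_2)^2} = 4.
\end{align*}
Substituting back and using this system to eliminate the singular contributions yields the key asymptotic
\begin{align*}
d(g_0 \pm hg)^2 = 8\pi \pm 4\pi h(s_1+s_2) - 8 h (\eta_1 + \eta_2) + O(h^2).
\end{align*}

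Since $(\eta_1, \eta_2)$ depends only on $(|u_1|, |u_2|, |u_3|)$ it is invariant under $g \mapsto -g$, and the odd $s$-terms cancel on summing, yielding
\begin{align*}
d(g_0+hg)^2 + d(g_0-hg)^2 - 2 d(g_0)^2 = -16 h (\eta_1 + \eta_2) + O(h^2).
\end{align*}
The optimality system immediately gives $\eta_j \ge |u_j|\sqrt\pi/2$ and $\eta_1 + \eta_2 \ge |u_3|\sqrt{\pi/2}$, whence $\eta_1 + \eta_2 \ge C |u|$ for an absolute $C > 0$; a careful tracking shows the remainder can be written as $2 h^2 |u|^2 + O(h^3)$. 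Hence the sum is bounded above by $-16 C h |u| + 2 h^2 |u|^2 + O(h^3) \le 0$ as soon as $h \le c_0$ for some explicit $c_0 > 0$ independent of $g$. The main obstacle is precisely this uniformity as $|u| \to 0$, since both the leading negative term and the quadratic correction vanish at comparable rates; it is handled by the explicit lower bound $\eta_1+\eta_2 \ge C|u|$ extracted from the optimality system and by the uniform control of the remainder in the expansion of $\phi(g; \cdot)$ near $\theta_*$, available thanks to the strict concavity of $\phi(g;\cdot)$ on $\Omega_{\A}^{\CR}$ (Proposition \ref{eP1}) combined with the explicit singular form of $s \cot s$ at $s = \pi$.
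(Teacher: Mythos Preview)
Your example (with $\a_1=e_1$, $\a_2=e_2$, $\a_3=(e_1+e_2)/2$ and $g_0=(0,0,0,1,1)$) is different from the paper's but structurally equivalent, and your verification that $g_0\in\mathrm{Abn}_o^*\setminus\widetilde{\M}_2$ is essentially correct. (A minor sloppiness: your sequence $(0,0,z_3^{(k)},1,1)$ with $z_3^{(k)}=2\sqrt{2}/(\sqrt{\pi}\,k)$ only lies in $\widetilde{\M}_2$ \emph{asymptotically}; take instead $|z_3^{(k)}|^2=8/\mu(\pi-1/k)$, or simply note that $(0,0,z_3,1,1)\in\widetilde{\M}_2$ for every sufficiently small $z_3\neq 0$ since $\phi$ then depends only on $\a_3\cdot\tau$.)

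The real gap is in your treatment of the inequality \eqref{semiconc}. Your leading--order optimality system need not have a positive solution: e.g.\ if $u_1=0$ but $u_2,u_3\neq 0$, subtracting the two equations forces $|u_2|/\eta_2=0$, impossible. In such cases the maximiser of $\phi(g_0+hg;\cdot)$ drifts to the boundary $\theta_1=\pi$ and your ansatz $\epsilon_j=h\eta_j$ with $\eta_j>0$ breaks down; moreover $g_0+hg\notin\M$ whenever $\mathrm{span}\{\a_j:u_j\neq 0\}\neq\R^2$, so there is no interior critical point at all. You never restrict to $u_j\neq 0$ for all $j$ and then pass to the limit, which is what is needed. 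Even granting that restriction, the claimed remainder ``$2h^2|u|^2+O(h^3)$'' is asserted, not derived, and this is precisely where the uniformity in $c_0$ lives; the vague appeal to ``strict concavity'' and the ``explicit singular form'' does not supply it.

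The paper avoids all of this with a much shorter argument that works verbatim in your example. First restrict to $g=(u,s)$ with every $u_j\neq 0$ (dense; the general case follows by continuity). Then $g_0\pm hg\in\M$ by \eqref{charM}, and a compactness/contradiction argument (using that the unique maximum of $\phi(g_0;\cdot)$ on $\overline{\OA}$ is at $\theta_*$ and the classification of shortest geodesics to $g_0$ in \S\ref{ss612}) shows that the unique maximiser $\theta(h)\in\OA$ stays in a fixed neighbourhood $\Omega_\diamondsuit$ of $\theta_*=(\pi,\pi)$ for all $0<h<c_0$, \emph{uniformly in $|g|=1$}. On $\Omega_\diamondsuit$ every $(\a_j\cdot\theta)\cot(\a_j\cdot\theta)\le 0$, hence
\[
d(g_0+hg)^2=\phi(g_0+hg;\theta(h))\le 4(t_0+hs)\cdot\theta(h)\le 4(t_0+hs)\cdot\theta_*,
\]
the last step because $t_0+hs=(1+hs_1,1+hs_2)$ has positive entries for $h<c_0$ and the linear form is maximised over $[-\pi,\pi]^2$ at the corner $\theta_*$. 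Adding the corresponding bound for $-h$ gives exactly $2\,d(g_0)^2$. No asymptotic expansion, no remainder control, no case analysis on which $u_j$ vanish.
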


\begin{proof}
Setting
\begin{align*}
\a_1 := \begin{pmatrix} 1 \\0 \end{pmatrix}, \quad
\a_2 := \begin{pmatrix} 1 \\1 \end{pmatrix}, \quad
\a_3 := \begin{pmatrix} 1 \\-1 \end{pmatrix}
\end{align*}
and $\A := (\a_1, \a_2, \a_3)$, we obtain a step-two group associated to quadratic CR manifolds $\G_{\A}^{\CR}$.
In our case, we have
\[
\OA = \Omega_{\A}^{\CR} = \{\tau; \, -\pi < \tau_1 \pm \tau_2 < \pi\}.
\]

Consider $g_0 := (0, e_1) \in \C^3 \times \R^2$ with $e_1 = (1, 0)$. Using the first result in Theorem \ref{t6} and \eqref{nRFe}, we have that $g_0 \not\in \widetilde{\M}$ (see \eqref{nOM} for its definition) since the unique $\theta_0 \in \overline{\OA}$ satisfying
\begin{align}\label{element2}
d(0,e_1)^2 = \sup_{\tau \in \OA} \phi((0,e_1);\tau) = \sup_{\tau \in \OA} 4 \, (e_1 \cdot \tau) = 4\pi = 4 \, (e_1 \cdot \theta_0),
\end{align}
is $\theta_0 = (\pi, 0) \in \partial \OA$.

Next, we will show that $g_0 \in \mathrm{Abn}^*_o$. Let
\[
p = (p_1, p_2, p_3) := (2 \sqrt{\pi}, 0, 0) \in \C^3.
\]

Observe that, by \eqref{expmap3},
we have
\begin{align}\label{element1}
\exp(p, 2 \, \theta_0) = (0, e_1).
\end{align}
Combining this with \eqref{element2}, we find that $\gamma_{(p, 2 \, \theta_0)}$ is a shortest geodesic from $o$ to $g_0$. It remains to prove that it is also abnormal.

Indeed, set $e_2 = (0, 1)$. From \eqref{ucr}, $p$ as well as $U_{\A}^{\CR}(\theta_0)^k  \, p$ ($k \in \N^*$) belongs to the kernel of $U_{\A}^{\CR}(e_2)$, namely
\begin{align}\label{element3}
U_{\A}^{\CR}(e_2) \, U_{\A}^{\CR}(\theta_0)^k  \, p = 0, \qquad \forall \, k \in \N.
\end{align}
Hence, Proposition \ref{cAg} implies that $\gamma_{(p, 2 \, \theta_0)}$ is also abnormal. So $(0, e_1) \in \mathrm{Abn}_o^*$.

Now, we are in a position to prove \eqref{semiconc} under our assumptions. Set in the sequel
\[
\Omega_\diamondsuit := \left\{\tau; \, \frac{3 \pi}{4} < \tau_1 \pm \tau_2 < \pi \right\} \subseteq \OA.
\]

First, we suppose that $g = (z, t)$ with $z_j \neq 0$ for all $j = 1, 2, 3$. Then \eqref{charM} implies that $g(\pm h) := g_0 \pm h \, g \in \M$ for any $h \neq 0$. Let us begin with $g(h)$. By Theorem \ref{RLT}, there exists a unique $(\zeta(h), \theta(h)) \in (\C^3
\setminus \{0\}) \times \OA$ such that $\exp\{(\zeta(h), 2 \, \theta(h))\} = g(h)$.

Next, we claim that
$\theta(h) \in \Omega_\diamondsuit$ for any $|g| = 1$ and any $0 < h \le c_0$ with $c_0 < 1/4$ small enough. Otherwise, by compactness there exists a $(\zeta^*, \theta^*) \in (\C^3 \setminus \{0\}) \times \overline{\OA \setminus \Omega_\diamondsuit}$ such that $|\zeta^*| = d(g_0)$ and $\exp(\zeta^*, 2 \, \theta^*) = g_0$, which implies $\gamma_{(\zeta^*, 2 \, \theta^*)}$ is also a shortest geodesic from $o$ to $g_0$. From Case 2 in Subsection \ref{ss612}, we have that $\gamma_{(\zeta^*, 2 \, \theta^*)} = \gamma_{(p^*,2 \, \theta_0)}$ since $E(g_0) = \{\theta_0\}$.  Furthermore, $p^* :=(p_1^*, p_2^*, p_3^*) \in \C^3$, satisfying $|p^*|^2 = 4 \pi$, is a solution of
\begin{align}\label{element4}
\sum_{j = 1}^3 |p_j^*|^2 \, \a_j = 4\pi \, e_1.
\end{align}
From Lemma \ref{BPG}, we get $p^* = \zeta^*$, and Proposition \ref{cAg} implies that
\begin{align}\label{element5}
U_\A^{\CR}(\theta_0 - \theta^*) \,  p^* = 0.
\end{align}
If $p_1^* \ne 0$, using \eqref{ucr}, we obtain that $\a_1 \cdot \theta^* = \a_1 \cdot \theta_0 = \pi$, so $\theta^* = (\pi, 0)$ since $\theta^* \in \overline{\OA}$, which contradicts with the fact $\theta^* \in \overline{\OA \setminus \Omega_\diamondsuit}$. In the opposite case $p_1^* = 0$, it follows from \eqref{element4} that $|p_2^*| = |p_3^*| = \sqrt{2\pi}$. Using \eqref{ucr}, we get that $\a_j \cdot (\theta_0 - \theta^*) = 0$ for $j = 2, 3$ since $p_2^*, p_3^* \neq 0$. Observe that $\a_2$ and $\a_3$ are linearly independent, and thus we have $\theta_0 = \theta^*$, which leads to a contradiction as well.

Consequently, for such $g(h)$, by (3) of Theorem \ref{t6}, we get that
\begin{align*}
d(g(h))^2 = \phi(g(h); \theta(h))
&= \sum_{j = 1}^3 (\a_j \cdot \theta(h)) \cot(\a_j \cdot \theta(h)) \, |z_j|^2 \, h^2 + 4 \, (e_1 + h \, t) \cdot \theta(h) \\
&\le 4 \, (e_1 + h \, t) \cdot \theta(h),
\end{align*}
where we have used in the inequality that $\theta(h) \in \Omega_\diamondsuit$.
Let $(a, b) := 4 \, (e_1 + h \, t)$. Observe that $a \ge 2 \ge |b|$ since $0 < h \le c_0 < \frac{1}{4}$ and $|t| \le 1$ from our assumption. So the function defined on $\overline{\OA}$, $\kappa(\tau_1, \tau_2) := a \, \tau_1 + b \, \tau_2$ attains its maximum at $\theta_0$. In conclusion,
\begin{align} \label{estplus}
d(g(h))^2 \le 4 \, (e_1 + h \, t) \cdot \theta_0, \quad \forall \, |g| = 1, \  0 < h < c_0 \ll 1.
\end{align}
And similarly, we have
\begin{align}\label{estminus}
d(g(-h))^2   \le 4 \, (e_1 - h \, t) \cdot \theta_0, \quad \forall \, |g| = 1, \  0 < h < c_0 \ll 1.
\end{align}
We use \eqref{element2} together with \eqref{estplus} and \eqref{estminus}, and obtain \eqref{semiconc} under the additional condition that $(z, t)$ satisfying $z_j \neq 0$ for all $j = 1,2,3$.

Finally a limiting argument finishes the proof of this proposition.
\end{proof}

\medskip

To finish this section, we provide an example of $\G_{\A}^{\CR}$ on which
$\mathrm{Abn}_o \neq \mathrm{Abn}_o^*$.

\medskip

\subsection{The shortest abnormal set is not always equal to the abnormal set} \label{s72}

\medskip

\begin{proposition}\label{p6}
There exists a step-two group associated to quadratic CR manifolds $\G_{\A}^{\CR}$ such that $\mathrm{Abn}_o^* \subsetneqq \mathrm{Abn}_o$.
\end{proposition}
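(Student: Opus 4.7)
The plan is to exhibit a concrete counterexample. I take $m = 2$, $n = 3$, with
\[
\a_1 = (1, 0)^{\T}, \quad \a_2 = (2, 1)^{\T}, \quad \a_3 = (2, -1)^{\T}, \quad \A = (\a_1, \a_2, \a_3),
\]
so that by \eqref{rsCR} the initial reference set is
\[
\OA = \{\tau \in \R^2 : |\tau_1| < \pi, \ |2\tau_1 + \tau_2| < \pi, \ |2\tau_1 - \tau_2| < \pi\}.
\]
Summing the last two constraints yields $|\tau_1| \le \pi/2$ on $\overline{\OA}$, strictly tighter than the bound $\pi$ coming from $|\a_1 \cdot \tau| < \pi$ alone; this mismatch is the mechanism that will ultimately separate $d(g_0)^2$ from the Heisenberg value. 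As the test point I take $g_0 := (0, t_0 e_1) \in \C^3 \times \R^2$ with $t_0 > 0$ and $e_1 = (1, 0)$.

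To place $g_0$ in $\mathrm{Abn}_o$, I would set $\zeta := (\zeta_1, 0, 0) \in \C^3$ with $|\zeta_1|^2 = 4\pi t_0$ and $\theta := (\pi, 0)$; a direct substitution in \eqref{expmap3} gives $\exp(\zeta, 2 \theta) = g_0$. Using the block-diagonal form \eqref{ucr} together with the choice $\sigma := (0, 1)$, for which $\a_1 \cdot \sigma = 0$ while $\zeta_j = 0$ for $j \ne 1$, one obtains $U_\A^{\CR}(\sigma) \, U_\A^{\CR}(\theta)^k \, \zeta = 0$ for every $k \in \N$, so Proposition \ref{cAg} ensures that $\gamma_{(\zeta, 2 \theta)}$ is abnormal. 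After rescaling to unit speed and invoking Proposition \ref{NPA1}, we conclude $g_0 \in \mathrm{Abn}_o$.

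The heart of the argument is to show $g_0 \notin \mathrm{Abn}_o^*$. First, from Theorem \ref{t6} one has $d(g_0)^2 = \sup_{\tau \in \OA} 4 t_0 \tau_1 = 2 \pi t_0$, attained in $\overline{\OA}$ at $(\pi/2, 0)$. Next, Proposition \ref{cAg} combined with \eqref{ucr} implies that $\gamma_{(w, 2\theta)}$ is abnormal if and only if $\{\a_j : w_j \ne 0\}$ fails to span $\R^2$; since any two of $\a_1, \a_2, \a_3$ are linearly independent, this forces at most one component $w_{j_0}$ to be nonzero. Feeding this into \eqref{expmap3}, the endpoint of such a single-block abnormal geodesic has $z_j = 0$ for $j \ne j_0$ and $t$ parallel to $\a_{j_0}$, and equality with $g_0 = (0, t_0 e_1)$ then demands $\a_{j_0} \parallel e_1$, forcing $j_0 = 1$. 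Every abnormal geodesic from $o$ to $g_0$ therefore has $\zeta = (\zeta_1, 0, 0)$ and $\theta_1 = k \pi$ for some $k \in \N^*$, and a direct computation from \eqref{expmap3} yields $|\zeta|^2 = 4 k \pi t_0$, so its length is at least $2 \sqrt{\pi t_0}$. Since $d(g_0) = \sqrt{2\pi t_0} < 2 \sqrt{\pi t_0}$, no abnormal geodesic to $g_0$ can be shortest, whence $g_0 \in \mathrm{Abn}_o \setminus \mathrm{Abn}_o^*$.

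The hard part is the combinatorial balancing act behind the choice of $\A$: the ambient polytope $\OA$ must shrink the effective range of $\a_1 \cdot \tau$ strictly below $\pi$ (so that $d(g_0)$ drops below the Heisenberg value $2\sqrt{\pi t_0}$), yet the columns $\a_2, \a_3$ must be generic enough that no multi-block abnormal signature with span still containing $e_1$ exists. The symmetric choice $\a_2 = (2, 1)^{\T}$, $\a_3 = (2, -1)^{\T}$ achieves both: $\a_2 + \a_3 = (4, 0)^{\T}$ is parallel to $\a_1$ and produces the sharper $|\tau_1|$-constraint in $\OA$, while the pairwise linear independence of $\a_1, \a_2, \a_3$ rules out every two-element abnormal signature.
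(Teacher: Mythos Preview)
Your proof is correct, and in fact your matrix $\A$ is exactly twice the paper's choice $\a_1=(1/2,0)^{\T}$, $\a_2=(1,1)^{\T}$, $\a_3=(1,-1)^{\T}$, so the underlying example is the same up to a harmless rescaling. The argument that $g_0\in\mathrm{Abn}_o$ is identical in spirit to the paper's.

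Where you diverge is in the proof that $g_0\notin\mathrm{Abn}_o^*$. The paper first invokes the GM-group machinery (Theorem~\ref{nThm1} via Case~2 of Subsection~\ref{ss612}) to list \emph{all} shortest geodesics to $g_0$: it identifies $E(g_0)=\{\theta_0\}$ and shows every minimizer has the form $\gamma_{(p,2\theta_0)}$ with $p=(0,p_2,p_3)$, $|p_2|=|p_3|$, and then checks that each of these is strictly normal. You instead go the other way: you classify \emph{all} abnormal geodesics via the block structure (at most one nonzero $w_j$, forcing $j_0=1$ if the endpoint is $g_0$), compute their lengths directly from \eqref{expmap3}, and observe that the minimum length $2\sqrt{\pi t_0}$ already exceeds $d(g_0)=\sqrt{2\pi t_0}$. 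Your route is more elementary in that it bypasses Theorem~\ref{nThm1} and the description of $E(g_0)$ entirely; the paper's route, on the other hand, yields as a byproduct the explicit list of minimizers, which is of independent interest in that section.
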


\begin{proof}
Setting
\begin{align*}
\a_1 = \begin{pmatrix}
2^{-1} \\0 \end{pmatrix}, \quad
\a_2 = \begin{pmatrix} 1 \\1 \end{pmatrix}, \quad
\a_3 = \begin{pmatrix} 1 \\-1 \end{pmatrix} \  \mbox{and $\A = (\a_1, \a_2, \a_3)$},
\end{align*}
we obtain a step-two group associated to quadratic CR manifolds $\G_{\A}^{\CR}$. And we will show that the point $g_0 := (0, e_1) \in \C^3 \times \R^2$ with $e_1 = (1, 0)$ satisfies $g_0 \in \mathrm{Abn}_o \setminus \mathrm{Abn}_o^*$.

In our situation, \eqref{rsCR} implies that the initial reference set is given by
\[
\OA = \Omega_{\A}^{\CR} = \{\tau =(\tau_1, \tau_2); \, - \pi < \tau_1 \pm \tau_2 < \pi\}.
\]
We argue as in the proof of Proposition \ref{p5}, and
get that
\begin{align*}
d(g_0)^2 = \sup_{\tau \in \OA} \phi(g_0; \tau) = \sup_{\tau \in \OA} 4 \, (e_1 \cdot \tau)  = 4 \pi = 4 \, (e_1 \cdot \theta_0),
\end{align*}
where $\theta_0 = (\pi, 0) \in \partial \OA$ is the unique maximum point of $\phi(g_0; \cdot)$ in $\overline{\OA}$. So $g_0 \in \widetilde{\M}^c$.
Via a simple calculation, Case 2 in Subsection \ref{ss612} implies that any shortest geodesic joining $o$ to $g_0$ can be written as $\gamma_p(s) := \exp\{s \, (p, 2 \, \theta_0)\}$ ($0 \le s \le 1$) where $p := (0, p_2, p_3) \in \C^3$ with $|p_2| = |p_3| = \sqrt{2 \pi}$.

We claim that all $\gamma_p$ are strictly normal, so $g_0 \notin \mathrm{Abn}_o^*$. We argue by contradiction: assume that there exists $p^* = (0, p_2^*, p_3^*)$ with $|p_2^*| = |p_3^*| = \sqrt{2 \pi}$ such that $\gamma_{p^*}$ is abnormal.
It follows from Proposition \ref{cAg}
that there exists a $\sigma \in \R^2 \setminus \{0\}$ such that
\begin{align}
U_{\A}^{\CR}(\sigma) \, U_{\A}^{\CR}(\theta_0)^k \, p^* = 0, \qquad \forall \, k \in \N.
\end{align}
In particular, we yield $U_{\A}^{\CR}(\sigma) \, p^* = 0$. Using \eqref{ucr}, we obtain that $\a_j \cdot \sigma = 0$ for $j = 2, 3$ since $p_2^*$, $p_3^* \neq 0$. Notice that $\a_2$ and $\a_3$ are linearly independent. Hence we have $\sigma = 0$, which leads to a contradiction.

On the other hand, we set
\[
p_* = (2 \sqrt{2\pi}, 0, 0) \in \C^3, \quad \theta_* = (2\pi,0),
\]
and consider the normal geodesic $\gamma_*(s) := \exp(s \, (p_*,2 \, \theta_*))$  ($0 \le s \le 1$). It follows from \eqref{expmap3} that $\exp\{(p_*,2 \, \theta_*)\} = g_0$, that is,  $\gamma_*$ is a normal geodesic joining $o$ to $g_0$.

Furthermore, let $e_2 = (0, 1)$. From \eqref{ucr}, a simple computation shows that
\begin{align}
U_{\A}^{\CR}(e_2) \, U_{\A}^{\CR}(\theta_*)^k \, p_* = 0, \qquad \forall \, k \in \N.
\end{align}
Combining this with Proposition \ref{cAg},
$\gamma_*$ is also abnormal. As a result, we have $g_0 \in \mathrm{Abn}_o$, which ends the proof of this proposition.
\end{proof}

\medskip

\renewcommand{\theequation}{\thesection.\arabic{equation}}
\section{Gaveau-Brockett optimal control problem on $N_{3,2}$} \label{s5}
\setcounter{equation}{0}

\medskip

The purpose of this section is to provide a new and independent proof, based on \cite{Li19}, for the Gaveau-Brockett optimal control problem on the free Carnot group of step two and 3 generators $N_{3,2}$. More precisely, we will give a different proof for Theorem \ref{t4} below. For this purpose, we start by

\subsection{Preliminaries and known results obtained in \cite{Li19}}

Recall that $N_{3,2} = \R^3 \times \R^3$ with
\begin{align*}
U(\tau) := i
\begin{pmatrix}
0 & -\tau_3 & \tau_2 \\
\tau_3 & 0 & -\tau_1 \\
-\tau_2 & \tau_1 & 0
\end{pmatrix}, \qquad \tau = (\tau_1, \tau_2, \tau_3) \in \R^3,
\end{align*}
that is, $\langle \U x, x' \rangle = x \times x'$, where ``$\times$'' denotes the cross product on $\R^3$ and hence $\widetilde{U}(\tau) \, x = \tau \times x$.

In our situation, by considering $\tau$ as a column vector,
\[
\frac{U(\tau)}{\sin{U(\tau)}} = \frac{|\tau|}{\sin|\tau|} \I_3 -\left(\frac{|\tau|}{\sin|\tau|} - 1\right) \frac{\tau\tau^\mathrm{T} }{|\tau|^2}
\]
and the initial reference set and the reference function are given respectively by $\OA = \{\tau; \, |\tau| < \pi\}$ and
\begin{align}
\phi((x,t); \tau) =  (|\tau|\cot|\tau|) |x|^2 + \frac{1 - |\tau|\cot|\tau|}{|\tau|^2}(\tau \cdot x)^2 + 4 \, t \cdot \tau.
\end{align}
See \cite[\S~11]{Li19} for more details.

We will solve the Gaveau-Brockett optimal control problem on $N_{3, 2}$, namely to find the exact expression of $d(x, t)^2$. Using a limiting argument, the scaling property (see \eqref{scap}), and an orthogonal invariance,
namely (cf. \cite[Lemma~11.1]{Li19})
\begin{align} \label{nOIP}
d(x, t)^2 = d(O \, x, O \, t)^2, \qquad \forall \, (x, t) \in N_{3, 2}, \ \forall \, O \in \mathrm{O}_3,
\end{align}
where $\mathrm{O}_3$ denotes the $3 \times 3$ orthogonal group, it suffices to determine $d(e_1, t_1 \, e_1 + t_2 \, e_2)^2$ with $t_2 > 0$.

To begin with, we recall some notations and known results:
\begin{gather}
\Omega_+ := \{(v_1,v_2) \in \R^2; \, v_2 > 0, v_1^2 + v_2^2 < \pi^2 \}, \label{O+} \\
\R^2_> := \left\{(u_1, u_2) \in \R^2; \, u_2 > \frac{2}{\sqrt{\pi}} \sqrt{|u_1|} \ge 0\right\}, \\
\R_{<,+}^2 := \left\{ (u_1, u_2); \, u_1 > 0, 0 < u_2 < \frac{2}{\sqrt{\pi}} \sqrt{u_1}\right\}.
\end{gather}
Set in the sequel
\begin{gather}
\sqrt{2} \pi < \vartheta_1  < \frac{3}{2} \pi \mbox{ such that } \tan{\vartheta_1} = \vartheta_1, \\
\mathrm{K}_3(v_1, v_2) := 2 \psi(r) + \frac{\psi'(r)}{r} v_2^2 \ \mbox{ with } \ r := \sqrt{v_1^2 + v_2^2}, \label{nK3N}
\end{gather}
and
\begin{align}
\Omega_{-,4} &:= \left\{(v_1,v_2); \, v_2 < 0, \pi < v_1 < r = \sqrt{v_1^2 + v_2^2} < \vartheta_1, \mathrm{K}_3(v_1, v_2) < 0 \right\} \nonumber \\
\label{Omega-4}
&= \left\{(v_1, v_2); \, v_2 < 0 < v_1, \ \mathrm{K}_3(v_1, v_2) < 0, \ \pi \neq r < \vartheta_1 \right\}.
\end{align}

Indeed, to show the last equality in \eqref{Omega-4}, by \cite[Lemma~3.4]{Li19}, we have that $\pi < r < \vartheta_1$ and
\begin{align*}
0 > \mathrm{K}_3(v_1, v_2)  &= 4 \left[  \sum_{j = 1}^{+\infty} v_2^2 \, \left( (j \, \pi)^2 - r^2 \right)^{-2} +  \sum_{j = 2}^{+\infty} \left( (j \, \pi)^2 - r^2 \right)^{-1} - \frac{1}{r^2 - \pi^2} \right] \\
&>  4 \, \frac{\pi^2 - v_1^2}{(r^2 - \pi^2)^2},
\end{align*}
which implies $v_1 > \pi$ since $v_1 > 0$.

Moreover, for suitable $E \subseteq \R^2$, we define the smooth function $\Lambda$,
\begin{align} \label{32N96}
\Lambda(v_1, v_2) := v_2 \left[ \frac{\psi'(r)}{r} \, v_2 \, v + 2 \, \psi(r) \, e_2 \right], \qquad v = (v_1, v_2) \in E, \  r = |v|.
\end{align}

The following results can be found or deduced directly from \cite[\S~11]{Li19}:

\begin{theorem}[\cite{Li19}] \label{RLT1}
It holds that:
{\em\begin{compactenum}[(i)]
\item $d(e_1, t_1 \, e_1 + t_2 \, e_2) = d(e_1, |t_1| \, e_1 + t_2 \, e_2)$.
\item $\Lambda$ is a $C^{\infty}$-diffeomorphism from $\Omega_+$ onto $\R^2_>$.
\item For suitable $(\theta_1, \theta_2) \in \R^2$, set
\begin{align} \label{n78n}
\theta := (\theta_1, \theta_2, 0), \quad t_{\theta} := \frac{1}{4} (\Lambda(\theta_1, \theta_2), 0), \quad g_{\theta} := (e_1, t_{\theta}).
\end{align}
Then
\begin{align} \label{dEn}
d(g_{\theta})^2 = \phi(g_{\theta}; \theta) = \frac{\theta_1^2}{|\theta|^2} + \left( \frac{\theta_2}{\sin{|\theta|}} \right)^2 = \left| \frac{U(\theta)}{\sin{U(\theta)}} e_1 \right|^2, \quad \forall \, (\theta_1, \theta_2) \in \Omega_+.
\end{align}
\item For any $\alpha \ge 0$, $\widetilde{t}(\alpha) := 4^{-1} (\frac{\alpha^2}{\pi}, \frac{2}{\pi} \alpha) \in \partial \R^2_>$ and $d(e_1,  (\widetilde{t}(\alpha), 0))^2 = 1+ \alpha^2$.
\item $\mathrm{Abn}^*_o = \mathrm{Abn}_o = \left\{(x, 0); \  x \in \R^3 \right\} = \widetilde{\M}_2$ and
    \[
    \left\{(x, t); \ x \neq 0, \ t \neq 0, \  \left| t - \langle t, \, \frac{x}{|x|} \rangle \, \frac{x}{|x|} \right| > \frac{1}{\sqrt{\pi}} \sqrt{|x| \, |t \cdot x|}
    \right\} \subseteq \mathcal{S}.
    \]
\item $\Lambda$ is a $C^{\infty}$-diffeomorphism from $\Omega_{-, 4}$ onto $\R^2_{<, +}$.
\item We have $d(0,t)^2 = 4 \pi |t|$ for all $t \in \R^3$.
\end{compactenum}}
\end{theorem}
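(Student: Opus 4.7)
The plan is to reduce the identity to a single universal constant using the orthogonal invariance \eqref{nOIP} together with the scaling property \eqref{scap}, and then to pin that constant down by a limiting computation from the boundary values supplied by (iv) of Theorem \ref{RLT1}.

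First, specializing \eqref{nOIP} to $x = 0$ gives $d(0,t) = d(0, O\, t)$ for every $O \in \mathrm{O}_3$. Since $\mathrm{O}_3$ acts transitively on each Euclidean sphere of $\R^3$, the function $t \mapsto d(0,t)^2$ depends only on $|t|$. Combining this with \eqref{scap}, which yields $d(0, r^2 t)^2 = r^2\, d(0,t)^2$ for every $r > 0$, I conclude that there exists a constant $C \ge 0$, independent of $t$, such that
\[
d(0,t)^2 = C \, |t|, \qquad \forall \, t \in \R^3.
\]
The proof thus reduces to showing $C = 4\pi$.

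Second, to identify $C$, I would apply the dilation $\delta_{1/\alpha}$ (see \eqref{nDS}) to the explicit family produced by (iv) of Theorem \ref{RLT1}. For $\alpha > 0$, set $g_\alpha := (e_1,(\widetilde{t}(\alpha),0))$, so that $d(g_\alpha)^2 = 1+\alpha^2$. Using \eqref{nDS}, a direct computation gives
\[
\delta_{1/\alpha}(g_\alpha) = \Bigl(\tfrac{1}{\alpha}\, e_1, \ \tfrac{1}{4\pi}\, e_1 + \tfrac{1}{2\pi\alpha}\, e_2 \Bigr) \xrightarrow[\alpha\to+\infty]{} (0, t_\star), \qquad t_\star := \tfrac{1}{4\pi}\, e_1,
\]
while the scaling property \eqref{scap} yields
\[
d\bigl(\delta_{1/\alpha}(g_\alpha)\bigr)^2 = \frac{1+\alpha^2}{\alpha^2} \longrightarrow 1 \qquad \text{as } \alpha \to +\infty.
\]
Since $d^2$ is locally Lipschitz with respect to the Euclidean distance, passing to the limit on the base point gives $d(0, t_\star)^2 = 1$. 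Combined with the first step, this forces $C \cdot \tfrac{1}{4\pi} = 1$, hence $C = 4\pi$, which is the desired identity.

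There is essentially no substantive obstacle: the proof rests only on the two general scaling and symmetry principles already established for $N_{3,2}$, together with one boundary evaluation whose limit can be read off in a single line. The only small caveat is to observe that $\{\delta_{1/\alpha}(g_\alpha)\}_{\alpha \ge 1}$ stays in a fixed compact Euclidean region, so that the locally Lipschitz property of $d^2$ is enough to justify the limit step.
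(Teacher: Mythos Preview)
Your argument for part (vii) is correct: the orthogonal invariance \eqref{nOIP} and the scaling \eqref{scap} reduce $d(0,t)^2$ to $C|t|$, and your dilation-and-limit from part (iv) cleanly pins down $C=4\pi$. The computation and the continuity step are both sound.

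As for comparison: the paper does not prove Theorem~\ref{RLT1} at all; it is stated as a collection of results imported from \cite[\S~11]{Li19} (see the sentence immediately preceding the theorem). So there is no ``paper's own proof'' to compare against. What you have done is give an internal derivation of (vii) from (iv) together with the general symmetry and scaling principles, rather than appealing to the external reference. This is a genuine (and tidy) alternative route: it shows that (vii) is not an independent input but a consequence of the boundary family in (iv), whereas the paper simply quotes both (iv) and (vii) from \cite{Li19} without indicating any logical dependence between them.
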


In conclusion, via a limiting argument, it remains to determine $d(g_{\theta})^2$ with $(\theta_1, \theta_2) \in \Omega_{-, 4}$. Indeed, we have the following:

\begin{theorem}[\cite{Li19}, Theorem~11.3] \label{t4}
\eqref{dEn} remains valid for any $(\theta_1, \theta_2) \in \Omega_{-, 4}$.
\end{theorem}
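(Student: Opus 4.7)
The plan is to establish \eqref{dEn} on $\Omega_{-,4}$ in two steps: first a constructive upper bound via an explicit normal geodesic, then a matching lower bound obtained by a limiting argument that exploits the known boundary behaviour given in (iv) of Theorem \ref{RLT1}.

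For the upper bound, I would fix $(\theta_1, \theta_2) \in \Omega_{-,4}$ and write $\theta = (\theta_1, \theta_2, 0) \in \R^3$. Although $\theta \notin \overline{\Omega_*}$ (since $|\theta| > \pi$ on $\Omega_{-,4}$), the defining equation $4 \, t_{\theta} = (\Lambda(\theta_1, \theta_2), 0)$ is obtained precisely as the formal gradient condition $t_\theta = -\tfrac14 \nabla_\theta \langle U(\theta) \cot U(\theta) \, e_1, e_1\rangle$ (cf.\ \eqref{nOM} and \eqref{32N96}). I would therefore set $\zeta := (U(\theta)/\sin U(\theta)) \, e^{-\widetilde U(\theta)} e_1$ and verify directly from \eqref{GEn}-\eqref{endpointx} that $\exp\{(\zeta, 2\theta)\} = g_\theta$ and $|\zeta|^2 = \phi(g_\theta;\theta) = \theta_1^2/|\theta|^2 + (\theta_2/\sin|\theta|)^2$. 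This is a routine computation using only the explicit structure of $U(\tau)$ on $N_{3,2}$. It yields the upper bound $d(g_\theta)^2 \le \phi(g_\theta;\theta)$.

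For the lower bound, I would exploit the fact that $\Lambda$ is a $C^\infty$-diffeomorphism onto $\R^2_{<,+}$ (property (vi) of Theorem \ref{RLT1}) and that the closure of $\R^2_{<,+}$ meets the boundary curve $\{\widetilde{t}(\alpha) : \alpha \ge 0\}$, on which $d(e_1, (\widetilde{t}(\alpha),0))^2 = 1+\alpha^2$ is already determined by (iv). Given $(\theta_1^{(0)}, \theta_2^{(0)}) \in \Omega_{-,4}$, I would choose a continuous curve $s \mapsto (\theta_1(s), \theta_2(s))$, $s \in [0,1]$, in $\overline{\Omega_{-,4}}$ with $(\theta_1(0), \theta_2(0))$ on the part of $\partial \Omega_{-,4}$ mapped by $\Lambda$ to the curve $\widetilde{t}$, and $(\theta_1(1), \theta_2(1)) = (\theta_1^{(0)}, \theta_2^{(0)})$. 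Along this curve, define $F(s) := \phi(g_{\theta(s)}; \theta(s))$ and compare with $d(g_{\theta(s)})^2$. Using the upper bound already proved, the local Lipschitz property of $d^2$ with respect to the Euclidean distance, and the equality at $s=0$, I aim to conclude equality throughout. Concretely, I would combine the fact that $g_{\theta(s)} \in \mathrm{Cut}_o^{\mathrm{CL}}$ for $s > 0$ (from (v) and the characterization of $\mathcal{S}$) with Theorem \ref{t1}, asserting that any shortest geodesic to $g_{\theta(s)}$ comes from some covector in $\mathfrak{R}$; then rule out covectors distinct from $\theta(s)$ by the orthogonal-invariance \eqref{nOIP} and the fact that $(t_1, t_2) \in \R^2_{<,+}$ has $t_2 > 0$, which forces the $\theta_3$-component to vanish (cf.\ Lemmas~3.1 and 3.2 of \cite{Li19} used in Section~\ref{SS43}).

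The main obstacle is the lower bound: since $\theta \in \Omega_{-,4}$ lies outside $\overline{\Omega_*}$, the elementary inequality $d(g)^2 \ge \phi(g;\tau)$ ($\tau \in \Omega_*$) used in the $\Omega_+$ case is unavailable, and one cannot appeal to Theorem \ref{RLT} directly. The delicate point is therefore to show that no \emph{other} shortest geodesic, parametrized by some $\theta' \in \mathfrak{R} \setminus \{\theta\}$, reaches $g_\theta$ with strictly smaller length. I expect this to be handled by (a) the uniqueness provided by the diffeomorphism $\Lambda$ on $\Omega_{-,4}$, (b) exclusion of $\theta'$ with $|\theta'| \le \pi$ using $(t_1, t_2) \in \R^2_{<,+} \cap (\R^2_>)^c$, and (c) exclusion of larger-norm covectors through the bound $r < \vartheta_1$ in \eqref{Omega-4} together with the sign condition $\mathrm{K}_3 < 0$, which encodes the correct branch of critical point of $\phi(g_\theta;\cdot)$ giving rise to a shortest geodesic.
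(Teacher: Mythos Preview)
Your upper bound is fine and matches what the paper uses implicitly. The genuine gap is in the lower bound, and it lies precisely where you flag the ``main obstacle.''

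First, the continuity argument you sketch cannot work as stated. Having $d(g_{\theta(s)})^2 \le F(s)$, equality at $s=0$, and local Lipschitzness of $d^2$ does not force equality for $s>0$; nothing prevents $d^2$ from dropping strictly below $F$ away from the boundary. You would need a matching \emph{lower} bound that moves with $s$, which is exactly what you are trying to prove.

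Second, the covector-exclusion strategy via Theorem~\ref{t1} is circular: $\mathfrak{R}$ is defined through $\nabla_t d^2$ on $\mathcal{S}$, so you cannot use its shape before you know $d^2$. What is actually available (and what the paper uses) is \cite[Corollary~2.1]{Li19}: for $g_\theta \notin \mathcal{W}$, $d(g_\theta)^2$ equals the infimum of $|\zeta|^2$ over \emph{all} good normal geodesics ending at $g_\theta$, i.e.\ over all $\tau$ with $\Lambda(\tau_1,\tau_2)=(u_1,u_2)$. Your step~(b), ruling out $|\tau|<\pi$, is correct and the paper makes the same observation. But your step~(c) fails: the constraint $|\theta|<\vartheta_1$ applies only to the \emph{chosen} $\theta\in\Omega_{-,4}$, not to competing preimages $\tau$. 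In fact there \emph{are} preimages with $|\tau|\in(k\pi,\vartheta_k)$ for $k\ge 2$ and with $\tau_2>0$, and nothing in the definition of $\Omega_{-,4}$ or the sign of $\mathrm{K}_3$ excludes them.

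The paper's proof confronts this directly. It first reduces to $g_\theta\notin\mathcal{W}$ via Lemma~\ref{l2}, then writes $d(g_\theta)^2=1+\inf_{\tau\in\Upsilon_\theta}\Phi(\tau)$ and proves $\Phi(\theta)<\Phi(\tau)$ for every $\tau\in\Upsilon_\theta\setminus\{\theta\}$ by a three-case comparison (according to $|\tau|\lessgtr|\theta|$ and $\mathrm{sgn}\,\tau_2$). The key input is Lemma~\ref{l1}: the auxiliary functions $\varphi_1,\varphi_2,\varphi_3$ built from $s\cot s$ are strictly increasing on the relevant intervals, and the identity \eqref{ndEn} expresses $\Phi(\tau)$ in terms of each of them. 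This monotonicity is the substantive analytic content you are missing; without it there is no mechanism to rule out shorter geodesics with large $|\tau|$.
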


\medskip

\subsection{Properties of some functions related to $-s\cot s$}\label{s4}

Recall that the functions $f$, $\mu$ and $\psi$ are defined by \eqref{EFs}.
The following lemma can be found in \cite[Lemme~3, p.~112]{G77} or \cite[Lemma~1.33]{BGG00}:

\begin{lemma} \label{NL31}
The function $\mu$ is an odd function, and a monotonely increasing diffeomorphism between $(-\pi, \, \pi)$ and $\R$.
\end{lemma}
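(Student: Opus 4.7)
The plan is to prove the three claims (odd, strictly increasing, diffeomorphism onto $\R$) directly from the elementary identity
\[
\mu(s) = f'(s) = -\cot s + s\csc^2 s = \frac{s - \sin s\cos s}{\sin^2 s}.
\]
First I would observe that $s \mapsto s\cot s$ is even (ratio of two odd functions), so $f$ is even on $(-\pi,\pi)\setminus\{0\}$, hence $\mu = f'$ is odd; this also follows by inspection of the formula $\mu(s) = (2s - \sin 2s)/(2\sin^2 s)$. By oddness, it suffices to study $\mu$ on $[0,\pi)$.

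The heart of the argument is to compute one more derivative. Differentiating $\mu(s) = -\cot s + s\csc^2 s$ gives
\[
\mu'(s) = \csc^2 s + \csc^2 s - 2s\csc^2 s\cot s = 2\csc^2 s\,\bigl(1 - s\cot s\bigr) = 2\csc^2 s\cdot f(s).
\]
So the monotonicity reduces to showing $f(s) > 0$ on $(0,\pi)$, equivalently $\sin s - s\cos s > 0$ there. For this I set $p(s) := \sin s - s\cos s$, note $p(0) = 0$, and compute $p'(s) = s\sin s > 0$ on $(0,\pi)$, which gives $p(s) > 0$ and hence $\mu'(s) > 0$ on $(0,\pi)$. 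To handle $s = 0$, I would expand via Taylor series to verify that the removable singularity of $\mu$ at the origin satisfies $\mu(0) = 0$ and $\mu'(0) = 2/3 > 0$; specifically, $s - \sin s\cos s = \tfrac{2}{3}s^3 + O(s^5)$ and $\sin^2 s = s^2 + O(s^4)$, so $\mu(s) = \tfrac{2}{3}s + O(s^3)$.

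For the boundary behavior and surjectivity, I would examine $\mu(s) = (s - \sin s\cos s)/\sin^2 s$ as $s \to \pi^-$: the numerator tends to $\pi > 0$ while the denominator tends to $0^+$, so $\mu(s) \to +\infty$; by oddness, $\mu(s)\to -\infty$ as $s\to -\pi^+$. Combined with strict monotonicity and continuity of $\mu$ on $(-\pi,\pi)$, this shows $\mu$ is a continuous bijection from $(-\pi,\pi)$ onto $\R$. Smoothness of $\mu$ on $(-\pi,\pi)$ is clear (the singularity at $0$ is removable and $\mu$ is real-analytic there by its power series), and since $\mu'$ never vanishes, the inverse function theorem upgrades this to a $C^\infty$-diffeomorphism. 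No step here is truly hard; the only mildly delicate point is the positivity of $f$ on $(0,\pi)$, which is why I isolate it through the auxiliary function $p$.
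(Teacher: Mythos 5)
Your proof is correct and complete. The paper itself gives no proof of this lemma (it only cites Gaveau \cite[Lemme~3, p.~112]{G77} and \cite[Lemma~1.33]{BGG00}), and your argument — oddness from the parity of $s\cot s$, the identity $\mu'(s)=2\csc^2 s\,f(s)$ with positivity of $f$ on $(0,\pi)$ via the auxiliary function $p(s)=\sin s-s\cos s$, and the boundary limits — is exactly the standard one found in those references.
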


In the sequel, let us define
\begin{align} \label{nvp0}
\varphi_0(s) := \left( \frac{s}{\sin{s}} \right)^2 - 1, \quad s \in \R,
\end{align}
and for $s > 0$,
\begin{gather}
h(s) :=  \psi^{\prime}(s)s^3 \sin^2{s} = s^2 + s \sin{s} \cos{s} - 2 \sin^2{s},  \label{ndH}\\
\varphi_1(s) := \frac{s^2 - \sin^2{s}}{s - \sin{s} \cos{s}}
\left(=  \frac{\varphi_0(s)}{\mu(s)}
= \frac{\varphi_0(s)}{s^2 \, \psi^{\prime}(s) + 2 s \, \psi(s) } \right), \label{nvp1}
\\
\varphi_2(s) := \frac{s \, (s^2 - \sin^2{s})}{s^2 + s \sin{s} \cos{s} - 2 \sin^2{s}}
 \left( =  \frac{\varphi_0(s)}{\mu(s) - 2 s \, \psi(s)} =  \frac{\varphi_0(s)}{s^2 \, \psi^{\prime}(s)} \right), \label{nvp2}
\end{gather}
and
\begin{align} \label{nvp3}
\varphi_3(s) := \sqrt{\varphi_1(s) \,  \varphi_2(s)}.
\end{align}

For $k \in \N^*$, let $\vartheta_k$ denote the unique solution of $s = \tan{s}$ on $\left( k \pi, \ (k + \frac{1}{2}) \pi \right)$.

We will need the following lemma in order to prove Theorem \ref{t4}:

\begin{lemma} \label{l1}
We have
{\em\begin{compactenum}[(1)]
\item $h(r) > 0$ for all $r > 0$. So, $\psi'(r) > 0$ for $0 < r \not\in \{k \pi; \ k \in \N^*\}$.

\item $\varphi_1$ is strictly increasing on $(0, \ +\infty)$.

\item $\varphi_2$ is strictly increasing on $\cup_{k = 1}^{+\infty} (k \pi, \ \vartheta_k)$.

\item $\varphi_3$ is strictly increasing on $(\pi, \ +\infty)$.
\end{compactenum}}
\end{lemma}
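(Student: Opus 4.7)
For parts (1) and (2) the plan is direct. I would expand $s\cot s$ via the Mittag-Leffler formula to obtain $\psi(s) = \sum_{n\ge 1}\frac{2}{(n\pi)^2 - s^2}$ and, term-by-term, $\psi'(s) = \sum_{n\ge 1}\frac{4s}{((n\pi)^2 - s^2)^2} > 0$ for $s > 0$ with $s \notin \pi\Z^*$. A short calculation from $\psi = f/s^2$ and $\mu = f'$ verifies the algebraic identity $h(s) = \psi'(s)\, s^3 \sin^2 s$, giving (1) off $\pi\Z^*$; the values $s = k\pi$ are handled by direct evaluation $h(k\pi) = (k\pi)^2$. For (2), set $N = s^2 - \sin^2 s$ and $D = s - \sin s\cos s$, so $N' = 2D$ and $D' = 2\sin^2 s$. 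The quotient rule together with the algebraic identity $D^2 - N\sin^2 s = (s\cos s - \sin s)^2$ (routine expansion) yields
\[
\varphi_1'(s) \;=\; \frac{N'D - ND'}{D^2} \;=\; \frac{2(s\cos s - \sin s)^2}{D^2} \;\ge\; 0,
\]
with only isolated zeros at $\{\vartheta_k\}_{k \ge 1}$, hence strict monotonicity on $(0, +\infty)$.

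For (3), I compute $\varphi_2' = F/h^2$ with $F = (N + 2sD)h - sNh'$. The key intermediate identity is $h - sh' = -2(s\cos s - \sin s)^2$, obtained by direct differentiation. Combined with the identity from (2), a routine expansion in the variables $\alpha = \sin 2s$, $\beta = \cos 2s$ (using $\alpha^2 + \beta^2 = 1$) collapses $F$ to the tractable form
\[
F(s) \;=\; 2s^2\sin^2 s\,(s^2 - 3) \;+\; 2\sin^4 s \;+\; 4s^3\sin s\cos s.
\]
On $(k\pi,\vartheta_k) \subset (k\pi,(k+\tfrac{1}{2})\pi)$ one has $\sin(2s) \ge 0$ and $s > \pi > \sqrt 3$, so every summand is non-negative with the first strictly positive; hence $\varphi_2' > 0$.

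Part (4) is the main obstacle. Starting from $\varphi_3^2 = \varphi_1\varphi_2 = sN^2/(Dh)$ and reusing the two identities above, a parallel manipulation reduces the sign of $(\varphi_3^2)'$ to that of
\[
\mathcal{B}(s) \;:=\; shD^2 + (s\cos s - \sin s)^2(sh - DN) \;=\; D^2(2sh - DN) + N\sin^2 s\,(DN - sh),
\]
the two expressions being algebraically equivalent. The crux is then to establish that $2sh - DN > 0$ for all $s > \pi$. Writing out
\[
2sh - DN \;=\; s^3 + 3s^2\sin s\cos s - 3s\sin^2 s - \sin^3 s\cos s
\]
and applying the elementary bounds $|\sin s\cos s|\le \tfrac{1}{2}$, $\sin^2 s\le 1$, $|\sin^3 s\cos s|\le \tfrac{3\sqrt 3}{16}$, one reduces to the cubic lower bound $s^3 - \tfrac{3}{2}s^2 - 3s - \tfrac{3\sqrt 3}{16}$, which is positive at $s = \pi$ and has derivative $3(s^2 - s - 1) > 0$ for $s > \tfrac{1 + \sqrt 5}{2}$, so it is positive throughout $[\pi, +\infty)$. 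Given $2sh - DN > 0$, a short case split on the sign of $sh - DN$ shows that in whichever expression of $\mathcal B$ one chooses, every summand has a definite sign with at least one strictly positive: use the first form when $sh \ge DN$, and the second form when $sh < DN$ (where $2sh - DN > 0$ supplies the strict positivity). Hence $\mathcal B > 0$ on $(\pi, +\infty)$, so $\varphi_3$ is strictly increasing there.
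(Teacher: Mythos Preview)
Your proposal is essentially correct, with one small gap in (3), and it takes a genuinely different route from the paper in several places.

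\textbf{The gap in (3).} Showing $\varphi_2'>0$ on each interval $(k\pi,\vartheta_k)$ separately does not, by itself, give strict monotonicity on the \emph{disjoint union} $\bigcup_{k\ge 1}(k\pi,\vartheta_k)$: you still need to bridge the components. The paper closes this by the direct evaluation $\varphi_2(k\pi)=k\pi$ and $\varphi_2(\vartheta_k)=\vartheta_k$, so that $\varphi_2(\vartheta_k)=\vartheta_k<(k+1)\pi=\varphi_2((k+1)\pi)$. This matters for the intended application (the proof of Theorem~\ref{t4}, Case~2), where one compares $\varphi_2(|\theta|)$ with $|\theta|\in(\pi,\vartheta_1)$ against $\varphi_2(|\tau|)$ with $|\tau|\in\bigcup_{k\ge 2}(k\pi,\vartheta_k)$. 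Add these two boundary values and your argument for (3) is complete.

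\textbf{Comparison with the paper.} For (1) you invoke the Mittag--Leffler expansion of $\psi$ to get $\psi'>0$ directly and then read off $h>0$ from $h=\psi'\,s^3\sin^2 s$; the paper instead shows $h>0$ by computing $h''$ and $h'$. For (2) the two arguments coincide (the paper differentiates $1/\varphi_1$ instead). In (3) your closed form
\[
F(s)=2s^2\sin^2 s\,(s^2-3)+2\sin^4 s+4s^3\sin s\cos s
\]
together with $\sin 2s\ge 0$ and $s^2>3$ on $(k\pi,\vartheta_k)$ is more transparent than the paper's sign analysis of the factored expression for $(1/\varphi_2)'$. For (4) the paper splits $(\pi,\infty)$ into $\bigcup(k\pi,\vartheta_k)$ (handled via (2)+(3)) and $\bigcup(\vartheta_k,(k+1)\pi)$ (handled by a separate crude polynomial bound valid for $r>\vartheta_1$), whereas your identity
\[
\mathcal B(s)=shD^2+(s\cos s-\sin s)^2(sh-DN)=D^2(2sh-DN)+N\sin^2 s\,(DN-sh)
\]
combined with the single estimate $2sh-DN>0$ on $(\pi,\infty)$ treats the whole ray at once. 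Your computations check out: the numerator of $(\varphi_3^2)'$ equals $2N\,\mathcal B(s)$, both displayed forms of $\mathcal B$ agree, $2sh-DN=s^3+3s^2\sin s\cos s-3s\sin^2 s-\sin^3 s\cos s$, and your cubic lower bound is positive on $[\pi,\infty)$. The case split on the sign of $sh-DN$ then gives $\mathcal B>0$ immediately. This is a cleaner and more unified argument than the paper's.
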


\begin{proof}
Notice that (1) can be found in \cite[Lemma 3.1]{MM17}, and (2) as well as the strict monotonicity of $\varphi_3$ on $\cup_{k = 1}^{+\infty} (k \pi, \ \vartheta_k)$ can be found in the proof of \cite[Lemma 3.4]{MM17}. For the sake of clarity, we will provide a complete proof which is not complicated.

We begin with the proof of (1). Obviously, it suffices to prove the first claim. Indeed, when $r \ge \frac{\pi}{2}$, we have
\begin{align}\label{fprime}
h^{\prime}(r) = r \, [2 +  \cos(2r)] - \frac{3}{2} \sin(2r) \ge \frac{\pi - 3}{2} > 0.
\end{align}
In the opposite case $r \in \left(0, \ \frac{\pi}{2}\right)$, we have the elementary inequality $\sin{r} > r \cos{r}$ and it is clear that
\begin{align*}
h^{\prime \prime}(r) = 2 - 2\cos(2r) - 2r \sin(2r) = 4 \sin{r} \, (\sin{r} - r \cos{r}) > 0,
\end{align*}
which implies that $h^{\prime}(r) > \lim\limits_{r \to 0^+}h^{\prime}(r) = 0$ for $r \in \left(0, \ \frac{\pi}{2}\right)$. Combining this with
\eqref{fprime}, we get that $h(r) > \lim\limits_{r \to 0^+}h(r) = 0$ for $r > 0$, which ends the proof of the first assertion.

To prove (2), let us set
\begin{align} \label{dFG}
F(r) := \frac{r - \sin{r} \cos{r}}{r^2 - \sin^2{r}} \quad \mbox{ and } \quad
G(r) := \frac{-\sin^2{r} + r \sin{r} \cos{r}}{r \, (r^2 - \sin^2{r})}.
\end{align}

Remark that
\begin{align}\label{Fp2G}
F(r) = \frac{1}{\varphi_1(r)}, \quad F(r) + 2G(r) = \frac{1}{\varphi_2(r)}, \quad F(r) + G(r) = \frac{1}{r}.
\end{align}
A simple computation gets that
\begin{align} \label{doF}
F^{\prime}(r) = - 2\frac{(r \cos{r} - \sin{r})^2}{(r^2 - \sin^2{r})^2} < 0, \quad \forall \, r \in (0, \ +\infty) \setminus \{\vartheta_k; \  k \in \N^* \},
\end{align}
which implies the strict monotonicity of $\varphi_1$.

We return to the proof of (3). Using \eqref{Fp2G},
we have that
\begin{align*}
\left(\frac{1}{\varphi_2(r)}\right)^{\prime} &= 2(F(r) + G(r))^{\prime} - F^{\prime}(r)
=- \frac{2}{r^2} + 2 \, \frac{(r \cos{r} - \sin{r})^2}{(r^2 - \sin^2{r})^2} \\
&= 2 \, \frac{(r^2 \cos{r} - r \sin{r} - r^2 + \sin^2{r})
(r^2 \cos{r} - r \sin{r} + r^2 - \sin^2{r})}
{r^2 \, (r^2 - \sin^2{r})^2 }.
\end{align*}
Observe that for $r \in (k \pi, \ \vartheta_k)$ with $k$ odd, we have that $ r \cos{r} < \sin{r} < 0$, so
\begin{align*}
\left\{ \begin{array}{ll}
r^2 \cos{r} - r \sin{r} - r^2 + \sin^2{r}
=r \, (r \cos{r} - \sin{r}) - (r^2 - \sin^2{r}) < 0 \\[2mm]
r^2 \cos{r} - r \sin{r} + r^2 - \sin^2{r}
= r^2 \, (\cos{r} + 1) - \sin{r} \, (r + \sin{r}) > 0
\end{array} \right..
\end{align*}
Similarly, if $r \in (k \pi, \ \vartheta_k)$ with $k$ even, we have that $0 < \sin{r} < r \cos{r}$  and
\begin{align*}
\left\{ \begin{array}{ll}
r^2 \cos{r} - r \sin{r} - r^2 + \sin^2{r}
=r^2 \, (\cos{r} - 1) + \sin{r} \, (-r + \sin{r}) < 0 \\[2mm]
r^2 \cos{r} - r \sin{r} + r^2 - \sin^2{r}
= r \, (r \cos{r} - \sin{r}) + (r^2 - \sin^2{r})  > 0
\end{array} \right..
\end{align*}

Hence we have that $\left(\frac{1}{\varphi_2}\right)^{\prime} < 0$ on $\cup_{k = 1}^{+\infty} (k \pi, \ \vartheta_k)$.
Finally, a direct computation gives
\begin{align*}
\varphi_2(k\pi) =  k \pi \quad \mbox{and} \quad \varphi_2(\vartheta_k) = \vartheta_k  \quad \forall \, k \ge 1,
\end{align*}
which finishes the proof of the strict monotonicity of $\varphi_2$ on $\cup_{k = 1}^{+\infty} (k \pi, \ \vartheta_k)$.

We are in a position to prove the strict monotonicity of $\varphi_3$. By the fact that $\varphi_3 = \sqrt{\varphi_1 \, \varphi_2}$, it following from (2) and (3) that $\varphi_3$ is strictly increasing on $\cup_{k = 1}^{+\infty} (k \pi, \ \vartheta_k)$. Then it remains to prove that it is also strictly increasing on $\cup_{k = 1}^{+\infty} (\vartheta_k, \ (k+1) \pi)$.
Indeed, by using \eqref{Fp2G} again, we have that
\begin{align*}
\left(\frac{1}{\varphi_3^2(r)}\right)^{\prime} &= \left[(F(r) + 2G(r)) F(r)\right]^{\prime}
=\left[\left( \frac{2}{r} - F(r) \right) F(r) \right]^{\prime} \\
&=\left[\left( \frac{2}{r} - F(r) \right)\right]^{\prime} F(r) + (F(r) + 2G(r))F^{\prime}(r) \\
&=-\frac{2}{r^2} F(r) + 2G(r) F^{\prime}(r).
\end{align*}
From \eqref{dFG} and \eqref{doF}, the last term equals
\begin{align*}
- 2 \frac{(r - \sin{r} \cos{r}) (r^2 - \sin^2{r})^2 + 2 r \, (r \cos{r} - \sin{r})^2 \,
(-\sin^2{r} + r \sin{r} \cos{r})
}{r^2 \, (r^2 - \sin^2{r})^3}.
\end{align*}
Note that we have for $r > \vartheta_1 > 4$,
\begin{align*}
&\quad(r - \sin{r} \cos{r})(r^2 - \sin^2{r})^2 + 2 \, r \, (r \cos{r} - \sin{r})^2
(-\sin^2{r} + r \sin{r} \cos{r}) \\
&\ge \left(r - \frac{1}{2}\right)(r^2 - 1)^2 - 2 \, r \, (r + 1)^2 \left(\frac{r}{2} + 1\right) \\
&= (r + 1)^2 \left[ (r - \frac{1}{2}) \, (r - 1)^2 - r^2 - 2 r \right] \\
&\ge (r + 1)^2 \left[ 3 \, (r - 1)^2 - r^2 - 2 r \right] > 2 \, r \, (r - 4) + 3 >0,
\end{align*}
which proves our lemma.
\end{proof}

\subsection{Determination of $\mathcal{W}$ in our situation}

In order to prove Theorem \ref{t4}, we will use \cite[Corollary~2.1]{Li19} in which we have assumed that $g$ does not belong to $\mathcal{W}$ (cf. \eqref{nW}).

Let us begin with the

\subsubsection{Expression of $\gamma(w, 2 \, \theta ; s) = (x(s),t(s))$ on $N_{3,2}$}

We first recall the convention \eqref{defhat}. As in the setting of K-type groups, an elementary computation gives that
\begin{gather*}
\cos(2 \, s \, U(\theta)) \, w = \cos{(2 \, s \, |\theta|)} \, (w - (w \cdot \widehat{\theta}) \, \widehat{\theta}) + (w \cdot \widehat{\theta}) \, \widehat{\theta}, \\
\frac{\sin(2 \, s \, U(\theta))}{U(\theta)} \, w = \frac{\sin(2 \, s \, |\theta|)}{|\theta|} \, (w - (w \cdot \widehat{\theta}) \, \widehat{\theta}) +  2 s \,  (w \cdot \widehat{\theta}) \, \widehat{\theta}, \\
\widetilde{U}(\theta) \, \frac{\sin(2 \, s \, U(\theta))}{U(\theta)} \, w = \sin(2 \, s \, |\theta|) ( \widehat{\theta} \times w),
\end{gather*}
where we have used the fact that $\widetilde{U}(\tau) \, x = \tau \times x$. Consequently, using \eqref{GEn}, we obtain that
\begin{align}\nonumber
\zeta(s) &= \dot{x}(s) = \cos(2 \, s \, U(\theta)) \, w +  \widetilde{U}(\theta) \, \frac{\sin(2 \, s \, U(\theta))}{U(\theta)} \, w  \\
\nonumber
&=  \cos(2 \, s \, |\theta|)(w - (w \cdot \widehat{\theta}) \, \widehat{\theta}) + (w \cdot \widehat{\theta}) \, \widehat{\theta}
+  \sin(2 \, s \, |\theta|) ( \widehat{\theta} \times w), \\
\label{expxN}
x(s) &= \frac{\sin(2 \, s \, |\theta|)}{2 |\theta|} (w - (w \cdot \widehat{\theta}) \, \widehat{\theta}) +  s \, (w \cdot \widehat{\theta}) \, \widehat{\theta}
+ \frac{1 - \cos(2 \, s \, |\theta|)}{2 |\theta|} ( \widehat{\theta} \times w).
\end{align}

Then using \eqref{GEn} again we have
\begin{align}\nonumber
\dot{t}(s) &= \frac{1}{2} \, x(s) \times \zeta(s) = \frac{1}{2} \left( \frac{\sin(2 \, s \, |\theta|)}{2 |\theta|} - s\, \cos(2 \, s \, |\theta|) \right) \, \u_1 + \frac{1 - \cos(2 \, s \, |\theta|)}{4 |\theta|} \, \u_2 \\
\label{exptN}
&+ \frac{1}{2} \left( s \, \sin(2 \, s \, |\theta|) - \frac{1 - \cos(2 \, s \, |\theta|)}{2 |\theta|}\right) \, \u_3,
\end{align}
with
\begin{align}
\u_1 &=  (w - (w \cdot \widehat{\theta}) \, \widehat{\theta}) \times [(w \cdot \widehat{\theta}) \, \widehat{\theta}]
= -  (w \cdot \widehat{\theta}) \, ( \widehat{\theta} \times w), \\
\u_2 &=  (w - (w \cdot \widehat{\theta}) \, \widehat{\theta}) \times ( \widehat{\theta} \times w)
= (|w|^2 - (w \cdot \widehat{\theta})^2) \, \widehat{\theta}, \\
\u_3 &=  (w \cdot \widehat{\theta}) \, \widehat{\theta} \times ( \widehat{\theta} \times w)
= -  (w \cdot \widehat{\theta}) \, [w - (w \cdot \widehat{\theta}) \, \widehat{\theta}],
\end{align}
where we have used the well-known vector triple product expansion:
\begin{align}\label{triex}
a \times (b \times c) = (a \cdot c) \, b - (a \cdot b) \, c, \qquad \forall \, a,b,c \in \R^3.
\end{align}
As a result,  by using \eqref{n*n1} and \eqref{n*n2}, we can write
\begin{align}\nonumber
t(s)&= \frac{1 - \cos(2 \, s \, |\theta|) -  s \,|\theta|  \sin(2 \, s \, |\theta|)}{4 |\theta|^2} \, \u_1
+ \frac{2 \, s \, |\theta| - \sin(2 \, s \, |\theta|)}{8 |\theta|^2} \, \u_2 \\
\label{exptpN}
&+ \frac{\sin(2 \, s \, |\theta|) - s \, |\theta| - s \, |\theta| \cos(2 \, s \, |\theta|)}{4 |\theta|^2} \, \u_3.
\end{align}

\medskip

Now, we provide the

\subsubsection{Description of $\mathcal{W} \cap \left\{ (e_1, \frac{1}{4}(u_1, u_2, 0)); \  u_1, u_2 \in \R\right\}$}

In this subsection, we suppose that $|\theta| = k\pi$ with $k \in \N^*$. Substituting this with $s = 1$ in \eqref{expxN} and \eqref{exptpN}, $(x(w,2 \, \theta), t(w,2 \, \theta)) := \exp(w,2 \, \theta)$ is given by
\begin{align}\label{expmap}
\left\{ \begin{array}{ll}
x(w,2 \,\theta) = (w \cdot \widehat{\theta}) \, \widehat{\theta}  \\
t(w,2 \, \theta) = \frac{1}{4 |\theta|} \, \u_2 - \frac{1 }{ 2|\theta|} \, \u_3 = \frac{w \cdot \widehat{\theta}}{2|\theta|} \, w + \frac{|w|^2 - 3 \, (w \cdot \widehat{\theta})^2}{4|\theta|} \, \widehat{\theta} \\
\end{array} \right..
\end{align}
Then we have the following lemma, which implies that the set
$\mathcal{W}$ is negligible in the ``subspace'' as well.

\begin{lemma}\label{l2}
Let $t = \frac{1}{4}(u, 0) = \frac{1}{4}(u_1, u_2, 0)$ such that $(e_1, t) \in \mathcal{W}$.
Then we have
$16 u_1^2 = k^2 \pi^2 u_2^4$ for some $k \in \N^*$.
\end{lemma}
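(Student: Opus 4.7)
The plan is to unwind the definition of $\mathcal{W}$ in the special case of $N_{3,2}$ and then evaluate the explicit endpoint formula \eqref{expmap} on points of the prescribed form.

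First I would identify $\mathcal{V}^c$ in our setting. Since $\widetilde{U}(\tau) \, x = \tau \times x$, one computes $\widetilde{U}(\tau)^2 x = (\tau \cdot x)\tau - |\tau|^2 \, x$, so the nonzero eigenvalues of $U(\tau)$ are $\pm |\tau|$ (with multiplicity $2$ in total) while $\widehat{\tau}$ spans the kernel. Hence $\theta \in \mathcal{V}^c$ is equivalent to $|\theta| = k \pi$ for some $k \in \N^*$. By the definition \eqref{nW}, saying $(e_1, t) \in \mathcal{W}$ means there exist $w \in \R^3$ and $\theta \in \R^3$ with $|\theta| = k \pi$ such that $\exp(w, 2\theta) = (e_1, t)$.

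Next I would plug these data into \eqref{expmap}. The first equation $x(w, 2\theta) = (w \cdot \widehat{\theta}) \, \widehat{\theta} = e_1$ forces $\widehat{\theta} = \epsilon \, e_1$ with $\epsilon \in \{+1, -1\}$ and $w \cdot \widehat{\theta} = \epsilon$; equivalently
\[
\theta = \epsilon \, k \pi \, e_1, \qquad w_1 = 1.
\]
Writing $w = (1, w_2, w_3)$, the second equation in \eqref{expmap} simplifies to
\[
t \;=\; \frac{\epsilon}{2 k \pi}\, w \;+\; \frac{|w|^2 - 3}{4 k \pi}\, \epsilon \, e_1.
\]

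Finally I would match this expression componentwise with the assumed form $t = \tfrac{1}{4}(u_1, u_2, 0)$. The third slot gives $w_3 = 0$. The second slot gives $w_2 = \epsilon \, k\pi \, u_2 / 2$, so $w_2^2 = k^2 \pi^2 u_2^2/4$. Inserting this into the first slot yields
\[
\frac{u_1}{4} \;=\; \frac{\epsilon}{2k\pi} + \frac{w_2^2 - 2}{4k\pi}\,\epsilon \;=\; \frac{\epsilon \, w_2^2}{4 k \pi} \;=\; \epsilon \, \frac{k \pi \, u_2^2}{16},
\]
so $u_1 = \epsilon \, k\pi \, u_2^2/4$ and squaring delivers $16 \, u_1^2 = k^2 \pi^2 \, u_2^4$, as required.

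There is essentially no obstacle here: once the spectral identification of $\mathcal{V}^c$ is made in the $N_{3,2}$ setting, the rest is a direct computation using the already-derived endpoint formula \eqref{expmap}. The only point that deserves a line of care is that $\widehat{\theta}$ is determined by $x(w, 2\theta) = e_1$ only up to sign, which is why the conclusion involves $u_1^2$ and not $u_1$ itself.
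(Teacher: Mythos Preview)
Your proof is correct and follows essentially the same approach as the paper: identify $\mathcal{V}^c$ via the spectrum of $U(\theta)$, use the endpoint formula \eqref{expmap} to pin down $\widehat{\theta} = \pm e_1$ and $w_1 = 1$, then read off the constraint. The only cosmetic difference is that the paper packages the final step through inner products and a Pythagoras identity $|t|^2 = \frac{1}{|\theta|^2}\big((t\cdot\theta)^2 + t\cdot\theta\big)$, whereas you simply match components directly; both routes give the same relation $u_1 = \pm \tfrac{k\pi}{4} u_2^2$.
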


\begin{proof}

Let $(w, 2 \, \theta) \in \R^3 \times \R^3$ with $w := (w_1, w_2, w_3)$ such that $\exp(w, 2 \, \theta) = (e_1, t)$ and $|\theta| = k \pi$ for some $k \in \N^*$. It follows from \eqref{expmap} that
\begin{align} \label{n723n}
e_1 = (w \cdot \widehat{\theta}) \, \widehat{\theta}, \quad
t  = \frac{w \cdot \widehat{\theta}}{2 |\theta|} \, w + \frac{|w|^2 - 3 \, (w \cdot \widehat{\theta})^2}{4|\theta|} \, \widehat{\theta}.
\end{align}
The first equality implies that $\theta = \pm (k \pi, 0, 0) := (\theta_1, 0, 0)$,
$w \cdot \widehat{\theta} = \pm 1$. Furthermore, $w \cdot \widehat{\theta}$ and $\theta_1$ have the same sign.
Taking inner product on both sides of the second identity in \eqref{n723n} with $\widehat{\theta}$, we obtain
\begin{align}\label{n724n}
t \cdot \widehat{\theta} = \frac{|w|^2 - (w \cdot \widehat{\theta})^2}{4 |\theta|},
\end{align}
and $\theta_1 u_1 \geq 0$.

Multiplying both sides of \eqref{n724n} by $- \widehat{\theta}$, and summing with both sides of the second equation in \eqref{n723n} respectively,  we have
\begin{align}\label{tminus}
t - (t \cdot \widehat{\theta} ) \, \widehat{\theta} = \frac{w \cdot \widehat{\theta}}{2 |\theta|} \, (w - (w \cdot \widehat{\theta}) \, \widehat{\theta} ).
\end{align}
In particular, we get that $w_3 = 0$ and $u_2 = 2 \, \frac{w \cdot \widehat{\theta}}{|\theta|} \, w_2$.

By Pythagoras Theorem, we can write
\begin{align*}
|t|^2 & = | (t \cdot \widehat{\theta}) \, \widehat{\theta} |^2 + | t  - (t \cdot \widehat{\theta})  \, \widehat{\theta} |^2
= (t \cdot \widehat{\theta})^2 + \frac{(w \cdot \widehat{\theta})^2}{4|\theta|^2} \, (|w|^2 - (w \cdot \widehat{\theta})^2) \\
&= (t \cdot \widehat{\theta})^2 + \frac{1}{4 |\theta|^2} \, (4 \, t \cdot \theta) \\
&= \frac{1}{|\theta|^2} \left( (t \cdot \theta)^2 +  (t \cdot \theta) \right) ,
\end{align*}
where we have used \eqref{tminus} and Pythagoras Theorem in the second ``$=$'', \eqref{n724n} and $ |w \cdot \widehat{\theta}| = 1$  in the third ``$=$''. Inserting $t =  \frac{1}{4}(u_1, u_2, 0)$ and $\theta = \pm (k \pi, 0, 0)$ in the last equation gives the
desired result.
\end{proof}

\begin{remark}\label{r2}
Assume $t = \frac{1}{4}(u_1, \frac{2}{\sqrt{k \, \pi}} \sqrt{u_1}, 0)$ with $u_1  > 0$ and $k \in \N^*$. It follows from the proof above that the ``bad'' normal geodesic joining $o$ to $(e_1, t)  \in \mathcal{W}$ is
\[
\gamma_{(w, 2 \, \theta)}(s) \  (0 \le s \le 1) \mbox{ with } w = (1, \sqrt{k \pi \, u_1}, 0) \mbox{ and } \theta = (k \pi, 0, 0).
\]
More precisely, from \eqref{expxN} and \eqref{exptpN}, $\gamma_{(w,2 \, \theta)}(s) := (x(s), t(s))$ $(0 \le s \le 1)$ is given by:
\begin{align*}
x(s) & =  \begin{pmatrix} s \\0 \\0 \end{pmatrix}
+ \frac{\sin(2 \, s  \, k \pi)}{2k \pi}
\begin{pmatrix} 0\\ \sqrt{k \pi \, u_1} \\0 \end{pmatrix}
+ \frac{1 - \cos(2 \, s \, k \pi)}{2k \pi}
\begin{pmatrix} 0 \\ 0 \\ \sqrt{k \pi \, u_1} \end{pmatrix}  \\
t(s) & = \frac{1}{4} \left[ - \frac{ \sin(2 \, s \, k \pi) - s \, k \pi - s \, k \pi \cos(2 \, s \, k \pi)}{k^2 \pi^2}
\begin{pmatrix}0 \\ \sqrt{k \pi \, u_1} \\0 \end{pmatrix} \right. \\
& \left. + \frac{s \, k \pi \sin(2 \, s \,k \pi) - 1 + \cos(2 \, s \, k \pi)}{k^2 \pi^2}
\begin{pmatrix} 0 \\ 0 \\  \sqrt{k \pi \, u_1}  \end{pmatrix}
+ \frac{2 \, s \, k \pi - \sin(2 \, s \, k \pi)}{2k^2 \pi^2}
 \begin{pmatrix} k \pi \, u_1 \\0  \\0 \end{pmatrix} \right].
\end{align*}
\end{remark}

\medskip

We are in a position to provide the

\medskip

\subsection{Proof of Theorem \ref{t4}}

\medskip

Recall that $\psi(s) = \frac{1 - s \cot{s}}{s^2}$ and $\varphi_0(s)$ is defined by \eqref{nvp0}. Set in the following:
\[
\Phi(w) := \varphi_0(|w|) \frac{w_2^2}{|w|^2}, \quad w = (w_1, w_2, w_3) \in \R^3.
\]

Let  $(\theta_1, \theta_2) \in \Omega_{-, 4}$. Recall that (cf. \eqref{n78n})
\[
\theta := (\theta_1, \theta_2, 0), \quad (u_1, u_2) := \Lambda(\theta_1,\theta_2) \in \R_{<,+}^2, \quad t_{\theta} := \frac{1}{4} (u_1, u_2, 0), \quad g_{\theta} := (e_1, t_{\theta}).
\]

By Lemma \ref{l2}, via a limiting argument, we may suppose in the sequel that $g_\theta \notin \mathcal{W}$. Then  all the normal geodesics joining $o$ to $g_\theta$ are ``good'' ones, so \cite[Corollary 2.1]{Li19} gives that
\begin{align}
d(e_1, t_\theta)^2 = \inf_{\tau \in \Upsilon_{\theta}}  \left[\frac{\tau_1^2}{|\tau|^2} +
\left(\frac{\tau_2}{\sin|\tau|}\right)^2\right]
=\inf_{\tau \in \Upsilon_{\theta}} \left\{ \Phi(\tau) + 1
\right\},
\end{align}
where $\Upsilon_{\theta}$ denotes the set of $\tau = (\tau_1, \tau_2, 0)$ such that $\Lambda(\tau_1,\tau_2) = (u_1,u_2)$ (see \eqref{32N96}), namely,
\begin{align}\label{nELn}
\left\{ \begin{array}{ll}
(0 < ) \, u_1 = \frac{\psi'(|\tau|)}{|\tau|} \, \tau_1 \, \tau_2^2  \\[4mm]
(0 < ) \, u_2 = \tau_2 \, \mathrm{K}_3(\tau_1, \tau_2) = \tau_2 \, \left( \frac{\psi'(|\tau|)}{|\tau|} \, \tau_2^2 + 2 \, \psi(|\tau|) \right) \\
\end{array} \right..
\end{align}

Here are some direct observations. \\[-8mm]

\paragraph{Observations:} Under the above assumptions, for any $\tau \in \Upsilon_{\theta}$, we have \\
1. $|\tau| \notin \{k \, \pi; \, k \in \N^*\}$ since $(e_1,t_\theta) \notin \mathcal{W}$. \\
2. Moreover $\tau_2 \neq 0$ and $\tau_1 > 0$ since $u_1 > 0$ and $\psi' > 0$ from (1) of Lemma \ref{l1}.  \\
3. Furthermore $|\tau| > \pi$. Otherwise $|\tau| < \pi$, $u_2 > 0$ and the second equation in \eqref{nELn} imply that $\tau_2 > 0$. So $\tau \in  \Omega_+$. Hence it follows from (ii)  of Theorem \ref{RLT1} that $(u_1, u_2) \in \R^2_{>}$. This leads to a contradiction. \\
4. The following equalities hold, in particular for $\theta$,
\begin{align} \label{ndEn}
\Phi(\tau) = \varphi_2(|\tau|) \frac{|\tau|}{\tau_1} u_1 = \varphi_1(|\tau|) \left( \frac{\tau_1}{|\tau|} u_1 + \frac{\tau_2}{|\tau|} u_2 \right) = \varphi_3(|\tau|) \sqrt{u_1 \, \left(u_1 + \frac{\tau_2}{\tau_1} u_2 \right)}.
\end{align}
Indeed, the first ``$=$'' follows from the definition of $\varphi_2$ (see \eqref{nvp2}) and the first equation in \eqref{nELn}, the second (resp. third) one from \eqref{nELn} and \eqref{nvp1} (resp. \eqref{nvp3} and the first two equalities). \\[-2mm]

It remains to show that
\begin{align} \label{nGn}
\Phi(\theta) < \Phi(\tau), \quad \forall \, \tau \in \Upsilon_{\theta} \setminus \{ \theta \}.
\end{align}
And we split the proof into three cases.

\paragraph{Case 1: $\tau \in \Upsilon_{\theta}$ with $|\tau| < |\theta|$.}
In such case, we get $|\tau| \in (\pi, \, |\theta|)$.  Moreover, we have $\tau_2 > 0$. Otherwise $\tau_2 < 0$, and combining with
the fact that $u_2 > 0$ and the second equation in \eqref{nELn}, we have
$\mathrm{K}_3(\tau_1, \tau_2) = \frac{\psi'(|\tau|)}{|\tau|} \tau_2^2 + 2 \psi(|\tau|) < 0$. So $(\tau_1, \, \tau_2) \in \Omega_{-,4}$. Thus it follows from (vi) of Theorem \ref{RLT1} that $(\tau_1, \, \tau_2) = (\theta_1, \, \theta_2)$, which is a contradiction.

Next, remark that $s \, \psi(s) = \frac{1}{s} - \cot{s}$ ($< 0$) is strictly increasing on $(\pi, \  \vartheta_1)$, then we have $2 \, |\tau| \, \psi(|\tau|) < 2 \, |\theta| \, \psi(|\theta|)$. By \eqref{nELn}, we can write
\begin{align*}
u_2 \, \frac{|\tau|}{\tau_2} - u_1 \, \frac{|\tau|}{\tau_1} = 2 \, |\tau| \, \psi(|\tau|)
< 2 \, |\theta| \, \psi(|\theta|) = u_2 \, \frac{|\theta|}{\theta_2} - u_1 \, \frac{|\theta|}{\theta_1}.
\end{align*}
By the fact that $u_1$, $u_2$, $\tau_1$, $\tau_2$, $\theta_1 > 0$ and $\theta_2 < 0$, the last inequality implies that
\[
0 < \frac{\tau_1}{|\tau|} < \frac{\theta_1}{|\theta|} \ \mbox{ and so } \  \frac{|\tau_2|}{|\tau|} = \sqrt{1 - \left( \frac{\tau_1}{|\tau|} \right)^2} > \sqrt{1 - \left( \frac{\theta_1}{|\theta|} \right)^2} = \frac{|\theta_2|}{|\theta|} > 0.
\]

Then we have
\begin{align*}
\Phi(\theta) = \left[\left(\frac{|\theta|}{\sin|\theta|}\right)^2 - 1\right] \left(\frac{\theta_2}{|\theta|}\right)^2
< \left[\left(\frac{|\tau|}{\sin|\tau|}\right)^2 - 1\right] \left(\frac{\tau_2}{|\tau|}\right)^2 = \Phi(\tau),
\end{align*}
since the function $\left(\frac{s}{\sin s}\right)^2$ ($> 1$) is strictly decreasing on $(\pi, \ \vartheta_1)$ (cf. \cite[(1.45)]{BGG00}), which ends the proof in this case.

\paragraph{Case 2: $\tau \in \Upsilon_{\theta}$ with $|\tau| \ge |\theta|$, $\tau \neq \theta$ and $\tau_2 < 0$.}
We argue as in the beginning
of Case 1,  we have that $\mathrm{K}_3(\tau_1, \tau_2) = \frac{\psi'(|\tau|)}{|\tau|} \tau_2^2 + 2 \psi(|\tau|) < 0$ and $|\tau| \notin (\pi, \, \vartheta_1)$. Moreover, since $\psi'$ is always positive (see (1) of Lemma \ref{l1}) and $\psi$ is negative only on $\cup_{k = 1}^{+\infty} (k \pi, \ \vartheta_k)$,  then we get that $|\tau| \in \cup_{k = 2}^{+\infty} (k \pi,\ \vartheta_k)$.

We begin with the case where $\frac{|\tau_2|}{|\tau|} \ge \frac{|\theta_2|}{|\theta|}$. Then we yield that
\[
0 < \frac{\tau_1}{|\tau|} \le \frac{\theta_1}{|\theta|}, \  \mbox{so } \ \frac{|\tau|}{\tau_1} \ge \frac{|\theta|}{\theta_1} > 0.
\]
Combining this with the first equality in \eqref{ndEn}, we get
\begin{align*}
\Phi(\theta) = \varphi_2(|\theta|) \, u_1 \, \frac{|\theta|}{\theta_1} < \varphi_2(|\tau|) \, u_1 \, \frac{|\tau|}{\tau_1}  =
\Phi(\tau),
\end{align*}
where we have used, in the inequality, the fact that $u_1 > 0$ and $\varphi_{2}$ ($> \pi$) is strictly increasing on $\cup_{k = 1}^{+\infty} (k \, \pi, \ \vartheta_k)$ from Lemma \ref{l1}.

We continue with the opposite case $\frac{|\tau_2|}{|\tau|} < \frac{|\theta_2|}{|\theta|}$, which is equivalent to $\frac{\theta_2}{|\theta|} < \frac{\tau_2}{|\tau|} < 0$. Similarly, we have $\frac{\tau_1}{|\tau|} > \frac{\theta_1}{|\theta|} > 0$. Hence, via the second equality in \eqref{ndEn},
\begin{align*}
0 < \Phi(\theta) = \varphi_1(|\theta|)
 \left( u_1 \, \frac{\theta_1}{|\theta|} + u_2 \, \frac{\theta_2}{|\theta|} \right) <  \varphi_1(|\tau|)
 \left( u_1 \, \frac{\tau_1}{|\tau|} + u_2 \,  \frac{\tau_2}{|\tau|}\right) = \Phi(\tau),
\end{align*}
where we have used the fact that $u_1, u_2 > 0$ and $\varphi_{1}$ ($> 0$) is strictly increasing on $(0, \ +\infty)$ from Lemma \ref{l1}.

\paragraph{Case 3: $\tau \in \Upsilon_{\theta}$ with $|\tau| \ge |\theta|$ and $\tau_2 > 0$.}
By the third equality in \eqref{ndEn}, we get
\begin{align*}
\Phi(\theta) =
\varphi_3(|\theta|) \sqrt{u_1 \left(u_1 + u_2 \, \frac{\theta_2}{\theta_1}\right)} <  \varphi_3(|\tau|) \sqrt{u_1 \left(u_1 + u_2 \, \frac{\tau_2}{\tau_1}\right)}
 = \Phi(\tau),
\end{align*}
where we have used, in the inequality, the fact that $u_1, u_2 > 0$, $\frac{\theta_2}{\theta_1} < 0 < \frac{\tau_2}{\tau_1}$, and $\varphi_{3}$ ($> 0$) is strictly increasing on $(\pi, \ +\infty)$ from Lemma \ref{l1}.

This finishes the proof of Theorem \ref{t4}.

\medskip

\subsection{Some consequences}

\medskip

In this sub-section, we provide some applications of Theorems \ref{RLT1} and \ref{t4}. More precisely, we determine the exact formulas of $d(g)^2$ on the whole space via a limiting argument, the cut locus $\mathrm{Cut}_o$ as well as all shortest geodesics from $o$ to any given $g \neq o$. For the sake of clarity, we will first reformulate Theorem \ref{t4}.

Recall that $\vartheta_1$ is the unique solution of $\tan{s} = s$ on $(\pi, \ \frac{3}{2} \pi)$.
Also for $\pi < s < \vartheta_1$,
\begin{gather*}
f(s) = 1 - s \cot{s}, \quad \mu(s) = f'(s), \quad \psi(s) = \frac{f(s)}{s^2}, \quad
\varphi_0(s) = \left( \frac{s}{\sin{s}} \right)^2 - 1, \\
\varphi_1(s) = \frac{\varphi_0(s)}{\mu(s)}, \quad \varphi_2(s) = \frac{\varphi_0(s)}{s^2 \, \psi'(s)}, \quad \varphi_3(s) = \sqrt{\varphi_1(s) \, \varphi_2(s)}.
\end{gather*}

\begin{theorem} \label{nT4n}
Let $u_1, u_2 > 0$ such that $u_2 < \frac{2}{\sqrt{\pi}} \sqrt{u_1}$. Suppose that
\[
\widetilde{\theta} := \widetilde{\theta}(u_1, u_2) =  (\theta_1, \theta_2) \ \mbox{ with } \ \theta_2 < 0 < \theta_1 < |\widetilde{\theta}| \ (\neq \pi) < \vartheta_1
\]
is the unique solution of:
\begin{align*}
u_1 = \frac{\psi'(|\widetilde{\theta}|)}{|\widetilde{\theta}|} \, \theta_1 \, \theta_2^2  \qquad
u_2 = \theta_2 \, \left( \frac{\psi'(|\widetilde{\theta}|)}{|\widetilde{\theta}|} \, \theta_2^2 + 2 \, \psi(|\widetilde{\theta}|) \right).
\end{align*}
Then we have $\theta_1 > \pi$ and
\begin{align} \label{EDnD}
d\left( e_1, \frac{1}{4}(u_1, u_2, 0) \right)^2
&= \left( \frac{\theta_1}{|\widetilde{\theta}|} \right)^2 + \left( \frac{\theta_2}{\sin{|\widetilde{\theta}|}} \right)^2 = \varphi_1(|\widetilde{\theta}|) \left( u_1 \, \frac{\theta_1}{|\widetilde{\theta}|} + u_2 \, \frac{\theta_2}{|\widetilde{\theta}|} \right) + 1 \nonumber \\
&= \varphi_2(|\widetilde{\theta}|) \, u_1 \, \frac{|\widetilde{\theta}|}{\theta_1} + 1 = \varphi_3(|\widetilde{\theta}|) \sqrt{u_1 \, \left( u_1 + u_2 \, \frac{\theta_2}{\theta_1} \right)} + 1.
\end{align}
\end{theorem}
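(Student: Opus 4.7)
The plan is to view Theorem \ref{nT4n} as a clean repackaging of Theorem \ref{t4} together with (vi) of Theorem \ref{RLT1}, and then to extract the four equivalent expressions from the algebraic identities \eqref{ndEn} already established in the proof of Theorem \ref{t4}.

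First I would handle the existence, uniqueness, and qualitative properties of $\widetilde{\theta}$. Reading the two defining equations for $\widetilde{\theta}$ and comparing them with \eqref{32N96}, one sees that they amount exactly to $\Lambda(\theta_1,\theta_2)=(u_1,u_2)$. By (vi) of Theorem \ref{RLT1}, $\Lambda$ is a $C^{\infty}$-diffeomorphism from $\Omega_{-,4}$ onto $\R^2_{<,+}$; since $(u_1,u_2)\in \R^2_{<,+}$ by hypothesis, there is a unique $(\theta_1,\theta_2)\in\Omega_{-,4}$ solving this system. The defining conditions $\theta_2<0$, $\pi\neq|\widetilde{\theta}|<\vartheta_1$ and $\mathrm{K}_3(\theta_1,\theta_2)<0$ are built into $\Omega_{-,4}$, and the inequality $\theta_1>\pi$ is then immediate from the first (equivalent) description of $\Omega_{-,4}$ in \eqref{Omega-4}.

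Next I would establish the first equality in \eqref{EDnD} by a direct appeal to Theorem \ref{t4}. Setting $t_\theta:=\tfrac14(u_1,u_2,0)$ and $\theta=(\theta_1,\theta_2,0)$ so that $g_\theta=(e_1,t_\theta)$, Theorem \ref{t4} asserts that \eqref{dEn} continues to hold for every $(\theta_1,\theta_2)\in\Omega_{-,4}$, which gives
\[
d(g_\theta)^2=\phi(g_\theta;\theta)=\frac{\theta_1^2}{|\widetilde{\theta}|^2}+\left(\frac{\theta_2}{\sin|\widetilde{\theta}|}\right)^2.
\]

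Finally, to derive the remaining three formulas I would apply the identities \eqref{ndEn} from Observation 4 in the proof of Theorem \ref{t4}, specialised to $\tau=\widetilde{\theta}\in\Upsilon_\theta$. Writing $\Phi(\tau):=\varphi_0(|\tau|)\tau_2^2/|\tau|^2$ as in that proof, the definition \eqref{nvp0} of $\varphi_0$ together with $\theta_1^2+\theta_2^2=|\widetilde{\theta}|^2$ gives
\[
\Phi(\widetilde{\theta})+1=\left[\left(\frac{|\widetilde{\theta}|}{\sin|\widetilde{\theta}|}\right)^2-1\right]\frac{\theta_2^2}{|\widetilde{\theta}|^2}+1=\frac{\theta_1^2}{|\widetilde{\theta}|^2}+\left(\frac{\theta_2}{\sin|\widetilde{\theta}|}\right)^2=d(g_\theta)^2,
\]
and the three factorizations of $\Phi(\widetilde{\theta})$ in \eqref{ndEn}, which follow mechanically from the system \eqref{nELn} and the definitions \eqref{nvp1}--\eqref{nvp3} of $\varphi_1,\varphi_2,\varphi_3$, produce the remaining three equalities in \eqref{EDnD}. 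There is no real obstacle here: all the analytic work (existence and uniqueness of the critical covector in $\Omega_{-,4}$, the monotonicity of $\varphi_1,\varphi_2,\varphi_3$, and the minimisation over $\Upsilon_\theta$) was already done in Theorems \ref{RLT1} and \ref{t4}, so the argument reduces to substitution and a diffeomorphism lookup.
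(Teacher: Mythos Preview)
Your proposal is correct and matches the paper's treatment: the paper presents Theorem \ref{nT4n} explicitly as a reformulation of Theorem \ref{t4}, and the ingredients you identify---the diffeomorphism $\Lambda:\Omega_{-,4}\to\R^2_{<,+}$ from Theorem \ref{RLT1}(vi) for existence/uniqueness and the bound $\theta_1>\pi$ from \eqref{Omega-4}, the formula \eqref{dEn} extended by Theorem \ref{t4}, and the purely algebraic identities \eqref{ndEn} specialised to $\tau=\widetilde{\theta}$---are exactly the pieces the paper has in place. There is nothing further to add.
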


Combining this with Theorem \ref{RLT1}, it only remains to find the

\subsubsection{Exact expression of $d(e_1, \frac{\beta}{4} \, e_1)^2$ with $\beta > 0$}

We have the following result, which is exactly \cite[Theorem 1.4]{MM17} up to a scaling property (cf. \eqref{scap}) and an orthogonal invariance (see \eqref{nOIP}) combining with (i) of Theorem \ref{RLT1}.

\begin{corollary}\label{c3}
Let $t(\beta) = \frac{1}{4}(\beta, 0, 0)$ with $\beta > 0$. Then it holds that
\begin{align*}
d(e_1,t(\beta))^2 =  \varphi_3(r) \, \beta + 1,
\end{align*}
where $r$ is the unique solution of the following equation in $(\pi, \ \vartheta_1)$:
\begin{align}\label{DCUTP}
- 2 \, \psi(r) \, \sqrt{r^2 + 2 \, r \, \frac{\psi(r)}{\psi'(r)}} = \beta.
\end{align}
\end{corollary}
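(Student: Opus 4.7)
The plan is to derive Corollary \ref{c3} from Theorem \ref{nT4n} by a limiting argument as $u_2 \to 0^+$ along $u_1 = \beta$. Fix $\beta > 0$ and a sequence $u_2^{(n)} \to 0^+$. Theorem \ref{nT4n} associates to each $(\beta, u_2^{(n)}) \in \R^2_{<,+}$ a unique $\widetilde{\theta}^{(n)} \in \Omega_{-,4}$. Since $\overline{\Omega_{-,4}} \subseteq \{|v| \leq \vartheta_1\}$ is compact, I would first extract a convergent subsequence with limit $\theta^* = (\theta_1^*, \theta_2^*)$, where $\theta_1^* \geq 0 \geq \theta_2^*$ and $r^* := |\theta^*| \in [\pi, \vartheta_1]$.

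Passing to the limit in the defining system, the second equation $u_2 = \theta_2 \, \mathrm{K}_3(\theta_1, \theta_2)$ forces $\theta_2^* \, \mathrm{K}_3(\theta_1^*, \theta_2^*) = 0$. The case $\theta_2^* = 0$ is incompatible with the first equation (it would yield $\beta = 0$), so $\mathrm{K}_3(\theta_1^*, \theta_2^*) = 0$, giving $\theta_2^{*2} = -2 r^* \psi(r^*)/\psi'(r^*)$. Since $\psi'(r^*) > 0$ by Lemma \ref{l1}(1) and the left-hand side is nonnegative, one obtains $\psi(r^*) \leq 0$, which combined with $r^* \in [\pi, \vartheta_1]$ forces $r^* \in (\pi, \vartheta_1)$. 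Substituting back, $\beta = -2\psi(r^*) \theta_1^*$ together with $\theta_1^{*2} = r^{*2} - \theta_2^{*2} = r^{*2} + 2r^* \psi(r^*)/\psi'(r^*)$ produces precisely equation \eqref{DCUTP}. A useful side identity to make the radicand manifestly positive is $(r^2\psi(r))' = f'(r) = \mu(r)$, which rewrites it as $\mu(r^*)/\psi'(r^*) > 0$.

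The main technical obstacle is uniqueness of $r^* \in (\pi, \vartheta_1)$ satisfying \eqref{DCUTP}. Two routes are available. The direct route is to show that $\Psi(r) := -2\psi(r)\sqrt{r^2 + 2r\psi(r)/\psi'(r)}$ is strictly decreasing on $(\pi, \vartheta_1)$, using the endpoint behavior $\Psi(\pi^+) = +\infty$ (which follows from the expansions $\psi(r) \sim -1/[\pi(r-\pi)]$ and $\psi'(r) \sim 1/[\pi(r-\pi)^2]$) and $\Psi(\vartheta_1^-) = 0$ (from $\psi(\vartheta_1) = 0$ and $\psi'(\vartheta_1) > 0$), combined with a derivative computation in the spirit of Lemma \ref{l1}. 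The alternative, topological route is: if two distinct $r^*_1, r^*_2$ solved \eqref{DCUTP}, the corresponding points on the curve $\{\mathrm{K}_3 = 0\} \subseteq \partial \Omega_{-,4}$ would be two distinct accumulation points of $\Lambda^{-1}(\beta, u_2)$ as $u_2 \to 0^+$, contradicting the local diffeomorphism property of $\Lambda$ (Theorem \ref{RLT1}(vi)) extended via the explicit parametrization of the boundary.

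Granted uniqueness, the subsequence limit $\theta^*$ is independent of the extraction, hence $\widetilde{\theta}(\beta, u_2) \to \theta^*$ as $u_2 \to 0^+$. Combining the local Lipschitz continuity of $d^2$ with respect to the Euclidean distance and the last expression in \eqref{EDnD}, namely $d^2 = \varphi_3(|\widetilde{\theta}|) \sqrt{u_1(u_1 + u_2 \theta_2/\theta_1)} + 1$, and letting $u_2 \to 0^+$ yields
\begin{align*}
d(e_1, t(\beta))^2 = \lim_{u_2 \to 0^+} \left[\varphi_3(|\widetilde{\theta}|) \sqrt{u_1(u_1 + u_2 \theta_2/\theta_1)} + 1 \right] = \varphi_3(r^*) \, \beta + 1,
\end{align*}
which is the claimed formula. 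The limit passage and the rest of the argument are routine given Theorem \ref{nT4n} and the continuity of $d^2$; all the work concentrates in the monotonicity/uniqueness step.
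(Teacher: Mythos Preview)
Your overall strategy matches the paper's: approach $(\beta,0)$ from inside $\R^2_{<,+}$, extract a subsequential limit $\theta^*$ of $\widetilde\theta(\beta,u_2)\in\Omega_{-,4}$, identify $r^*=|\theta^*|$ as a root of \eqref{DCUTP}, and pass to the limit in the last expression of \eqref{EDnD}. The difficulty you underestimate is \emph{excluding the corner point} $\theta^*=(\pi,0)$.

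Your claim ``$\theta_2^*=0$ would yield $\beta=0$ from the first equation'' presupposes that $\psi'(r)/r$ stays bounded along the sequence. But in $\overline{\Omega_{-,4}}$ the only way $\theta_2^{(n)}\to0$ is with $r^{(n)}\to\pi^+$ (recall $\theta_1^{(n)}>\pi$), and there $\psi'(r)\sim\frac{1}{\pi(r-\pi)^2}$ blows up, so $\frac{\psi'(r^{(n)})}{r^{(n)}}\,\theta_1^{(n)}\,(\theta_2^{(n)})^2$ can perfectly well stay equal to $\beta>0$. For the same reason, passing to the limit in $u_2=\theta_2\,\mathrm K_3(\theta_1,\theta_2)$ to get $\theta_2^*\,\mathrm K_3(\theta_1^*,\theta_2^*)=0$ is illegitimate at $r^*=\pi$, since $\mathrm K_3$ itself is singular there. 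So your argument is circular: you need $r^*\neq\pi$ before either limit passage is valid, yet you only deduce $r^*\in(\pi,\vartheta_1)$ afterwards. The paper closes this gap by a genuine blow-up analysis: assuming $\theta_2(\epsilon_j)\to0^-$, the first equation and $\psi'(r)\sim\frac{1}{\pi(r-\pi)^2}$ force $\theta_2(\epsilon_j)/(r^{(j)}-\pi)\to-\sqrt{\pi\beta}$, and then the second equation together with $(r-\pi)\psi(r)\to-1/\pi$ gives $u_2^{(j)}\to\frac{2}{\sqrt\pi}\sqrt\beta\neq0$, contradicting $u_2^{(j)}\to0$. You have all the needed asymptotics written down later for the endpoint behaviour of $\Psi$; you just need to deploy them here first.

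On uniqueness: the paper does not prove the monotonicity of $\Psi$ either, it quotes \cite[Lemma~3.5]{MM17} (their function $P$ satisfies $\Psi=4/P$). Your ``direct route'' is exactly this, and is the right thing to do. Your ``topological route'' does not work as stated: Theorem~\ref{RLT1}(vi) only asserts that $\Lambda$ is a diffeomorphism on the open set $\Omega_{-,4}$, and gives no information about injectivity of the boundary map $\{\mathrm K_3=0\}\to\{u_2=0\}$, which is precisely the uniqueness you want.
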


\begin{proof}
Let $0 < \epsilon < \frac{2}{\sqrt{\pi}} \sqrt{\beta}$ and $t(\beta, \epsilon) = \frac{1}{4} (\beta, \epsilon, 0)$. Suppose that $\widetilde{\theta}_{\epsilon} := (\theta_1(\epsilon), \theta_2(\epsilon))$ is the unique solution of
\begin{gather}
\theta_2(\epsilon) < 0 < \pi < \theta_1(\epsilon) < |\widetilde{\theta}_{\epsilon}| < \vartheta_1, \nonumber \\
\beta = \frac{\psi'(|\widetilde{\theta}_{\epsilon}|)}{|\widetilde{\theta}_{\epsilon}|} \, \theta_1(\epsilon) \, \theta^2_2(\epsilon), \qquad \epsilon = \theta_2(\epsilon) \left( \frac{\psi'(|\widetilde{\theta}_{\epsilon}|)}{|\widetilde{\theta}_{\epsilon}|} \, \theta^2_2(\epsilon) + 2 \, \psi(|\widetilde{\theta}_{\epsilon}|) \right). \label{ncin}
\end{gather}

By the compactness of $\overline{B_{\R^2}(0, \vartheta_1)}$, up to subsequences, we may take $\epsilon_j \longrightarrow 0^+$ as $j \longrightarrow +\infty$  such that the corresponding $\widetilde{\theta}_{\epsilon_j} \longrightarrow \widetilde{\theta}_0 := (\theta_1^{(0)}, \theta_2^{(0)})$. Obviously $\pi \le \theta_1^{(0)} \le \vartheta_1$ and $\theta_2^{(0)} \leq 0$.

We claim that $\theta_2^{(0)} \neq 0$ so $\widetilde{\theta}_0 \notin \{(\pi, 0), (\vartheta_1, 0)\}$ and $\pi < r := |\widetilde{\theta}_0| < \vartheta_1$ by \eqref{ncin}. Indeed, this is ensured by the choice of $\Omega_{-, 4}$ in \cite[\S~11]{Li19}. More precisely, we argue by contradiction: suppose that $\theta_2(\epsilon_j) \longrightarrow 0^-$. Then the first equation in \eqref{ncin} implies that
\[
\lim_{j \longrightarrow +\infty} \frac{\psi'(|\widetilde{\theta}_{\epsilon_j}|)}{|\widetilde{\theta}_{\epsilon_j}|} = +\infty, \quad \mbox{so } \ |\widetilde{\theta}_{\epsilon_j}| \longrightarrow \pi^+ \  \mbox{and } \ \theta_1(\epsilon_j)
\longrightarrow \pi^+,
\]
since $\pi < \theta_1(\epsilon_j) <  |\widetilde{\theta}_{\epsilon_j}| < \vartheta_1$ and $\psi'(s) \longrightarrow +\infty$ ($\pi < s < \vartheta_1$) only if $s \longrightarrow \pi^+$. Moreover, a direct calculation shows that (see also \cite[Lemma~3.4]{Li19})
\begin{align*}
\lim_{s \longrightarrow \pi^+} (s - \pi) \, \psi(s) = -\frac{1}{\pi},
\quad \lim_{s \longrightarrow \pi^+} (s - \pi)^2 \, \psi'(s) = \frac{1}{\pi}.
\end{align*}
Combining this with \eqref{ncin}, we get that
\begin{align*}
\lim_{j \longrightarrow +\infty} \frac{1}{\pi} \left( \frac{\theta_2(\epsilon_j)}{|\widetilde{\theta}_{\epsilon_j}| - \pi} \right)^2 = \beta, \quad 0 = - \frac{2}{\pi} \lim_{j \longrightarrow +\infty} \frac{\theta_2(\epsilon_j)}{|\widetilde{\theta}_{\epsilon_j}| - \pi} = \frac{2}{\sqrt{\pi}} \sqrt{\beta} > 0.
\end{align*}
This leads to a contradiction.

In conclusion, by the continuity of $d^2$ and the last equality in \eqref{EDnD}, we obtain that $d(e_1, t(\beta))^2 =  \varphi_3(r) \, \beta + 1$, where $\pi < r < \vartheta_1$ satisfies
\begin{align} \label{NICN}
\beta = \frac{\psi'(r)}{r} \, \theta_1^{(0)} \, (\theta^{(0)}_2)^2, \qquad  \frac{\psi'(r)}{r} \, (\theta^{(0)}_2)^2 + 2 \, \psi(r) = 0.
\end{align}
That is
\begin{align*}
\beta = - 2 \, \psi(r) \sqrt{r^2 - (\theta^{(0)}_2)^2} = - 2 \, \psi(r) \, \sqrt{r^2 + 2 \, r \, \frac{\psi(r)}{\psi'(r)}}.
\end{align*}

Note that the RHS of the last equality is exactly $\frac{4}{P(s)}$ with the function $P$ defined in \cite[(3.3)]{MM17}. Then from
\cite[Lemma 3.5]{MM17}, we know that $P$ is a strictly increasing diffeomorphism between $(\pi , \ \vartheta_1)$ and $(0, \ +\infty)$, which justifies the uniqueness of the solution $r$ in $(\pi, \  \vartheta_1)$.
\end{proof}

\subsubsection{The cut locus on $N_{3,2}$}

We can characterize the cut locus of $o$ in $N_{3,2}$ from the exact formula for $d^2$ as well.
Recall that $\mathcal{S}$ denotes the set of points $g$ such that $d^2$ is $C^{\infty}$ in a neighborhood of $g$. We have the following result:

\begin{proposition} \label{nP6n}
It holds that $\mathcal{S} \supseteq \{(x, t); \ x \ \mbox{and} \  t \ \mbox{are linearly independent}\}$ on $N_{3, 2}$.
\end{proposition}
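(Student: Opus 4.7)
The plan is to verify Proposition \ref{nP6n} through the smooth-point characterization \eqref{CCL1}: for each $(x_0, t_0)$ with $x_0, t_0$ linearly independent, I will exhibit a unique shortest geodesic $\gamma$ from $o$ to $(x_0, t_0)$, show it is strictly normal, and verify the endpoint is not conjugate along $\gamma$. First I would use the orthogonal invariance \eqref{nOIP}, the scaling property \eqref{scap}, and Theorem \ref{RLT1}(i) to reduce the problem to the canonical form $(e_1, \tfrac{1}{4}(u_1, u_2, 0))$ with $u_2 > 0$ and $u_1 \geq 0$; in this form, linear independence of $e_1$ and $(u_1, u_2, 0)$ is equivalent to $u_2 > 0$, which is assumed.

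Then I would split into three cases according to the sign of $u_2 - \tfrac{2}{\sqrt\pi}\sqrt{u_1}$. In the case $(u_1, u_2) \in \R^2_>$, an elementary calculation gives $|t_0 - \langle t_0, \hat{x_0}\rangle \hat{x_0}| = u_2/4$ and $\tfrac{1}{\sqrt\pi}\sqrt{|x_0||t_0 \cdot x_0|} = \tfrac{\sqrt{u_1}}{2\sqrt\pi}$, so Theorem \ref{RLT1}(v) applies directly and puts the point in $\mathcal{S}$. In the case $(u_1, u_2) \in \R^2_{<, +}$, the diffeomorphism part (vi) of Theorem \ref{RLT1} produces a unique $(\theta_1, \theta_2) \in \Omega_{-, 4}$ with $\Lambda(\theta_1, \theta_2) = (u_1, u_2)$, and with $\theta_0 := (\theta_1, \theta_2, 0)$ and $\zeta_0 := \tfrac{U(\theta_0)}{\sin U(\theta_0)} e^{-\tilde U(\theta_0)} e_1$, Theorem \ref{nT4n} gives that $\gamma_{(\zeta_0, 2\theta_0)}$ reaches $(e_1, t_0)$ with length $|\zeta_0| = d(e_1, t_0)$. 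Uniqueness of this shortest geodesic would follow by first applying the orthogonal-invariance reduction of \cite[\S~11]{Li19} to bring any candidate initial covector into the planar form $(\cdot, \cdot, 0)$, and then quoting the strict inequality $\Phi(\theta) < \Phi(\tau)$ established in the proof of Theorem \ref{t4}. Non-abnormality is immediate from $t_0 \neq 0$ and $\mathrm{Abn}_o^* = \{(x, 0)\}$ in Theorem \ref{RLT1}(v), and non-conjugacy follows by a Schur-complement computation of $d\exp$ at $(\zeta_0, 2\theta_0)$: the $\zeta$-block $\tfrac{\sin U(\theta_0)}{U(\theta_0)} e^{\tilde U(\theta_0)}$ is invertible since $|\theta_0| \in (\pi, \vartheta_1) \setminus \pi \N$, while the reduced $\theta$-block is $-\tfrac{1}{4}\mathrm{Hess}_\theta \langle U(\theta) \cot U(\theta) \cdot e_1, e_1\rangle$, whose invertibility is equivalent to the local invertibility of $\Lambda$ at $(\theta_1, \theta_2)$ and therefore holds by Theorem \ref{RLT1}(vi).

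The hard part will be the boundary case $u_2 = \tfrac{2}{\sqrt\pi}\sqrt{u_1}$ with $u_1 > 0$. Here Theorem \ref{RLT1}(iv) gives $d(e_1, t_0)^2 = 1 + \pi u_1$ and Remark \ref{r2} with $k = 1$ identifies a candidate shortest geodesic $\gamma_{(w_0, 2\theta_0)}$ with $\theta_0 = (\pi, 0, 0)$ and $w_0 = (1, \sqrt{\pi u_1}, 0)$; since $|\theta_0| = \pi$ this is a \emph{bad} normal geodesic and $\tfrac{\sin U(\theta_0)}{U(\theta_0)}$ is singular, so the Schur argument of the previous paragraph breaks down. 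I plan to dispatch this case via a limiting/perturbation argument for uniqueness: choosing sequences $(u_1^{(n)}, u_2^{(n)})$ approaching the boundary point from inside $\R^2_>$ and from inside $\R^2_{<, +}$, compactness together with the explicit formulas in the previous two cases forces both sequences of initial covectors to converge to $(w_0, 2\theta_0)$, yielding uniqueness of the shortest geodesic at the limit; the non-conjugacy would then be verified by an explicit $6 \times 6$ Jacobian computation at $(w_0, \theta_0)$ using the closed forms \eqref{expxN}--\eqref{exptpN}, checking that the rank-deficiency of the $\zeta$-block at $|\theta_0| = \pi$ is exactly compensated by the $\theta$-derivatives, whose non-degenerate contribution comes from smooth terms such as $(2s|\theta| - \sin(2s|\theta|))/|\theta|^2$ and $\mu(|\theta|)$ evaluated at $|\theta| = \pi$.
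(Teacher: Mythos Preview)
Your approach via the characterization \eqref{CCL1} is different from the paper's, which directly verifies smoothness of $d^2$ using a parametrization argument. For Case (1) ($\R^2_>$) your reduction to Theorem \ref{RLT1}(v) is correct. For Case (2) ($\R^2_{<,+}$) your Schur-complement argument is essentially right but incomplete: the reduced $\theta$-block is the full $3\times 3$ Hessian $-\tfrac14\mathrm{Hess}_\theta\langle U(\theta)\cot U(\theta)\,e_1,e_1\rangle$, not just the $2\times 2$ Jacobian of $\Lambda$. At $\theta_3=0$ the Hessian is block-diagonal, and the $(\theta_3,\theta_3)$ entry computes to $\tfrac{\psi'(r)}{r}\theta_2^2+2\psi(r)=\mathrm{K}_3(\theta_1,\theta_2)$, which is negative on $\Omega_{-,4}$ by definition; you need to say this. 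You also need to handle $g_\theta\in\mathcal{W}$ separately (the strict inequality from Theorem \ref{t4}'s proof is established only off $\mathcal{W}$).

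The genuine gap is the boundary case $u_2=\tfrac{2}{\sqrt\pi}\sqrt{u_1}$. Showing that sequences of covectors from each side converge to $(w_0,2\theta_0)$ does \emph{not} yield uniqueness of the shortest geodesic at the limit: a priori there could be another shortest geodesic at $g_b$ not arising as such a limit. To close this you would have to invoke Remark \ref{RKn2} (every shortest geodesic at $g_b$ is a limit of unique shortest geodesics from $\mathcal S$-points), then reduce those $\mathcal S$-points to canonical form and argue that \emph{all} subsequential limits of covectors coincide---which is exactly the delicate continuity statement you have not proved. Your non-conjugacy plan (explicit $6\times6$ Jacobian at $|\theta_0|=\pi$) is also left undone, and here the $\zeta$-block is singular, so the Schur reduction no longer applies and the cancellation you anticipate must be checked by hand.

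The paper sidesteps both difficulties. For the interior cases it parametrizes a neighborhood of $(e_1,t_\theta)$ by $(O,r,(\theta_1,\theta_2))\in\mathrm{O}_3\times(0,\infty)\times\Omega_{\pm}$ via $\Pi(O,r,(\theta_1,\theta_2))=(rOe_1,\tfrac{r^2}{4}O(\Lambda(\theta_1,\theta_2),0))$; invertibility of $d\Pi$ follows from the diffeomorphism property of $\Lambda$ alone, and smoothness of $d^2$ is then immediate from \eqref{dEn}. For the boundary case the key idea is a change of variables $\Theta:(r,\eta)\mapsto(r,\rho)$ with $\rho=\sin\eta/\sin r$, under which $\Omega_{+,1}$ and $\Omega_{-,4}$ glue smoothly across $\{r=\pi\}$ to a single open set $\widetilde\Omega$, the map $\Xi=\Lambda\circ\Theta^{-1}$ extends to a diffeomorphism $\widetilde\Omega\to(0,\infty)^2$ (with Jacobian $2(\pi^2\rho^4+4\rho^2)>0$ on $\{r=\pi\}$), and $d^2=(r^2-\sin^2 r)\rho^2+1$ is manifestly smooth in $(r,\rho)$. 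This avoids any geodesic-uniqueness or conjugate-point analysis at the singular level $|\theta|=\pi$.
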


\begin{proof}
Using the scaling property (cf. \eqref{scap}), the orthogonal invariance (see \eqref{nOIP}) as well as (i) of Theorem \ref{RLT1}, it suffices to show that $d^2$ is smooth at $(e_1, \frac{1}{4} (u_1, u_2, 0))$ with
$u_2 > 0$ and $u_1 \ge 0$.
By recalling the notations defined by \eqref{O+}-\eqref{32N96}, we divide it into cases.

{\em Case (1):} $(u_1, u_2) \in \R^2_{<, +}$. Consider the smooth map
\begin{align*}
\Pi: \mathrm{O}_3 \times (0, \ +\infty) \times \Omega_{-, 4} &\longrightarrow \R^3 \times \R^3 = N_{3,2} \\
(O, r, (\theta_1, \theta_2)) &\longmapsto (r \, O \, e_1, \frac{r^2}{4} \, O \, (\Lambda(\theta_1,\theta_2), 0)),
\end{align*}
where $\mathrm{O}_3$ denotes the $3 \times 3$ orthogonal group. A direct computation shows that the differential of $\Pi$ at the point $(\I_3, 1, \Lambda^{-1}(u_1,u_2))$ is invertible since $\Lambda$ is a $C^\infty$-diffeomorphism from $\Omega_{-, 4}$ onto $\R^2_{<, +}$ by (vi) of Theorem \ref{RLT1}.
As a result, from the inverse function theorem, Theorem \ref{nT4n} and the fact that the function
\[
r^2 \, \frac{\theta_1^2}{\theta_1^2 + \theta_2^2} + r^2 \, \left( \frac{\theta_2}{\sin{\sqrt{\theta_1^2 + \theta_2^2}}} \right)^2
\]
is smooth on $(0, \ +\infty) \times \Omega_{-, 4}$, we have $(e_1, \frac{1}{4}(u_1, u_2, 0)) \in \mathcal{S}$.

{\em Case (2):} $(u_1, u_2) \in \R^2_{>}$. Similarly, we have $(e_1, \frac{1}{4}(u_1, u_2, 0)) \in \mathcal{S}$.

{\em Case (3):} $(u_1, \frac{2}{\sqrt{\pi}} \sqrt{u_1})$ with $u_1 > 0$.
In such case, it can be proven by using \cite[Theorem~26]{BR19} with the function defined in \cite[\S~11.2, Step~2]{Li19},  via Lemma \ref{nLN} and the concrete characterization of $\mathrm{Abn}^*_o$ obtained in (v) of Theorem \ref{RLT1}. However, we will provide here a direct proof, which is of independent interest.

Set
\[
\Omega_{+, 1} := \Omega_+ \cap \{(v_1, v_2); \, v_1 > 0\}, \quad \R_{>, +}^2 := \R_>^2 \cap \{(u_1, u_2); \, u_1 > 0\}.
\]
Using the polar coordinate in $\R^2 \setminus \{ (v_1, 0); \ v_1 \le 0 \}$, $(r, \eta)$ where $v_1 = r \cos{\eta}$ and $v_2 = r \sin{\eta}$ with $-\pi < \eta < \pi$, we introduce another map $\Theta: (r, \eta) \mapsto (r, \rho := \frac{\sin{\eta}}{\sin{r}})$ with suitable domain.

Notice that we have $r > 0$ and $\eta \in \left(0,  \, \frac{\pi}{2}\right)$ (resp. $\left(-\frac{\pi}{2}, \  0\right)$) on $\Omega_{+, 1}$ (resp. $\Omega_{-,4}$). Moreover, it is not hard to show that $\Theta$ is injective on $\Omega_{+, 1}$ (resp. $\Omega_{-,4}$) and its Jacobian determinant is $\frac{\cos{\eta}}{\sin{r}}$. It follows from the global inverse function Theorem that $\Theta$ is a $C^\infty$-diffeomorphism from $\Omega_{+, 1}$ (resp. $\Omega_{-, 4}$) onto $\widetilde{\Omega}_+ := \Theta(\Omega_{+,1})$ (resp. $\widetilde{\Omega}_- := \Theta(\Omega_{-,4})$). From \eqref{O+}, \eqref{Omega-4}, \eqref{nK3N} and \eqref{ndH}, a direct calculation yields that
\begin{gather*}
\widetilde{\Omega}_+ = \{(r,\rho); \, 0 < \rho \, \sin{r} < 1, \ 0 < r < \pi\}, \\ \widetilde{\Omega}_- = \{(r,\rho); \, \pi < r < \vartheta_1, \ \rho > 0,  \ \rho^2  \, h(r) + 2 \, r^2 \, \psi(r)< 0  \}.
\end{gather*}
By (ii) and (vi) of Theorem \ref{RLT1}, $\Xi := \Lambda \circ \Theta^{-1}: (r, \rho) \mapsto (u_1, u_2)$ is a $C^\infty$-diffeomorphism from $\widetilde{\Omega}_+$ (resp. $\widetilde{\Omega}_-$) onto $\R_{>, +}^2$ (resp. $\R_{<, +}^2$). Using \eqref{32N96}, \eqref{ndH} and the definition of $\psi$ (cf. \eqref{EFs}), it can be written explicitly:
\begin{align}\label{Xi}
\left\{ \begin{array}{ll}
u_1 =  \left(r + \sin{r} \cos{r - 2 \, \frac{\sin^2{r}}{r}}\right)
\sqrt{1 - \rho^2 \sin^2{r}} \, \rho^2 \\
u_2 =  \sin{r} \, \left(r + \sin{r} \cos{r} - 2 \frac{\sin^2{r}}{r}\right)  \rho^3
+ 2 \left(\frac{\sin{r}}{r} - \cos{r}\right)\rho \\
\end{array} \right..
\end{align}

A key observation is that $\Xi$ is also meaningful on $(\pi,\rho)$ for $\rho > 0$ and $\Xi(\pi,\rho) = \left(\pi \, \rho^2, 2 \, \rho\right)$ is a bijection from $ \{\pi\} \times (0, \ +\infty) $ to
$\left\{(u_1,u_2): \, u_2 = \frac{2}{\sqrt{\pi}} \sqrt{u_1} > 0 \right\}$. Set
\[
\widetilde{\Omega} := \widetilde{\Omega}_+ \cup \widetilde{\Omega}_- \cup (\{\pi\} \times (0, \ +\infty)).
\]
See the plot in Figure \ref{fig1}.

\begin{figure}
 \centering
\begin{overpic}[width = 15cm, height=10cm]{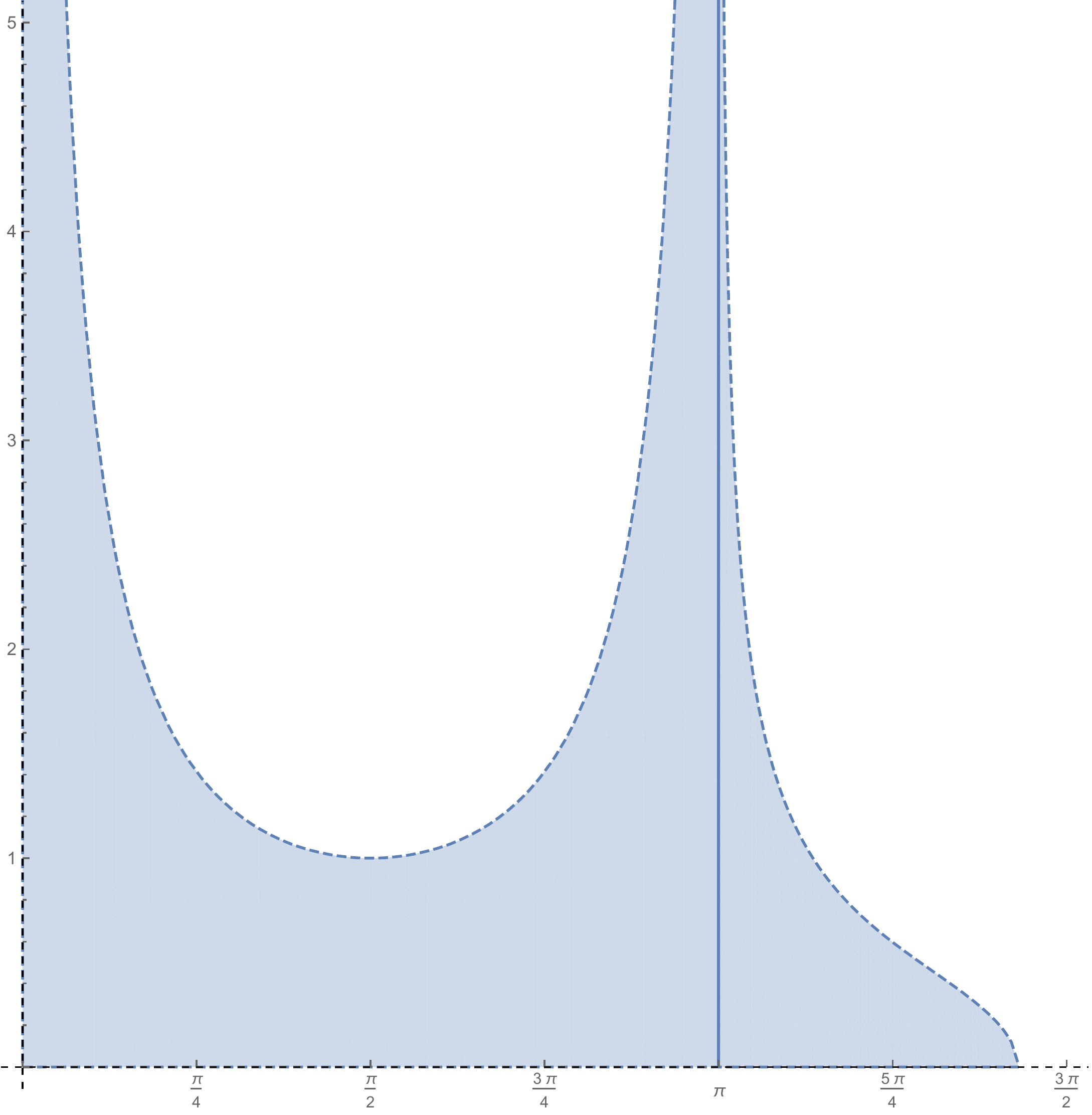}
\put(33,8){$\widetilde{\Omega}_+$}
\put(71,8){$\widetilde{\Omega}_-$}
\put(42.5,50){$\{\pi\} \times (0 , +\infty) \longrightarrow$}
\put(94,4){$\swarrow$}
\put(94,5.5){$\quad (\vartheta_1,0)$}
\put(42,30){$\rho = \frac{1}{\sin{r}} $}
\put(52,28.5){$\searrow$}
\put(69.5,28.5){$\swarrow$}
\put(73,30){$\rho = \sqrt{- \frac{2r^2\psi(r)}{h(r)}}$}
\put(100.5,2.4){$r$}
\put(1.5,68){$\rho$}
\end{overpic}
\caption[image]{Plot of $\widetilde{\Omega} = \widetilde{\Omega}_+ \cup \widetilde{\Omega}_- \cup (\{\pi\} \times (0, \ +\infty))$ }\label{fig1}
\end{figure}

Moreover, notice that $\Xi$ is $C^{\infty}$ on $\widetilde{\Omega}$. A direct computation shows that the Jacobian determinant of $\Xi$ at $(\pi, \rho)$ ($\rho > 0$) equals
\begin{align*}
J(\Xi)(\pi, \rho)
= 2 \left( \pi^2 \rho^4 + 4 \rho^2 \right) > 0.
\end{align*}
As a consequence, $\Xi$ is a $C^\infty$-diffeomorphism
from $\widetilde{\Omega}$ onto $(0, \ +\infty) \times (0, \ +\infty)$.

In conclusion, by (iii), (iv) of Theorem \ref{RLT1} as well as Theorem \ref{t4}, we have
\begin{align}
d(e_1, \frac{1}{4} (u_1, u_2, 0))^2 = (r^2 - \sin^2{r}) \, \rho^2 + 1, \ \quad \forall \, u_1, u_2 > 0,
\end{align}
where $(r, \rho) = \Xi^{-1}(u_1,u_2)$. Finally, by using the smooth map
\begin{align*}
\widetilde{\Pi}: \mathrm{O}_3 \times (0, \ +\infty) \times \widetilde{\Omega} &\longrightarrow \R^3 \times \R^3 = N_{3,2} \\
(O, R, (r,\rho)) &\longmapsto  (R \, O \, e_1, \frac{R^2}{4} \, O \,(\Xi(r, \rho), 0)),
\end{align*}
we can get that $(e_1, \frac{1}{4} (u_1, \frac{2}{\sqrt{\pi}} \sqrt{u_1}, 0) \in \mathcal{S}$.

This completes the proof of this proposition.
\end{proof}

Indeed, the relation ``$\supseteq$'' in Proposition \ref{nP6n} can be improved to
``$=$''. In other words, we have the following:

\begin{corollary}\label{c4}
On $N_{3,2}$, we have $\mathrm{Cut}_o =  \{(x, t); \, x \  \mbox{and} \ t \  \mbox{are linearly dependent}\}$.
\end{corollary}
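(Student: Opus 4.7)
The plan is to establish both inclusions separately. The inclusion $\mathrm{Cut}_o \subseteq \{(x,t); \, x \text{ and } t \text{ are linearly dependent}\}$ is immediate by contraposition from Proposition \ref{nP6n}, so the substantial content is the reverse one: if $x$ and $t$ are linearly dependent, then $(x,t) \in \mathrm{Cut}_o$.

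For the reverse inclusion I would split according to whether $t = 0$. The case $t = 0$ is free: $(x,0) \in \mathrm{Abn}_o^* \subseteq \mathrm{Cut}_o$ by (v) of Theorem \ref{RLT1}. For $t \ne 0$, the linear dependence of $x,t$ gives $x = c\,t$ for some $c \in \R$, and by the orthogonal invariance \eqref{nOIP} I may apply a rotation so that both vectors lie along $e_1$. It therefore suffices to prove that $(c\, e_1,\, \eta\, e_1) \in \mathrm{Cut}_o$ for every $c \in \R$ and every $\eta > 0$. The key idea is to exploit the subgroup $\mathrm{SO}_2 \subset \mathrm{O}_3$ of rotations about the $e_1$-axis, which fixes both $c\,e_1$ and $\eta\,e_1$ pointwise. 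By \eqref{nOIP}, whenever $\gamma_{(w,2\theta)}$ is a shortest geodesic joining $o$ to $(ce_1,\eta e_1)$, so is $\gamma_{(Ow,2O\theta)}$ for every $O \in \mathrm{SO}_2$, with the same length $|w|$. If I can rule out the existence of an $\mathrm{SO}_2$-fixed initial covector reaching this endpoint, then infinitely many distinct shortest geodesics join $o$ to $(ce_1,\eta e_1)$, forcing this point out of $\mathcal{S}$ in view of the characterization \eqref{CCL1}, hence into $\mathrm{Cut}_o$.

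The only genuine computation is to show that $\mathrm{SO}_2$-invariant covectors cannot produce endpoints with nonzero $t$-component. A pair $(w,\theta) \in \R^3 \times \R^3$ is pointwise fixed by $\mathrm{SO}_2$ precisely when both $w$ and $\theta$ are parallel to $e_1$; the degenerate sub-case $\theta = 0$ gives a straight segment in $\R e_1 \times \{0\}$ and is immediate. Otherwise $\theta = \theta_1 e_1$ with $\theta_1 \neq 0$ and $w = w_1 e_1$, so $w - (w\cdot\widehat{\theta})\widehat{\theta} = 0$ and $\widehat{\theta}\times w = 0$. Consequently each of the vectors $\u_1,\u_2,\u_3$ appearing in the explicit formula \eqref{exptpN} vanishes identically, yielding $t(1) = 0$. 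Since $\eta > 0$ this is incompatible with the prescribed endpoint, which completes the proof. No step really qualifies as an obstacle; the whole argument reduces to a symmetry observation combined with the explicit geodesic equations already recorded in the paper.
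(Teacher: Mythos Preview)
Your approach is essentially the same as the paper's: both establish the reverse inclusion by using the $\mathrm{SO}_2$-symmetry about the $e_1$-axis to exhibit more than one shortest geodesic to each point $(ce_1,\eta e_1)$ with $\eta>0$. The paper routes this through Theorem~\ref{t2} and \cite[Lemma~9]{RS17}, while you invoke the characterization \eqref{CCL1} directly; these are equivalent manoeuvres.

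There is, however, a small gap in your argument. You show that an $\mathrm{SO}_2$-fixed \emph{covector} $(w,\theta)$ forces $t(1)=0$, and then infer infinitely many distinct shortest geodesics. But distinct covectors $(Ow,2O\theta)$ need not yield distinct \emph{curves} when the geodesic is abnormal (cf.\ Proposition~\ref{cAg} and Lemma~\ref{BPG}, which only gives $Ow=w$, not $O\theta=\theta$). The easiest patch is to observe that $(ce_1,\eta e_1)\notin\mathrm{Abn}_o^*$ by (v) of Theorem~\ref{RLT1} (since $\eta\neq 0$), so every shortest geodesic reaching it is strictly normal and hence determined by its covector; then your orbit of covectors does give a circle of distinct curves. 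Alternatively, argue at the level of curves: if $O\cdot\gamma=\gamma$ for some nontrivial $O\in\mathrm{SO}_2$ then $x(s)\in\R e_1$ for all $s$, whence $\dot t(s)=\tfrac12\,x(s)\times\dot x(s)=0$ and $t(1)=0$, a contradiction. Finally, for the claim that $\gamma_{(Ow,2O\theta)}$ has the same endpoint you should cite \eqref{symN1} rather than \eqref{nOIP}, which only concerns the distance.
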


\begin{proof}
It follows from Proposition \ref{nP6n} that
\begin{align} \label{cutA}
\mathrm{Cut}_o \subseteq  \{(x, t); \, x \  \mbox{and} \ t \  \mbox{are linearly dependent}\}.
\end{align}
Moreover, we have that (see (v) of Theorem \ref{RLT1}):
\begin{align} \label{abnN}
\mathrm{Abn}^*_o = \{(x, 0); \ x \in \R^3\} = \widetilde{\M}_2.
\end{align}

By Theorem \ref{t2}, it remains to show that the classical cut locus of $o$ is
\begin{align} \label{n32CC}
\mathrm{Cut}_o^{\mathrm{CL}} = \{(x,t); \, t \neq 0 \mbox{ and } x = \lambda t \mbox{ for some } \lambda \in \R\} := \E,
\end{align}
which has been already proven in \cite{My02} and \cite{MM17} by completely different technique. Indeed, once we get \eqref{cutA} and \eqref{abnN}, via \cite[Lemma~9]{RS17},  \eqref{n32CC} is a direct consequence of the simple fact that there exist two distinct shortest geodesics from $o$ to any $g \in \E$. See Subsection \ref{SS76} below for more details.
\end{proof}

\medskip

\subsubsection{Description of shortest geodesic(s) from $o$ to any given $g \neq o$} \label{SS76}

\medskip

Let us begin by the following observation:

\begin{lemma} \label{n32nLn}
Let $\mathrm{SO}_3$ denote the $3 \times 3$ special orthogonal group, and $(x, t) = \exp(w, \tau)$. Then we have
\begin{gather}
 \exp(O \, w,O \, \tau) = (O \, x, O \,t), \quad \forall \, O \in \mathrm{SO}_3, \label{symN1} \\
\exp(O \, e^{\widetilde{U}(\tau)} \, w, O \, \tau) = (O \, x, O \,t), \quad \forall \, O \in \mathrm{O}_3 \setminus \mathrm{SO}_3. \label{symN4}
\end{gather}
\end{lemma}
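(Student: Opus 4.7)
The plan is to establish \eqref{symN1} first by a direct computation using the integral representation \eqref{GEn}, and then to deduce \eqref{symN4} by combining \eqref{symN1} with the already-proven symmetry \eqref{symN3}. The crucial algebraic fact driving everything is the behavior of the cross product under orthogonal transformations: if $O\in\mathrm{SO}_3$ then $O(a\times b)=(Oa)\times(Ob)$, while if $O\in\mathrm{O}_3\setminus\mathrm{SO}_3$ then $O(a\times b)=-(Oa)\times(Ob)$. Because $\widetilde{U}(\tau)x=\tau\times x$ on $N_{3,2}$, the first identity is equivalent to the operator identity $\widetilde{U}(O\tau)\,O=O\,\widetilde{U}(\tau)$ for $O\in\mathrm{SO}_3$, and hence $e^{s\widetilde{U}(O\tau)}\,O=O\,e^{s\widetilde{U}(\tau)}$ for all $s$.

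To prove \eqref{symN1}, fix $O\in\mathrm{SO}_3$ and consider the geodesic $\gamma(s)=(x(s),t(s))=\gamma_{(w,\tau)}(s)$, which by \eqref{GEn} satisfies $\zeta(s)=e^{s\widetilde{U}(\tau)}w$, $x(s)=\int_0^s\zeta(r)\,dr$, and $t(s)=\tfrac12\int_0^s x(r)\times\zeta(r)\,dr$. The observation above gives $e^{s\widetilde{U}(O\tau)}(Ow)=O\,\zeta(s)$, so the horizontal curve with initial covector $(Ow,O\tau)$ has velocity $O\zeta(s)$ and hence horizontal coordinate $Ox(s)$. For the $t$-coordinate, again using $O\in\mathrm{SO}_3$,
\[
\tfrac12\int_0^s(Ox(r))\times(O\zeta(r))\,dr=\tfrac12\int_0^s O\bigl(x(r)\times\zeta(r)\bigr)\,dr=O\,t(s).
\]
Evaluating at $s=1$ gives \eqref{symN1}.

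For \eqref{symN4}, let $O\in\mathrm{O}_3\setminus\mathrm{SO}_3$, so $\det O=-1$ and therefore $-O\in\mathrm{SO}_3$. Applying \eqref{symN3} to the data $(w,\tau)$ yields $\exp(-e^{\widetilde{U}(\tau)}w,\,-\tau)=(-x,-t)$. Now apply the already-established identity \eqref{symN1} with the rotation $-O$ to this new datum:
\[
\exp\bigl((-O)(-e^{\widetilde{U}(\tau)}w),\,(-O)(-\tau)\bigr)=\bigl((-O)(-x),\,(-O)(-t)\bigr),
\]
which simplifies to $\exp(Oe^{\widetilde{U}(\tau)}w,\,O\tau)=(Ox,Ot)$, as desired. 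The only step that requires some care is checking the sign flip in the operator identity under $\mathrm{SO}_3$ (which is completely routine from the cross-product formula); the real content is the clean reduction of the orientation-reversing case to the orientation-preserving one via \eqref{symN3}, and this reduction is essentially free once one notices $-O\in\mathrm{SO}_3$ in dimension three. I do not anticipate any genuine obstacle in this lemma.
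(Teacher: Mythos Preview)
Your proof is correct and follows essentially the same approach as the paper: both establish \eqref{symN1} via the $\mathrm{SO}_3$-equivariance of the cross product (you through the integral representation \eqref{GEn}, the paper through the explicit formulas \eqref{expxN} and \eqref{exptpN}), and both deduce \eqref{symN4} by writing $O\in\mathrm{O}_3\setminus\mathrm{SO}_3$ as $-(-O)$ with $-O\in\mathrm{SO}_3$ and combining \eqref{symN1} with \eqref{symN3}. The only cosmetic difference is the order: the paper applies \eqref{symN1} first and then \eqref{symN3}, which forces it to invoke the conjugation identity $\widetilde{U}(O\tau)=O\,\widetilde{U}(\tau)\,O^{\mathrm T}$ at the end to simplify; your order (\eqref{symN3} first, then \eqref{symN1}) yields the claim directly without that extra step.
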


\begin{proof}
Using the well-known basic property of the cross product:
\begin{align}\label{orthocro}
O \, (\tau \times \eta) = (O \, \tau) \times (O \, \eta), \quad \forall \, O \in \mathrm{SO}_3,
\end{align}
\eqref{symN1} can be checked directly by \eqref{expxN} and \eqref{exptpN}, or explained by \cite[\S~2.1]{MPAM06}.

To obtain \eqref{symN4},  we use the fact that  $\widetilde{U}(\tau) \, \eta = \tau \times \eta$ as well as \eqref{orthocro}, and get that
\begin{align}\label{orthoU}
\widetilde{U}(O \, \tau) \, \eta = (O \, \tau) \times \eta = O \, (\tau \times ( O^{\T} \, \eta)) = O \, \widetilde{U}(\tau) \, O^{\T} \, \eta, \quad \forall \, O \in \mathrm{SO}_3,
\end{align}
which implies that
\begin{align}\label{orthoU2}
\widetilde{U}(O \, \tau) =  O \, \widetilde{U}(\tau) \, O^{\T}, \qquad \forall \, O \in \mathrm{SO}_3.
\end{align}
Then for any $O \in \mathrm{O}_3 \setminus \mathrm{SO}_3$, we have $-O \in \mathrm{SO}_3$, and \eqref{symN1} implies that
\begin{align*}
\exp(-O \, w,-O \, \tau) = (-O \, x, -O \,t).
\end{align*}
Consequently, applying \eqref{symN3} to the last equation and using \eqref{orthoU2}, we obtain \eqref{symN4}.
\end{proof}

Combining  \eqref{symN1} and \eqref{symN4} with \eqref{symN2}, it suffices to determine all shortest geodesic(s) from $o$ to $g$, where: (1) $g = (e_1, \frac{1}{4}(u_1, u_2, 0))$ with $u_1 \ge 0$ and $u_2 > 0$; (2) $g = (e_1, 0)$; (3) $g = (0, e_1)$ or $g = (e_1, \frac{1}{4} \beta \, e_1)$ with $\beta > 0$.

{\em Case 1.} $g = (e_1, \frac{1}{4}(u_1, u_2, 0))$ with $u_1 \ge 0$ and $u_2 > 0$. In such case, we have $g \in \mathcal{S}$. So there exists a unique shortest geodesic from $o$ to $g$, which is strictly normal. If $(u_1, u_2) \in \R^2_> \cup \R_{<, +}^2$, it follows from \cite[Theorem~2.4 and Theorem~2.5]{Li19} that the shortest geodesic is given by $\gamma_{(w,2 \, \theta)}$ with $w = \frac{U(\theta)}{\sin{U(\theta)}} \, e^{-\widetilde{U}(\theta)} \, e_1$ and $\theta = (\Lambda^{-1}(u_1, u_2), 0)$. For $u_2 = \frac{2}{\sqrt{\pi}} \sqrt{u_1} > 0$, then it is given by $\gamma_{(w, 2 \, \theta)}$ with $w = (1, \sqrt{\pi \, u_1}, 0 )$ and $\theta = (\pi, 0, 0)$, see Remark \ref{r2} for more details.

{\em Case 2.} $g = (e_1, 0) \in \mathrm{Abn}_o^* \setminus \{ o \}$. The unique shortest geodesic joining $o$ to $g$ is a straight segment and it is abnormal.

{\em Case 3.} $g = (0, e_1)$ or $g = (e_1, \frac{1}{4} \beta \, e_1)$ with $\beta > 0$. In such case, $g \in \E$. By \eqref{symN1}, a trivial observation is that there exist at least two distinct shortest geodesics from $o$ to $g$. Indeed, a complete description can be found in \cite[\S~3]{MM17}. However, we will provide a completely different method to get it, which can be considered as a direct consequence of our main results, namely Theorems \ref{RLT1} and \ref{nT4n}. More precisely, we will use Remark \ref{RKn2} to determine the parameter $(w, \theta)$ of any shortest geodesic from $o$ to $g$, $\gamma_{(w, 2 \, \theta)}$.

{\em Case 3 (a).}
Let us begin with the case $g_{(\beta)} = (e_1, \frac{1}{4}(\beta, 0, 0))$ where $\beta > 0$. By Remark \ref{RKn2}, there exist $\{g_n : = (x^{(n)}, t^{(n)})\}_{n = 1}^{+ \infty} \subseteq \mathrm{Cut}_o^c$, with the corresponding unique shortest geodesic $\gamma_{(w^{(n)}, 2 \, \theta^{(n)})}$, such that $(g_n, w^{(n)}, \theta^{(n)}) \to (g_{(\beta)}, w, \theta)$ as $n \to +\infty$. Without loss of generality, we may assume that $|x^{(n)}| \ne 0$ and $x^{(n)} \cdot t^{(n)} > 0$ for all $n \ge 1$. For each $n \ge 1$, we pick an orthogonal matrix $O^{(n)} \in \mathrm{O}_3$ such that
\begin{align} \label{icNN}
O^{(n)} \, x^{(n)} = |x^{(n)}| \, e_1, \quad O^{(n)} \, t^{(n)} = \frac{|x^{(n)}|^2}{4} (u^{(n)}_1, u^{(n)}_2, 0) \ \mbox{with $u^{(n)}_1, u^{(n)}_2 > 0$.}
\end{align}
Combining this with the fact that $(x^{(n)}, t^{(n)}) \to (e_1, \frac{1}{4} \beta \, e_1)$ as $n \to +\infty$, we get that $(u^{(n)}_1, u^{(n)}_2) \to (\beta, 0)$ as $n \to +\infty$.
By arguing as in the proof of \eqref{NICN} and using \eqref{symN1}, \eqref{symN4} as well as \eqref{symN2}, we get that
\begin{align*}
O^{(n)} \, \theta^{(n)} \to (\Theta_1, \Theta_2, 0) \ \mbox{with} \  \Theta_1 = \sqrt{r^2  + 2 \, r \, \frac{ \psi(r)}{\psi'(r)}}, \  \Theta_2 = - \sqrt{-2 \, r \, \frac{\psi(r)}{\psi'(r)}},
\end{align*}
where $r$ is the unique solution of \eqref{DCUTP} in $(\pi, \ \vartheta_1)$. Since $\mathrm{O}_3$ is compact, up to subsequences, we may further assume that $O^{(n)} \to O'$ as $n \to +\infty$. Moreover, it follows from the first equation in \eqref{icNN} that the orthogonal matrix $O'$ satisfies $O' \, e_1 = e_1$. Since $O' \, \theta = (\Theta_1,\Theta_2,0)$, we yield that
\begin{align*}
\theta &= (\Theta_1, |\Theta_2| \cos{\sigma}, |\Theta_2| \sin{\sigma}) \  \mbox{for some $\sigma \in \R$}, \\
w = \lim_{n \to +\infty} w^{(n)}  &= \lim_{n \to +\infty} \frac{U(\theta^{(n)})}{\sin{U(\theta^{(n)})}} \, e^{-\widetilde{U}(\theta^{(n)})} \, x^{(n)} = \frac{U(\theta)}{\sin{U(\theta)}} \, e^{-\widetilde{U}(\theta)} \, e_1,
\end{align*}
where we have used \eqref{endpointx} in the second ``$=$'' of the last formula.

In other words, we have proven that every shortest geodesic from $o$ to $g_{(\beta)}$ can be expressed as $\gamma_{(w, 2 \, \theta)}$, where the parameter $(w, \theta)$ have the form
\begin{align}\label{thetaw}
\theta(\sigma) := (\Theta_1, |\Theta_2| \cos{\sigma}, |\Theta_2| \sin{\sigma}), \qquad
w(\sigma) := \frac{U(\theta(\sigma))}{\sin{U(\theta(\sigma))}} \, e^{-\widetilde{U}(\theta(\sigma))} \, e_1.
\end{align}
Furthermore, we will prove that the converse is also valid, namely every such parameter provides a shortest geodesic steering $o$ to $g_{(\beta)}$.

Let us fix a such shortest geodesic $\gamma_{(w(\sigma_0), 2 \, \theta(\sigma_0))}$, with $\sigma_0 \in \R$. By \eqref{symN1}, for any $\alpha \in \R$, $\gamma_{(O{(\alpha)} \, w(\sigma_0), 2 \, O{(\alpha)} \, \theta(\sigma_0))}$ is also
a shortest geodesic from $o$ to $g_{(\beta)}$, where
\[
O{(\alpha)} = \left(
                 \begin{array}{ccc}
                   1 & 0 & 0 \\
                   0 & \cos{\alpha} & - \sin{\alpha} \\
                   0 & \sin{\alpha} & \cos{\alpha} \\
                 \end{array}
               \right) \in \mathrm{SO}_3.
\]
It remains to show that
\begin{align} \label{nCvn}
(O{(\alpha)} \, w(\sigma_0), O{(\alpha)} \, \theta(\sigma_0)) = (w(\sigma_0 + \alpha),  \theta(\sigma_0 + \alpha)), \quad \forall \, \alpha \in \R,
\end{align}
since $(w(\sigma_0 + \alpha),  \theta(\sigma_0 + \alpha))$ runs over all possible $(w(\sigma),  \theta(\sigma))$ as the parameter $\alpha$ runs over $\R$.

In fact, it follows from \eqref{orthoU2} that
\begin{align}\label{invarw}
O(\alpha) \, w(\sigma_0) &= O(\alpha) \, \frac{U(\theta(\sigma_0))}{\sin{U(\theta(\sigma_0))}} \, e^{-\widetilde{U}(\theta(\sigma_0))} \, O(\alpha)^{\T}\, O(\alpha) \, e_1 \nonumber \\
&= \frac{U(O(\alpha) \, \theta(\sigma_0))}{\sin{U(O(\alpha) \, \theta(\sigma_0))}} \, e^{-\widetilde{U}(O(\alpha) \, \theta(\sigma_0))} \, e_1.
\end{align}
To finish the proof of \eqref{nCvn}, it suffices to notice that we have obviously
\[
O(\alpha) \, \theta(\sigma_0) = \theta(\sigma_0 + \alpha).
\]

Via some elementary but tedious calculations, our result can be identified with that of \cite[Theorem 3.2]{MM17} with
$(y, y^\perp, \theta)  = \left(\frac{\sqrt{\beta}}{2} \, e_2, \frac{\sqrt{\beta}}{2} \, e_3, r\right)$ therein, where we identify $\wedge^2 \R^3$ with $\R^3$ via the map $\T$ defined by
\begin{align*}
\T: \wedge^2 \R^3 &\longrightarrow \R^3 \\
a \wedge b & \longmapsto a \times b.
\end{align*}

{\em Case 3 (b).}
In the opposite case where $g = (0, e_1)$, similarly, it follows from Remark \ref{RKn2} and \eqref{expmap} that all shortest geodesics joining $o$ to $g$ are given by $\gamma_{(w, 2 \, \theta)}$, where
\begin{align*}
\theta = (\pi, 0, 0), \quad w = 2 \sqrt{\pi} \, (0,-\cos{\sigma},-\sin{\sigma}) \  \mbox{with} \ \sigma \in \R.
\end{align*}

Via some elementary but tedious calculations, our result can be identified with that of \cite[Theorem 3.2]{MM17} with
$(y, y^\perp, \theta)  = \left( e_2,  e_3, \pi \right)$ therein (via the map $\T$ as well).

\medskip

\renewcommand{\theequation}{\thesection.\arabic{equation}}
\section{Appendix A: Proof of Lemma \ref{LMng}}  \label{Axa}
\setcounter{equation}{0}

\medskip

The proof essentially follows that of \cite[Proposition~10.3]{Li19} and we include it for the sake of completeness.

Set $v := (v_1,v_2) \in \R^2$. Then from the definition of $\Upsilon((\yy,\xx,\xxa);\cdot)$,
for $|\yy|^2 + \xx^2 \neq 0$ and $\xxa \neq 0$, we have
\begin{align}\label{graexpUps}
(u_1, u_2) &:= \Upsilon((\yy,\xx,\xxa);(v_1, v_2)) \nonumber \\
&= \nabla_{v}\left[ \left( \frac{v_1^2}{4} \, \psi\left( \frac{|v|}{2} \right)
+ v_2^2 \, \psi(|v|) \right)|\xxa|^2 + f\left( \frac{|v|}{2} \right) (|\yy|^2 + \xx^2)\right].
\end{align}
More precisely,
\begin{align}\label{expUps}
\left\{ \begin{array}{ll}
 u_1 = v_1 \left[\mu\left(\frac{|v|}{2}\right) \frac{|\yy|^2 + \xx^2 }{2|v|}
+ \psi^{\prime}\left(\frac{|v|}{2}\right) \, \frac{v_1^2}{4} \frac{|\xxa|^2}{2|v|}
+ \psi\left(\frac{|v|}{2}\right) \frac{|\xxa|^2}{2}
 + \psi^{\prime}(|v|) \, v_2^2  \, \frac{|\xxa|^2}{|v|}  \right]\\
 \mbox{} \\
 u_2 = v_2 \left[ \mu\left(\frac{|v|}{2}\right) \frac{|\yy|^2 + \xx^2 }{2|v|}
+ \psi^{\prime}\left(\frac{|v|}{2}\right) \, \frac{v_1^2}{4} \frac{|\xxa|^2}{2|v|}
 + \psi^{\prime}(|v|) \, v_2^2  \, \frac{|\xxa|^2}{|v|}
+ 2 \, \psi(|v|) \, |\xxa|^2 \right]\\
\end{array} \right..
\end{align}

It follows from \cite[Lemma 3.3]{Li19} that
\begin{align*}
0 \le \frac{\psi^{\prime}(|v|)}{|v|} \, |v_1| \, v_2^2 \le \frac{\pi}{4} \, v_2^2 \left( 2 \, \psi(|v|) + \frac{\psi^{\prime}(|v|)}{|v|} \, v_2^2 \right)^2, \qquad \forall \, |v|< \pi,
\end{align*}
which implies that
\begin{align*}
|u_1| &\le \mu\left(\frac{|v|}{2}\right) \frac{|\yy|^2 + \xx^2 }{2} + \left( \psi^{\prime}\left(\frac{|v|}{2}\right) \, \frac{|v|^2}{8} + \psi\left(\frac{|v|}{2}\right) \frac{|v|}{2} \right) |\xxa|^2 \\
&+ \frac{\pi}{4} \, v_2^2 \left( 2 \, \psi(|v|)
  + \frac{\psi^{\prime}(|v|)}{|v|} \, v_2^2 \right)^2 |\xxa|^2.
\end{align*}
Using the identity $s^2 \, \psi^{\prime}(s) + 2 s \, \psi(s) = \mu(s)$, the second equality in \eqref{expUps} implies that
\begin{align*}
|u_1| \le \, \mu\left(\frac{|v|}{2}\right)\frac{|\yy|^2 + \xx^2 + |\xxa|^2 }{2} + \frac{\pi}{4} \, \frac{ u_2^2}{|\xxa|^2}
<  \, \frac{\pi}{4} \left(\frac{u_2^2}{|\xxa|^2} + |\yy|^2 + \xx^2 + |\xxa|^2 \right),
\end{align*}
where we have used in ``$<$'' the fact that $\mu(\pi/2) = \pi/2$ and $\mu$ is strictly increasing on $(-\pi,\pi)$ (see Lemma \ref{NL31}), so $\Upsilon((\yy,\xx,\xxa);\cdot)$ is from $B_{\R^2}(0, \pi)$ to $\R^2_{r}(\yy,\xx,\xxa)$.

Next, it follows from \eqref{graexpUps} that the Jacobian of $\Upsilon((\yy,\xx,\xxa);\cdot)$ is given by
\begin{align*}
\mathrm{Hess}_{v}\left[ \left( \frac{v_1^2}{4} \, \psi\left( \frac{|v|}{2} \right)
+ v_2^2 \, \psi(|v|) \right)|\xxa|^2 + f\left( \frac{|v|}{2} \right) (|\yy|^2 + \xx^2)\right] > 0,
\end{align*}
since $|\yy|^2 + \xx^2 \neq 0$, $\xxa \neq 0$, $\mathrm{Hess}_{v}(f(|v|)) > 0$ and $\mathrm{Hess}_{v}(v_1^2 \, \psi(|v|) ) \ge 0$ (so symmetrically $\mathrm{Hess}_{v}(v_2^2 \, \psi(|v|) ) \ge 0$) for $|v| < \pi$, which can be found in the proof of \cite[Proposition~10.1]{Li19}.

Finally, using Hadamard's theorem, it remains to show that $\Upsilon((\yy,\xx,\xxa);\cdot)$ is proper. To this end, we consider the behaviour of  $\{v^{(j)}\}_{j = 1}^{+\infty}
\subseteq B_{\R^2}(0, \pi)$
satisfying $v^{(j)} = (v^{(j)}_1,v^{(j)}_2) \longrightarrow \partial B_{\R^2}(0, \pi)$ and we split it into cases. \\
(1) If $|v^{(j)}| \longrightarrow \pi^-$ and $|v^{(j)}_2| \ge \varepsilon > 0$, then $\Upsilon((\yy,\xx,\xxa);v^{(j)}) \longrightarrow \infty$. \\
(2) If $|v^{(j)}| \longrightarrow \pi^-$ and $|v^{(j)}_2| \longrightarrow 0^+$, set $\Upsilon((\yy,\xx,\xxa);v^{(j)}) = (u^{(j)}_1, u^{(j)}_2)$. \\
In the case $|u^{(j)}_1| \longrightarrow +\infty$, it is easy to see $\Upsilon((\yy,\xx,\xxa);v^{(j)}) \longrightarrow \infty$. \\
In the opposite case, assume $|u^{(j)}_1| \longrightarrow a \, (\ge 0)$. Then from the first equation of \eqref{expUps}, using the fact that $s^2 \, \psi^{\prime}(s) + 2 s \, \psi(s) = \mu(s)$ as well as $\mu(\pi/2) = \pi/2$, we have that
\begin{align*}
a = \frac{\pi}{4} \left(  |\yy|^2 + \xx^2 + |\xxa|^2 \right) + \lim_{j \longrightarrow +\infty} \frac{1}{\pi} \left( \frac{v^{(j)}_2}{\pi - |v^{(j)}|} \right)^2 |\xxa|^2,
\end{align*}
where we have used the fact that (see also \cite[Lemma~3.4]{Li19})
\begin{align*}
\lim_{s \longrightarrow \pi^-} (\pi - s)^2 \, \psi'(s) = \frac{1}{\pi}.
\end{align*}
Since
\[
\lim_{s \longrightarrow \pi^-} (\pi - s) \, \psi(s) = \frac{1}{\pi},
\]
the second equation of \eqref{expUps} gives that
\begin{align*}
\lim_{j \longrightarrow +\infty} |u^{(j)}_2| = \lim_{j \longrightarrow +\infty} \frac{2}{\pi} \frac{|v^{(j)}_2|}{\pi - |v^{(j)}|} \, |\xxa|^2
= \frac{2 \, |\xxa|}{\sqrt{\pi}} \sqrt{a - \frac{\pi}{4}
\left(  |\yy|^2 + \xx^2 + |\xxa|^2 \right) }.
\end{align*}

In conclusion, we have
$\Upsilon((\yy,\xx,\xxa);v^{(j)}) \longrightarrow \partial \R^2_{r}(\yy,\xx,\xxa)$ when $v^{(j)}  \longrightarrow \partial B_{\R^2}(0, \pi)$
in $B_{\R^2}(0, \pi)$, which means it is proper.

This finishes the proof of this lemma.

\medskip

\renewcommand{\theequation}{\thesection.\arabic{equation}}
\section{Appendix B: Properties of the direct product of two 2-step groups}  \label{Axb}
\setcounter{equation}{0}

\medskip

Assume that $\G_j = \G(q_j, m_j, \U_j)$ ($j = 1, 2$), where
\[
\U_j = \{ \UU_j^{(1)}, \ldots, \UU_j^{(m_j)} \}.
\]

Consider the direct product $\G_1 \times \G_2 := \G(q_1 + q_2, m_1 + m_2, \U)$ with $\U$ defined by
\[
\left\{ \left(
              \begin{array}{cc}
                \UU_1^{(1)} & \mbox{} \\
                \mbox{} & \O_{q_2 \times q_2} \\
              \end{array}
            \right),
\ldots,
\left(
              \begin{array}{cc}
                \UU_1^{(m_1)} & \mbox{} \\
                \mbox{} & \O_{q_2 \times q_2} \\
              \end{array}
            \right),
\left(
              \begin{array}{cc}
                \O_{q_1 \times q_1} & \mbox{} \\
                \mbox{} & \UU_2^{(1)} \\
              \end{array}
            \right),
\ldots,
\left(
              \begin{array}{cc}
                \O_{q_1 \times q_1} & \mbox{} \\
                \mbox{} & \UU_2^{(m_2)} \\
              \end{array}
            \right)
   \right\},
\]
where $\O_{k_1 \times k_2}$ denotes the $k_1 \times k_2$ null matrix. Let $\g_j := (\x_j, \t_j) \in \G_j, j = 1, 2$.
We identify $(\g_1, \g_2)$ with $((\x_1, \x_2), (\t_1, \t_2))$.
Then, it is clear that
\begin{gather*}
\OA^{(\G_1 \times \G_2)} = \OA^{(\G_1)} \times \OA^{(\G_2)}, \\
\phi^{(\G_1 \times \G_2)}((\g_1, \g_2); (\theta_{(1)}, \theta_{(2)})) = \phi^{(\G_1)}(\g_1; \theta_{(1)}) + \phi^{(\G_2)}(\g_2;  \theta_{(2)}), \\
\M^{(\G_1 \times \G_2)} = \M^{(\G_1)} \times \M^{(\G_2)}, \quad  d_{\G_1 \times \G_2}((\g_1, \g_2))^2 = d_{\G_1}(\g_1)^2 + d_{\G_2}(\g_2)^2,
\end{gather*}
and the meaning of the notations herein is obvious. It is clear that $(\gamma_{\G_1}(s), \gamma_{\G_2}(s))$ is a normal geodesic if and only if both $\gamma_{\G_1}$ and $\gamma_{\G_2}$ are normal geodesics. Moreover, the normal geodesic $(\gamma_{\G_1}(s), \gamma_{\G_2}(s))$ is abnormal if and only if $\gamma_{\G_1}$ or $\gamma_{\G_2}$ is abnormal. Also, it is shortest if and only if both $\gamma_{\G_1}$ and $\gamma_{\G_2}$ are shortest. Hence, we have the following well-known properties: on $\G_1 \times \G_2$, it holds that:
\begin{gather}
\mathrm{Cut}_o^{(\G_1 \times \G_2)} = \left( \mathrm{Cut}_o^{(\G_1)} \times \G_2 \right) \cup \left( \G_1 \times \mathrm{Cut}_o^{(\G_2)} \right), \\
\mathrm{Cut}_o^{\mathrm{CL} \  (\G_1 \times \G_2)} = \left( \mathrm{Cut}_o^{\mathrm{CL} \ (\G_1)} \times \G_2 \right) \cup \left( \G_1 \times \mathrm{Cut}_o^{\mathrm{CL} \ (\G_2)} \right), \\
\mathrm{Abn}_o^{* \ (\G_1 \times \G_2)} = \left( \mathrm{Abn}_o^{* \ (\G_1)} \times \G_2 \right) \cup \left( \G_1 \times \mathrm{Abn}_o^{* \ (\G_2)} \right),
\end{gather}
and the meaning of the notations herein is obvious.

By the fact that $\overline{\M^{(\G_1 \times \G_2)}} = \overline{\M^{(\G_1)}} \times \overline{\M^{(\G_2)}}$, we have the following:

\begin{proposition}
$\G_1 \times \G_2$ is of type GM if and only if both $\G_1$ and $\G_2$ are GM-groups.
\end{proposition}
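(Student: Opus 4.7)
The plan is to reduce the statement to the factorization $\overline{\M^{(\G_1 \times \G_2)}} = \overline{\M^{(\G_1)}} \times \overline{\M^{(\G_2)}}$ already recorded just before the proposition, and then apply the elementary fact that a Cartesian product of subsets equals the ambient product space if and only if each factor equals its ambient space. Since, by definition, $\G$ is GM precisely when $\overline{\M^{(\G)}} = \G$, the equivalence falls out immediately from
\[
\overline{\M^{(\G_1 \times \G_2)}} = \G_1 \times \G_2 \iff \overline{\M^{(\G_1)}} \times \overline{\M^{(\G_2)}} = \G_1 \times \G_2 \iff \overline{\M^{(\G_j)}} = \G_j \  (j = 1, 2).
\]
The nontrivial ``$\Leftarrow$'' of the second step uses nothing more than projecting onto each factor.

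First, however, I would verify carefully the product identity $\M^{(\G_1 \times \G_2)} = \M^{(\G_1)} \times \M^{(\G_2)}$ that underlies the closure factorization. For this, I unpack the definition \eqref{m} using the splittings $\OA^{(\G_1 \times \G_2)} = \OA^{(\G_1)} \times \OA^{(\G_2)}$ and
\[
\phi^{(\G_1 \times \G_2)}((\g_1,\g_2);(\theta_{(1)},\theta_{(2)})) = \phi^{(\G_1)}(\g_1;\theta_{(1)}) + \phi^{(\G_2)}(\g_2;\theta_{(2)}),
\]
which are forced by the block-diagonal structure of the tuple $\U$ defining $\G_1 \times \G_2$. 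From this splitting, $\nabla_{(\theta_{(1)}, \theta_{(2)})}\phi^{(\G_1 \times \G_2)}$ vanishes at $(\theta_{(1)}, \theta_{(2)})$ iff each $\nabla_{\theta_{(j)}}\phi^{(\G_j)}(\g_j; \cdot)$ vanishes at $\theta_{(j)}$, and more importantly $\mathrm{Hess}_{(\theta_{(1)}, \theta_{(2)})} \phi^{(\G_1 \times \G_2)}$ is block-diagonal with blocks $\mathrm{Hess}_{\theta_{(j)}} \phi^{(\G_j)}(\g_j; \cdot)$. Hence the product Hessian is nondegenerate if and only if both factor Hessians are. This gives $\M^{(\G_1 \times \G_2)} = \M^{(\G_1)} \times \M^{(\G_2)}$, and then the closure factorization follows from the general fact that $\overline{A \times B} = \overline{A} \times \overline{B}$ for subsets of finite-dimensional Euclidean spaces.

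With the closure factorization in hand, the proposition is immediate. I expect no serious obstacle; the only point that deserves a sentence is the nondegeneracy argument for the product Hessian, which relies solely on the block-diagonal form dictated by the product structure of $\U$.
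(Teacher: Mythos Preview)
Your proposal is correct and matches the paper's own argument exactly: the paper deduces the proposition in one line from the already-recorded identity $\overline{\M^{(\G_1 \times \G_2)}} = \overline{\M^{(\G_1)}} \times \overline{\M^{(\G_2)}}$, and your write-up simply fills in the (block-diagonal Hessian) justification of the underlying product identity $\M^{(\G_1 \times \G_2)} = \M^{(\G_1)} \times \M^{(\G_2)}$ that the paper states without proof.
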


Similarly, on the direct product of the Euclidean space $\R^k$ with a step-two group $\G$,
we have
\begin{gather}
\mathrm{Cut}_o^{(\R^k \times \G)} = \R^k \times \mathrm{Cut}_o^{(\G)},  \quad \mathrm{Cut}_o^{\mathrm{CL} \ (\R^k \times \G)} = \R^k \times \mathrm{Cut}_o^{\mathrm{CL} \ (\G)},  \\
\mathrm{Abn}_o^{* \ (\R^k \times \G)} = \R^k \times \mathrm{Abn}_o^{* \ (\G)},
\end{gather}
and the meaning of the notations herein is obvious. In particular,

\begin{proposition}  \label{NSAa}
Let $\G$ be a step-two group. Consider the direct product of the Euclidean space
$\R^k$ with $\G$. Then:
\begin{description}
  \item[(1)] $\R^k \times \G$ is of type GM if and only if $\G$ is a GM-group;
  \item[(2)] $\R^k \times \G$ is of type SA if and only if $\G$ is a M\'etivier or SA-group.
\end{description}
\end{proposition}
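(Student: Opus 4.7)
The key observation is that the group law on $\R^k \times \G$ is that of a step-two group $\G(q + k, m, \widetilde{\U})$ whose structure matrices have the block form
\[
\widetilde{U}^{(j)} = \begin{pmatrix} \O_{k \times k} & \O_{k \times q} \\ \O_{q \times k} & U^{(j)} \end{pmatrix}, \qquad 1 \le j \le m.
\]
Writing a generic point of $\R^k \times \G$ as $((y, x), t) \in \R^k \times \R^q \times \R^m$, it follows directly from the block decomposition that $\|\widetilde{U}(\tau)\| = \|U(\tau)\|$ for every $\tau$, and that $s \cot s$ evaluated at the eigenvalue $0$ equals $1$, so that the reference function factors as
\[
\phi^{(\R^k \times \G)}\!\bigl(((y, x), t);\tau\bigr) = |y|^2 + \phi^{(\G)}((x, t); \tau), \qquad \tau \in \OA^{(\R^k \times \G)} = \OA^{(\G)}.
\]

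First I would exploit this identity to transfer the definitions of $\M$ and $\widetilde{\M}_2$ (see \eqref{nOM}--\eqref{dtM2}): since $|y|^2$ is constant in $\tau$, the critical points of $\phi^{(\R^k \times \G)}\!\bigl(((y, x), t); \cdot\bigr)$ in $\OA^{(\G)}$, together with their Hessians, coincide with those of $\phi^{(\G)}((x, t); \cdot)$. This immediately yields the two identities
\[
\M^{(\R^k \times \G)} = \R^k \times \M^{(\G)}, \qquad \widetilde{\M}_2^{(\R^k \times \G)} = \R^k \times \widetilde{\M}_2^{(\G)},
\]
and consequently $\overline{\M^{(\R^k \times \G)}} = \R^k \times \overline{\M^{(\G)}}$.

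Part (1) is then essentially immediate: $\R^k \times \G$ is of type GM iff $\overline{\M^{(\R^k \times \G)}} = \R^k \times \G$, and the product structure above makes this equivalent to $\overline{\M^{(\G)}} = \G$, i.e., to $\G$ being a GM-group. For part (2), recall that $o_{\G} \in \widetilde{\M}_2^{(\G)}$ always, so when $k \ge 1$ the set $\R^k \times \{o_{\G}\} \subseteq \widetilde{\M}_2^{(\R^k \times \G)}$ has positive dimension and the first condition in $(SA)$ for $\R^k \times \G$ is automatic. The remaining inclusion $\widetilde{\M}_2^{(\R^k \times \G)} \subseteq \{((y, x), 0); (y, x) \in \R^{k + q}\}$ translates, via the identity above, exactly to $\widetilde{\M}_2^{(\G)} \subseteq \{(x, 0); x \in \R^q\}$, which holds iff $\G$ is M\'etivier (the case $\widetilde{\M}_2^{(\G)} = \{o_{\G}\}$, cf. (d) of Remark \ref{nRna}) or $\G$ is of type SA.

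There is no genuine obstacle here beyond correctly identifying the block structure and the factorization of the reference function; once that is in place the proof reduces to bookkeeping with the definitions of $\M$, $\widetilde{\M}_2$, $(GM)$ and $(SA)$.
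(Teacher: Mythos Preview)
Your proof is correct and follows exactly the approach the paper sets up in Appendix~B: the paper records the product identities for $\OA$, $\phi$, $\M$, and then states Proposition~\ref{NSAa} as an immediate consequence, which is precisely what you spell out via the block structure and the factorization $\phi^{(\R^k \times \G)} = |y|^2 + \phi^{(\G)}$. Your explicit identification $\widetilde{\M}_2^{(\R^k \times \G)} = \R^k \times \widetilde{\M}_2^{(\G)}$ and the observation that $k \ge 1$ forces the first condition in $(SA)$ to hold automatically are exactly the right points.
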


\medskip

\renewcommand{\theequation}{\thesection.\arabic{equation}}
\section{Appendix C: Construction of SA-groups}  \label{Axc}
\setcounter{equation}{0}

\medskip

In \cite[\S~8.1]{Li19}, there is a simple method to construct an uncountable number of GM-groups (resp. GM-groups of M\'etivier type) from any given step-two group (resp. M\'etivier group). We will use the same method to produce SA-groups. More precisely, assume that $m \ge 2$ and $\G_j = \G(q_j, m, \U_j)$ ($j = 1, 2$), where
\[
\U_j = \{ \UU_j^{(1)}, \ldots, \UU_j^{(m)} \}.
\]

We consider $\G := \G(q_1 + q_2, m, \U)$ with $\U$ defined by
\[
\left\{ \left(
              \begin{array}{cc}
                \UU_1^{(1)} & \mbox{} \\
                \mbox{} & \UU_2^{(1)} \\
              \end{array}
            \right),
\ldots,
\left(
              \begin{array}{cc}
                \UU_1^{(m)} & \mbox{} \\
                \mbox{} & \UU_2^{(m)} \\
              \end{array}
            \right)
   \right\}.
\]

Let $g = (\xnx_1, \xnx_2, t) \in \R^{q_1} \times \R^{q_2} \times \R^m$. Obviously, we have:
\begin{gather*}
\OA^{(\G)} = \OA^{(\G_1)} \cap \OA^{(\G_2)}, \\
\phi^{(\G)}(g; \tau) = \langle U^{(\G_1)}(\tau) \cot{U^{(\G_1)}(\tau)} \, \xnx_1, \ \xnx_1 \rangle + \langle U^{(\G_2)}(\tau) \cot{U^{(\G_2)}(\tau)} \, \xnx_2, \ \xnx_2 \rangle + 4 \, t \cdot \tau,
\end{gather*}
and the meaning of the notations herein is clear. Moreover, $g \in \widetilde{\M}_2^{(\G)}$ if and only if there are $\theta \in \OA^{(\G_1)} \cap \OA^{(\G_2)}$ and $t', t''
\in \R^m$ such that:
\begin{gather*}
t = t' + t'',
 \quad t' = - \frac{1}{4} \nabla_{\theta} \langle U^{(\G_1)}(\theta) \cot{U^{(\G_1)}(\theta)} \, \xnx_1, \ \xnx_1 \rangle, \\
t''
= - \frac{1}{4} \nabla_{\theta} \langle U^{(\G_2)}(\theta) \cot{U^{(\G_2)}(\theta)} \, \xnx_2, \ \xnx_2 \rangle,
\end{gather*}
and the sum of two positive semidefinite matrices
\[
\left( - \mathrm{Hess}_{\theta} \langle U^{(\G_1)}(\theta) \cot{U^{(\G_1)}(\theta)} \, \xnx_1, \ \xnx_1 \rangle \right) + \left( - \mathrm{Hess}_{\theta} \langle U^{(\G_2)}(\theta) \cot{U^{(\G_2)}(\theta)} \, \xnx_2, \ \xnx_2 \rangle \right)
\]
is singular. In such case, we have $(\xnx_1, t') \in \widetilde{\M}_2^{(\G_1)}$ and $(\xnx_2, t'') \in \widetilde{\M}_2^{(\G_2)}$. Thus, we get immediately the following:

\begin{proposition}
With the above notations, $\G$ is a SA-group when $\G_1$ is of type SA and $\G_2$ is of type M\'etivier or SA.
\end{proposition}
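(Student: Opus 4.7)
The goal is to verify the two conditions that define an SA-group for $\G$, namely (i) $\widetilde{\M}_2^{(\G)} \subseteq \R^{q_1+q_2} \times \{0\}$ and (ii) $\widetilde{\M}_2^{(\G)} \neq \{o\}$. Part (i) is essentially already contained in the discussion preceding the proposition: for every $g = ((\xnx_1, \xnx_2), t) \in \widetilde{\M}_2^{(\G)}$, there exist $\theta \in \OA^{(\G_1)} \cap \OA^{(\G_2)}$ and $t', t''$ with $t = t' + t''$ such that the sum of the two positive semidefinite matrices
\[
A_j := -\mathrm{Hess}_\theta \bigl\langle U^{(\G_j)}(\theta) \cot U^{(\G_j)}(\theta) \, \xnx_j, \, \xnx_j \bigr\rangle, \qquad j = 1, 2,
\]
is singular (the PSD property follows from the concavity of $\phi$ asserted in Proposition~\ref{eP1}). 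A singular sum of PSD matrices forces the summands to share a common null direction, so each $A_j$ is already singular; hence $\theta$ is a degenerate critical point of $\phi^{(\G_1)}((\xnx_1, t'); \cdot)$ (resp.\ of $\phi^{(\G_2)}((\xnx_2, t''); \cdot)$) in $\OA^{(\G_1)}$ (resp.\ in $\OA^{(\G_2)}$), and so $(\xnx_1, t') \in \widetilde{\M}_2^{(\G_1)}$ and $(\xnx_2, t'') \in \widetilde{\M}_2^{(\G_2)}$. Now $\G_1$ being of type SA forces $t' = 0$, and $\G_2$ being either SA or M\'etivier forces $t'' = 0$ as well (in the M\'etivier case, (d) of Remark~\ref{nRna} yields $\widetilde{\M}_2^{(\G_2)} = \{o\}$, so in particular $(\xnx_2, t'') = o$). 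Therefore $t = 0$, proving (i).

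For (ii), I would produce a nontrivial element of $\widetilde{\M}_2^{(\G)}$ by lifting an abnormal geodesic from $\G_1$ to $\G$. Pick $(\xnx_1^0, 0) \in \widetilde{\M}_2^{(\G_1)} \setminus \{o\}$, which exists since $\G_1$ is SA, and let $\zeta \in \R^{q_1}$ with $|\zeta| = 1$ and $\tau \in \R^m$ be parameters supplied by Proposition~\ref{NPA1} applied in $\G_1$ so that $\gamma^{(\G_1)}(\zeta, \tau; \cdot)$ is abnormal. Set $\xi := (\zeta, 0) \in \R^{q_1+q_2}$. Then $|\xi| = 1$, and the block-diagonal form of $U^{(\G)}$ gives, for all $\theta, \sigma \in \R^m$ and all $k \in \N$,
\[
U^{(\G)}(\sigma)\, U^{(\G)}(\theta)^k\, \xi \;=\; \bigl(U^{(\G_1)}(\sigma)\, U^{(\G_1)}(\theta)^k\, \zeta,\; 0\bigr),
\]
so the abnormality criterion of Proposition~\ref{cAg} for $\gamma^{(\G)}(\xi, \tau; \cdot)$ reduces exactly to the one already satisfied by $\gamma^{(\G_1)}(\zeta, \tau; \cdot)$; hence $\gamma^{(\G)}(\xi, \tau; \cdot)$ is abnormal in $\G$ as well. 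Since $\|U^{(\G)}(\tau)\| = \max\{\|U^{(\G_1)}(\tau)\|, \|U^{(\G_2)}(\tau)\|\} < \infty$, the interval $(0, 2\pi/\|U^{(\G)}(\tau)\|)$ is nonempty; for any $s$ in it, applying Proposition~\ref{NPA1} in $\G$ gives $\gamma^{(\G)}(\xi, \tau; s) \in \widetilde{\M}_2^{(\G)}$, and this point is distinct from $o$ because $\dot\gamma^{(\G)}(0) = \xi \neq 0$.

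The only mild obstacle is the block-diagonal reduction of the abnormality criterion just performed; everything else is a mechanical assembly of Propositions~\ref{eP1}, \ref{cAg} and \ref{NPA1} together with the definitions of SA and M\'etivier groups.
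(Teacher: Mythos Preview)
Your argument is correct. Part~(i) is exactly what the paper's discussion preceding the proposition establishes (the paper already records that $(\xnx_1,t')\in\widetilde{\M}_2^{(\G_1)}$ and $(\xnx_2,t'')\in\widetilde{\M}_2^{(\G_2)}$; you simply re-derive this via the common null direction of the two PSD Hessians, which is fine).

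For part~(ii) you take a longer route than the paper. Since the paper writes ``Thus, we get immediately the following'', its implicit argument is presumably the one-line observation: $\G_1$ being SA means $\G_1$ is not of M\'etivier type, so some $U^{(\G_1)}(\sigma)$ with $\sigma\neq 0$ is singular; then $U^{(\G)}(\sigma)$ is singular by the block-diagonal structure, so $\G$ is not M\'etivier either, and therefore $\widetilde{\M}_2^{(\G)}\neq\{o\}$ by the equivalence recalled just before the definition of SA-groups. Your approach instead lifts an explicit abnormal geodesic from $\G_1$ to $\G$ via Propositions~\ref{cAg} and~\ref{NPA1}. This is correct and more constructive (it actually exhibits points of $\widetilde{\M}_2^{(\G)}$), but it is heavier machinery than needed here; the paper's route avoids touching geodesics altogether.
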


\medskip

\section*{Acknowledgement}
\setcounter{equation}{0}
This work is partially supported by NSF of China (Grants  No. 11625102 and No. 11831004) and ``The Program of Shanghai Academic Research Leader'' (18XD1400700).  The authors would like to thank L. Rizzi for many useful suggestions.

\nocite{*}
\bibliographystyle{abbrv}
\bibliography{LZcc}

\mbox{}\\
Hong-Quan Li, Ye Zhang\\
School of Mathematical Sciences/Shanghai Center for Mathematical Sciences  \\
Fudan University \\
220 Handan Road  \\
Shanghai 200433  \\
People's Republic of China \\
E-Mail: hongquan\_li@fudan.edu.cn \\
17110180012@fudan.edu.cn \quad or \quad zhangye0217@126.com \mbox{}\\

\end{document}